\newtheorem{theorem}{Theorem}
\newtheorem{proposition}[theorem]{Proposition}
\newtheorem{remark}[theorem]{Remark}
\renewcommand{\familydefault}{cmss}
\newcommand{\mone}{{\,\text{-}1}}
\newcommand{\mtwo}{{\,\text{-}2}}
\newcommand{\tto}{\twoheadrightarrow}
\font\sc=rsfs10
\newcommand{\cC}{\sc\mbox{C}\hspace{1.0pt}}
\newcommand{\cG}{\sc\mbox{G}\hspace{1.0pt}}
\newcommand{\cM}{\sc\mbox{M}\hspace{1.0pt}}
\newcommand{\cR}{\sc\mbox{R}\hspace{1.0pt}}
\newcommand{\cI}{\sc\mbox{I}\hspace{1.0pt}}
\newcommand{\cJ}{\sc\mbox{J}\hspace{1.0pt}}
\newcommand{\cS}{\sc\mbox{S}\hspace{1.0pt}}
\newcommand{\cT}{\sc\mbox{T}\hspace{1.0pt}}
\newcommand{\cD}{\sc\mbox{D}\hspace{1.0pt}}
\newcommand{\cL}{\sc\mbox{L}\hspace{1.0pt}}
\newcommand{\cP}{\sc\mbox{P}\hspace{1.0pt}}
\newcommand{\cA}{\sc\mbox{A}\hspace{1.0pt}}
\newcommand{\cB}{\sc\mbox{B}\hspace{1.0pt}}
\newcommand{\cU}{\sc\mbox{U}\hspace{1.0pt}}
\newcommand{\cH}{\sc\mbox{H}\hspace{1.0pt}}
\font\scc=rsfs7
\newcommand{\ccC}{\scc\mbox{C}\hspace{1.0pt}}
\newcommand{\ccP}{\scc\mbox{P}\hspace{1.0pt}}
\newcommand{\ccA}{\scc\mbox{A}\hspace{1.0pt}}
\newcommand{\ccJ}{\scc\mbox{J}\hspace{1.0pt}}
\newcommand{\ccS}{\scc\mbox{S}\hspace{1.0pt}}
\newcommand{\ccG}{\scc\mbox{G}\hspace{1.0pt}}
\newcommand{\ccH}{\scc\mbox{H}\hspace{1.0pt}}
\newcommand{\Hom}{\operatorname{Hom}}
\begin{document}
\title[Infinite rank module categories]
{Combinatorics of infinite rank module categories over\\
finite dimensional $\mathfrak{sl}_3$-modules
in Lie-algebraic context}

\author[V.~Mazorchuk and X.~Zhu]
{Volodymyr Mazorchuk and Xiaoyu Zhu}

\begin{abstract}
We determine the combinatorics of transitive module categories 
over the monoidal category of finite dimensional 
$\mathfrak{sl}_3$-modules which arise when acting by the latter
monoidal category on arbitrary simple $\mathfrak{sl}_3$-modules.
This gives us a family of eight graphs which can be viewed
as $\mathfrak{sl}_3$-generalizations of the classical infinite
Dynkin diagrams.
\end{abstract}

\maketitle

\section{Introduction and description of the results}\label{s1}

\subsection{Infinite Dynkin diagrams}\label{s1.1}

Classical Dynkin diagrams are combinatorial objects that arise in a variety
of different contexts, for example, in the classification of finite
root systems, semi-simple Lie algebras or representation finite 
hereditary algebras. There are various generalizations of 
Dynkin diagrams, for instance, affine Dynkin diagrams which appear,
among other places, in the McKay correspondence and in spectral 
theory of finite graphs. Infinite generalization of 
Dynkin diagrams also appear in various contexts. 
For example, the following list of infinite 
Dynkin diagrams appears in \cite{HPR,HPR2}:

\resizebox{6cm}{!}{
$
\xymatrix@R=4mm{
A_\infty:&&\bullet\ar@{-}[r]&\bullet\ar@{-}[r]&
\bullet\ar@{-}[r]&\dots
}\qquad\qquad\qquad
$
}
\resizebox{6cm}{!}{
$
\xymatrix@R=4mm{
A_\infty^\infty:&&\dots\ar@{-}[r]&\bullet\ar@{-}[r]&\bullet\ar@{-}[r]&
\bullet\ar@{-}[r]&\dots
}\qquad\qquad\qquad
$
}

\resizebox{6cm}{!}{
$
\xymatrix@R=4mm{
B_\infty:&&\bullet\ar@/_2mm/@{-}[r]&\bullet\ar@{-}[r]\ar@/_2mm/[l]&
\bullet\ar@{-}[r]&\dots
}\qquad\qquad\qquad
$
}
\resizebox{6cm}{!}{
$
\xymatrix@R=4mm{
C_\infty:&&\bullet\ar@/_2mm/@{-}[r]\ar@/^2mm/[r]&\bullet\ar@{-}[r]&
\bullet\ar@{-}[r]&\dots
}\qquad\qquad\qquad
$
}

\resizebox{6cm}{!}{
$
\xymatrix@R=4mm{
D_\infty:&&\bullet\ar@{-}[r]&\bullet\ar@{-}[r]&
\bullet\ar@{-}[r]&\dots\\
&&&\bullet\ar@{-}[u]&
}\qquad\qquad\qquad
$
}
\resizebox{6cm}{!}{
$
\xymatrix@R=4mm{
T_\infty:&&\bullet\ar@{-}@(ul,dl)[]\ar@{-}[r]&\bullet\ar@{-}[r]&
\bullet\ar@{-}[r]&\dots
}\qquad\qquad\qquad
$
}

In our previous paper \cite{MZ}, we investigated combinatorics of 
transitive module categories over the monoidal category of finite dimensional 
$\mathfrak{sl}_2$-modules which arise when acting by the latter
monoidal category on arbitrary simple $\mathfrak{sl}_2$-modules.
It turns out that this combinatorics is always described
by an infinite Dynkin diagram. One could loosely say that infinite Dynkin 
diagrams capture $\mathfrak{sl}_2$-symmetries in the Lie algebraic context.

\subsection{$\mathfrak{sl}_3$-generalization}\label{s1.2}

In the present paper, we study $\mathfrak{sl}_3$-generalizations of the
results of \cite{MZ} mentioned above. We consider the monoidal category
$\mathscr{C}$ of finite dimensional $\mathfrak{sl}_3$-modules and look
at the $\mathscr{C}$-module categories of the form
$\mathrm{add}(\mathscr{C}\cdot L)$, where $L$ is a simple
$\mathfrak{sl}_3$-module. A major part of the paper is dedicated to the
case when $L$ is a simple highest weight module or a Whittaker module.
However, our main result, Theorem~\ref{thm-main}, deals with the case
of an arbitrary simple $L$.

Given a ``nice'' $\mathscr{C}$-module category $\mathbf{M}$,
for example, an additive and Krull-Schmidt category on which 
$\mathscr{C}$ acts via additive functors, the combinatorial
shadow of the action of an object of $\mathscr{C}$ can be
collected in a non-negative integer matrix which bookkeeps
the multiplicities of the action in the natural basis of
the split Grothendieck group consisting of all indecomposable objects.
The category $\mathscr{C}$ is generated, as a monoidal category,
by one object: the natural $\mathfrak{sl}_3$-module. 
Therefore, the underlying combinatorics of the action is,
to some extent, determined by one such matrix or, equivalently, 
by a certain oriented graph, call it  $\Gamma$.  

The $\mathscr{C}$-module category $\mathrm{add}(\mathscr{C}\cdot L)$ 
admits a ``composition series'' with ``simple'' subquotients. 
Combinatorially, this simplicity means that the corresponding graph 
$\Gamma$ is strongly connected. In this paper we determine
all possible $\Gamma$ which can appear, for arbitrary $L$.
It turns out that the list is finite (however, all graphs in this
list are infinite). In fact, the list consists
of exactly eight graphs, see Figure~\ref{fig16}.
It is natural to view these eight graphs as 
$\mathfrak{sl}_3$-generalizations of the infinite Dynkin diagrams
(if we interpret the latter as $\mathfrak{sl}_2$-related objects).

\subsection{Content of the paper}\label{s1.3}

Our main result, Theorem~\ref{thm-main}, asserts exactly that
the combinatorics of a ``simple'' subquotient of a 
$\mathscr{C}$-module category of the form 
$\mathrm{add}(\mathscr{C}\cdot L)$ is governed by one of
the eight graphs listed in Figure~\ref{fig16}.

However, a major part of the paper is devoted to determination of
these eight graphs, for very particular simple modules $L$.
Seven of these eight graphs come from the case when $L$
is a simple highest weight module, that is, an object of the 
BGG category $\mathcal{O}$ associated to $\mathfrak{sl}_3$, 
cf. \cite{BGG}. The remaining graph appears in the case when $L$
is a non-degenerate Whittaker module, cf. \cite{Ko}. After this,
fairly technical part, it is not too difficult to show, using
the standard techniques of Harish-Chandra bimodules, that 
all other cases of $L$ are similar to either category
$\mathcal{O}$ or to  Whittaker modules (in fact, there is even
an equivalence of the corresponding $\mathscr{C}$-module categories,
generalizing the approach of \cite{MiSo}).

For all graphs, we determine the Perron-Frobenius eigenvector
for the Perron-Frobenius eigenvalue $3$ of the corresponding
matrix and discuss the interpretation of the coefficients of
this eigenvector in Lie-theoretic context. The coefficients
are usually naturally related to either the dimension or
the Gelfand-Kirillov dimension or the Bernstein coefficient of
the modules involved.

\subsection{Structure of the paper}\label{s1.4}

Section~\ref{s2} introduces all necessary preliminaries and notation
related to the Lie algebra $\mathfrak{sl}_3$. Weight pictures play
an important role in helping to understand the combinatorics we want to
describe. Therefore, in Section~\ref{s2}, we, in particular, provide
pictorial representations of all our main ingredients.

In Section~\ref{s3} we collected all necessary preliminaries 
related to the monoidal category $\mathscr{C}$ and module categories
over it. We also comment on the relation of our problems to 
the classical Perron-Frobenius theory.

Sections~\ref{s4}, \ref{s5} and \ref{s6} contain the special cases
preparation work for the main theorem. Sections~\ref{s4} looks at the
regular $\mathscr{C}$-module category. This is a very special case
in which one can derive many more properties compared to the general
case. In particular, here we prove Theorem~\ref{thm-s4.3-1} which asserts
that a simple $\mathscr{C}$-module category with the same combinatorics
as the regular $\mathscr{C}$-module category is, in fact, equivalent to
the regular $\mathscr{C}$-module category. Section~\ref{s5} studies
combinatorics of 
$\mathscr{C}$-module categories in the context of BGG category
$\mathcal{O}$. Section~\ref{s6} studies combinatorics of 
$\mathscr{C}$-module categories in the context of 
Whittaker modules.

Our main result, Theorem~\ref{thm-main}, is formulated and
proved in Section~\ref{s7}.

\subsection*{Acknowledgments} The first author is partially
supported by the Swedish Research Council.
The second author is partially supported by the
Zhejiang Provincial Natural Science Foundation of China, 
Grant No. QN25A010010.

\section{The Lie algebra $\mathfrak{sl}_3$}\label{s2}

\subsection{Setup}\label{s2.1}

In this paper, we work over the field $\mathbb{C}$ of complex numbers.
Consider the Lie algebra $\mathfrak{g}:=\mathfrak{sl}_3=\mathfrak{sl}_3(\mathbb{C})$
of all traceless $3\times 3$ complex matrices. It has
the standard triangular decomposition
\begin{displaymath}
\mathfrak{sl}_3=\mathfrak{n}_-\oplus \mathfrak{h}\oplus \mathfrak{n}_+. 
\end{displaymath}
Here $\mathfrak{h}$ is the Cartan subalgebra of all traceless diagonal
matrices, $\mathfrak{n}_+$ is the subalgebra of all strictly upper triangular
matrices and $\mathfrak{n}_-$ is the subalgebra of all strictly lower 
triangular matrices.

For $i,j\in\{1,2,3\}$, we denote by $e_{i,j}$ the corresponding matrix unit.
Then $\{e_{1,2},e_{1,3},e_{2,3}\}$ is a basis in $\mathfrak{n}_+$,
$\{e_{2,1},e_{3,1},e_{3,2}\}$ is a basis in $\mathfrak{n}_-$,
and $\{h_1,h_2\}$, where $h_1:=e_{1,1}-e_{2,2}$ and $h_2:=e_{2,2}-e_{3,3}$,
is a basis in $\mathfrak{h}$.

For a Lie algebra $\mathfrak{a}$, we denote by $U(\mathfrak{a})$
the universal enveloping algebra of $\mathfrak{a}$.

\subsection{Weights and roots}\label{s2.2}

Consider the dual space $\mathfrak{h}^*:=
\mathrm{Hom}_\mathbb{C}(\mathfrak{h},\mathbb{C})$. As usual, we will call the
elements of $\mathfrak{h}^*$ {\em weights}. An element
$\lambda\in \mathfrak{h}^*$ will be identified with
$(\lambda(h_1),\lambda(h_2))\in\mathbb{C}^2$.
We set $\lambda_1:=\lambda(h_1)$ and $\lambda_2:=\lambda(h_2)$.

Let $\mathbf{R}\subset \mathfrak{h}^*$ be the root system of
$\mathfrak{sl}_3$. Then the above triangular decomposition 
induces a choice of a basis  $\pi$ in $\mathbf{R}$ and the corresponding
decomposition of $\mathbf{R}$ into positive and negative roots:
$\mathbf{R}=\mathbf{R}_+\coprod\mathbf{R}_-$. We denote by
$\alpha$ the root corresponding to $e_{12}$ and by
$\beta$ the root corresponding to $e_{23}$. Then 
$\pi=\{\alpha,\beta\}$, $\mathbf{R}_+=\{\alpha,\beta,\alpha+\beta\}$
and $\mathbf{R}_-=\{-\alpha,-\beta,-\alpha-\beta\}$.

We denote by $\Theta$ the set $\mathbb{Z}\pi$, and by
$\Lambda$ the set of all {\em integral weights}, that is, 
the set of all $\lambda\in \mathfrak{h}^*$ such that 
both $\lambda(h_1)$ and $\lambda(h_2)$ are integers.
Note that $\Theta$ is a subgroup of $\Lambda$ and
$\Lambda/\Theta$ is a cyclic group of order $3$.
The sets $\Lambda$, $\Theta$ and $\mathbf{R}$ are
major players in our story. They are visualized 
in Figure~\ref{fig1}.

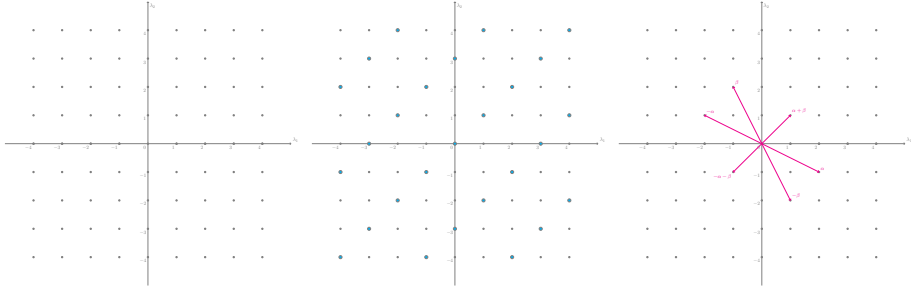
\begin{figure}
\resizebox{12cm}{!}{
\begin{tikzpicture}
\draw[gray, thin,  ->] (0,10) -- (20,10) node[anchor=south west] {\large$\lambda_1$};
\draw[gray, thin,  ->] (10,00) -- (10,20) node[anchor=north west] {\large$\lambda_2$};
\draw[gray,fill=gray] (10,10) circle (.3ex) node[anchor=north east] {\color{gray}$0$};
\draw[gray,fill=gray] (12,10) circle (.3ex) node[anchor=north east] {\color{gray}$1$};
\draw[gray,fill=gray] (14,10) circle (.3ex) node[anchor=north east] {\color{gray}$2$};
\draw[gray,fill=gray] (16,10) circle (.3ex) node[anchor=north east] {\color{gray}$3$};
\draw[gray,fill=gray] (18,10) circle (.3ex) node[anchor=north east] {\color{gray}$4$};
\draw[gray,fill=gray] (8,10) circle (.3ex) node[anchor=north east] {\color{gray}$-1$};
\draw[gray,fill=gray] (6,10) circle (.3ex) node[anchor=north east] {\color{gray}$-2$};
\draw[gray,fill=gray] (4,10) circle (.3ex) node[anchor=north east] {\color{gray}$-3$};
\draw[gray,fill=gray] (2,10) circle (.3ex) node[anchor=north east] {\color{gray}$-4$};
\draw[gray,fill=gray] (10,12) circle (.3ex) node[anchor=north east] {\color{gray}$1$};
\draw[gray,fill=gray] (10,14) circle (.3ex) node[anchor=north east] {\color{gray}$2$};
\draw[gray,fill=gray] (10,16) circle (.3ex) node[anchor=north east] {\color{gray}$3$};
\draw[gray,fill=gray] (10,18) circle (.3ex) node[anchor=north east] {\color{gray}$4$};
\draw[gray,fill=gray] (10,8) circle (.3ex) node[anchor=north east] {\color{gray}$-1$};
\draw[gray,fill=gray] (10,6) circle (.3ex) node[anchor=north east] {\color{gray}$-2$};
\draw[gray,fill=gray] (10,4) circle (.3ex) node[anchor=north east] {\color{gray}$-3$};
\draw[gray,fill=gray] (10,2) circle (.3ex) node[anchor=north east] {\color{gray}$-4$};
\draw[gray,fill=gray] (2,2) circle (.3ex);
\draw[gray,fill=gray] (2,4) circle (.3ex);
\draw[gray,fill=gray] (2,6) circle (.3ex);
\draw[gray,fill=gray] (2,8) circle (.3ex);
\draw[gray,fill=gray] (2,10) circle (.3ex);
\draw[gray,fill=gray] (2,12) circle (.3ex);
\draw[gray,fill=gray] (2,14) circle (.3ex);
\draw[gray,fill=gray] (2,16) circle (.3ex);
\draw[gray,fill=gray] (2,18) circle (.3ex);
\draw[gray,fill=gray] (4,2) circle (.3ex);
\draw[gray,fill=gray] (4,4) circle (.3ex);
\draw[gray,fill=gray] (4,6) circle (.3ex);
\draw[gray,fill=gray] (4,8) circle (.3ex);
\draw[gray,fill=gray] (4,10) circle (.3ex);
\draw[gray,fill=gray] (4,12) circle (.3ex);
\draw[gray,fill=gray] (4,14) circle (.3ex);
\draw[gray,fill=gray] (4,16) circle (.3ex);
\draw[gray,fill=gray] (4,18) circle (.3ex);
\draw[gray,fill=gray] (6,2) circle (.3ex);
\draw[gray,fill=gray] (6,4) circle (.3ex);
\draw[gray,fill=gray] (6,6) circle (.3ex);
\draw[gray,fill=gray] (6,8) circle (.3ex);
\draw[gray,fill=gray] (6,10) circle (.3ex);
\draw[gray,fill=gray] (6,12) circle (.3ex);
\draw[gray,fill=gray] (6,14) circle (.3ex);
\draw[gray,fill=gray] (6,16) circle (.3ex);
\draw[gray,fill=gray] (6,18) circle (.3ex);
\draw[gray,fill=gray] (8,2) circle (.3ex);
\draw[gray,fill=gray] (8,4) circle (.3ex);
\draw[gray,fill=gray] (8,6) circle (.3ex);
\draw[gray,fill=gray] (8,8) circle (.3ex);
\draw[gray,fill=gray] (8,10) circle (.3ex);
\draw[gray,fill=gray] (8,12) circle (.3ex);
\draw[gray,fill=gray] (8,14) circle (.3ex);
\draw[gray,fill=gray] (8,16) circle (.3ex);
\draw[gray,fill=gray] (8,18) circle (.3ex);
\draw[gray,fill=gray] (12,2) circle (.3ex);
\draw[gray,fill=gray] (12,4) circle (.3ex);
\draw[gray,fill=gray] (12,6) circle (.3ex);
\draw[gray,fill=gray] (12,8) circle (.3ex);
\draw[gray,fill=gray] (12,10) circle (.3ex);
\draw[gray,fill=gray] (12,12) circle (.3ex);
\draw[gray,fill=gray] (12,14) circle (.3ex);
\draw[gray,fill=gray] (12,16) circle (.3ex);
\draw[gray,fill=gray] (12,18) circle (.3ex);
\draw[gray,fill=gray] (14,2) circle (.3ex);
\draw[gray,fill=gray] (14,4) circle (.3ex);
\draw[gray,fill=gray] (14,6) circle (.3ex);
\draw[gray,fill=gray] (14,8) circle (.3ex);
\draw[gray,fill=gray] (14,10) circle (.3ex);
\draw[gray,fill=gray] (14,12) circle (.3ex);
\draw[gray,fill=gray] (14,14) circle (.3ex);
\draw[gray,fill=gray] (14,16) circle (.3ex);
\draw[gray,fill=gray] (14,18) circle (.3ex);
\draw[gray,fill=gray] (16,2) circle (.3ex);
\draw[gray,fill=gray] (16,4) circle (.3ex);
\draw[gray,fill=gray] (16,6) circle (.3ex);
\draw[gray,fill=gray] (16,8) circle (.3ex);
\draw[gray,fill=gray] (16,10) circle (.3ex);
\draw[gray,fill=gray] (16,12) circle (.3ex);
\draw[gray,fill=gray] (16,14) circle (.3ex);
\draw[gray,fill=gray] (16,16) circle (.3ex);
\draw[gray,fill=gray] (16,18) circle (.3ex);
\draw[gray,fill=gray] (18,2) circle (.3ex);
\draw[gray,fill=gray] (18,4) circle (.3ex);
\draw[gray,fill=gray] (18,6) circle (.3ex);
\draw[gray,fill=gray] (18,8) circle (.3ex);
\draw[gray,fill=gray] (18,10) circle (.3ex);
\draw[gray,fill=gray] (18,12) circle (.3ex);
\draw[gray,fill=gray] (18,14) circle (.3ex);
\draw[gray,fill=gray] (18,16) circle (.3ex);
\draw[gray,fill=gray] (18,18) circle (.3ex);
\end{tikzpicture}
\qquad
\begin{tikzpicture}
\draw[gray, thin,  ->] (0,10) -- (20,10) node[anchor=south west] {\large$\lambda_1$};
\draw[gray, thin,  ->] (10,00) -- (10,20) node[anchor=north west] {\large$\lambda_2$};
\draw[gray,fill=cyan] (10,10) circle (.7ex) node[anchor=north east] {\color{gray}$0$};
\draw[gray,fill=gray] (12,10) circle (.3ex) node[anchor=north east] {\color{gray}$1$};
\draw[gray,fill=gray] (14,10) circle (.3ex) node[anchor=north east] {\color{gray}$2$};
\draw[gray,fill=cyan] (16,10) circle (.7ex) node[anchor=north east] {\color{gray}$3$};
\draw[gray,fill=gray] (18,10) circle (.3ex) node[anchor=north east] {\color{gray}$4$};
\draw[gray,fill=gray] (8,10) circle (.3ex) node[anchor=north east] {\color{gray}$-1$};
\draw[gray,fill=gray] (6,10) circle (.3ex) node[anchor=north east] {\color{gray}$-2$};
\draw[gray,fill=cyan] (4,10) circle (.7ex) node[anchor=north east] {\color{gray}$-3$};
\draw[gray,fill=gray] (2,10) circle (.3ex) node[anchor=north east] {\color{gray}$-4$};
\draw[gray,fill=gray] (10,12) circle (.3ex) node[anchor=north east] {\color{gray}$1$};
\draw[gray,fill=gray] (10,14) circle (.3ex) node[anchor=north east] {\color{gray}$2$};
\draw[gray,fill=cyan] (10,16) circle (.7ex) node[anchor=north east] {\color{gray}$3$};
\draw[gray,fill=gray] (10,18) circle (.3ex) node[anchor=north east] {\color{gray}$4$};
\draw[gray,fill=gray] (10,8) circle (.3ex) node[anchor=north east] {\color{gray}$-1$};
\draw[gray,fill=gray] (10,6) circle (.3ex) node[anchor=north east] {\color{gray}$-2$};
\draw[gray,fill=cyan] (10,4) circle (.7ex) node[anchor=north east] {\color{gray}$-3$};
\draw[gray,fill=gray] (10,2) circle (.3ex) node[anchor=north east] {\color{gray}$-4$};
\draw[gray,fill=cyan] (2,2) circle (.7ex);
\draw[gray,fill=gray] (2,4) circle (.3ex);
\draw[gray,fill=gray] (2,6) circle (.3ex);
\draw[gray,fill=cyan] (2,8) circle (.7ex);
\draw[gray,fill=gray] (2,12) circle (.3ex);
\draw[gray,fill=cyan] (2,14) circle (.7ex);
\draw[gray,fill=gray] (2,16) circle (.3ex);
\draw[gray,fill=gray] (2,18) circle (.3ex);
\draw[gray,fill=gray] (4,2) circle (.3ex);
\draw[gray,fill=cyan] (4,4) circle (.7ex);
\draw[gray,fill=gray] (4,6) circle (.3ex);
\draw[gray,fill=gray] (4,8) circle (.3ex);
\draw[gray,fill=gray] (4,12) circle (.3ex);
\draw[gray,fill=gray] (4,14) circle (.3ex);
\draw[gray,fill=cyan] (4,16) circle (.7ex);
\draw[gray,fill=gray] (4,18) circle (.3ex);
\draw[gray,fill=gray] (6,2) circle (.3ex);
\draw[gray,fill=gray] (6,4) circle (.3ex);
\draw[gray,fill=cyan] (6,6) circle (.7ex);
\draw[gray,fill=gray] (6,8) circle (.3ex);
\draw[gray,fill=cyan] (6,12) circle (.7ex);
\draw[gray,fill=gray] (6,14) circle (.3ex);
\draw[gray,fill=gray] (6,16) circle (.3ex);
\draw[gray,fill=cyan] (6,18) circle (.7ex);
\draw[gray,fill=cyan] (8,2) circle (.7ex);
\draw[gray,fill=gray] (8,4) circle (.3ex);
\draw[gray,fill=gray] (8,6) circle (.3ex);
\draw[gray,fill=cyan] (8,8) circle (.7ex);
\draw[gray,fill=gray] (8,12) circle (.3ex);
\draw[gray,fill=cyan] (8,14) circle (.7ex);
\draw[gray,fill=gray] (8,16) circle (.3ex);
\draw[gray,fill=gray] (8,18) circle (.3ex);
\draw[gray,fill=gray] (12,2) circle (.3ex);
\draw[gray,fill=gray] (12,4) circle (.3ex);
\draw[gray,fill=cyan] (12,6) circle (.7ex);
\draw[gray,fill=gray] (12,8) circle (.3ex);
\draw[gray,fill=cyan] (12,12) circle (.7ex);
\draw[gray,fill=gray] (12,14) circle (.3ex);
\draw[gray,fill=gray] (12,16) circle (.3ex);
\draw[gray,fill=cyan] (12,18) circle (.7ex);
\draw[gray,fill=cyan] (14,2) circle (.7ex);
\draw[gray,fill=gray] (14,4) circle (.3ex);
\draw[gray,fill=gray] (14,6) circle (.3ex);
\draw[gray,fill=cyan] (14,8) circle (.7ex);
\draw[gray,fill=gray] (14,12) circle (.3ex);
\draw[gray,fill=cyan] (14,14) circle (.7ex);
\draw[gray,fill=gray] (14,16) circle (.3ex);
\draw[gray,fill=gray] (14,18) circle (.3ex);
\draw[gray,fill=gray] (16,2) circle (.3ex);
\draw[gray,fill=cyan] (16,4) circle (.7ex);
\draw[gray,fill=gray] (16,6) circle (.3ex);
\draw[gray,fill=gray] (16,8) circle (.3ex);
\draw[gray,fill=gray] (16,12) circle (.3ex);
\draw[gray,fill=gray] (16,14) circle (.3ex);
\draw[gray,fill=cyan] (16,16) circle (.7ex);
\draw[gray,fill=gray] (16,18) circle (.3ex);
\draw[gray,fill=gray] (18,2) circle (.3ex);
\draw[gray,fill=gray] (18,4) circle (.3ex);
\draw[gray,fill=cyan] (18,6) circle (.7ex);
\draw[gray,fill=gray] (18,8) circle (.3ex);
\draw[gray,fill=cyan] (18,12) circle (.7ex);
\draw[gray,fill=gray] (18,14) circle (.3ex);
\draw[gray,fill=gray] (18,16) circle (.3ex);
\draw[gray,fill=cyan] (18,18) circle (.7ex);
\end{tikzpicture}
\qquad
\begin{tikzpicture}
\draw[gray, thin,  ->] (0,10) -- (20,10) node[anchor=south west] {\large$\lambda_1$};
\draw[gray, thin,  ->] (10,00) -- (10,20) node[anchor=north west] {\large$\lambda_2$};
\draw[gray,fill=gray] (10,10) circle (.3ex) node[anchor=north east] {\color{gray}$0$};
\draw[gray,fill=gray] (12,10) circle (.3ex) node[anchor=north east] {\color{gray}$1$};
\draw[gray,fill=gray] (14,10) circle (.3ex) node[anchor=north east] {\color{gray}$2$};
\draw[gray,fill=gray] (16,10) circle (.3ex) node[anchor=north east] {\color{gray}$3$};
\draw[gray,fill=gray] (18,10) circle (.3ex) node[anchor=north east] {\color{gray}$4$};
\draw[gray,fill=gray] (8,10) circle (.3ex) node[anchor=north east] {\color{gray}$-1$};
\draw[gray,fill=gray] (6,10) circle (.3ex) node[anchor=north east] {\color{gray}$-2$};
\draw[gray,fill=gray] (4,10) circle (.3ex) node[anchor=north east] {\color{gray}$-3$};
\draw[gray,fill=gray] (2,10) circle (.3ex) node[anchor=north east] {\color{gray}$-4$};
\draw[gray,fill=gray] (10,12) circle (.3ex) node[anchor=north east] {\color{gray}$1$};
\draw[gray,fill=gray] (10,14) circle (.3ex) node[anchor=north east] {\color{gray}$2$};
\draw[gray,fill=gray] (10,16) circle (.3ex) node[anchor=north east] {\color{gray}$3$};
\draw[gray,fill=gray] (10,18) circle (.3ex) node[anchor=north east] {\color{gray}$4$};
\draw[gray,fill=gray] (10,8) circle (.3ex) node[anchor=north east] {\color{gray}$-1$};
\draw[gray,fill=gray] (10,6) circle (.3ex) node[anchor=north east] {\color{gray}$-2$};
\draw[gray,fill=gray] (10,4) circle (.3ex) node[anchor=north east] {\color{gray}$-3$};
\draw[gray,fill=gray] (10,2) circle (.3ex) node[anchor=north east] {\color{gray}$-4$};
\draw[gray,fill=gray] (2,2) circle (.3ex);
\draw[gray,fill=gray] (2,4) circle (.3ex);
\draw[gray,fill=gray] (2,6) circle (.3ex);
\draw[gray,fill=gray] (2,8) circle (.3ex);
\draw[gray,fill=gray] (2,10) circle (.3ex);
\draw[gray,fill=gray] (2,12) circle (.3ex);
\draw[gray,fill=gray] (2,14) circle (.3ex);
\draw[gray,fill=gray] (2,16) circle (.3ex);
\draw[gray,fill=gray] (2,18) circle (.3ex);
\draw[gray,fill=gray] (4,2) circle (.3ex);
\draw[gray,fill=gray] (4,4) circle (.3ex);
\draw[gray,fill=gray] (4,6) circle (.3ex);
\draw[gray,fill=gray] (4,8) circle (.3ex);
\draw[gray,fill=gray] (4,10) circle (.3ex);
\draw[gray,fill=gray] (4,12) circle (.3ex);
\draw[gray,fill=gray] (4,14) circle (.3ex);
\draw[gray,fill=gray] (4,16) circle (.3ex);
\draw[gray,fill=gray] (4,18) circle (.3ex);
\draw[gray,fill=gray] (6,2) circle (.3ex);
\draw[gray,fill=gray] (6,4) circle (.3ex);
\draw[gray,fill=gray] (6,6) circle (.3ex);
\draw[gray,fill=gray] (6,8) circle (.3ex);
\draw[gray,fill=gray] (6,10) circle (.3ex);
\draw[gray,fill=gray] (6,12) circle (.3ex);
\draw[gray,fill=gray] (6,14) circle (.3ex);
\draw[gray,fill=gray] (6,16) circle (.3ex);
\draw[gray,fill=gray] (6,18) circle (.3ex);
\draw[gray,fill=gray] (8,2) circle (.3ex);
\draw[gray,fill=gray] (8,4) circle (.3ex);
\draw[gray,fill=gray] (8,6) circle (.3ex);
\draw[gray,fill=gray] (8,8) circle (.3ex);
\draw[gray,fill=gray] (8,10) circle (.3ex);
\draw[gray,fill=gray] (8,12) circle (.3ex);
\draw[gray,fill=gray] (8,14) circle (.3ex);
\draw[gray,fill=gray] (8,16) circle (.3ex);
\draw[gray,fill=gray] (8,18) circle (.3ex);
\draw[gray,fill=gray] (12,2) circle (.3ex);
\draw[gray,fill=gray] (12,4) circle (.3ex);
\draw[gray,fill=gray] (12,6) circle (.3ex);
\draw[gray,fill=gray] (12,8) circle (.3ex);
\draw[gray,fill=gray] (12,10) circle (.3ex);
\draw[gray,fill=gray] (12,12) circle (.3ex);
\draw[gray,fill=gray] (12,14) circle (.3ex);
\draw[gray,fill=gray] (12,16) circle (.3ex);
\draw[gray,fill=gray] (12,18) circle (.3ex);
\draw[gray,fill=gray] (14,2) circle (.3ex);
\draw[gray,fill=gray] (14,4) circle (.3ex);
\draw[gray,fill=gray] (14,6) circle (.3ex);
\draw[gray,fill=gray] (14,8) circle (.3ex);
\draw[gray,fill=gray] (14,10) circle (.3ex);
\draw[gray,fill=gray] (14,12) circle (.3ex);
\draw[gray,fill=gray] (14,14) circle (.3ex);
\draw[gray,fill=gray] (14,16) circle (.3ex);
\draw[gray,fill=gray] (14,18) circle (.3ex);
\draw[gray,fill=gray] (16,2) circle (.3ex);
\draw[gray,fill=gray] (16,4) circle (.3ex);
\draw[gray,fill=gray] (16,6) circle (.3ex);
\draw[gray,fill=gray] (16,8) circle (.3ex);
\draw[gray,fill=gray] (16,10) circle (.3ex);
\draw[gray,fill=gray] (16,12) circle (.3ex);
\draw[gray,fill=gray] (16,14) circle (.3ex);
\draw[gray,fill=gray] (16,16) circle (.3ex);
\draw[gray,fill=gray] (16,18) circle (.3ex);
\draw[gray,fill=gray] (18,2) circle (.3ex);
\draw[gray,fill=gray] (18,4) circle (.3ex);
\draw[gray,fill=gray] (18,6) circle (.3ex);
\draw[gray,fill=gray] (18,8) circle (.3ex);
\draw[gray,fill=gray] (18,10) circle (.3ex);
\draw[gray,fill=gray] (18,12) circle (.3ex);
\draw[gray,fill=gray] (18,14) circle (.3ex);
\draw[gray,fill=gray] (18,16) circle (.3ex);
\draw[gray,fill=gray] (18,18) circle (.3ex);
\draw[gray, thick, magenta,  ->] (10,10) -- (14,8) node[anchor=south west] {\large\color{magenta}$\alpha$};
\draw[gray, thick, magenta,  ->] (10,10) -- (8,14) node[anchor=south west] {\large\color{magenta}$\beta$};
\draw[gray, thick, magenta,  ->] (10,10) -- (12,12) node[anchor=south west] {\large\color{magenta}$\alpha+\beta$};
\draw[gray, thick, magenta,  ->] (10,10) -- (6,12) node[anchor=south west] {\large\color{magenta}$-\alpha$};
\draw[gray, thick, magenta,  ->] (10,10) -- (12,6) node[anchor=south west] {\large\color{magenta}$-\beta$};
\draw[gray, thick, magenta,  ->] (10,10) -- (8,8) node[anchor=north east] {\large\color{magenta}$-\alpha-\beta$};
\end{tikzpicture}
}
\caption{The sets {\color{gray}$\Lambda$}, {\color{cyan}$\Theta$} 
and {\color{magenta}$\mathbf{R}$}}\label{fig1}
\end{figure}

\subsection{Highest weight modules}\label{s2.3}

For each $\lambda\in\mathfrak{h}^*$, we have the 
corresponding {\em Verma module} $\Delta(\lambda)$
with highest weight $\lambda$, see
\cite{Di,Hu}. We denote by $L(\lambda)$ the unique simple top 
of $\Delta(\lambda)$. We refer to \cite{Di,Hu} for all 
details on  standard properties of these modules.

Let $Z(\mathfrak{g})$ be the center of $U(\mathfrak{g})$.
Then each element $z\in Z(\mathfrak{g})$ acts on $\Delta(\lambda)$
(and hence also on $L(\lambda)$)
as a scalar, which we denote by $\chi_{{}_\lambda}(z)$. The map
$\chi_{{}_\lambda}:Z(\mathfrak{g})\to \mathbb{C}$ is an algebra
homomorphism and is called the {\em central character} 
(of $\Delta(\lambda)$). Let $W$ be the Weyl group of 
the pair $(\mathfrak{g},\mathfrak{h})$. Set
\begin{displaymath}
\rho:=\alpha+\beta=\frac{1}{2}\sum_{\gamma\in\mathbf{R}_+}\gamma.
\end{displaymath}
Then we can define the {\em dot-action} of $W$ on 
$\mathfrak{h}^*$ from the usual action of $W$
on $\mathfrak{h}^*$ as follows:
$w\cdot \lambda:=w(\lambda+\rho)-\rho$.
Now, for $\lambda,\mu\in\mathfrak{h}^*$, we have 
$\chi_{{}_\lambda}=\chi_{{}_\mu}$ if and only if 
$\lambda\in W\cdot \mu$.

Note that $W$ is isomorphic to the symmetric group $S_3$.
If we denote by $s$ the simple reflection with respect to $\alpha$
and by $r$ the simple reflection with respect to $\beta$,
then $W=\{e,r,s,rs,sr,w_0\}$, where $w_0=srs=rsr$.

\subsection{Finite dimensional $\mathfrak{sl}_3$-modules}\label{s2.4}

Our main protagonist in this paper is the semi-simple mo\-no\-i\-dal category
$\mathscr{C}$ of all finite dimensional $\mathfrak{sl}_3$-modules.
The monoidal structure is given by tensoring over $\mathbb{C}$
(which has the natural structure of an $\mathfrak{sl}_3$-module
using the standard comultiplication on $U(\mathfrak{sl}_3)$
which sends $g\in \mathfrak{sl}_3$ to $g\otimes\mathrm{id}+\mathrm{id}\otimes g$).
The monoidal category $\mathscr{C}$ is rigid and symmetric.
The dual of an object $V$ is the object 
$V^*=\mathrm{Hom}_\mathbb{C}(V,\mathbb{C})$, on which the 
action of $\mathfrak{sl}_3$ is defined using the canonical 
anti-involution $g\mapsto -g$, for $g\in \mathfrak{sl}_3$.

The simple objects in $\mathscr{C}$, up to isomorphism, are exactly
the modules $L(\lambda)$, where $\lambda\in\mathfrak{h}^*$ is such
that both $\lambda_1$ and $\lambda_2$ are non-negative integers.
As an idempotent split additive 
monoidal category, the category $\mathscr{C}$ is generated
by the object $L((1,0))$. The latter object is the {\em natural}
$3$-dimensional $\mathfrak{sl}_3$-module $\mathbb{C}^3$ on which
$\mathfrak{sl}_3$ acts by matrix multiplication. To simplify
notation, we denote the object $L((1,0))$ by $\mathrm{F}$
and the object $L((0,1))$ by $\mathrm{G}$. Note that
$\mathrm{F}$ and $\mathrm{G}$ are biadjoint.

\subsection{Weights of finite dimensional $\mathfrak{sl}_3$-modules}\label{s2.5}

For a $\mathfrak{g}$-module $M$ and $\lambda\in\mathfrak{h}^*$,
consider the space
\begin{displaymath}
M_\lambda=\{v\in M\,:\, h(v)=\lambda(h)v,\,\,\text{ for all }h\in\mathfrak{h}\}. 
\end{displaymath}
The space $M_\lambda$ is usually called the {\em weight space}
of $M$ corresponding to the weight $\lambda$. 
A $\mathfrak{g}$-module $M$ is called a {\em weight module} provided
that it is the direct sum of its weight spaces. For a weight
$\mathfrak{g}$-module $M$, the set of all $\lambda\in\mathfrak{h}^*$
such that $M_\lambda\neq 0$ is called the {\em support} of $M$
and denoted $\mathrm{supp}(M)$.

All finite dimensional $\mathfrak{g}$-modules are weight modules. Moreover,
the support of each finite dimensional $\mathfrak{g}$-module is
$W$-invariant. Furthermore, the set $\Lambda$ coincides with the 
union of the supports of all finite dimensional $\mathfrak{g}$-module.

The algebra $U(\mathfrak{g})$ is a $\mathfrak{g}$-module with respect to the
adjoint action. Moreover, the latter action is locally finite.
Furthermore, the set $\Theta$ coincides with the 
union of the supports of all finite dimensional $\mathfrak{g}$-modules
which appear as summands of $U(\mathfrak{g})$, considered as the
adjoint $\mathfrak{g}$-module. Note that, a simple finite dimensional
$\mathfrak{g}$-module $L(\lambda)$ appears as a summand of $U(\mathfrak{g})$, 
considered as an adjoint $\mathfrak{g}$-module, if and only if
$L(\lambda)_0\neq 0$.

In Figure~\ref{fig2}, we depicted the supports of the simple 
$\mathfrak{g}$-modules $L((1,0))$, its dual $L((0,1))=L((1,0))^*$
and the adjoint module $\mathfrak{g}=L((1,1))$. Note that, for
the latter module, the weight space corresponding to $0$
has dimension $2$.

\begin{figure}
\resizebox{10cm}{!}{
\begin{tikzpicture}
\draw[gray, thin,  ->] (0,10) -- (20,10) node[anchor=south west] {\large$\lambda_1$};
\draw[gray, thin,  ->] (10,00) -- (10,20) node[anchor=north west] {\large$\lambda_2$};
\draw[gray,fill=gray] (10,10) circle (.3ex) node[anchor=north east] {\color{gray}$0$};
\draw[gray,fill=violet] (12,10) circle (1.3ex) node[anchor=north east] {\color{gray}$1$};
\draw[gray,fill=gray] (14,10) circle (.3ex) node[anchor=north east] {\color{gray}$2$};
\draw[gray,fill=gray] (16,10) circle (.3ex) node[anchor=north east] {\color{gray}$3$};
\draw[gray,fill=gray] (18,10) circle (.3ex) node[anchor=north east] {\color{gray}$4$};
\draw[gray,fill=gray] (8,10) circle (.3ex) node[anchor=north east] {\color{gray}$-1$};
\draw[gray,fill=gray] (6,10) circle (.3ex) node[anchor=north east] {\color{gray}$-2$};
\draw[gray,fill=gray] (4,10) circle (.3ex) node[anchor=north east] {\color{gray}$-3$};
\draw[gray,fill=gray] (2,10) circle (.3ex) node[anchor=north east] {\color{gray}$-4$};
\draw[gray,fill=gray] (10,12) circle (.3ex) node[anchor=north east] {\color{gray}$1$};
\draw[gray,fill=gray] (10,14) circle (.3ex) node[anchor=north east] {\color{gray}$2$};
\draw[gray,fill=gray] (10,16) circle (.3ex) node[anchor=north east] {\color{gray}$3$};
\draw[gray,fill=gray] (10,18) circle (.3ex) node[anchor=north east] {\color{gray}$4$};
\draw[gray,fill=violet] (10,8) circle (1.3ex) node[anchor=north east] {\color{gray}$-1$};
\draw[gray,fill=gray] (10,6) circle (.3ex) node[anchor=north east] {\color{gray}$-2$};
\draw[gray,fill=gray] (10,4) circle (.3ex) node[anchor=north east] {\color{gray}$-3$};
\draw[gray,fill=gray] (10,2) circle (.3ex) node[anchor=north east] {\color{gray}$-4$};
\draw[gray,fill=gray] (2,2) circle (.3ex);
\draw[gray,fill=gray] (2,4) circle (.3ex);
\draw[gray,fill=gray] (2,6) circle (.3ex);
\draw[gray,fill=gray] (2,8) circle (.3ex);
\draw[gray,fill=gray] (2,10) circle (.3ex);
\draw[gray,fill=gray] (2,12) circle (.3ex);
\draw[gray,fill=gray] (2,14) circle (.3ex);
\draw[gray,fill=gray] (2,16) circle (.3ex);
\draw[gray,fill=gray] (2,18) circle (.3ex);
\draw[gray,fill=gray] (4,2) circle (.3ex);
\draw[gray,fill=gray] (4,4) circle (.3ex);
\draw[gray,fill=gray] (4,6) circle (.3ex);
\draw[gray,fill=gray] (4,8) circle (.3ex);
\draw[gray,fill=gray] (4,10) circle (.3ex);
\draw[gray,fill=gray] (4,12) circle (.3ex);
\draw[gray,fill=gray] (4,14) circle (.3ex);
\draw[gray,fill=gray] (4,16) circle (.3ex);
\draw[gray,fill=gray] (4,18) circle (.3ex);
\draw[gray,fill=gray] (6,2) circle (.3ex);
\draw[gray,fill=gray] (6,4) circle (.3ex);
\draw[gray,fill=gray] (6,6) circle (.3ex);
\draw[gray,fill=gray] (6,8) circle (.3ex);
\draw[gray,fill=gray] (6,10) circle (.3ex);
\draw[gray,fill=gray] (6,12) circle (.3ex);
\draw[gray,fill=gray] (6,14) circle (.3ex);
\draw[gray,fill=gray] (6,16) circle (.3ex);
\draw[gray,fill=gray] (6,18) circle (.3ex);
\draw[gray,fill=gray] (8,2) circle (.3ex);
\draw[gray,fill=gray] (8,4) circle (.3ex);
\draw[gray,fill=gray] (8,6) circle (.3ex);
\draw[gray,fill=gray] (8,8) circle (.3ex);
\draw[gray,fill=gray] (8,10) circle (.3ex);
\draw[gray,fill=violet] (8,12) circle (1.3ex);
\draw[gray,fill=gray] (8,14) circle (.3ex);
\draw[gray,fill=gray] (8,16) circle (.3ex);
\draw[gray,fill=gray] (8,18) circle (.3ex);
\draw[gray,fill=gray] (12,2) circle (.3ex);
\draw[gray,fill=gray] (12,4) circle (.3ex);
\draw[gray,fill=gray] (12,6) circle (.3ex);
\draw[gray,fill=gray] (12,8) circle (.3ex);
\draw[gray,fill=gray] (12,10) circle (.3ex);
\draw[gray,fill=gray] (12,12) circle (.3ex);
\draw[gray,fill=gray] (12,14) circle (.3ex);
\draw[gray,fill=gray] (12,16) circle (.3ex);
\draw[gray,fill=gray] (12,18) circle (.3ex);
\draw[gray,fill=gray] (14,2) circle (.3ex);
\draw[gray,fill=gray] (14,4) circle (.3ex);
\draw[gray,fill=gray] (14,6) circle (.3ex);
\draw[gray,fill=gray] (14,8) circle (.3ex);
\draw[gray,fill=gray] (14,10) circle (.3ex);
\draw[gray,fill=gray] (14,12) circle (.3ex);
\draw[gray,fill=gray] (14,14) circle (.3ex);
\draw[gray,fill=gray] (14,16) circle (.3ex);
\draw[gray,fill=gray] (14,18) circle (.3ex);
\draw[gray,fill=gray] (16,2) circle (.3ex);
\draw[gray,fill=gray] (16,4) circle (.3ex);
\draw[gray,fill=gray] (16,6) circle (.3ex);
\draw[gray,fill=gray] (16,8) circle (.3ex);
\draw[gray,fill=gray] (16,10) circle (.3ex);
\draw[gray,fill=gray] (16,12) circle (.3ex);
\draw[gray,fill=gray] (16,14) circle (.3ex);
\draw[gray,fill=gray] (16,16) circle (.3ex);
\draw[gray,fill=gray] (16,18) circle (.3ex);
\draw[gray,fill=gray] (18,2) circle (.3ex);
\draw[gray,fill=gray] (18,4) circle (.3ex);
\draw[gray,fill=gray] (18,6) circle (.3ex);
\draw[gray,fill=gray] (18,8) circle (.3ex);
\draw[gray,fill=gray] (18,10) circle (.3ex);
\draw[gray,fill=gray] (18,12) circle (.3ex);
\draw[gray,fill=gray] (18,14) circle (.3ex);
\draw[gray,fill=gray] (18,16) circle (.3ex);
\draw[gray,fill=gray] (18,18) circle (.3ex);
\end{tikzpicture}
\qquad
\begin{tikzpicture}
\draw[gray, thin,  ->] (0,10) -- (20,10) node[anchor=south west] {\large$\lambda_1$};
\draw[gray, thin,  ->] (10,00) -- (10,20) node[anchor=north west] {\large$\lambda_2$};
\draw[gray,fill=gray] (10,10) circle (.3ex) node[anchor=north east] {\color{gray}$0$};
\draw[gray,fill=gray] (12,10) circle (.3ex) node[anchor=north east] {\color{gray}$1$};
\draw[gray,fill=gray] (14,10) circle (.3ex) node[anchor=north east] {\color{gray}$2$};
\draw[gray,fill=gray] (16,10) circle (.3ex) node[anchor=north east] {\color{gray}$3$};
\draw[gray,fill=gray] (18,10) circle (.3ex) node[anchor=north east] {\color{gray}$4$};
\draw[gray,fill=cyan] (8,10) circle (1.3ex) node[anchor=north east] {\color{gray}$-1$};
\draw[gray,fill=gray] (6,10) circle (.3ex) node[anchor=north east] {\color{gray}$-2$};
\draw[gray,fill=gray] (4,10) circle (.3ex) node[anchor=north east] {\color{gray}$-3$};
\draw[gray,fill=gray] (2,10) circle (.3ex) node[anchor=north east] {\color{gray}$-4$};
\draw[gray,fill=cyan] (10,12) circle (1.3ex) node[anchor=north east] {\color{gray}$1$};
\draw[gray,fill=gray] (10,14) circle (.3ex) node[anchor=north east] {\color{gray}$2$};
\draw[gray,fill=gray] (10,16) circle (.3ex) node[anchor=north east] {\color{gray}$3$};
\draw[gray,fill=gray] (10,18) circle (.3ex) node[anchor=north east] {\color{gray}$4$};
\draw[gray,fill=gray] (10,8) circle (.3ex) node[anchor=north east] {\color{gray}$-1$};
\draw[gray,fill=gray] (10,6) circle (.3ex) node[anchor=north east] {\color{gray}$-2$};
\draw[gray,fill=gray] (10,4) circle (.3ex) node[anchor=north east] {\color{gray}$-3$};
\draw[gray,fill=gray] (10,2) circle (.3ex) node[anchor=north east] {\color{gray}$-4$};
\draw[gray,fill=gray] (2,2) circle (.3ex);
\draw[gray,fill=gray] (2,4) circle (.3ex);
\draw[gray,fill=gray] (2,6) circle (.3ex);
\draw[gray,fill=gray] (2,8) circle (.3ex);
\draw[gray,fill=gray] (2,10) circle (.3ex);
\draw[gray,fill=gray] (2,12) circle (.3ex);
\draw[gray,fill=gray] (2,14) circle (.3ex);
\draw[gray,fill=gray] (2,16) circle (.3ex);
\draw[gray,fill=gray] (2,18) circle (.3ex);
\draw[gray,fill=gray] (4,2) circle (.3ex);
\draw[gray,fill=gray] (4,4) circle (.3ex);
\draw[gray,fill=gray] (4,6) circle (.3ex);
\draw[gray,fill=gray] (4,8) circle (.3ex);
\draw[gray,fill=gray] (4,10) circle (.3ex);
\draw[gray,fill=gray] (4,12) circle (.3ex);
\draw[gray,fill=gray] (4,14) circle (.3ex);
\draw[gray,fill=gray] (4,16) circle (.3ex);
\draw[gray,fill=gray] (4,18) circle (.3ex);
\draw[gray,fill=gray] (6,2) circle (.3ex);
\draw[gray,fill=gray] (6,4) circle (.3ex);
\draw[gray,fill=gray] (6,6) circle (.3ex);
\draw[gray,fill=gray] (6,8) circle (.3ex);
\draw[gray,fill=gray] (6,10) circle (.3ex);
\draw[gray,fill=gray] (6,12) circle (.3ex);
\draw[gray,fill=gray] (6,14) circle (.3ex);
\draw[gray,fill=gray] (6,16) circle (.3ex);
\draw[gray,fill=gray] (6,18) circle (.3ex);
\draw[gray,fill=gray] (8,2) circle (.3ex);
\draw[gray,fill=gray] (8,4) circle (.3ex);
\draw[gray,fill=gray] (8,6) circle (.3ex);
\draw[gray,fill=gray] (8,8) circle (.3ex);
\draw[gray,fill=gray] (8,10) circle (.3ex);
\draw[gray,fill=gray] (8,12) circle (.3ex);
\draw[gray,fill=gray] (8,14) circle (.3ex);
\draw[gray,fill=gray] (8,16) circle (.3ex);
\draw[gray,fill=gray] (8,18) circle (.3ex);
\draw[gray,fill=gray] (12,2) circle (.3ex);
\draw[gray,fill=gray] (12,4) circle (.3ex);
\draw[gray,fill=gray] (12,6) circle (.3ex);
\draw[gray,fill=cyan] (12,8) circle (1.3ex);
\draw[gray,fill=gray] (12,10) circle (.3ex);
\draw[gray,fill=gray] (12,12) circle (.3ex);
\draw[gray,fill=gray] (12,14) circle (.3ex);
\draw[gray,fill=gray] (12,16) circle (.3ex);
\draw[gray,fill=gray] (12,18) circle (.3ex);
\draw[gray,fill=gray] (14,2) circle (.3ex);
\draw[gray,fill=gray] (14,4) circle (.3ex);
\draw[gray,fill=gray] (14,6) circle (.3ex);
\draw[gray,fill=gray] (14,8) circle (.3ex);
\draw[gray,fill=gray] (14,10) circle (.3ex);
\draw[gray,fill=gray] (14,12) circle (.3ex);
\draw[gray,fill=gray] (14,14) circle (.3ex);
\draw[gray,fill=gray] (14,16) circle (.3ex);
\draw[gray,fill=gray] (14,18) circle (.3ex);
\draw[gray,fill=gray] (16,2) circle (.3ex);
\draw[gray,fill=gray] (16,4) circle (.3ex);
\draw[gray,fill=gray] (16,6) circle (.3ex);
\draw[gray,fill=gray] (16,8) circle (.3ex);
\draw[gray,fill=gray] (16,10) circle (.3ex);
\draw[gray,fill=gray] (16,12) circle (.3ex);
\draw[gray,fill=gray] (16,14) circle (.3ex);
\draw[gray,fill=gray] (16,16) circle (.3ex);
\draw[gray,fill=gray] (16,18) circle (.3ex);
\draw[gray,fill=gray] (18,2) circle (.3ex);
\draw[gray,fill=gray] (18,4) circle (.3ex);
\draw[gray,fill=gray] (18,6) circle (.3ex);
\draw[gray,fill=gray] (18,8) circle (.3ex);
\draw[gray,fill=gray] (18,10) circle (.3ex);
\draw[gray,fill=gray] (18,12) circle (.3ex);
\draw[gray,fill=gray] (18,14) circle (.3ex);
\draw[gray,fill=gray] (18,16) circle (.3ex);
\draw[gray,fill=gray] (18,18) circle (.3ex);
\end{tikzpicture}
\qquad
\begin{tikzpicture}
\draw[gray, thin,  ->] (0,10) -- (20,10) node[anchor=south west] {\large$\lambda_1$};
\draw[gray, thin,  ->] (10,00) -- (10,20) node[anchor=north west] {\large$\lambda_2$};
\draw[gray,fill=magenta] (10,10) circle (2.3ex) node[anchor=north east] {\color{gray}$0$};
\draw[gray,fill=gray] (12,10) circle (.3ex) node[anchor=north east] {\color{gray}$1$};
\draw[gray,fill=gray] (14,10) circle (.3ex) node[anchor=north east] {\color{gray}$2$};
\draw[gray,fill=gray] (16,10) circle (.3ex) node[anchor=north east] {\color{gray}$3$};
\draw[gray,fill=gray] (18,10) circle (.3ex) node[anchor=north east] {\color{gray}$4$};
\draw[gray,fill=gray] (8,10) circle (.3ex) node[anchor=north east] {\color{gray}$-1$};
\draw[gray,fill=gray] (6,10) circle (.3ex) node[anchor=north east] {\color{gray}$-2$};
\draw[gray,fill=gray] (4,10) circle (.3ex) node[anchor=north east] {\color{gray}$-3$};
\draw[gray,fill=gray] (2,10) circle (.3ex) node[anchor=north east] {\color{gray}$-4$};
\draw[gray,fill=gray] (10,12) circle (.3ex) node[anchor=north east] {\color{gray}$1$};
\draw[gray,fill=gray] (10,14) circle (.3ex) node[anchor=north east] {\color{gray}$2$};
\draw[gray,fill=gray] (10,16) circle (.3ex) node[anchor=north east] {\color{gray}$3$};
\draw[gray,fill=gray] (10,18) circle (.3ex) node[anchor=north east] {\color{gray}$4$};
\draw[gray,fill=gray] (10,8) circle (.3ex) node[anchor=north east] {\color{gray}$-1$};
\draw[gray,fill=gray] (10,6) circle (.3ex) node[anchor=north east] {\color{gray}$-2$};
\draw[gray,fill=gray] (10,4) circle (.3ex) node[anchor=north east] {\color{gray}$-3$};
\draw[gray,fill=gray] (10,2) circle (.3ex) node[anchor=north east] {\color{gray}$-4$};
\draw[gray,fill=gray] (2,2) circle (.3ex);
\draw[gray,fill=gray] (2,4) circle (.3ex);
\draw[gray,fill=gray] (2,6) circle (.3ex);
\draw[gray,fill=gray] (2,8) circle (.3ex);
\draw[gray,fill=gray] (2,10) circle (.3ex);
\draw[gray,fill=gray] (2,12) circle (.3ex);
\draw[gray,fill=gray] (2,14) circle (.3ex);
\draw[gray,fill=gray] (2,16) circle (.3ex);
\draw[gray,fill=gray] (2,18) circle (.3ex);
\draw[gray,fill=gray] (4,2) circle (.3ex);
\draw[gray,fill=gray] (4,4) circle (.3ex);
\draw[gray,fill=gray] (4,6) circle (.3ex);
\draw[gray,fill=gray] (4,8) circle (.3ex);
\draw[gray,fill=gray] (4,10) circle (.3ex);
\draw[gray,fill=gray] (4,12) circle (.3ex);
\draw[gray,fill=gray] (4,14) circle (.3ex);
\draw[gray,fill=gray] (4,16) circle (.3ex);
\draw[gray,fill=gray] (4,18) circle (.3ex);
\draw[gray,fill=gray] (6,2) circle (.3ex);
\draw[gray,fill=gray] (6,4) circle (.3ex);
\draw[gray,fill=gray] (6,6) circle (.3ex);
\draw[gray,fill=gray] (6,8) circle (.3ex);
\draw[gray,fill=gray] (6,10) circle (.3ex);
\draw[gray,fill=magenta] (6,12) circle (1.3ex);
\draw[gray,fill=gray] (6,14) circle (.3ex);
\draw[gray,fill=gray] (6,16) circle (.3ex);
\draw[gray,fill=gray] (6,18) circle (.3ex);
\draw[gray,fill=gray] (8,2) circle (.3ex);
\draw[gray,fill=gray] (8,4) circle (.3ex);
\draw[gray,fill=gray] (8,6) circle (.3ex);
\draw[gray,fill=magenta] (8,8) circle (1.3ex);
\draw[gray,fill=gray] (8,10) circle (.3ex);
\draw[gray,fill=gray] (8,12) circle (.3ex);
\draw[gray,fill=magenta] (8,14) circle (1.3ex);
\draw[gray,fill=gray] (8,16) circle (.3ex);
\draw[gray,fill=gray] (8,18) circle (.3ex);
\draw[gray,fill=gray] (12,2) circle (.3ex);
\draw[gray,fill=gray] (12,4) circle (.3ex);
\draw[gray,fill=magenta] (12,6) circle (1.3ex);
\draw[gray,fill=gray] (12,8) circle (.3ex);
\draw[gray,fill=gray] (12,10) circle (.3ex);
\draw[gray,fill=magenta] (12,12) circle (1.3ex);
\draw[gray,fill=gray] (12,14) circle (.3ex);
\draw[gray,fill=gray] (12,16) circle (.3ex);
\draw[gray,fill=gray] (12,18) circle (.3ex);
\draw[gray,fill=gray] (14,2) circle (.3ex);
\draw[gray,fill=gray] (14,4) circle (.3ex);
\draw[gray,fill=gray] (14,6) circle (.3ex);
\draw[gray,fill=magenta] (14,8) circle (1.3ex);
\draw[gray,fill=gray] (14,10) circle (.3ex);
\draw[gray,fill=gray] (14,12) circle (.3ex);
\draw[gray,fill=gray] (14,14) circle (.3ex);
\draw[gray,fill=gray] (14,16) circle (.3ex);
\draw[gray,fill=gray] (14,18) circle (.3ex);
\draw[gray,fill=gray] (16,2) circle (.3ex);
\draw[gray,fill=gray] (16,4) circle (.3ex);
\draw[gray,fill=gray] (16,6) circle (.3ex);
\draw[gray,fill=gray] (16,8) circle (.3ex);
\draw[gray,fill=gray] (16,10) circle (.3ex);
\draw[gray,fill=gray] (16,12) circle (.3ex);
\draw[gray,fill=gray] (16,14) circle (.3ex);
\draw[gray,fill=gray] (16,16) circle (.3ex);
\draw[gray,fill=gray] (16,18) circle (.3ex);
\draw[gray,fill=gray] (18,2) circle (.3ex);
\draw[gray,fill=gray] (18,4) circle (.3ex);
\draw[gray,fill=gray] (18,6) circle (.3ex);
\draw[gray,fill=gray] (18,8) circle (.3ex);
\draw[gray,fill=gray] (18,10) circle (.3ex);
\draw[gray,fill=gray] (18,12) circle (.3ex);
\draw[gray,fill=gray] (18,14) circle (.3ex);
\draw[gray,fill=gray] (18,16) circle (.3ex);
\draw[gray,fill=gray] (18,18) circle (.3ex);
\end{tikzpicture}
\qquad
}
\caption{The supports of  {\color{violet}$L((1,0))$}, {\color{cyan}$L((0,1))$} 
and {\color{magenta}$L((1,1))$}}\label{fig2}
\end{figure}
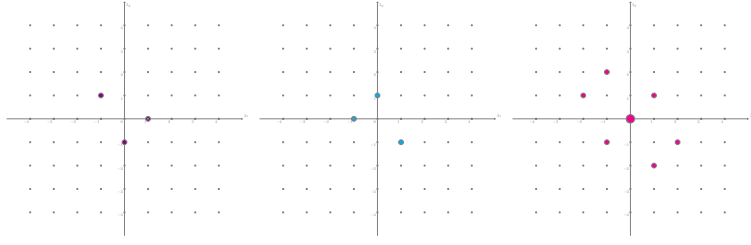

\subsection{Gelfand-Kirillov dimension and Bernstein number}\label{s2.6}

Each finitely generated $U(\mathfrak{g})$-module $M$ has finite
{\em Gelfand-Kirillov dimension}, denoted $\mathrm{GKdim}(M)$.
This dimension characterizes the rate of (polynomial) growth
of $M$ with respect to some choice of a filtration of $U(\mathfrak{g})$.
In fact, it is well-known (see \cite{Ja,KrLe}) that $\mathrm{GKdim}(M)$
is independent of the choice of such filtration.

We denote by $\mathbf{b}(M)$ the leading term of the polynomial that
describes the growth of $M$, this is known as the {\em Bernstein number}.
Unlike $\mathrm{GKdim}(M)$, the number $\mathbf{b}(M)$ might depend on
the choice of a filtration of $U(\mathfrak{g})$. Therefore we fix
some choice of such filtration until the end of the paper
(for example, we can take the filtration by monomial degree).

For any finite-dimensional $U(\mathfrak{g})$-module $V$, we have:
\begin{itemize}
\item $\mathrm{GKdim}(V)=0$ and $\mathbf{b}(V)=\dim(V)$;
\item $\mathrm{GKdim}(M\otimes_{\mathbb{C}} V)=\mathrm{GKdim}(M)$;
\item $\mathbf{b}(M\otimes_{\mathbb{C}} V)=\dim(V)\cdot \mathbf{b}(M)$.
\end{itemize}

\subsection{Projective functors}\label{s2.7}

Denote by $\mathcal{Z}$ the full subcategory of $\mathfrak{g}\text{-}\mathrm{mod}$
consisting of all objects, the action of $Z(\mathfrak{g})$ on which is 
locally finite. The category $\mathcal{Z}$ decomposes into a direct sum of 
full subcategories $\mathcal{Z}_{{}_\chi}$, indexed by central characters 
$\chi:Z(\mathfrak{g})\to\mathbb{C}$. The category $\mathcal{Z}_{{}_\chi}$
consists of all modules in $\mathcal{Z}$ on which the kernel of 
$\chi$ acts locally nilpotently. The category $\mathcal{Z}$ is stable
under the action of $\mathscr{C}$ on $\mathfrak{g}\text{-}\mathrm{mod}$.

A {\em projective functor} is an endofunctor of $\mathcal{Z}$
which is isomorphic to a direct summand of tensoring with some
finite dimensional $\mathfrak{g}$-modules. Indecomposable projective
functors are classified in \cite{BG}. They are in bijection with 
dot-orbits of $W$ on pairs $(\lambda,\mu)$, where $\lambda,\mu\in\mathfrak{h}^*$
are such that $\lambda-\mu\in\Lambda$. Each such orbit contains
at least one pair $(\lambda,\mu)$ such that 
\begin{itemize}
\item $\lambda$ is dominant with respect to its integral Weyl group;
\item $\mu$ is anti-dominant with respect to the dot-stabilizer of $\lambda$.
\end{itemize}
Such a pair  $(\lambda,\mu)$ is called {\em proper}. We will denote the 
indecomposable projective functor corresponding to $(\lambda,\mu)$
by $\theta_{\lambda,\mu}$.

\section{Locally finitary  $\mathscr{C}$-module categories}\label{s3}

\subsection{Basics}\label{s3.1}

We refer to \cite{EGNO} for basics on representations of 
monoidal categories.

Let $\mathcal{M}$ be a $\mathscr{C}$-module category. We will say that 
$\mathcal{M}$ is {\em locally finitary} provided that
\begin{itemize}
\item $\mathcal{M}$ is $\mathbb{C}$-linear, additive,
idempotent split and Krull-Schmidt;
\item $\mathcal{M}$ has at most countably many indecomposable
objects, up to isomorphism;
\item $\dim \mathcal{M}(X,Y)<\infty$, for all $X,Y\in \mathcal{M}$.
\end{itemize}
A typical example is $\mathscr{C}$ itself, considered
as a $\mathscr{C}$-module category with respect to the
left regular action.

A locally finitary $\mathscr{C}$-module category $\mathcal{M}$
is called {\em admissible} provided that it has weak kernels,
see \cite{Fr}. For example, any semi-simple 
$\mathscr{C}$-module category $\mathcal{M}$ is admissible.

\subsection{Module categories of interest}\label{s3.2}

The category $\mathfrak{sl}_3\text{-}\mathrm{Mod}$
of all $\mathfrak{sl}_3$-modules is a $\mathscr{C}$-module category
in the obvious way. The category $\mathfrak{sl}_3\text{-}\mathrm{mod}$
of all finitely generated $\mathfrak{sl}_3$-modules is a 
$\mathscr{C}$-module subcategory of $\mathfrak{sl}_3\text{-}\mathrm{Mod}$.

We will be interested in $\mathscr{C}$-module subcategories 
of $\mathfrak{sl}_3\text{-}\mathrm{mod}$ of the form
$\mathrm{add}(\mathscr{C}\cdot L)$, where $L$
is a simple $\mathfrak{sl}_3$-module (not necessarily finite dimensional).
These categories are defined as follows:

Given a simple $\mathfrak{sl}_3$-module $L$ and a finite
dimensional $\mathfrak{sl}_3$-module $V$, the module 
$V\otimes_{\mathbb{C}}L$ has a finite dimensional endomorphism
algebra (see \cite{MMM}) and hence decomposes into a finite direct sum of 
indecomposable modules. The category $\mathrm{add}(\mathscr{C}\cdot L)$
is the full subcategory of $\mathfrak{sl}_3\text{-}\mathrm{mod}$
which consists of all objects that are isomorphic to direct sums
of modules appearing as summands in $V\otimes_{\mathbb{C}}L$,
where $V$ can vary inside $\mathscr{C}$. Consequently, any 
$\mathrm{add}(\mathscr{C}\cdot L)$ is a locally finitary
$\mathscr{C}$-module category.

\subsection{Action matrices and graphs}\label{s3.3}

Let $\mathcal{M}$ be a locally finitary $\mathscr{C}$-module category
and $\mathrm{Ind}(\mathcal{M})=\{N_i\,:\,i\in \mathtt{I}\}$ be a
complete and irredundant
set of representatives of isomorphism classes of indecomposable
objects in $\mathcal{M}$. Then, to each $\theta\in\mathscr{C}$,
we can associate the $\mathtt{I}\times \mathtt{I}$ matrix $[\theta]$ 
with coefficients $m_{i,j}$, where $i,j\in \mathtt{I}$, defined
as follows: $m_{i,j}$ is the multiplicity of $X_i$ as a summand
of $\theta(X_j)$. 

Note that all $m_{i,j}$ are non-negative integers. Therefore, we 
can alternatively encode $[\theta]$ as an oriented graph,
which we call $\Gamma_\theta$, whose vertex set is $\mathtt{I}$
and which has exactly $m_{i,j}$ oriented edges from 
$j$ to $i$. 

Since the monoidal category $\mathscr{C}$ is generated by
$\mathrm{F}$, in many cases, almost all essential information 
about all $[\theta]$ and all $\Gamma_\theta$ is contained already 
in $[\mathrm{F}]$ and $\Gamma_\mathrm{F}$. For example, 
if $\mathcal{M}$ is semi-simple, then $[\mathrm{F}^*]=[\mathrm{F}]^t$,
see e.g. \cite[Lemma~8]{AM}. In particular, the graph
$\Gamma_{\mathrm{F}^*}$ is the opposite of $\Gamma_\mathrm{F}$.
Note that $[\mathrm{F}^*]$
and $[\mathrm{F}]$ commute, which puts very strong restrictions on
$[\mathrm{F}]$.

We will make the connection between $[\mathrm{F}]$
and an arbitrary $[\theta]$ more precise
in the next subsection, in particular, we will describe how
any $[\theta]$ can be expressed as a polynomial in $[\mathrm{F}]$
and $[\mathrm{F}^*]$. 

We will call
$[\mathrm{F}]$ the {\em matrix of $\mathcal{M}$},
$[\mathrm{F}^*]$ the {\em dual matrix of $\mathcal{M}$},
$\Gamma_\mathrm{F}$ the {\em graph of $\mathcal{M}$}
and $\Gamma_{\mathrm{F}^*}$ the {\em dual graph of $\mathcal{M}$}.

\subsection{Grothendieck groups}\label{s3.4}

Consider the Grothendieck ring $\mathbf{Gr}(\mathscr{C})$
of $\mathscr{C}$. It has the natural basis given by 
$[L((i,j))]$, where $i,j\in\mathbb{Z}_{\geq 0}$, with the
structure constants with respect to this basis given by 
the Clebsch-Gordan coefficients $\gamma_{(i,j),(i',j')}^{(k,l)}$
defined via
\begin{displaymath}
L((i,j))\otimes_\mathbb{C}L((i',j'))\cong
\bigoplus_{(k,l)}L((k,l))^{\oplus \gamma_{(i,j),(i',j')}^{(k,l)}}.
\end{displaymath}
Since $\mathfrak{h}$-eigenvalues behave additively with respect to
tensor products, one obtains both the equality
$\gamma_{(i,j),(i',j')}^{(i+i',j+j')}=1$
and the fact that $\gamma_{(i,j),(i',j')}^{(k,l)}\neq 0$ implies the
inequality $(k,l)\leq (i+i',j+j')$ in the sense that 
$(i+i'-k,j+j'-l)\in\mathbb{Z}_{\geq 0}\pi$. Consequently,
$\mathbf{Gr}(\mathscr{C})$ is isomorphic to the 
polynomial algebra $\mathbb{Z}[x,y]$, by sending $[L(1,0)]$ to $x$
and $[L(0,1)]$ to $y$.

Following \cite[Subsection~2.2]{MMMT}, for
$(i,j)\in \mathbb{Z}$, define polynomials
$\mathtt{U}_{i,j}(x,y)$ recursively as follows:
\begin{itemize}
\item $\mathtt{U}_{i,j}(x,y)=0$ provided that $(i,j)\not\in \mathbb{Z}_{\geq 0}^2$;
\item $\mathtt{U}_{0,0}(x,y)=1$;
\item $X\mathtt{U}_{i,j}(x,y)=\mathtt{U}_{i+1,j}(x,y)+
\mathtt{U}_{i-1,j+1}(x,y)+\mathtt{U}_{i,j-1}(x,y)$;
\item $Y\mathtt{U}_{i,j}(x,y)=\mathtt{U}_{i,j+1}(x,y)+
\mathtt{U}_{i+1,j-1}(x,y)+\mathtt{U}_{i-1,j}(x,y)$.
\end{itemize}
Here are some small examples:
\begin{displaymath}
\mathtt{U}_{1,0}(x,y)=x,\,\, \mathtt{U}_{0,1}(x,y)=y,\,\,
\mathtt{U}_{2,0}(x,y)=x^2-y, \,\, \mathtt{U}_{1,1}(x,y)=xy-1. 
\end{displaymath}
Note that directly from the definitions it follows
that $\mathtt{U}_{i,j}(x,y)=\mathtt{U}_{j,i}(y,x)$.

The polynomials $\{\mathtt{U}_{i,j}(x,y)\,:\, (i,j)\in \mathbb{Z}_{\geq 0}\}$
form a basis of $\mathbb{Z}[x,y]$, moreover, under the above isomorphism
between $\mathbf{Gr}(\mathscr{C})$ and $\mathbb{Z}[x,y]$, the polynomial
$\mathtt{U}_{i,j}(x,y)$ corresponds to $[L((i,j))]$. This is due to the fact 
that the recursive definition of $\mathtt{U}_{i,j}$ simply mimics the
Clebsch-Gordan rule for tensoring with $L((1,0))$ and $L((0,1))$.
It could be helpful to compare the recursive steps with the supports of
$L((1,0))$ and $L((0,1))$ given in Figure~\ref{fig3}.

If $\mathcal{M}$ is a locally finitary $\mathscr{C}$-module, then the
split Grothendieck group $\mathbf{Gr}(\mathcal{M})$ is, naturally,
a $\mathbf{Gr}(\mathscr{C})$-module. The group $\mathbf{Gr}(\mathcal{M})$
is a free abelian group with the standard basis given by the
isomorphism classes of the indecomposable objects in $\mathcal{M}$.
With respect to this basis, the matrix of the linear operator 
corresponding to the action of $\theta\in \mathscr{C}$ 
is exactly the action matrix $[\theta]$.
Consequently, if $\mathcal{M}$ is a locally finitary $\mathscr{C}$-module
category, then, for any $(i,j)\in \mathbb{Z}_{\geq 0}$, we have
\begin{displaymath}
[L((i,j))]= \mathtt{U}_{i,j}([L((1,0))],[L((0,1))]).
\end{displaymath}
In particular, the knowledge of $[L((1,0))]$ and $[L((0,1))]$
determines all the remaining $[L((i,j))]$ uniquely.

\subsection{Transitive module categories}\label{s3.5}

A locally finitary $\mathscr{C}$-module category $\mathcal{M}$
is called {\em transitive} provided that, for all indecomposable objects
$X$ and $Y$ in $\mathcal{M}$, there is $\theta\in \mathscr{C}$
such that $Y$ is isomorphic to a summand of $\theta(X)$,
see \cite{MM5}. Alternatively, $\mathcal{M}$ is transitive 
provided that $\Gamma_\mathrm{F}$ is strongly connected.

A locally finitary $\mathscr{C}$-module category $\mathcal{M}$
is called {\em simple} provided that $\mathcal{M}$ is non-zero
and does not have any non-trivial $\mathscr{C}$-stable ideals.
Every simple $\mathscr{C}$-module category is automatically 
transitive. Every locally finitary  transitive $\mathscr{C}$-module category
has a unique simple transitive quotient. More generally,
every locally finitary $\mathscr{C}$-module category 
admits a (possibly infinite) filtration with transitive 
subquotients. The multiset of the corresponding simple 
transitive subquotients is independent of the choice of
the filtration.

\subsection{Abelianization}\label{s3.6}

Given a locally finitary $\mathscr{C}$-module category $\mathcal{M}$,
the abelianization $\overline{\mathcal{M}}$ of ${\mathcal{M}}$ is defined
as the following category of diagrams over ${\mathcal{M}}$, see \cite{MM1}:
\begin{itemize}
\item the objects of $\overline{\mathcal{M}}$ are diagrams
$\alpha:X\to Y$ over ${\mathcal{M}}$;
\item the morphisms in $\overline{\mathcal{M}}$ from 
$\alpha:X\to Y$ to $\alpha':X'\to Y'$ are given as the
quotient of the space of all solid commutative diagrams over ${\mathcal{M}}$
as below, modulo the subspace generated by all diagrams for
which the right vertical map factors through some dotted morphism:
\begin{displaymath}
\xymatrix{
X\ar[rr]^{\alpha}\ar[d]&&Y\ar@{.>}[lld]\ar[d]\\
X'\ar[rr]^{\alpha'}&&Y'
}
\end{displaymath}
\end{itemize}
The category $\overline{\mathcal{M}}$ inherits from ${\mathcal{M}}$
a structure of a $\mathscr{C}$-module category. Furthermore,
$\overline{\mathcal{M}}$ is abelian provided that ${\mathcal{M}}$
is admissible. In the latter case,  $\overline{\mathcal{M}}$
has enough projective objects  and ${\mathcal{M}}$ is equivalent,
via the embedding $X\mapsto (0\to X)$, to the category of projective
objects in $\overline{\mathcal{M}}$. Moreover, for an indecomposable
$X\in {\mathcal{M}}$, mapping the object $0\to X$ to its simple top gives rise
to a bijection between the indecomposable projective and simple 
objects in $\overline{\mathcal{M}}$.

For $\theta\in \mathscr{C}$ and two simple objects 
$L$ and $N$ in $\overline{\mathcal{M}}$, we can define 
$a_{N,L}^\theta$ as the composition multiplicity of $N$ in $\theta L$,
which coincides with $\dim \overline{\mathcal{M}}(P_N,\theta L)$,
where $P_N$ is the projective cover of $N$. These can be assembled
into the matrix $\llbracket \theta\rrbracket$. By
\cite[Lemma~8]{AM}, the matrix $\llbracket \theta\rrbracket$ is
transposed to the matrix $[\theta^*]$.

\subsection{Positive eigenvectors}\label{s3.7}

Let $L$ be a simple $\mathfrak{sl}_3$-module. Consider the
$\mathscr{C}$-module category $\mathrm{add}(\mathscr{C}\cdot L)$.
This is an admissible $\mathscr{C}$-module category.
Let us assume that $\{X_i\,:\,i\in\mathtt{I}\}$ is a complete and
irredundant list of indecomposable objects in 
$\mathrm{add}(\mathscr{C}\cdot L)$. Then, for every $i\in\mathtt{I}$,
we have (see \cite{MMM}):
\begin{displaymath}
\mathrm{GKdim}(X_i)= \mathrm{GKdim}(L).
\end{displaymath}
Moreover, for the (column) vector $\mathbf{B}_{L}=(\mathbf{b}(X_i))_{i\in\mathtt{I}}$, 
from the last property in Subsection~\ref{s2.6} it follows that
\begin{displaymath}
\mathbf{B}_{L}^t[\mathrm{F}]=3\mathbf{B}_{L}^t. 
\end{displaymath}
In particular, $3$ is the Perron-Frobenius eigenvalue of 
$[\mathrm{F}]$ and $\mathbf{B}_{L}$ is the unique (up to a positive scalar)
left Perron-Frobenius eigenvector for that eigenvalue.

Similarly, we can define the vector $\mathbf{B}_{\mathcal{M}}$ for any transitive
$\mathscr{C}$-module category $\mathcal{M}$ and have
$\mathbf{B}_{\mathcal{M}}^t[\mathrm{F}]=3\mathbf{B}_{\mathcal{M}}^t$.

\section{The left regular module category}\label{s4}

\subsection{Basic combinatorics}\label{s4.1}

The left regular $\mathscr{C}$-module category ${}_\mathscr{C}\mathscr{C}$
is transitive with semi-simple underlying category. 
Consequently, ${}_\mathscr{C}\mathscr{C}$ is a  simple
$\mathscr{C}$-module category and coincides with 
$\mathrm{add}(\mathscr{C}\cdot L)$, for any simple finite
dimensional $\mathfrak{sl}_3$-module $L$.

The indecomposable objects in $\mathscr{C}$ are
$\{L((i,j))\,:\,i,j\in\mathbb{Z}_{\geq 0}\}$.
We have:

\resizebox{\textwidth}{!}{
$
L((1,0))\otimes_{\mathbb{C}}L((i,j))\cong
\begin{cases}
L((1,0)),& (i,j)=(0,0);\\
L((i+1,0))\oplus L((i-1,1)),& i>0, j=0;\\
L((1,j))\oplus L((i,j-1)),& i=0, j>0;\\
L((i+1,j))\oplus L((i-1,j+1))\oplus L((i,j-1)),& i,j>0.
\end{cases}
$
}

This implies that the graph $\Gamma_\mathrm{F}$
is as in Figure~\ref{fig3}. Since the underlying category
${}_\mathscr{C}\mathscr{C}$ is semi-simple, the graph 
$\Gamma_\mathrm{G}$ is just the opposite 
of $\Gamma_\mathrm{F}$.

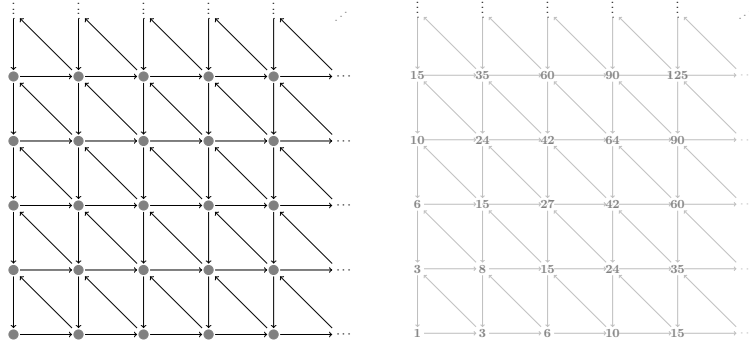
\begin{figure}
\resizebox{10cm}{!}{
\begin{tikzpicture}
\draw[black, thick,  ->] (0.2,0) -- (1.8,0);
\draw[black, thick,  ->] (0,1.8) -- (0,0.2);
\draw[black, thick,  ->] (1.8,0.2) -- (0.2,1.8);
\draw[black, thick,  ->] (2.2,0) -- (3.8,0);
\draw[black, thick,  ->] (2,1.8) -- (2,0.2);
\draw[black, thick,  ->] (3.8,0.2) -- (2.2,1.8);
\draw[black, thick,  ->] (4.2,0) -- (5.8,0);
\draw[black, thick,  ->] (4,1.8) -- (4,0.2);
\draw[black, thick,  ->] (5.8,0.2) -- (4.2,1.8);
\draw[black, thick,  ->] (6.2,0) -- (7.8,0);
\draw[black, thick,  ->] (6,1.8) -- (6,0.2);
\draw[black, thick,  ->] (7.8,0.2) -- (6.2,1.8);
\draw[black, thick,  ->] (8,1.8) -- (8,0.2);
\draw[black, thick,  ->] (0.2,2) -- (1.8,2);
\draw[black, thick,  ->] (0,3.8) -- (0,2.2);
\draw[black, thick,  ->] (1.8,2.2) -- (0.2,3.8);
\draw[black, thick,  ->] (2.2,2) -- (3.8,2);
\draw[black, thick,  ->] (2,3.8) -- (2,2.2);
\draw[black, thick,  ->] (3.8,2.2) -- (2.2,3.8);
\draw[black, thick,  ->] (4.2,2) -- (5.8,2);
\draw[black, thick,  ->] (4,3.8) -- (4,2.2);
\draw[black, thick,  ->] (5.8,2.2) -- (4.2,3.8);
\draw[black, thick,  ->] (6.2,2) -- (7.8,2);
\draw[black, thick,  ->] (6,3.8) -- (6,2.2);
\draw[black, thick,  ->] (7.8,2.2) -- (6.2,3.8);
\draw[black, thick,  ->] (8,3.8) -- (8,2.2);
\draw[black, thick,  ->] (0.2,4) -- (1.8,4);
\draw[black, thick,  ->] (0,5.8) -- (0,4.2);
\draw[black, thick,  ->] (1.8,4.2) -- (0.2,5.8);
\draw[black, thick,  ->] (2.2,4) -- (3.8,4);
\draw[black, thick,  ->] (2,5.8) -- (2,4.2);
\draw[black, thick,  ->] (3.8,4.2) -- (2.2,5.8);
\draw[black, thick,  ->] (4.2,4) -- (5.8,4);
\draw[black, thick,  ->] (4,5.8) -- (4,4.2);
\draw[black, thick,  ->] (5.8,4.2) -- (4.2,5.8);
\draw[black, thick,  ->] (6.2,4) -- (7.8,4);
\draw[black, thick,  ->] (6,5.8) -- (6,4.2);
\draw[black, thick,  ->] (7.8,4.2) -- (6.2,5.8);
\draw[black, thick,  ->] (8,5.8) -- (8,4.2);
\draw[black, thick,  ->] (0.2,6) -- (1.8,6);
\draw[black, thick,  ->] (0,7.8) -- (0,6.2);
\draw[black, thick,  ->] (1.8,6.2) -- (0.2,7.8);
\draw[black, thick,  ->] (2.2,6) -- (3.8,6);
\draw[black, thick,  ->] (2,7.8) -- (2,6.2);
\draw[black, thick,  ->] (3.8,6.2) -- (2.2,7.8);
\draw[black, thick,  ->] (4.2,6) -- (5.8,6);
\draw[black, thick,  ->] (4,7.8) -- (4,6.2);
\draw[black, thick,  ->] (5.8,6.2) -- (4.2,7.8);
\draw[black, thick,  ->] (6.2,6) -- (7.8,6);
\draw[black, thick,  ->] (6,7.8) -- (6,6.2);
\draw[black, thick,  ->] (7.8,6.2) -- (6.2,7.8);
\draw[black, thick,  ->] (8,7.8) -- (8,6.2);
\draw[black, thick,  ->] (8.2,0) -- (9.8,0) node[anchor= west] {\large$\cdots$};
\draw[black, thick,  ->] (8.2,2) -- (9.8,2) node[anchor= west] {\large$\cdots$};
\draw[black, thick,  ->] (8.2,4) -- (9.8,4) node[anchor= west] {\large$\cdots$};
\draw[black, thick,  ->] (8.2,6) -- (9.8,6) node[anchor= west] {\large$\cdots$};
\draw[black, thick,  ->] (8.2,8) -- (9.8,8) node[anchor= west] {\large$\cdots$};
\draw[black, thick,  ->] (9.8,0.2) -- (8.2,1.8);
\draw[black, thick,  ->] (9.8,2.2) -- (8.2,3.8);
\draw[black, thick,  ->] (9.8,4.2) -- (8.2,5.8);
\draw[black, thick,  ->] (9.8,6.2) -- (8.2,7.8);
\draw[black, thick,  ->]  (0,9.8) -- (0,8.2);
\draw[black, thick,  ->]  (2,9.8) -- (2,8.2);
\draw[black, thick,  ->]  (4,9.8) -- (4,8.2);
\draw[black, thick,  ->]  (6,9.8) -- (6,8.2);
\draw[black, thick,  ->]  (8,9.8) -- (8,8.2);
\draw[black, thin,  -] (0,9.8) -- (0,9.8) node[anchor= south] {\large$\vdots$};
\draw[black, thin,  -] (2,9.8) -- (2,9.8) node[anchor= south] {\large$\vdots$};
\draw[black, thin,  -] (4,9.8) -- (4,9.8) node[anchor= south] {\large$\vdots$};
\draw[black, thin,  -] (6,9.8) -- (6,9.8) node[anchor= south] {\large$\vdots$};
\draw[black, thin,  -] (8,9.8) -- (8,9.8) node[anchor= south] {\large$\vdots$};
\draw[gray, thin,  -] (10.4,10.4) -- (10.4,10.4) node[anchor= north east] {\large$\iddots$};
\draw[black, thick,  ->] (1.8,8.2) -- (0.2,9.8);
\draw[black, thick,  ->] (3.8,8.2) -- (2.2,9.8);
\draw[black, thick,  ->] (5.8,8.2) -- (4.2,9.8);
\draw[black, thick,  ->] (7.8,8.2) -- (6.2,9.8);
\draw[black, thick,  ->] (9.8,8.2) -- (8.2,9.8);
\draw[black, thick,  ->] (0.2,8) -- (1.8,8);
\draw[black, thick,  ->] (2.2,8) -- (3.8,8);
\draw[black, thick,  ->] (4.2,8) -- (5.8,8);
\draw[black, thick,  ->] (6.2,8) -- (7.8,8);
\draw[gray,fill=gray] (0,0) circle (.9ex);
\draw[gray,fill=gray] (2,0) circle (.9ex);
\draw[gray,fill=gray] (4,0) circle (.9ex);
\draw[gray,fill=gray] (6,0) circle (.9ex);
\draw[gray,fill=gray] (8,0) circle (.9ex);
\draw[gray,fill=gray] (0,2) circle (.9ex);
\draw[gray,fill=gray] (2,2) circle (.9ex);
\draw[gray,fill=gray] (4,2) circle (.9ex);
\draw[gray,fill=gray] (6,2) circle (.9ex);
\draw[gray,fill=gray] (8,2) circle (.9ex);
\draw[gray,fill=gray] (0,4) circle (.9ex);
\draw[gray,fill=gray] (2,4) circle (.9ex);
\draw[gray,fill=gray] (4,4) circle (.9ex);
\draw[gray,fill=gray] (6,4) circle (.9ex);
\draw[gray,fill=gray] (8,4) circle (.9ex);
\draw[gray,fill=gray] (0,6) circle (.9ex);
\draw[gray,fill=gray] (2,6) circle (.9ex);
\draw[gray,fill=gray] (4,6) circle (.9ex);
\draw[gray,fill=gray] (6,6) circle (.9ex);
\draw[gray,fill=gray] (8,6) circle (.9ex);
\draw[gray,fill=gray] (0,8) circle (.9ex);
\draw[gray,fill=gray] (2,8) circle (.9ex);
\draw[gray,fill=gray] (4,8) circle (.9ex);
\draw[gray,fill=gray] (6,8) circle (.9ex);
\draw[gray,fill=gray] (8,8) circle (.9ex);
\end{tikzpicture}
\qquad\qquad
\begin{tikzpicture}
\draw[lightgray, thin,  ->] (0.2,0) -- (1.8,0);
\draw[lightgray, thin,  ->] (0,1.8) -- (0,0.2);
\draw[lightgray, thin,  ->] (1.8,0.2) -- (0.2,1.8);
\draw[lightgray, thin,  ->] (2.2,0) -- (3.8,0);
\draw[lightgray, thin,  ->] (2,1.8) -- (2,0.2);
\draw[lightgray, thin,  ->] (3.8,0.2) -- (2.2,1.8);
\draw[lightgray, thin,  ->] (4.2,0) -- (5.8,0);
\draw[lightgray, thin,  ->] (4,1.8) -- (4,0.2);
\draw[lightgray, thin,  ->] (5.8,0.2) -- (4.2,1.8);
\draw[lightgray, thin,  ->] (6.2,0) -- (7.8,0);
\draw[lightgray, thin,  ->] (6,1.8) -- (6,0.2);
\draw[lightgray, thin,  ->] (7.8,0.2) -- (6.2,1.8);
\draw[lightgray, thin,  ->] (8,1.8) -- (8,0.2);
\draw[lightgray, thin,  ->] (0.2,2) -- (1.8,2);
\draw[lightgray, thin,  ->] (0,3.8) -- (0,2.2);
\draw[lightgray, thin,  ->] (1.8,2.2) -- (0.2,3.8);
\draw[lightgray, thin,  ->] (2.2,2) -- (3.8,2);
\draw[lightgray, thin,  ->] (2,3.8) -- (2,2.2);
\draw[lightgray, thin,  ->] (3.8,2.2) -- (2.2,3.8);
\draw[lightgray, thin,  ->] (4.2,2) -- (5.8,2);
\draw[lightgray, thin,  ->] (4,3.8) -- (4,2.2);
\draw[lightgray, thin,  ->] (5.8,2.2) -- (4.2,3.8);
\draw[lightgray, thin,  ->] (6.2,2) -- (7.8,2);
\draw[lightgray, thin,  ->] (6,3.8) -- (6,2.2);
\draw[lightgray, thin,  ->] (7.8,2.2) -- (6.2,3.8);
\draw[lightgray, thin,  ->] (8,3.8) -- (8,2.2);
\draw[lightgray, thin,  ->] (0.2,4) -- (1.8,4);
\draw[lightgray, thin,  ->] (0,5.8) -- (0,4.2);
\draw[lightgray, thin,  ->] (1.8,4.2) -- (0.2,5.8);
\draw[lightgray, thin,  ->] (2.2,4) -- (3.8,4);
\draw[lightgray, thin,  ->] (2,5.8) -- (2,4.2);
\draw[lightgray, thin,  ->] (3.8,4.2) -- (2.2,5.8);
\draw[lightgray, thin,  ->] (4.2,4) -- (5.8,4);
\draw[lightgray, thin,  ->] (4,5.8) -- (4,4.2);
\draw[lightgray, thin,  ->] (5.8,4.2) -- (4.2,5.8);
\draw[lightgray, thin,  ->] (6.2,4) -- (7.8,4);
\draw[lightgray, thin,  ->] (6,5.8) -- (6,4.2);
\draw[lightgray, thin,  ->] (7.8,4.2) -- (6.2,5.8);
\draw[lightgray, thin,  ->] (8,5.8) -- (8,4.2);
\draw[lightgray, thin,  ->] (0.2,6) -- (1.8,6);
\draw[lightgray, thin,  ->] (0,7.8) -- (0,6.2);
\draw[lightgray, thin,  ->] (1.8,6.2) -- (0.2,7.8);
\draw[lightgray, thin,  ->] (2.2,6) -- (3.8,6);
\draw[lightgray, thin,  ->] (2,7.8) -- (2,6.2);
\draw[lightgray, thin,  ->] (3.8,6.2) -- (2.2,7.8);
\draw[lightgray, thin,  ->] (4.2,6) -- (5.8,6);
\draw[lightgray, thin,  ->] (4,7.8) -- (4,6.2);
\draw[lightgray, thin,  ->] (5.8,6.2) -- (4.2,7.8);
\draw[lightgray, thin,  ->] (6.2,6) -- (7.8,6);
\draw[lightgray, thin,  ->] (6,7.8) -- (6,6.2);
\draw[lightgray, thin,  ->] (7.8,6.2) -- (6.2,7.8);
\draw[lightgray, thin,  ->] (8,7.8) -- (8,6.2);
\draw[lightgray, thin,  ->] (8.2,0) -- (9.8,0) node[anchor= west] {\large$\cdots$};
\draw[lightgray, thin,  ->] (8.2,2) -- (9.8,2) node[anchor= west] {\large$\cdots$};
\draw[lightgray, thin,  ->] (8.2,4) -- (9.8,4) node[anchor= west] {\large$\cdots$};
\draw[lightgray, thin,  ->] (8.2,6) -- (9.8,6) node[anchor= west] {\large$\cdots$};
\draw[lightgray, thin,  ->] (8.2,8) -- (9.8,8) node[anchor= west] {\large$\cdots$};
\draw[lightgray, thin,  ->] (9.8,0.2) -- (8.2,1.8);
\draw[lightgray, thin,  ->] (9.8,2.2) -- (8.2,3.8);
\draw[lightgray, thin,  ->] (9.8,4.2) -- (8.2,5.8);
\draw[lightgray, thin,  ->] (9.8,6.2) -- (8.2,7.8);
\draw[lightgray, thin,  ->]  (0,9.8) -- (0,8.2);
\draw[lightgray, thin,  ->]  (2,9.8) -- (2,8.2);
\draw[lightgray, thin,  ->]  (4,9.8) -- (4,8.2);
\draw[lightgray, thin,  ->]  (6,9.8) -- (6,8.2);
\draw[lightgray, thin,  ->]  (8,9.8) -- (8,8.2);
\draw[black, thin,  -] (0,9.8) -- (0,9.8) node[anchor= south] {\large$\vdots$};
\draw[black, thin,  -] (2,9.8) -- (2,9.8) node[anchor= south] {\large$\vdots$};
\draw[black, thin,  -] (4,9.8) -- (4,9.8) node[anchor= south] {\large$\vdots$};
\draw[black, thin,  -] (6,9.8) -- (6,9.8) node[anchor= south] {\large$\vdots$};
\draw[black, thin,  -] (8,9.8) -- (8,9.8) node[anchor= south] {\large$\vdots$};
\draw[gray, thin,  -] (10.4,10.4) -- (10.4,10.4) node[anchor= north east] {\large$\iddots$};
\draw[lightgray, thin,  ->] (1.8,8.2) -- (0.2,9.8);
\draw[lightgray, thin,  ->] (3.8,8.2) -- (2.2,9.8);
\draw[lightgray, thin,  ->] (5.8,8.2) -- (4.2,9.8);
\draw[lightgray, thin,  ->] (7.8,8.2) -- (6.2,9.8);
\draw[lightgray, thin,  ->] (9.8,8.2) -- (8.2,9.8);
\draw[lightgray, thin,  ->] (0.2,8) -- (1.8,8);
\draw[lightgray, thin,  ->] (2.2,8) -- (3.8,8);
\draw[lightgray, thin,  ->] (4.2,8) -- (5.8,8);
\draw[lightgray, thin,  ->] (6.2,8) -- (7.8,8);
\draw[gray,fill=gray] (0,0) circle (.01ex) node {\large$\mathbf{1}$};
\draw[gray,fill=gray] (2,0) circle (.01ex) node {\large$\mathbf{3}$};
\draw[gray,fill=gray] (4,0) circle (.01ex) node {\large$\mathbf{6}$};
\draw[gray,fill=gray] (6,0) circle (.01ex) node {\large$\mathbf{10}$};
\draw[gray,fill=gray] (8,0) circle (.01ex) node {\large$\mathbf{15}$};
\draw[gray,fill=gray] (0,2) circle (.01ex) node {\large$\mathbf{3}$};
\draw[gray,fill=gray] (2,2) circle (.01ex) node {\large$\mathbf{8}$};
\draw[gray,fill=gray] (4,2) circle (.01ex) node {\large$\mathbf{15}$};
\draw[gray,fill=gray] (6,2) circle (.01ex) node {\large$\mathbf{24}$};
\draw[gray,fill=gray] (8,2) circle (.01ex) node {\large$\mathbf{35}$};
\draw[gray,fill=gray] (0,4) circle (.01ex) node {\large$\mathbf{6}$};
\draw[gray,fill=gray] (2,4) circle (.01ex) node {\large$\mathbf{15}$};
\draw[gray,fill=gray] (4,4) circle (.01ex) node {\large$\mathbf{27}$};
\draw[gray,fill=gray] (6,4) circle (.01ex) node {\large$\mathbf{42}$};
\draw[gray,fill=gray] (8,4) circle (.01ex) node {\large$\mathbf{60}$};
\draw[gray,fill=gray] (0,6) circle (.01ex) node {\large$\mathbf{10}$};
\draw[gray,fill=gray] (2,6) circle (.01ex) node {\large$\mathbf{24}$};
\draw[gray,fill=gray] (4,6) circle (.01ex) node {\large$\mathbf{42}$};
\draw[gray,fill=gray] (6,6) circle (.01ex) node {\large$\mathbf{64}$};
\draw[gray,fill=gray] (8,6) circle (.01ex) node {\large$\mathbf{90}$};
\draw[gray,fill=gray] (0,8) circle (.01ex) node {\large$\mathbf{15}$};
\draw[gray,fill=gray] (2,8) circle (.01ex) node {\large$\mathbf{35}$};
\draw[gray,fill=gray] (4,8) circle (.01ex) node {\large$\mathbf{60}$};
\draw[gray,fill=gray] (6,8) circle (.01ex) node {\large$\mathbf{90}$};
\draw[gray,fill=gray] (8,8) circle (.01ex) node {\large$\mathbf{125}$};
\end{tikzpicture}
}
\caption{The graph of the left regular $\mathscr{C}$-module category
and the corresponding eigenvector}\label{fig3}
\end{figure}

\subsection{Eigenvector}\label{s4.2}

Since $\Gamma_{\mathrm{F}}$ is strongly connected
and all vertices have finite degree, from the 
infinite analogue of the Perron-Frobenius Theorem, 
the matrix $[\mathrm{F}]$ has a unique, up to scalar, 
eigenvector with strictly positive entries. The corresponding
eigenvalue is the Perron-Frobenius eigenvalue of $[\mathrm{F}]$.

Clearly, the vector $(\dim(L((i,j))))_{(i,j)\in\mathbb{Z}_{\geq 0}^2}$
is an eigenvector for the matrix $[\mathrm{F}]$ with eigenvalue $3=\dim(L((1,0)))$.
From the definition of $\mathtt{U}_{(i,j)}(X,Y)$ it follows directly that
$\dim(L((i,j)))=\mathtt{U}_{(i,j)}(3,3)$. It is easy to check that
the assignment
\begin{displaymath}
(i,j)\mapsto \frac{(i+1)(j+1)(i+j+2)}{2} 
\end{displaymath}
satisfies the same recursion and hence equals $\dim(L((i,j)))$.
An alternative way to come to this is to use the Weyl character formula.

This means that the integral sequence 
\begin{displaymath}
\dim(L((0,0))), \dim(L((1,0))), \dim(L((0,1))), 
\dim(L((2,0))), \dim(L((1,1))), \dots 
\end{displaymath}
is the sequence A107985 in \cite{OEIS}.

The above eigenvector is depicted in Figure~\ref{fig3}. 
Clearly, it is also an eigenvector for $[\mathrm{G}]$,
with the same eigenvalue.

\subsection{Simple $\mathscr{C}$-module categories with the same combinatorics}\label{s4.3}

The following theorem shows that the $\mathscr{C}$-module 
category ${}_\mathscr{C}\mathscr{C}$ is determined
uniquely, up to isomorphism,
by its graph and dual graph in the class of 
admissible simple $\mathscr{C}$-module categories.

\begin{theorem}\label{thm-s4.3-1}
Let $\mathcal{M}$ be an admissible simple $\mathscr{C}$-module category
with the same graph and dual graph as ${}_\mathscr{C}\mathscr{C}$.
Then $\mathcal{M}$ is equivalent to ${}_\mathscr{C}\mathscr{C}$.
\end{theorem}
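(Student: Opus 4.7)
The strategy is to construct a $\mathscr{C}$-linear equivalence $\Phi\colon{}_\mathscr{C}\mathscr{C}\to\mathcal{M}$. From the Clebsch--Gordan decomposition recalled in Subsection~\ref{s4.1}, $L((0,0))$ is the unique vertex of $\Gamma_\mathrm{F}({}_\mathscr{C}\mathscr{C})$ whose $\mathrm{F}$-out-multiplicity equals $1$; hence by the graph hypothesis there is a unique indecomposable $X\in\mathcal{M}$ with $\mathrm{F}\cdot X$ indecomposable. Set $\Phi(V):=V\cdot X$, which is $\mathscr{C}$-linear by associativity of the action. Applying the recursion $[L((i,j))]=\mathtt{U}_{i,j}([\mathrm{F}],[\mathrm{G}])$ of Subsection~\ref{s3.4} to both categories, the assumption that $\Gamma_\mathrm{F}$ and $\Gamma_{\mathrm{F}^*}$ of $\mathcal{M}$ coincide with those of ${}_\mathscr{C}\mathscr{C}$ forces $[L((i,j))]_\mathcal{M}=[L((i,j))]_{{}_\mathscr{C}\mathscr{C}}$ for all $(i,j)$. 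Since $L((i,j))\otimes L((0,0))=L((i,j))$ in $\mathscr{C}$, this yields $\Phi(L((i,j)))\cong X_{i,j}$, where $X_{i,j}$ is the indecomposable of $\mathcal{M}$ labeled by $(i,j)$ under the graph isomorphism. In particular, $\Phi$ is bijective on isomorphism classes of indecomposables and hence essentially surjective.

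For full faithfulness, biadjunction of $\mathrm{F}$ and $\mathrm{G}$ gives
\begin{displaymath}
\mathrm{Hom}_\mathcal{M}(V\cdot X,W\cdot X)\;\cong\;\mathrm{Hom}_\mathcal{M}(X,(V^*\otimes W)\cdot X),
\end{displaymath}
and decomposing $V^*\otimes W$ into simples of $\mathscr{C}$ reduces the whole statement to showing $\dim\mathrm{Hom}_\mathcal{M}(X,X_{i,j})=\delta_{(i,j),(0,0)}$. Equivalently, one must prove that $\mathcal{M}$ is semisimple with the $X_{i,j}$ as pairwise non-isomorphic simple objects. To attack this, pass to the abelianization $\overline{\mathcal{M}}$ of Subsection~\ref{s3.6}: the $X_{i,j}$ become the indecomposable projectives $P_{i,j}$ with simple tops $L_{i,j}$, and with Cartan matrix $C=(c_{(k,l),(i,j)})$ whose entries record both the composition multiplicities $[P_{i,j}:L_{k,l}]$ and the Hom-dimensions $\dim\mathrm{Hom}_\mathcal{M}(X_{k,l},X_{i,j})$. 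Exactness of $\mathrm{F}$ together with the fact that it preserves projectives yields the standard exchange identity $\llbracket\mathrm{F}\rrbracket\,C=C\,[\mathrm{F}]_\mathcal{M}$. Using $\llbracket\mathrm{F}\rrbracket=[\mathrm{F}^*]_\mathcal{M}^t=[\mathrm{G}]_\mathcal{M}^t$ from \cite[Lemma~8]{AM} together with the rigidity identity $[\mathrm{G}]_{{}_\mathscr{C}\mathscr{C}}=[\mathrm{F}]_{{}_\mathscr{C}\mathscr{C}}^t$ in the semisimple regular category, transferred to $\mathcal{M}$ via the dual-graph hypothesis, I obtain $\llbracket\mathrm{F}\rrbracket=[\mathrm{F}]_\mathcal{M}$; hence $C$ commutes with $[\mathrm{F}]_\mathcal{M}$ and, symmetrically, with $[\mathrm{G}]_\mathcal{M}$.

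The final and most delicate step is to deduce $C=I$. Because the commutative algebra generated by $[\mathrm{F}]$ and $[\mathrm{G}]$ acts transitively on the basis of the Grothendieck group via $[L((i,j))]\,e_{(0,0)}=e_{(i,j)}$, the commuting matrix $C$ is determined by its $(0,0)$-th column, that is, by the integers $\dim\mathrm{Hom}_\mathcal{M}(X_{k,l},X_{0,0})$. The plan is to pin this column down to $e_{(0,0)}$ using simplicity of $\mathcal{M}$: any non-zero non-scalar morphism with target $X_{0,0}$ would, under $\mathscr{C}$-action and composition with arbitrary morphisms in $\mathcal{M}$, generate a proper non-zero $\mathscr{C}$-stable ideal, contradicting simplicity. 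Once $C=I$ is established, $\overline{\mathcal{M}}$, and hence $\mathcal{M}$, is semisimple, the preceding reduction delivers full faithfulness of $\Phi$, and combined with essential surjectivity $\Phi$ is the required equivalence of $\mathscr{C}$-module categories. I expect the main obstacle to be precisely this last step: the transfer of rigidity across the graph and the commutation of $C$ with $[\mathrm{F}]$ and $[\mathrm{G}]$ are formal, but extracting $C=I$ requires carefully marrying the commutation constraints with the simplicity hypothesis so as to rule out non-trivial morphisms into, and non-scalar endomorphisms of, the distinguished vertex $X_{0,0}$.
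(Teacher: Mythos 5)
Your setup through the commutation identity $[\mathrm{F}]_\mathcal{M} C = C[\mathrm{F}]_\mathcal{M}$ is sound and parallels the paper's use of $\llbracket\theta\rrbracket=[\theta]$; the reformulation via the Cartan matrix $C$ of $\overline{\mathcal{M}}$ is a legitimate repackaging of ``$\mathcal{M}$ is semi-simple.'' The genuine gap is the final step, and you yourself flag it: the argument that ``any non-zero morphism $f\colon X_{k,l}\to X_{0,0}$ with $(k,l)\neq(0,0)$ generates a \emph{proper} $\mathscr{C}$-stable ideal'' is circular as stated. Such an $f$ is a radical morphism (source and target are non-isomorphic indecomposables), but the ideal it generates under the $\mathscr{C}$-action is proper only if one already knows that $\theta(f)$ remains radical for all $\theta\in\mathscr{C}$ --- i.e.\ that none of the components $Y_a\to Z_b$ of $\theta(f)\colon\theta(X_{k,l})\to\theta(X_{0,0})$ is an isomorphism. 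Without that, simplicity gives no contradiction, because the ideal might simply be all of $\mathcal{M}$.

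That missing fact --- the radical of $\mathcal{M}$ is $\mathrm{F}$-invariant --- is exactly the combinatorial core of the paper's proof, and your route offers no substitute for it. The paper establishes it in two pieces, using ingredients you already have in hand: (i) for a nilpotent endomorphism $\psi$ of an indecomposable $X$, the graph of $\Gamma_\mathrm{F}$ (Figure~\ref{fig3}) has no multiple edges, so $\mathrm{F}(X)$ is multiplicity-free, and the ring homomorphism $\mathrm{End}(\mathrm{F}(X))\twoheadrightarrow\mathrm{End}(\mathrm{F}(X))/\mathrm{Rad}$ onto a product of division algebras kills the nilpotent $\mathrm{F}(\psi)$, forcing $\mathrm{F}(\psi)$ into the radical; (ii) for $\varphi\colon X\to Y$ between non-isomorphic indecomposables, one applies the exact $\mathrm{F}$ to $0\to\mathrm{Rad}(Y)\to Y\to N_Y\to 0$ in $\overline{\mathcal{M}}$ and uses $\llbracket\mathrm{F}\rrbracket=[\mathrm{F}]$ together with the $\mathscr{C}$-stability of the semi-simple objects to identify $\mathrm{F}(N_Y)$ with the top of the projective $\mathrm{F}(Y)$, whence $\mathrm{F}(\mathrm{Rad}(Y))=\mathrm{Rad}(\mathrm{F}(Y))$. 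Only after this lemma does simplicity kill the radical --- at which point $\overline{\mathcal{M}}$ is semi-simple, $C=I$ falls out for free, and your Yoneda map $\Phi$ is indeed an equivalence. In short: your skeleton is right, but the load-bearing wall is the multiplicity-freeness argument, and your sketch assumes its conclusion rather than proving it.
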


\begin{proof}
Let $\{M_{(i,j)}\,:\,(i,j)\in\mathbb{Z}_{\geq 0}^2\}$ be a complete
and irredundant list of representatives of isomorphism classes of 
indecomposable objects in $\mathcal{M}$, indexed such that the map
$L((i,j))\mapsto M_{(i,j)}$ induces an isomorphism between the 
graphs of ${}_\mathscr{C}\mathscr{C}$ and $\mathcal{M}$. Clearly,
the graph in Figure~\ref{fig3} has no non-trivial automorphisms,
which implies that we automatically have the induced isomorphism 
between the  dual graphs of ${}_\mathscr{C}\mathscr{C}$ and $\mathcal{M}$.

Consider $\overline{\mathcal{M}}$ and let $N_{(i,j)}$ be the simple
top of $M_{(i,j)}$ in $\overline{\mathcal{M}}$. Then we have the matrices
$\llbracket \mathrm{F}\rrbracket$ and $\llbracket \mathrm{F}^*\rrbracket$
which are the transposes of $[\mathrm{F}^*]$ and $[\mathrm{F}]$,
respectively. Now recall that the underline category of 
${}_\mathscr{C}\mathscr{C}$ is semi-simple. Hence $[\mathrm{F}^*]$ 
is the transpose of $[\mathrm{F}]$. Consequently, 
$\llbracket \mathrm{F}\rrbracket=[\mathrm{F}]$.
Similarly, 
\begin{equation}\label{eq-s4.3-2}
\llbracket \theta\rrbracket=[\theta],
\end{equation}
for any $\theta\in\mathscr{C}$.

The last observation allows us to adopt the argument from 
\cite[Proposition~7]{MZ}. Denote by $\mathcal{N}$ the category of 
all semi-simple objects in $\overline{\mathcal{M}}$. 
For $(i,j)\in\mathbb{Z}_{\geq 0}^2$, we have
$L((i,j))\otimes_{\mathbb{C}}L((0,0))\cong L((i,j))$.
Therefore \eqref{eq-s4.3-2} implies
$L((i,j))(N_{(0,0)})\cong N_{(i,j)}$. 

For $\theta\in\mathscr{C}$, we thus can write 
$\theta(N_{(i,j)})$ as $(\theta\otimes_{\mathbb{C}} L((i,j)))\big(N_{(0,0)}\big)$
and, from the previous paragraph, it follows that 
$\theta(N_{(i,j)})\in\mathcal{N}$. Consequently,
$\mathcal{N}$ is stable under the action of $\mathscr{C}$.

In order to prove the claim of the theorem, we need to show that
the radical of ${\mathcal{M}}$ is $\mathscr{C}$-invariant.
Since $\mathscr{C}$ is monoidally generated by $\mathrm{F}$, it is
enough to prove that the radical of ${\mathcal{M}}$ is 
$\mathrm{F}$-invariant. The radical of ${\mathcal{M}}$ is
generated by the morphisms of the form $\varphi:X\to Y$, where 
$X$ and $Y$ are two non-isomorphism indecomposable objects in
$\mathcal{M}$, and the radical morphisms of the form $\psi:X\to X$, 
where  $X$ is an indecomposable objects in $\mathcal{M}$.

In the latter case, $\psi$ is nilpotent as $\mathcal{M}$ is locally
finitary. Therefore $\mathrm{F}(\psi)$ is nilpotent. Since
$\mathrm{F}(X)$ is multiplicity free (the graph in Figure~\ref{fig3}
does not have any multiple edges), it follows that $\mathrm{F}(\psi)$
is a radical morphism.

In the former case, the image of $\varphi$ belongs to the radical
of $Y$ and we need to show that the image of $\mathrm{F}(\varphi)$
belongs to the radical of $\mathrm{F}(Y)$. For this we apply 
$\mathrm{F}$ to the short exact sequence
\begin{displaymath}
0\to \mathrm{Rad}(Y)\to Y\to N_Y\to 0, 
\end{displaymath}
where  $N_Y$ is the simple top of $Y$, and obtain a short exact sequence
\begin{displaymath}
0\to \mathrm{F}(\mathrm{Rad}(Y))\to \mathrm{F}(Y)\to \mathrm{F}(N_Y)\to 0. 
\end{displaymath}
From \eqref{eq-s4.3-2}, we have that the top of 
$\mathrm{F}(Y)$ is isomorphic to $\mathrm{F}(N_Y)$. Hence
the radical of $\mathrm{F}(Y)$ is isomorphic to 
$\mathrm{F}(\mathrm{Rad}(Y))$. The claim follows.

The above shows that $\mathcal{M}$ is equivalent to $\mathcal{N}$.
The Yoneda Lemma implies that, mapping $L((0,0))$ to $N_{(0,0)}$
induces a morphism of $\mathscr{C}$-module categories 
from ${}_\mathscr{C}\mathscr{C}$ to
$\mathcal{N}$. As $L((i,j))(N_{(0,0)})\cong N_{(i,j)}$ and both
categories are semi-simple, this morphism is an equivalence.
\end{proof}

\begin{remark}\label{rem-s4.3-7}
{\em
The argument in the proof of Theorem~\ref{thm-s4.3-1} generalizes to the
following situation: Let $\mathscr{X}$ be a semi-simple rigid monoidal
category with only one $\mathcal{H}$-cell in the sense of \cite{MM1},
then every admissible simple $\mathscr{X}$-module category with the 
same action matrices as for the left regular module category is equivalent 
to the left regular module category.
}
\end{remark}

\section{$\mathscr{C}$-module categories from category $\mathcal{O}$}\label{s5}

\subsection{Category $\mathcal{O}$}\label{s5.1}

Consider the BGG category $\mathcal{O}$ for $\mathfrak{g}$, see \cite{BGG,Hu}.
For $\lambda\in\mathfrak{h}^*$, the corresponding Verma module $\Delta(\lambda)$
and its simple top $L(\lambda)$ are objects in $\mathcal{O}$. Each 
$L(\lambda)$ has an indecomposable projective cover in $\mathcal{O}$, which we
denote by $P(\lambda)$.
The set $\{L(\lambda)\,:\,\lambda\in \mathfrak{h}^*\}$ is a complete and
irredundant list of representatives of the isomorphism classes of simple
objects in $\mathcal{O}$.

Category $\mathcal{O}$ is a subcategory of $\mathcal{Z}$ and inherits from the 
latter the decomposition
\begin{displaymath}
\mathcal{O}\cong\bigoplus_{\chi}  \mathcal{O}_{{}_\chi}.
\end{displaymath}
Category $\mathcal{O}$ is stable under the action of $\mathscr{C}$
and of all projective functors.

\subsection{The integral part of $\mathcal{O}$}\label{s5.2}

Consider the full subcategory $\mathcal{O}^{\mathrm{int}}$ of $\mathcal{O}$
consisting of all modules with integral supports, that is,
consisting of all $M\in\mathcal{O}$ such that $\mathrm{supp}(M)\subset \Lambda$.
We will call $\lambda=(\lambda_1,\lambda_2)$
\begin{itemize}
\item a {\em top weight} provided that $\lambda_1,\lambda_2\geq 0$;
\item a {\em middle weight} provided that ($\lambda_1<0$
and $\lambda_2\geq 0$) or ($\lambda_1\geq 0$
and $\lambda_2< 0$);
\item a {\em bottom weight} provided that $\lambda_1,\lambda_2< 0$.
\end{itemize}
Note that 
\begin{itemize}
\item the top weights are exactly the highest weights of simple
finite dimensional modules, that is simple objects in $\mathscr{C}$;
\item the bottom weights are exactly the highest weights of 
(integral) simple Verma modules.
\end{itemize}
The dot-orbit of an integral $\lambda$ may have $6$, $3$ or $1$ element.
In the first case, $\lambda$ is {\em regular}, otherwise it is {\em singular}.
In Figure~\ref{fig4}, one can find a depiction of integral weights with 
{\color{magenta}top}, {\color{teal}middle} and 
{\color{blue}bottom} weights highlighted with different colors;
the areas with regular weights shaded, and singular weights lying on dotted lines
(the latter are the reflection hyperplanes for the dot action of $W$).

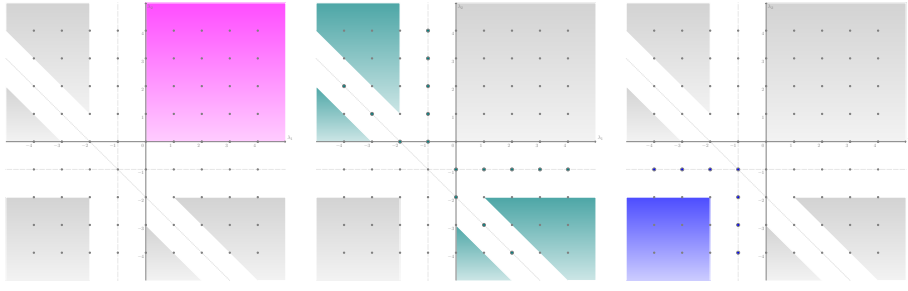
\begin{figure}
\resizebox{12cm}{!}{
\begin{tikzpicture}
\shadedraw[color=white,top color=lightgray!70,bottom color = lightgray!20]  (0,0) --
(0,6) --  (6,6) -- (6,0);
\shadedraw[color=white,top color=lightgray!70,bottom color = lightgray!20]  (0,10) --
(0,14) --  (4,10);
\shadedraw[color=white,top color=lightgray!70,bottom color = lightgray!20]  (0,18) --
(0,20) --  (6,20) -- (6,12);
\shadedraw[color=white,top color=magenta!70,bottom color = magenta!20]  (10,10) --
(10,20) --  (20,20) -- (20,10);
\shadedraw[color=white,top color=lightgray!70,bottom color = lightgray!20]  (12,6) --
(20,6) --  (20,0) -- (18,0);
\shadedraw[color=white,top color=lightgray!70,bottom color = lightgray!20]  (10,0) --
(10,4) --  (14,0);
\draw[gray, thin,  dotted] (0,8) -- (20,8);
\draw[gray, thin,  dotted] (8,0) -- (8,20);
\draw[gray, thin,  dotted] (0,16) -- (16,0);
\draw[gray, thin,  ->] (0,10) -- (20,10) node[anchor=south west] {\large$\lambda_1$};
\draw[gray, thin,  ->] (10,00) -- (10,20) node[anchor=north west] {\large$\lambda_2$};
\draw[gray,fill=gray] (10,10) circle (.3ex) node[anchor=north east] {\color{gray}$0$};
\draw[gray,fill=gray] (12,10) circle (.3ex) node[anchor=north east] {\color{gray}$1$};
\draw[gray,fill=gray] (14,10) circle (.3ex) node[anchor=north east] {\color{gray}$2$};
\draw[gray,fill=gray] (16,10) circle (.3ex) node[anchor=north east] {\color{gray}$3$};
\draw[gray,fill=gray] (18,10) circle (.3ex) node[anchor=north east] {\color{gray}$4$};
\draw[gray,fill=gray] (8,10) circle (.3ex) node[anchor=north east] {\color{gray}$-1$};
\draw[gray,fill=gray] (6,10) circle (.3ex) node[anchor=north east] {\color{gray}$-2$};
\draw[gray,fill=gray] (4,10) circle (.3ex) node[anchor=north east] {\color{gray}$-3$};
\draw[gray,fill=gray] (2,10) circle (.3ex) node[anchor=north east] {\color{gray}$-4$};
\draw[gray,fill=gray] (10,12) circle (.3ex) node[anchor=north east] {\color{gray}$1$};
\draw[gray,fill=gray] (10,14) circle (.3ex) node[anchor=north east] {\color{gray}$2$};
\draw[gray,fill=gray] (10,16) circle (.3ex) node[anchor=north east] {\color{gray}$3$};
\draw[gray,fill=gray] (10,18) circle (.3ex) node[anchor=north east] {\color{gray}$4$};
\draw[gray,fill=gray] (10,8) circle (.3ex) node[anchor=north east] {\color{gray}$-1$};
\draw[gray,fill=gray] (10,6) circle (.3ex) node[anchor=north east] {\color{gray}$-2$};
\draw[gray,fill=gray] (10,4) circle (.3ex) node[anchor=north east] {\color{gray}$-3$};
\draw[gray,fill=gray] (10,2) circle (.3ex) node[anchor=north east] {\color{gray}$-4$};
\draw[gray,fill=gray] (2,2) circle (.3ex);
\draw[gray,fill=gray] (2,4) circle (.3ex);
\draw[gray,fill=gray] (2,6) circle (.3ex);
\draw[gray,fill=gray] (2,8) circle (.3ex);
\draw[gray,fill=gray] (2,10) circle (.3ex);
\draw[gray,fill=gray] (2,12) circle (.3ex);
\draw[gray,fill=gray] (2,14) circle (.3ex);
\draw[gray,fill=gray] (2,16) circle (.3ex);
\draw[gray,fill=gray] (2,18) circle (.3ex);
\draw[gray,fill=gray] (4,2) circle (.3ex);
\draw[gray,fill=gray] (4,4) circle (.3ex);
\draw[gray,fill=gray] (4,6) circle (.3ex);
\draw[gray,fill=gray] (4,8) circle (.3ex);
\draw[gray,fill=gray] (4,10) circle (.3ex);
\draw[gray,fill=gray] (4,12) circle (.3ex);
\draw[gray,fill=gray] (4,14) circle (.3ex);
\draw[gray,fill=gray] (4,16) circle (.3ex);
\draw[gray,fill=gray] (4,18) circle (.3ex);
\draw[gray,fill=gray] (6,2) circle (.3ex);
\draw[gray,fill=gray] (6,4) circle (.3ex);
\draw[gray,fill=gray] (6,6) circle (.3ex);
\draw[gray,fill=gray] (6,8) circle (.3ex);
\draw[gray,fill=gray] (6,10) circle (.3ex);
\draw[gray,fill=gray] (6,12) circle (.3ex);
\draw[gray,fill=gray] (6,14) circle (.3ex);
\draw[gray,fill=gray] (6,16) circle (.3ex);
\draw[gray,fill=gray] (6,18) circle (.3ex);
\draw[gray,fill=gray] (8,2) circle (.3ex);
\draw[gray,fill=gray] (8,4) circle (.3ex);
\draw[gray,fill=gray] (8,6) circle (.3ex);
\draw[gray,fill=gray] (8,8) circle (.3ex);
\draw[gray,fill=gray] (8,10) circle (.3ex);
\draw[gray,fill=gray] (8,12) circle (.3ex);
\draw[gray,fill=gray] (8,14) circle (.3ex);
\draw[gray,fill=gray] (8,16) circle (.3ex);
\draw[gray,fill=gray] (8,18) circle (.3ex);
\draw[gray,fill=gray] (12,2) circle (.3ex);
\draw[gray,fill=gray] (12,4) circle (.3ex);
\draw[gray,fill=gray] (12,6) circle (.3ex);
\draw[gray,fill=gray] (12,8) circle (.3ex);
\draw[gray,fill=gray] (12,10) circle (.3ex);
\draw[gray,fill=gray] (12,12) circle (.3ex);
\draw[gray,fill=gray] (12,14) circle (.3ex);
\draw[gray,fill=gray] (12,16) circle (.3ex);
\draw[gray,fill=gray] (12,18) circle (.3ex);
\draw[gray,fill=gray] (14,2) circle (.3ex);
\draw[gray,fill=gray] (14,4) circle (.3ex);
\draw[gray,fill=gray] (14,6) circle (.3ex);
\draw[gray,fill=gray] (14,8) circle (.3ex);
\draw[gray,fill=gray] (14,10) circle (.3ex);
\draw[gray,fill=gray] (14,12) circle (.3ex);
\draw[gray,fill=gray] (14,14) circle (.3ex);
\draw[gray,fill=gray] (14,16) circle (.3ex);
\draw[gray,fill=gray] (14,18) circle (.3ex);
\draw[gray,fill=gray] (16,2) circle (.3ex);
\draw[gray,fill=gray] (16,4) circle (.3ex);
\draw[gray,fill=gray] (16,6) circle (.3ex);
\draw[gray,fill=gray] (16,8) circle (.3ex);
\draw[gray,fill=gray] (16,10) circle (.3ex);
\draw[gray,fill=gray] (16,12) circle (.3ex);
\draw[gray,fill=gray] (16,14) circle (.3ex);
\draw[gray,fill=gray] (16,16) circle (.3ex);
\draw[gray,fill=gray] (16,18) circle (.3ex);
\draw[gray,fill=gray] (18,2) circle (.3ex);
\draw[gray,fill=gray] (18,4) circle (.3ex);
\draw[gray,fill=gray] (18,6) circle (.3ex);
\draw[gray,fill=gray] (18,8) circle (.3ex);
\draw[gray,fill=gray] (18,10) circle (.3ex);
\draw[gray,fill=gray] (18,12) circle (.3ex);
\draw[gray,fill=gray] (18,14) circle (.3ex);
\draw[gray,fill=gray] (18,16) circle (.3ex);
\draw[gray,fill=gray] (18,18) circle (.3ex);
\end{tikzpicture}
\qquad\qquad
\begin{tikzpicture}
\shadedraw[color=white,top color=lightgray!70,bottom color = lightgray!20]  (0,0) --
(0,6) --  (6,6) -- (6,0);
\shadedraw[color=white,top color=teal!70,bottom color = teal!20]  (0,10) --
(0,14) --  (4,10);
\shadedraw[color=white,top color=teal!70,bottom color = teal!20]  (0,18) --
(0,20) --  (6,20) -- (6,12);
\shadedraw[color=white,top color=lightgray!70,bottom color = lightgray!20]  (10,10) --
(10,20) --  (20,20) -- (20,10);
\shadedraw[color=white,top color=teal!70,bottom color = teal!20]  (12,6) --
(20,6) --  (20,0) -- (18,0);
\shadedraw[color=white,top color=teal!70,bottom color = teal!20]  (10,0) --
(10,4) --  (14,0);
\draw[gray, thin,  dotted] (0,8) -- (20,8);
\draw[gray, thin,  dotted] (8,0) -- (8,20);
\draw[gray, thin,  dotted] (0,16) -- (16,0);
\draw[gray, thin,  ->] (0,10) -- (20,10) node[anchor=south west] {\large$\lambda_1$};
\draw[gray, thin,  ->] (10,00) -- (10,20) node[anchor=north west] {\large$\lambda_2$};
\draw[gray,fill=gray] (10,10) circle (.3ex) node[anchor=north east] {\color{gray}$0$};
\draw[gray,fill=gray] (12,10) circle (.3ex) node[anchor=north east] {\color{gray}$1$};
\draw[gray,fill=gray] (14,10) circle (.3ex) node[anchor=north east] {\color{gray}$2$};
\draw[gray,fill=gray] (16,10) circle (.3ex) node[anchor=north east] {\color{gray}$3$};
\draw[gray,fill=gray] (18,10) circle (.3ex) node[anchor=north east] {\color{gray}$4$};
\draw[gray,fill=gray] (8,10) circle (.3ex) node[anchor=north east] {\color{gray}$-1$};
\draw[gray,fill=gray] (6,10) circle (.3ex) node[anchor=north east] {\color{gray}$-2$};
\draw[gray,fill=gray] (4,10) circle (.3ex) node[anchor=north east] {\color{gray}$-3$};
\draw[gray,fill=gray] (2,10) circle (.3ex) node[anchor=north east] {\color{gray}$-4$};
\draw[gray,fill=gray] (10,12) circle (.3ex) node[anchor=north east] {\color{gray}$1$};
\draw[gray,fill=gray] (10,14) circle (.3ex) node[anchor=north east] {\color{gray}$2$};
\draw[gray,fill=gray] (10,16) circle (.3ex) node[anchor=north east] {\color{gray}$3$};
\draw[gray,fill=gray] (10,18) circle (.3ex) node[anchor=north east] {\color{gray}$4$};
\draw[gray,fill=teal] (10,8) circle (.7ex) node[anchor=north east] {\color{gray}$-1$};
\draw[gray,fill=teal] (10,6) circle (.7ex) node[anchor=north east] {\color{gray}$-2$};
\draw[gray,fill=gray] (10,4) circle (.3ex) node[anchor=north east] {\color{gray}$-3$};
\draw[gray,fill=gray] (10,2) circle (.3ex) node[anchor=north east] {\color{gray}$-4$};
\draw[gray,fill=gray] (2,2) circle (.3ex);
\draw[gray,fill=gray] (2,4) circle (.3ex);
\draw[gray,fill=gray] (2,6) circle (.3ex);
\draw[gray,fill=gray] (2,8) circle (.3ex);
\draw[gray,fill=gray] (2,10) circle (.3ex);
\draw[gray,fill=gray] (2,12) circle (.3ex);
\draw[gray,fill=teal] (2,14) circle (.7ex);
\draw[gray,fill=gray] (2,16) circle (.3ex);
\draw[gray,fill=gray] (2,18) circle (.3ex);
\draw[gray,fill=gray] (4,2) circle (.3ex);
\draw[gray,fill=gray] (4,4) circle (.3ex);
\draw[gray,fill=gray] (4,6) circle (.3ex);
\draw[gray,fill=gray] (4,8) circle (.3ex);
\draw[gray,fill=gray] (4,10) circle (.3ex);
\draw[gray,fill=teal] (4,12) circle (.7ex);
\draw[gray,fill=gray] (4,14) circle (.3ex);
\draw[gray,fill=gray] (4,16) circle (.3ex);
\draw[gray,fill=gray] (4,18) circle (.3ex);
\draw[gray,fill=gray] (6,2) circle (.3ex);
\draw[gray,fill=gray] (6,4) circle (.3ex);
\draw[gray,fill=gray] (6,6) circle (.3ex);
\draw[gray,fill=gray] (6,8) circle (.3ex);
\draw[gray,fill=teal] (6,10) circle (.7ex);
\draw[gray,fill=gray] (6,12) circle (.3ex);
\draw[gray,fill=gray] (6,14) circle (.3ex);
\draw[gray,fill=gray] (6,16) circle (.3ex);
\draw[gray,fill=gray] (6,18) circle (.3ex);
\draw[gray,fill=gray] (8,2) circle (.3ex);
\draw[gray,fill=gray] (8,4) circle (.3ex);
\draw[gray,fill=gray] (8,6) circle (.3ex);
\draw[gray,fill=gray] (8,8) circle (.3ex);
\draw[gray,fill=teal] (8,10) circle (.7ex);
\draw[gray,fill=teal] (8,12) circle (.7ex);
\draw[gray,fill=teal] (8,14) circle (.7ex);
\draw[gray,fill=teal] (8,16) circle (.7ex);
\draw[gray,fill=teal] (8,18) circle (.7ex);
\draw[gray,fill=gray] (12,2) circle (.3ex);
\draw[gray,fill=teal] (12,4) circle (.7ex);
\draw[gray,fill=gray] (12,6) circle (.3ex);
\draw[gray,fill=teal] (12,8) circle (.7ex);
\draw[gray,fill=gray] (12,10) circle (.3ex);
\draw[gray,fill=gray] (12,12) circle (.3ex);
\draw[gray,fill=gray] (12,14) circle (.3ex);
\draw[gray,fill=gray] (12,16) circle (.3ex);
\draw[gray,fill=gray] (12,18) circle (.3ex);
\draw[gray,fill=teal] (14,2) circle (.7ex);
\draw[gray,fill=gray] (14,4) circle (.3ex);
\draw[gray,fill=gray] (14,6) circle (.3ex);
\draw[gray,fill=teal] (14,8) circle (.7ex);
\draw[gray,fill=gray] (14,10) circle (.3ex);
\draw[gray,fill=gray] (14,12) circle (.3ex);
\draw[gray,fill=gray] (14,14) circle (.3ex);
\draw[gray,fill=gray] (14,16) circle (.3ex);
\draw[gray,fill=gray] (14,18) circle (.3ex);
\draw[gray,fill=gray] (16,2) circle (.3ex);
\draw[gray,fill=gray] (16,4) circle (.3ex);
\draw[gray,fill=gray] (16,6) circle (.3ex);
\draw[gray,fill=teal] (16,8) circle (.7ex);
\draw[gray,fill=gray] (16,10) circle (.3ex);
\draw[gray,fill=gray] (16,12) circle (.3ex);
\draw[gray,fill=gray] (16,14) circle (.3ex);
\draw[gray,fill=gray] (16,16) circle (.3ex);
\draw[gray,fill=gray] (16,18) circle (.3ex);
\draw[gray,fill=gray] (18,2) circle (.3ex);
\draw[gray,fill=gray] (18,4) circle (.3ex);
\draw[gray,fill=gray] (18,6) circle (.3ex);
\draw[gray,fill=teal] (18,8) circle (.7ex);
\draw[gray,fill=gray] (18,10) circle (.3ex);
\draw[gray,fill=gray] (18,12) circle (.3ex);
\draw[gray,fill=gray] (18,14) circle (.3ex);
\draw[gray,fill=gray] (18,16) circle (.3ex);
\draw[gray,fill=gray] (18,18) circle (.3ex);
\end{tikzpicture}
\qquad\qquad
\begin{tikzpicture}
\shadedraw[color=white,top color=blue!70,bottom color = blue!20]  (0,0) --
(0,6) --  (6,6) -- (6,0);
\shadedraw[color=white,top color=lightgray!70,bottom color = lightgray!20]  (0,10) --
(0,14) --  (4,10);
\shadedraw[color=white,top color=lightgray!70,bottom color = lightgray!20]  (0,18) --
(0,20) --  (6,20) -- (6,12);
\shadedraw[color=white,top color=lightgray!70,bottom color = lightgray!20]  (10,10) --
(10,20) --  (20,20) -- (20,10);
\shadedraw[color=white,top color=lightgray!70,bottom color = lightgray!20]  (12,6) --
(20,6) --  (20,0) -- (18,0);
\shadedraw[color=white,top color=lightgray!70,bottom color = lightgray!20]  (10,0) --
(10,4) --  (14,0);
\draw[gray, thin,  dotted] (0,8) -- (20,8);
\draw[gray, thin,  dotted] (8,0) -- (8,20);
\draw[gray, thin,  dotted] (0,16) -- (16,0);
\draw[gray, thin,  ->] (0,10) -- (20,10) node[anchor=south west] {\large$\lambda_1$};
\draw[gray, thin,  ->] (10,00) -- (10,20) node[anchor=north west] {\large$\lambda_2$};
\draw[gray,fill=gray] (10,10) circle (.3ex) node[anchor=north east] {\color{gray}$0$};
\draw[gray,fill=gray] (12,10) circle (.3ex) node[anchor=north east] {\color{gray}$1$};
\draw[gray,fill=gray] (14,10) circle (.3ex) node[anchor=north east] {\color{gray}$2$};
\draw[gray,fill=gray] (16,10) circle (.3ex) node[anchor=north east] {\color{gray}$3$};
\draw[gray,fill=gray] (18,10) circle (.3ex) node[anchor=north east] {\color{gray}$4$};
\draw[gray,fill=gray] (8,10) circle (.3ex) node[anchor=north east] {\color{gray}$-1$};
\draw[gray,fill=gray] (6,10) circle (.3ex) node[anchor=north east] {\color{gray}$-2$};
\draw[gray,fill=gray] (4,10) circle (.3ex) node[anchor=north east] {\color{gray}$-3$};
\draw[gray,fill=gray] (2,10) circle (.3ex) node[anchor=north east] {\color{gray}$-4$};
\draw[gray,fill=gray] (10,12) circle (.3ex) node[anchor=north east] {\color{gray}$1$};
\draw[gray,fill=gray] (10,14) circle (.3ex) node[anchor=north east] {\color{gray}$2$};
\draw[gray,fill=gray] (10,16) circle (.3ex) node[anchor=north east] {\color{gray}$3$};
\draw[gray,fill=gray] (10,18) circle (.3ex) node[anchor=north east] {\color{gray}$4$};
\draw[gray,fill=gray] (10,8) circle (.3ex) node[anchor=north east] {\color{gray}$-1$};
\draw[gray,fill=gray] (10,6) circle (.3ex) node[anchor=north east] {\color{gray}$-2$};
\draw[gray,fill=gray] (10,4) circle (.3ex) node[anchor=north east] {\color{gray}$-3$};
\draw[gray,fill=gray] (10,2) circle (.3ex) node[anchor=north east] {\color{gray}$-4$};
\draw[gray,fill=gray] (2,2) circle (.3ex);
\draw[gray,fill=gray] (2,4) circle (.3ex);
\draw[gray,fill=gray] (2,6) circle (.3ex);
\draw[gray,fill=blue] (2,8) circle (.7ex);
\draw[gray,fill=gray] (2,10) circle (.3ex);
\draw[gray,fill=gray] (2,12) circle (.3ex);
\draw[gray,fill=gray] (2,14) circle (.3ex);
\draw[gray,fill=gray] (2,16) circle (.3ex);
\draw[gray,fill=gray] (2,18) circle (.3ex);
\draw[gray,fill=gray] (4,2) circle (.3ex);
\draw[gray,fill=gray] (4,4) circle (.3ex);
\draw[gray,fill=gray] (4,6) circle (.3ex);
\draw[gray,fill=blue] (4,8) circle (.7ex);
\draw[gray,fill=gray] (4,10) circle (.3ex);
\draw[gray,fill=gray] (4,12) circle (.3ex);
\draw[gray,fill=gray] (4,14) circle (.3ex);
\draw[gray,fill=gray] (4,16) circle (.3ex);
\draw[gray,fill=gray] (4,18) circle (.3ex);
\draw[gray,fill=gray] (6,2) circle (.3ex);
\draw[gray,fill=gray] (6,4) circle (.3ex);
\draw[gray,fill=gray] (6,6) circle (.3ex);
\draw[gray,fill=blue] (6,8) circle (.7ex);
\draw[gray,fill=gray] (6,10) circle (.3ex);
\draw[gray,fill=gray] (6,12) circle (.3ex);
\draw[gray,fill=gray] (6,14) circle (.3ex);
\draw[gray,fill=gray] (6,16) circle (.3ex);
\draw[gray,fill=gray] (6,18) circle (.3ex);
\draw[gray,fill=blue] (8,2) circle (.7ex);
\draw[gray,fill=blue] (8,4) circle (.7ex);
\draw[gray,fill=blue] (8,6) circle (.7ex);
\draw[gray,fill=blue] (8,8) circle (.7ex);
\draw[gray,fill=gray] (8,10) circle (.3ex);
\draw[gray,fill=gray] (8,12) circle (.3ex);
\draw[gray,fill=gray] (8,14) circle (.3ex);
\draw[gray,fill=gray] (8,16) circle (.3ex);
\draw[gray,fill=gray] (8,18) circle (.3ex);
\draw[gray,fill=gray] (12,2) circle (.3ex);
\draw[gray,fill=gray] (12,4) circle (.3ex);
\draw[gray,fill=gray] (12,6) circle (.3ex);
\draw[gray,fill=gray] (12,8) circle (.3ex);
\draw[gray,fill=gray] (12,10) circle (.3ex);
\draw[gray,fill=gray] (12,12) circle (.3ex);
\draw[gray,fill=gray] (12,14) circle (.3ex);
\draw[gray,fill=gray] (12,16) circle (.3ex);
\draw[gray,fill=gray] (12,18) circle (.3ex);
\draw[gray,fill=gray] (14,2) circle (.3ex);
\draw[gray,fill=gray] (14,4) circle (.3ex);
\draw[gray,fill=gray] (14,6) circle (.3ex);
\draw[gray,fill=gray] (14,8) circle (.3ex);
\draw[gray,fill=gray] (14,10) circle (.3ex);
\draw[gray,fill=gray] (14,12) circle (.3ex);
\draw[gray,fill=gray] (14,14) circle (.3ex);
\draw[gray,fill=gray] (14,16) circle (.3ex);
\draw[gray,fill=gray] (14,18) circle (.3ex);
\draw[gray,fill=gray] (16,2) circle (.3ex);
\draw[gray,fill=gray] (16,4) circle (.3ex);
\draw[gray,fill=gray] (16,6) circle (.3ex);
\draw[gray,fill=gray] (16,8) circle (.3ex);
\draw[gray,fill=gray] (16,10) circle (.3ex);
\draw[gray,fill=gray] (16,12) circle (.3ex);
\draw[gray,fill=gray] (16,14) circle (.3ex);
\draw[gray,fill=gray] (16,16) circle (.3ex);
\draw[gray,fill=gray] (16,18) circle (.3ex);
\draw[gray,fill=gray] (18,2) circle (.3ex);
\draw[gray,fill=gray] (18,4) circle (.3ex);
\draw[gray,fill=gray] (18,6) circle (.3ex);
\draw[gray,fill=gray] (18,8) circle (.3ex);
\draw[gray,fill=gray] (18,10) circle (.3ex);
\draw[gray,fill=gray] (18,12) circle (.3ex);
\draw[gray,fill=gray] (18,14) circle (.3ex);
\draw[gray,fill=gray] (18,16) circle (.3ex);
\draw[gray,fill=gray] (18,18) circle (.3ex);
\end{tikzpicture}
}
\caption{{\color{magenta}Top}, {\color{teal}middle} and 
{\color{blue}bottom} integral weights}\label{fig4}
\end{figure}

For a regular, integral and dominant $\lambda$ and $w\in W$,
denote $\theta_{\lambda,w\cdot \lambda}$ simply by $\theta_w$.

For each integral $\lambda$, define a module $N(\lambda)$ as follows:
\begin{itemize}
\item $N(\lambda)=L(\lambda)$, if $\lambda$ is a top weight
or a singular middle weight;
\item $N(\lambda)=\theta_{t}L(\lambda)$,
where $t\in\{r,s\}$ is the unique element such that
$\theta_{t}L(\lambda)\neq 0$, if $\lambda$ is a regular middle weight; 
\item $N(\lambda)=P(\lambda)$, if $\lambda$ is a bottom weight.
\end{itemize}

If $\lambda$ is a top weight, then $\mathrm{add}(\mathscr{C}\cdot L(\lambda))=\mathscr{C}$
and this case has already been studied in Section~\ref{s4}. Hence in what
follows we concentrate on middle and bottom $\lambda$.

\subsection{Upper middle weights}\label{s5.3}

A middle weight $\lambda$ will be called an upper middle weight
provided that $\lambda_2\geq 0$ and a lower middle weight
provided that $\lambda_2<0$. In this subsection we fix
an upper middle weight $\lambda$.

Denote by $\mathcal{N}_1$ the additive closure of all 
$N(\mu)$, where $\mu$ is an upper middle weight.

\begin{proposition}\label{prop-s5.3-1}
If $\lambda$ is a singular upper middle weight, then 
$\mathrm{add}(\mathscr{C}\cdot L(\lambda))$ coincides
with $\mathcal{N}_1$, moreover, the latter is a simple
transitive $\mathscr{C}$-module category whose graph
and positive eigenvector are depicted in Figure~\ref{fig5}.
\end{proposition}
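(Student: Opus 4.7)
My strategy is to verify three things in sequence: (i) stability of $\mathcal{N}_1$ under the action of $\mathscr{C}$ together with the equality $\mathrm{add}(\mathscr{C}\cdot L(\lambda))=\mathcal{N}_1$, (ii) simplicity and transitivity, and (iii) identification of the Perron--Frobenius eigenvector.

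First, I would compute $\mathrm{F}\otimes L(\mu)$ and $\mathrm{G}\otimes L(\mu)$ for every upper middle weight $\mu$. The method is to use the weights of $\mathrm{F}$ and $\mathrm{G}$ displayed in Figure~\ref{fig2} together with block decomposition of $\mathcal{O}$ by central characters from Subsection~\ref{s5.1}. Since $\lambda$ and every other singular upper middle $\mu$ lies on exactly one reflection hyperplane for the dot-action of $W$, tensoring $L(\mu)$ with $\mathrm{F}$ splits into a finite direct sum of block summands, each of which is an indecomposable projective functor from Subsection~\ref{s2.7} applied to $L(\mu)$. These summands are either of the form $L(\nu)$ for another singular middle $\nu$, or they are the translation $\theta_t L(\nu)$ off the single wall into a regular block, which is exactly $N(\nu)$ for $\nu$ a regular upper middle weight. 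For a regular upper middle $\mu$ the computation of $\mathrm{F}\otimes N(\mu)=\mathrm{F}\otimes\theta_t L(\mu)$ reduces to the previous case using the commuting isomorphism $\mathrm{F}\otimes_\mathbb{C}\theta_t\cong\theta_t(\mathrm{F}\otimes_\mathbb{C}-)$ (tensoring with a finite-dimensional module commutes with projective functors). The critical verification is geometric: after shifting $\mu+\rho$ by an element of $\mathrm{supp}(\mathrm{F})\cup\mathrm{supp}(\mathrm{G})$ and reflecting across the one active wall, one stays in the upper middle region, so no top weight and no bottom weight summand appears.

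Granted this closure, transitivity follows by reading off from the explicit computations that the shifts by elements of $\mathrm{supp}(\mathrm{F})$ already connect all singular upper middle weights, while translation off the wall reaches every regular upper middle $\mu$ through its $N(\mu)$ summand. This simultaneously identifies $\mathrm{add}(\mathscr{C}\cdot L(\lambda))$ with $\mathcal{N}_1$ and pins down $\Gamma_\mathrm{F}$ up to the labelling in Figure~\ref{fig5}. Simplicity is then a formal consequence of transitivity in our setup (Subsection~\ref{s3.5}) combined with the fact that the unique simple transitive quotient of a transitive category is obtained from the unique maximal $\mathscr{C}$-invariant ideal, which here must be zero because each $N(\mu)$ has a local endomorphism algebra and the radical morphisms between distinct $N(\mu)$ and $N(\nu)$ are detected on the tops of the $\theta_t$-images by Step~1.

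For the eigenvector, Subsection~\ref{s3.7} gives $\mathbf{B}_{L(\lambda)}=(\mathbf{b}(N(\mu)))_\mu$ as the left Perron--Frobenius eigenvector for eigenvalue $3$. All modules in $\mathcal{N}_1$ have the same Gelfand--Kirillov dimension as $L(\lambda)$ by Subsection~\ref{s3.7}, and the last bullet in Subsection~\ref{s2.6} implies $\mathbf{b}(\theta_t L(\mu))$ equals the dimension of a suitable finite-dimensional module times $\mathbf{b}(L(\mu))$, reducing everything to Bernstein numbers of simple highest weight modules, which are computed by the classical character formulas; the values match Figure~\ref{fig5}.

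\textbf{Main obstacle.} The hardest part is the case analysis in the first step: one must keep track of which wall $\mu$ sits on, which projective functor classifies each block summand of $\mathrm{F}\otimes L(\mu)$ via proper pairs, and verify that translation off the wall produces the specific indecomposable $N(\nu)$ rather than another projective functor image. A secondary subtlety is that, since $\mathcal{N}_1$ is not semi-simple, the graph $\Gamma_\mathrm{G}$ need not be the opposite of $\Gamma_\mathrm{F}$, so the action of $\mathrm{G}$ must be computed independently and the two graphs reconciled with the Perron--Frobenius eigenvector from Subsection~\ref{s3.7}.
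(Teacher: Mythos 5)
Your broad strategy (translate on/off walls using projective functors, read off $\Gamma_{\mathrm F}$ by a case analysis, invoke Perron--Frobenius for the eigenvector) is the right one and matches the paper's. However there are two real gaps.

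\textbf{Simplicity.} You write that ``simplicity is then a formal consequence of transitivity in our setup,'' but Subsection~\ref{s3.5} only says a locally finitary transitive $\mathscr{C}$-module category has a \emph{unique simple transitive quotient}; it does not say it is itself simple. Your supporting remarks do not supply the missing content: every indecomposable object in a Krull--Schmidt category has a local endomorphism algebra, so that observation is vacuous, and ``radical morphisms between distinct $N(\mu)$ and $N(\nu)$ are detected on the tops of the $\theta_t$-images'' is not a proof. The actual argument requires showing that any non-zero $\mathscr{C}$-stable ideal contains an identity morphism. The way to do this is to take a non-zero morphism $\varphi:N(\mu)\to N(\mu')$, translate it to a wall on which $L(\mu)$ survives, observe that the corresponding singular block is semi-simple so the translated morphism generates the identity of the simple object there, and then translate back and use transitivity. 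You need to make this translation-to-the-wall step explicit.

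\textbf{Closure and the case analysis.} Your ``critical verification'' asserts that after shifting $\mu+\rho$ by an element of $\mathrm{supp}(\mathrm F)\cup\mathrm{supp}(\mathrm G)$ and reflecting across the single active wall, one stays in the upper middle region. This is too naive: when $\mu=(-i,0)$ with $i\ge 3$, the shift $\mu+(0,-1)$ is \emph{not} an upper middle weight (it is not a middle weight at all). The correct bookkeeping is through the classification of indecomposable projective functors by dot-orbits of pairs, which forces you to look at the full dot $W$-orbit of the shifted weight, not just a single wall reflection; in the problematic cases the dot-orbit contains an upper middle weight and the projective functor that appears is a translation to a \emph{different} wall. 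Without this refinement your closure argument would fail for precisely those $\mu$ with $\mu_2=0$, $\mu_1\le -3$, which is where the paper has to do the most work.

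\textbf{Eigenvector.} Appealing to $\mathbf{B}_L=(\mathbf{b}(N(\mu)))_\mu$ from Subsection~\ref{s3.7} is a legitimate route, and by uniqueness of the Perron--Frobenius eigenvector it suffices to show the numbers in Figure~\ref{fig5} satisfy the eigenvalue relation. But you assert ``the values match Figure~\ref{fig5}'' without any computation, and computing Bernstein numbers of the $N(\mu)$ directly is not trivial. The paper sidesteps this by instead using the maximal weight-space dimension $\mathtt d(\cdot)$, exploiting the facts that upper middle $L(\nu)$ have uniformly bounded weight spaces and that the negative root vectors $\mathfrak g_{-\alpha}$, $\mathfrak g_{-\alpha-\beta}$ act injectively, which gives $\mathtt d(L(\nu)\otimes L((1,0)))=3\,\mathtt d(L(\nu))$ immediately; the $\mathtt d(N(\mu))$ are then computed from Dixmier's embedding theorem for Verma modules. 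Either invariant works, but you must actually compute one of them.
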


\begin{figure}
\resizebox{10cm}{!}{
\begin{tikzpicture}
\draw[black, thick,  ->] (0.2,0) -- (1.8,0);
\draw[black, thick,  ->] (0.2,2) -- (1.8,2);
\draw[black, thick,  ->] (0.2,6.1) to [out=40,in=140]  (1.8,6.1);
\draw[black, thick,  ->] (0.2,5.9) to [out=-40,in=-140]  (1.8,5.9);
\draw[black, thick,  ->] (0.2,4) -- (1.8,4);
\draw[black, thick,  ->] (8,9.8) -- (8,8.2);
\draw[black, thick,  ->] (6,9.8) -- (6,8.2);
\draw[black, thick,  ->] (4,9.8) -- (4,8.2);
\draw[black, thick,  ->] (2,9.8) -- (2,8.2);
\draw[black, thick,  ->] (10,9.8) -- (10,8.2);
\draw[black, thick,  ->] (2.2,0) -- (3.8,0);
\draw[black, thick,  ->] (2.2,2) -- (3.8,2);
\draw[black, thick,  ->] (2.2,8) -- (3.8,8);
\draw[black, thick,  ->] (2.2,4.1) to [out=40,in=140]  (3.8,4.1);
\draw[black, thick,  ->] (2.2,3.9) to [out=-40,in=-140]  (3.8,3.9);
\draw[black, thick,  ->] (4.2,0) -- (5.8,0);
\draw[black, thick,  ->] (4.2,6) -- (5.8,6);
\draw[black, thick,  ->] (4.2,8) -- (5.8,8);
\draw[black, thick,  ->] (4.2,2.1) to [out=40,in=140]  (5.8,2.1);
\draw[black, thick,  ->] (4.2,1.9) to [out=-40,in=-140]  (5.8,1.9);
\draw[black, thick,  ->] (6.2,4) -- (7.8,4);
\draw[black, thick,  ->] (6.2,6) -- (7.8,6);
\draw[black, thick,  ->] (6.2,8) -- (7.8,8);
\draw[black, thick,  ->] (6.2,0.1) to [out=40,in=140]  (7.8,0.1);
\draw[black, thick,  ->] (6.2,-0.1) to [out=-40,in=-140]  (7.8,-0.1);
\draw[black, thick,  ->] (8.2,0) -- (9.8,0);
\draw[black, thick,  ->] (8.2,2.1) to [out=40,in=140]  (9.8,2.1);
\draw[black, thick,  ->] (8.2,1.9) to [out=-40,in=-140]  (9.8,1.9);
\draw[black, thick,  ->] (8.2,4.1) to [out=40,in=140]  (9.8,4.1);
\draw[black, thick,  ->] (8.2,3.9) to [out=-40,in=-140]  (9.8,3.9);
\draw[black, thick,  ->] (8.2,6.1) to [out=40,in=140]  (9.8,6.1);
\draw[black, thick,  ->] (8.2,5.9) to [out=-40,in=-140]  (9.8,5.9);
\draw[black, thick,  ->] (8.2,8.1) to [out=40,in=140]  (9.8,8.1);
\draw[black, thick,  ->] (8.2,7.9) to [out=-40,in=-140]  (9.8,7.9);
\draw[black, thick,  ->] (2,1.8) -- (2,0.2);
\draw[black, thick,  ->] (4,1.8) -- (4,0.2);
\draw[black, thick,  ->] (6,1.8) -- (6,0.2);
\draw[black, thick,  ->] (8,1.8) -- (8,0.2);
\draw[black, thick,  ->] (10,1.8) -- (10,0.2);
\draw[black, thick,  ->] (2,3.8) -- (2,2.2);
\draw[black, thick,  ->] (4,3.8) -- (4,2.2);
\draw[black, thick,  ->] (6,3.8) -- (6,2.2);
\draw[black, thick,  ->] (8,3.8) -- (8,2.2);
\draw[black, thick,  ->] (10,3.8) -- (10,2.2);
\draw[black, thick,  ->] (2,5.8) -- (2,4.2);
\draw[black, thick,  ->] (4,5.8) -- (4,4.2);
\draw[black, thick,  ->] (6,5.8) -- (6,4.2);
\draw[black, thick,  ->] (8,5.8) -- (8,4.2);
\draw[black, thick,  ->] (10,5.8) -- (10,4.2);
\draw[black, thick,  ->] (2,7.8) -- (2,6.2);
\draw[black, thick,  ->] (4,7.8) -- (4,6.2);
\draw[black, thick,  ->] (6,7.8) -- (6,6.2);
\draw[black, thick,  ->] (8,7.8) -- (8,6.2);
\draw[black, thick,  ->] (10,7.8) -- (10,6.2);
\draw[black, thick,  ->] (1.8,0.2) -- (0.2,1.8);
\draw[black, thick,  ->] (3.8,0.2) -- (2.2,1.8);
\draw[black, thick,  ->] (5.8,0.2) -- (4.2,1.8);
\draw[black, thick,  ->] (7.8,0.2) -- (6.2,1.8);
\draw[black, thick,  ->] (9.8,0.2) -- (8.2,1.8);
\draw[black, thick,  ->] (1.8,2.2) -- (0.2,3.8);
\draw[black, thick,  ->] (3.8,2.2) -- (2.2,3.8);
\draw[black, thick,  ->] (5.8,2.2) -- (4.2,3.8);
\draw[black, thick,  ->] (7.8,2.2) -- (6.2,3.8);
\draw[black, thick,  ->] (9.8,2.2) -- (8.2,3.8);
\draw[black, thick,  ->] (1.8,4.2) -- (0.2,5.8);
\draw[black, thick,  ->] (3.8,4.2) -- (2.2,5.8);
\draw[black, thick,  ->] (5.8,4.2) -- (4.2,5.8);
\draw[black, thick,  ->] (7.8,4.2) -- (6.2,5.8);
\draw[black, thick,  ->] (9.8,4.2) -- (8.2,5.8);
\draw[black, thick,  ->] (1.8,6.2) -- (0.2,7.8);
\draw[black, thick,  ->] (3.8,6.2) -- (2.2,7.8);
\draw[black, thick,  ->] (5.8,6.2) -- (4.2,7.8);
\draw[black, thick,  ->] (7.8,6.2) -- (6.2,7.8);
\draw[black, thick,  ->] (9.8,6.2) -- (8.2,7.8);
\draw[black, thick,  ->] (1.8,8.2) -- (0.2,9.8);
\draw[black, thick,  ->] (3.8,8.2) -- (2.2,9.8);
\draw[black, thick,  ->] (5.8,8.2) -- (4.2,9.8);
\draw[black, thick,  ->] (7.8,8.2) -- (6.2,9.8);
\draw[black, thick,  ->] (9.8,8.2) -- (8.2,9.8);
\draw[black, thick,  ->] (4.2,0.2) to [out=60,in=230]  (9.8,3.6);
\draw[black, thick,  ->] (6.2,0.3) to [out=50,in=230]  (9.8,1.6);
\draw[black, thick,  ->] (2.2,0.3) to [out=70,in=240]  (9.8,5.6);
\draw[black, thick,  ->] (0.2,0.3) to [out=70,in=240]  (9.8,7.6);
\draw[black, thin,  -] (8,9.8) -- (8,9.8) node[anchor= south] {\large$\vdots$};
\draw[black, thin,  -] (6,9.8) -- (6,9.8) node[anchor= south] {\large$\vdots$};
\draw[black, thin,  -] (4,9.8) -- (4,9.8) node[anchor= south] {\large$\vdots$};
\draw[black, thin,  -] (2,9.8) -- (2,9.8) node[anchor= south] {\large$\vdots$};
\draw[black, thin,  -] (10,9.8) -- (10,9.8) node[anchor= south] {\large$\vdots$};
\draw[black, thin,  -] (0,0) -- (0,0) node[anchor= east] {\large$\cdots$};
\draw[black, thin,  -] (0,2) -- (0,2) node[anchor= east] {\large$\cdots$};
\draw[black, thin,  -] (0,4) -- (0,4) node[anchor= east] {\large$\cdots$};
\draw[black, thin,  -] (0,6) -- (0,6) node[anchor= east] {\large$\cdots$};
\draw[black, thin,  -] (0,8) -- (0,8) node[anchor= east] {\large$\cdots$};
\draw[black, thin,  -] (0,10) -- (0,10) node[anchor= south east] {\large$\ddots$};
\draw[gray,fill=gray] (10,0) circle (.9ex);
\draw[gray,fill=gray] (2,0) circle (.9ex);
\draw[gray,fill=gray] (4,0) circle (.9ex);
\draw[gray,fill=gray] (6,0) circle (.9ex);
\draw[gray,fill=gray] (8,0) circle (.9ex);
\draw[gray,fill=gray] (10,2) circle (.9ex);
\draw[gray,fill=gray] (2,2) circle (.9ex);
\draw[gray,fill=gray] (4,2) circle (.9ex);
\draw[gray,fill=gray] (6,2) circle (.9ex);
\draw[gray,fill=gray] (8,2) circle (.9ex);
\draw[gray,fill=gray] (10,4) circle (.9ex);
\draw[gray,fill=gray] (2,4) circle (.9ex);
\draw[gray,fill=gray] (4,4) circle (.9ex);
\draw[gray,fill=gray] (6,4) circle (.9ex);
\draw[gray,fill=gray] (8,4) circle (.9ex);
\draw[gray,fill=gray] (10,6) circle (.9ex);
\draw[gray,fill=gray] (2,6) circle (.9ex);
\draw[gray,fill=gray] (4,6) circle (.9ex);
\draw[gray,fill=gray] (6,6) circle (.9ex);
\draw[gray,fill=gray] (8,6) circle (.9ex);
\draw[gray,fill=gray] (10,8) circle (.9ex);
\draw[gray,fill=gray] (2,8) circle (.9ex);
\draw[gray,fill=gray] (4,8) circle (.9ex);
\draw[gray,fill=gray] (6,8) circle (.9ex);
\draw[gray,fill=gray] (8,8) circle (.9ex);
\end{tikzpicture}
\qquad\qquad
\begin{tikzpicture}
\draw[lightgray, thin,  ->] (0.2,0) -- (1.8,0);
\draw[lightgray, thin,  ->] (0.2,2) -- (1.8,2);
\draw[lightgray, thin,  ->] (0.2,6.1) to [out=40,in=140]  (1.8,6.1);
\draw[lightgray, thin,  ->] (0.2,5.9) to [out=-40,in=-140]  (1.8,5.9);
\draw[lightgray, thin,  ->] (0.2,4) -- (1.8,4);
\draw[lightgray, thin,  ->] (8,9.8) -- (8,8.2);
\draw[lightgray, thin,  ->] (6,9.8) -- (6,8.2);
\draw[lightgray, thin,  ->] (4,9.8) -- (4,8.2);
\draw[lightgray, thin,  ->] (2,9.8) -- (2,8.2);
\draw[lightgray, thin,  ->] (10,9.8) -- (10,8.2);
\draw[lightgray, thin,  ->] (2.2,0) -- (3.8,0);
\draw[lightgray, thin,  ->] (2.2,2) -- (3.8,2);
\draw[lightgray, thin,  ->] (2.2,8) -- (3.8,8);
\draw[lightgray, thin,  ->] (2.2,4.1) to [out=40,in=140]  (3.8,4.1);
\draw[lightgray, thin,  ->] (2.2,3.9) to [out=-40,in=-140]  (3.8,3.9);
\draw[lightgray, thin,  ->] (4.2,0) -- (5.8,0);
\draw[lightgray, thin,  ->] (4.2,6) -- (5.8,6);
\draw[lightgray, thin,  ->] (4.2,8) -- (5.8,8);
\draw[lightgray, thin,  ->] (4.2,2.1) to [out=40,in=140]  (5.8,2.1);
\draw[lightgray, thin,  ->] (4.2,1.9) to [out=-40,in=-140]  (5.8,1.9);
\draw[lightgray, thin,  ->] (6.2,4) -- (7.8,4);
\draw[lightgray, thin,  ->] (6.2,6) -- (7.8,6);
\draw[lightgray, thin,  ->] (6.2,8) -- (7.8,8);
\draw[lightgray, thin,  ->] (6.2,0.1) to [out=40,in=140]  (7.8,0.1);
\draw[lightgray, thin,  ->] (6.2,-0.1) to [out=-40,in=-140]  (7.8,-0.1);
\draw[lightgray, thin,  ->] (8.2,0) -- (9.8,0);
\draw[lightgray, thin,  ->] (8.2,2.1) to [out=40,in=140]  (9.8,2.1);
\draw[lightgray, thin,  ->] (8.2,1.9) to [out=-40,in=-140]  (9.8,1.9);
\draw[lightgray, thin,  ->] (8.2,4.1) to [out=40,in=140]  (9.8,4.1);
\draw[lightgray, thin,  ->] (8.2,3.9) to [out=-40,in=-140]  (9.8,3.9);
\draw[lightgray, thin,  ->] (8.2,6.1) to [out=40,in=140]  (9.8,6.1);
\draw[lightgray, thin,  ->] (8.2,5.9) to [out=-40,in=-140]  (9.8,5.9);
\draw[lightgray, thin,  ->] (8.2,8.1) to [out=40,in=140]  (9.8,8.1);
\draw[lightgray, thin,  ->] (8.2,7.9) to [out=-40,in=-140]  (9.8,7.9);
\draw[lightgray, thin,  ->] (2,1.8) -- (2,0.2);
\draw[lightgray, thin,  ->] (4,1.8) -- (4,0.2);
\draw[lightgray, thin,  ->] (6,1.8) -- (6,0.2);
\draw[lightgray, thin,  ->] (8,1.8) -- (8,0.2);
\draw[lightgray, thin,  ->] (10,1.8) -- (10,0.2);
\draw[lightgray, thin,  ->] (2,3.8) -- (2,2.2);
\draw[lightgray, thin,  ->] (4,3.8) -- (4,2.2);
\draw[lightgray, thin,  ->] (6,3.8) -- (6,2.2);
\draw[lightgray, thin,  ->] (8,3.8) -- (8,2.2);
\draw[lightgray, thin,  ->] (10,3.8) -- (10,2.2);
\draw[lightgray, thin,  ->] (2,5.8) -- (2,4.2);
\draw[lightgray, thin,  ->] (4,5.8) -- (4,4.2);
\draw[lightgray, thin,  ->] (6,5.8) -- (6,4.2);
\draw[lightgray, thin,  ->] (8,5.8) -- (8,4.2);
\draw[lightgray, thin,  ->] (10,5.8) -- (10,4.2);
\draw[lightgray, thin,  ->] (2,7.8) -- (2,6.2);
\draw[lightgray, thin,  ->] (4,7.8) -- (4,6.2);
\draw[lightgray, thin,  ->] (6,7.8) -- (6,6.2);
\draw[lightgray, thin,  ->] (8,7.8) -- (8,6.2);
\draw[lightgray, thin,  ->] (10,7.8) -- (10,6.2);
\draw[lightgray, thin,  ->] (1.8,0.2) -- (0.2,1.8);
\draw[lightgray, thin,  ->] (3.8,0.2) -- (2.2,1.8);
\draw[lightgray, thin,  ->] (5.8,0.2) -- (4.2,1.8);
\draw[lightgray, thin,  ->] (7.8,0.2) -- (6.2,1.8);
\draw[lightgray, thin,  ->] (9.8,0.2) -- (8.2,1.8);
\draw[lightgray, thin,  ->] (1.8,2.2) -- (0.2,3.8);
\draw[lightgray, thin,  ->] (3.8,2.2) -- (2.2,3.8);
\draw[lightgray, thin,  ->] (5.8,2.2) -- (4.2,3.8);
\draw[lightgray, thin,  ->] (7.8,2.2) -- (6.2,3.8);
\draw[lightgray, thin,  ->] (9.8,2.2) -- (8.2,3.8);
\draw[lightgray, thin,  ->] (1.8,4.2) -- (0.2,5.8);
\draw[lightgray, thin,  ->] (3.8,4.2) -- (2.2,5.8);
\draw[lightgray, thin,  ->] (5.8,4.2) -- (4.2,5.8);
\draw[lightgray, thin,  ->] (7.8,4.2) -- (6.2,5.8);
\draw[lightgray, thin,  ->] (9.8,4.2) -- (8.2,5.8);
\draw[lightgray, thin,  ->] (1.8,6.2) -- (0.2,7.8);
\draw[lightgray, thin,  ->] (3.8,6.2) -- (2.2,7.8);
\draw[lightgray, thin,  ->] (5.8,6.2) -- (4.2,7.8);
\draw[lightgray, thin,  ->] (7.8,6.2) -- (6.2,7.8);
\draw[lightgray, thin,  ->] (9.8,6.2) -- (8.2,7.8);
\draw[lightgray, thin,  ->] (1.8,8.2) -- (0.2,9.8);
\draw[lightgray, thin,  ->] (3.8,8.2) -- (2.2,9.8);
\draw[lightgray, thin,  ->] (5.8,8.2) -- (4.2,9.8);
\draw[lightgray, thin,  ->] (7.8,8.2) -- (6.2,9.8);
\draw[lightgray, thin,  ->] (9.8,8.2) -- (8.2,9.8);
\draw[lightgray, thin,  ->] (4.2,0.2) to [out=60,in=230]  (9.8,3.6);
\draw[lightgray, thin,  ->] (6.2,0.3) to [out=50,in=230]  (9.8,1.6);
\draw[lightgray, thin,  ->] (2.2,0.3) to [out=70,in=240]  (9.8,5.6);
\draw[lightgray, thin,  ->] (0.2,0.3) to [out=70,in=240]  (9.8,7.6);
\draw[lightgray, thin,  -] (8,9.8) -- (8,9.8) node[anchor= south] {\large$\vdots$};
\draw[lightgray, thin,  -] (6,9.8) -- (6,9.8) node[anchor= south] {\large$\vdots$};
\draw[lightgray, thin,  -] (4,9.8) -- (4,9.8) node[anchor= south] {\large$\vdots$};
\draw[lightgray, thin,  -] (2,9.8) -- (2,9.8) node[anchor= south] {\large$\vdots$};
\draw[lightgray, thin,  -] (10,9.8) -- (10,9.8) node[anchor= south] {\large$\vdots$};
\draw[lightgray, thin,  -] (0,0) -- (0,0) node[anchor= east] {\large$\cdots$};
\draw[lightgray, thin,  -] (0,2) -- (0,2) node[anchor= east] {\large$\cdots$};
\draw[lightgray, thin,  -] (0,4) -- (0,4) node[anchor= east] {\large$\cdots$};
\draw[lightgray, thin,  -] (0,6) -- (0,6) node[anchor= east] {\large$\cdots$};
\draw[lightgray, thin,  -] (0,8) -- (0,8) node[anchor= east] {\large$\cdots$};
\draw[lightgray, thin,  -] (0,10) -- (0,10) node[anchor= south east] {\large$\ddots$};
\draw[gray,fill=gray] (10,0) circle (.01ex) node {\large$\mathbf{1}$};
\draw[gray,fill=gray] (2,0) circle (.01ex) node {\large$\mathbf{5}$};
\draw[gray,fill=gray] (4,0) circle (.01ex) node {\large$\mathbf{4}$};
\draw[gray,fill=gray] (6,0) circle (.01ex) node {\large$\mathbf{3}$};
\draw[gray,fill=gray] (8,0) circle (.01ex) node {\large$\mathbf{1}$};
\draw[gray,fill=gray] (10,2) circle (.01ex) node {\large$\mathbf{2}$};
\draw[gray,fill=gray] (2,2) circle (.01ex) node {\large$\mathbf{6}$};
\draw[gray,fill=gray] (4,2) circle (.01ex) node {\large$\mathbf{5}$};
\draw[gray,fill=gray] (6,2) circle (.01ex) node {\large$\mathbf{2}$};
\draw[gray,fill=gray] (8,2) circle (.01ex) node {\large$\mathbf{3}$};
\draw[gray,fill=gray] (10,4) circle (.01ex) node {\large$\mathbf{3}$};
\draw[gray,fill=gray] (2,4) circle (.01ex) node {\large$\mathbf{7}$};
\draw[gray,fill=gray] (4,4) circle (.01ex) node {\large$\mathbf{3}$};
\draw[gray,fill=gray] (6,4) circle (.01ex) node {\large$\mathbf{4}$};
\draw[gray,fill=gray] (8,4) circle (.01ex) node {\large$\mathbf{5}$};
\draw[gray,fill=gray] (10,6) circle (.01ex) node {\large$\mathbf{4}$};
\draw[gray,fill=gray] (2,6) circle (.01ex) node {\large$\mathbf{4}$};
\draw[gray,fill=gray] (4,6) circle (.01ex) node {\large$\mathbf{5}$};
\draw[gray,fill=gray] (6,6) circle (.01ex) node {\large$\mathbf{6}$};
\draw[gray,fill=gray] (8,6) circle (.01ex) node {\large$\mathbf{7}$};
\draw[gray,fill=gray] (10,8) circle (.01ex) node {\large$\mathbf{5}$};
\draw[gray,fill=gray] (2,8) circle (.01ex) node {\large$\mathbf{6}$};
\draw[gray,fill=gray] (4,8) circle (.01ex) node {\large$\mathbf{7}$};
\draw[gray,fill=gray] (6,8) circle (.01ex) node {\large$\mathbf{8}$};
\draw[gray,fill=gray] (8,8) circle (.01ex) node {\large$\mathbf{9}$};
\end{tikzpicture}
\qquad
}
\caption{The graph of the upper middle $\mathscr{C}$-module category
$\mathcal{N}_1$ and the corresponding positive eigenvector}\label{fig5}
\end{figure}
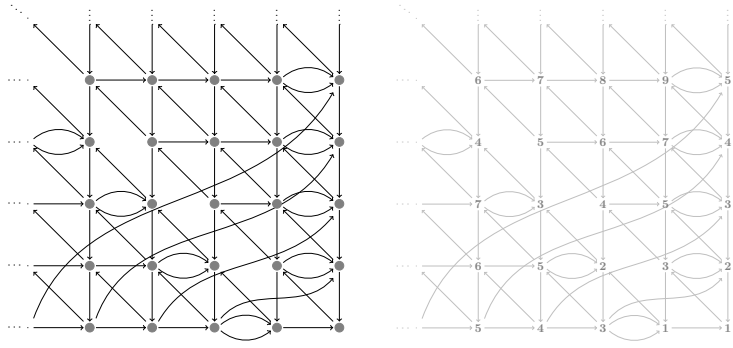

\begin{proof}
We start by observing that all singular middle weights belong to exactly
one wall (either an $r$-wall or an $s$-wall, but not both). Therefore,
our $L(\lambda)$ can be seen as the translation of some $L(\mu)$,
where $\mu$ is an upper middle regular weight, 
to this wall. Translating back, we get exactly $N(\mu)$.

The module $N(\mu)$ has, by construction, simple top and socle
isomorphic to $L(\mu)$. It also has one more simple subquotient
of the form $L(\mu')$, where $\mu'$ is 
obtained from $\mu$ by reflection with respect to the line
$x+y=-2$ (here $x$ and $y$ are standard coordinates in the plane),
which is again an upper middle regular
weight. Note that, sometimes, $N(\mu)$ also has a finite dimensional subquotient
which is not very relevant for our story as it is always killed
by translation to any wall. Translating 
$N(\mu)$ to some singular upper middle weight produces 
either a simple module (if $L(\mu)$ is killed and $L(\mu')$ survives)
or a direct sum of two simple modules (if $L(\mu')$ is killed and 
$L(\mu)$ survives). This implies that 
$\mathrm{add}(\mathscr{C}\cdot L(\lambda))=\mathcal{N}_1$.

Now let us argue that $\mathcal{N}_1$ is simple transitive. 
Any $N(\mu)$, where $\mu$ is upper middle regular, can be 
translated to any wall and then back to get all other
$N(\mu')$ (possibly with some non-zero multiplicities).
This implies that  $\mathcal{N}_1$ is transitive. 

If $\varphi:N(\mu)\to N(\mu')$ is a non-zero morphism, we can translate
it to a wall such that $L(\mu)$ survives and becomes
some $L(\nu)$. The part of $\mathcal{N}_1$ on the wall
that corresponds to the central character of $L(\nu)$
is just a copy of vector spaces with $L(\nu)$ being
the simple object. The morphism $\varphi$ survives translation
(as $L(\mu)$ survives it) and hence its translation becomes
a non-zero morphism in the category of vector spaces. 
As the latter category is simple, this non-zero morphism
generates the whole category, in particular, it generates
the identity on $L(\nu)$. Translating back, we get the
identity on all $N(\mu)$. This shows that $\mathcal{N}_1$ is simple.

It remains to determine the combinatorics of $\mathcal{N}_1$.
For this, we need to do a case-by-case analysis.
Recall that $\mathrm{supp}(L(1,0))=\{(1,0),(0,-1),(-1,1)\}$.

{\bf Case 1.} {\color{violet}Assume that the weights $\mu$, $\mu+(1,0)$,
$\mu+(0,-1)$ and $\mu+(-1,1)$ are all regular.} In this case, 
tensoring with $L((1,0))$ splits into a direct sum of three
equivalences: from the block containing $L(\mu)$ to the three
blocks containing $L(\mu+(1,0))$, $L(\mu+(0,-1))$ and 
$L(\mu+(-1,1))$, respectively. This implies that
the module $L(1,0)\otimes_{\mathbb{C}}N(\mu)$ splits as 
\begin{displaymath}
N(\mu+(1,0))\oplus N(\mu+(0,-1))\oplus N(\mu+(-1,1)).
\end{displaymath}
This gives the following local picture in the graph
(the node $\mu$ is in the middle): 

\begin{equation}
\resizebox{2cm}{!}{
\begin{tikzpicture}
\draw[gray,fill=gray] (2,2) circle (.9ex);
\draw[gray,fill=gray] (2,0) circle (.9ex);
\draw[gray,fill=gray] (4,2) circle (.9ex);
\draw[gray,fill=gray] (0,4) circle (.9ex);
\draw[black, thick,  ->] (2.2,2) -- (3.8,2);
\draw[black, thick,  ->] (2,1.8) -- (2,0.2);
\draw[black, thick,  ->] (1.8,2.2) -- (0.2,3.8);
\end{tikzpicture}
}\label{eq-s5.3-1}
\end{equation}

{\bf Case 2.} {\color{violet}Assume that $\mu$
is regular, $\mu_1=-2$ and $\mu_2\geq 2$.} In this case 
both $\mu+(0,-1)$ and $\mu+(-1,1)$ are regular
but $\mu+(1,0)$ is singular. Therefore, we get the
direct sum of of two equivalences, from the block 
containing $L(\mu)$ to the two blocks containing
$L(\mu+(0,-1))$ and  $L(\mu+(-1,1))$, respectively,
together with translation to the wall, from the
block  containing $L(\mu)$ to the block containing
$L(\mu+(1,0))$. Since $N(\mu)$ has two copies of
$L(\mu)$, translating to the wall we obtain two
copies of the simple module on the wall. 
This implies that $L((1,0))\otimes_\mathbb{C}N(\mu)$
is isomorphic to
\begin{displaymath}
N(\mu+(-1,1))\oplus N(\mu+(0,-1))\oplus 
N(\mu+(1,0))\oplus N(\mu+(1,0)),
\end{displaymath}
which gives the following local picture in the graph
(the node $\mu$ is in the middle): 

\begin{equation}\label{eq-s5.3-2}
\resizebox{2cm}{!}{
\begin{tikzpicture}
\draw[gray,fill=gray] (2,2) circle (.9ex);
\draw[gray,fill=gray] (2,0) circle (.9ex);
\draw[gray,fill=gray] (4,2) circle (.9ex);
\draw[gray,fill=gray] (0,4) circle (.9ex);
\draw[black, thick,  ->] (2.2,2.1) to [out=40,in=140]  (3.8,2.1);
\draw[black, thick,  ->] (2.2,1.9) to [out=-40,in=-140]  (3.8,1.9);
\draw[black, thick,  ->] (2,1.8) -- (2,0.2);
\draw[black, thick,  ->] (1.8,2.2) -- (0.2,3.8);
\end{tikzpicture}
}
\end{equation}

{\bf Case 3.} {\color{violet}Assume that $\mu=(-2,1)$.} 
In this case $\mu$ and $\mu+(-1,1)$ are regular, while 
$\mu+(0,-1)$ and $\mu+(1,0)$ are singular but with
different singularities. As in Case~2, we get 
an equivalence for $\mu+(-1,1)$ which produces 
a summand $N(\mu+(-1,1))$ and two copies of
$L(\mu+(1,0))$. At the same time, we will only get
one copy of $L(\mu+(0,-1))$ as $N(\mu)$ has
only one copy of $L((-3,0))$. This means that 
the combinatorics in this case is exactly the same 
as in Case~2.

{\bf Case 4.} {\color{violet}Assume that $\mu=(-1,i)$,
for $i>0$.}  In this case $\mu$ and $\mu+(0,-1)$ 
are singular with the same singularity, while 
$\mu+(-1,1)$ is regular and $\mu+(1,0)$ is not a
middle weight. Therefore tensoring with $L((1,0))$
splits into an equivalence between the blocks
containing $L(\mu)$ and $L(\mu+(0,-1))$ and a
translation out of the wall from the blocks
containing $L(\mu)$ to the block containing 
$L(\mu+(-1,1))$. This implies that 
$L((1,0))\otimes_\mathbb{C}N(\mu)$
is isomorphic to
\begin{displaymath}
N(\mu+(-1,1))\oplus N(\mu+(0,-1)),
\end{displaymath}
which gives the following local picture in the graph
(the node $\mu$ is in the middle): 

\begin{equation}\label{eq-s5.3-3}
\resizebox{1cm}{!}{
\begin{tikzpicture}
\draw[gray,fill=gray] (2,2) circle (.9ex);
\draw[gray,fill=gray] (2,0) circle (.9ex);
\draw[gray,fill=gray] (0,4) circle (.9ex);
\draw[black, thick,  ->] (2,1.8) -- (2,0.2);
\draw[black, thick,  ->] (1.8,2.2) -- (0.2,3.8);
\end{tikzpicture}
}
\end{equation}

{\bf Case 5.} {\color{violet}Assume that $\mu=(-1,0)$.}  
In this case $\mu$ is singular, while 
$\mu+(-1,1)$ is regular and neither $\mu+(1,0)$
nor $\mu+(0,-1)$ is a middle weight. Therefore tensoring 
with $L((1,0))$ gives just translation out of the
wall. This implies that  $L((1,0))\otimes_\mathbb{C}N(\mu)$
is isomorphic to $N(\mu+(-1,1))$
which gives the following local picture in the graph
(the node $\mu$ is on the south-east): 

\begin{center}
\resizebox{1cm}{!}{
\begin{tikzpicture}
\draw[gray,fill=gray] (2,2) circle (.9ex);
\draw[gray,fill=gray] (0,4) circle (.9ex);
\draw[black, thick,  ->] (1.8,2.2) -- (0.2,3.8);
\end{tikzpicture}
}
\end{center}

{\bf Case 6.} {\color{violet}Assume that $\mu=(-2-i,1+i)$,
for some $i>0$.} 
In this case $\mu$, $\mu+(-1,1)$ and $\mu+(1,0)$
are regular, while $\mu+(0,-1)$ is singular.
Thus, we get  two equivalences, from $\mu$
to $\mu+(-1,1)$ and $\mu+(1,0)$, and one translation
to the wall, from $\mu$ to $\mu+(0,-1)$.
As $N(\mu)$ has only one copy of $L(\mu+(-1,-1))$,
the translation to the wall outputs one copy of 
the corresponding simple module. This means that
the combinatorics of this case is exactly the same
as in Case~1.

{\bf Case 7.} {\color{violet}Assume that $\mu=(-2,0)$.}  
In this case $\mu$ and $\mu+(-1,1)$
are singular with the same singularity, while 
$\mu+(1,0)$ is singular with different singularity
and $\mu+(0,-1)$ is not a middle weight. This gives
us an equivalence from $\mu$ to $\mu+(-1,1)$
together with the composition of translation out of 
the wall (from $\mu$) to some regular weight
followed by translation to the wall (to $\mu+(1,0)$).
Translation of a simple out of the wall contains only one copy 
of the simple which survives the translation to the
other wall. Therefore $L((1,0))\otimes_\mathbb{C}N(\mu)$
is isomorphic to $N(\mu+(-1,1))\oplus N(\mu+(1,0))$
which gives the following local picture in the graph
(the node $\mu$ is in the middle): 

\begin{center}
\resizebox{2cm}{!}{
\begin{tikzpicture}
\draw[gray,fill=gray] (4,2) circle (.9ex);
\draw[gray,fill=gray] (2,2) circle (.9ex);
\draw[gray,fill=gray] (0,4) circle (.9ex);
\draw[black, thick,  ->] (1.8,2.2) -- (0.2,3.8);
\draw[black, thick,  ->] (2.2,2) -- (3.8,2);
\end{tikzpicture}
}
\end{center}

{\bf Case 8.} {\color{violet}Assume that $\mu=(-2-i,i)$,
for some $i>0$.} 
In this case $\mu$ and $\mu+(-1,1)$ are singular
with the same singularity, while $\mu+(1,0)$
$\mu+(0,-1)$ are regular but belong to two 
different connected shaded components.
Also, in this case $\mu+(1,0)$ and 
$\mu+(0,-1)$ belong to the same dot-orbit of $W$.
Thus, we only get an equivalence from 
$\mu$ to $\mu+(-1,1)$ and one translation out
of the wall from $\mu$ to the block containing 
both $\mu+(1,0)$ and $\mu+(0,-1)$. 
This means that
the combinatorics of this case is exactly the same
as in Case~4.

{\bf Case 9.} {\color{violet}Assume that $\mu=(-3-i,i)$,
for some $i>0$.} In this case, the situation
is exactly the same as in Case~2.

We are now left with two most complicated cases.

{\bf Case 10.} {\color{violet}Assume that $\mu=(-3,0)$.} 
In this case $\mu$ and $\mu+(-1,1)=(-4,1)$
are regular, $\mu+(1,0)=(-2,0)$ is singular and 
$\mu+(0,-1)=(-3,-1)$ is not a middle weight. However,
we have the middle weight $(-1,1)$ which belongs to
the same dot-orbit as $(-3,-1)$ and the difference
between $(-1,1)$ and the weight $(-2,1)$ which 
belongs to the same dot-orbit as $\mu$ is
$(1,0)$. This means that tensoring with $L((1,0))$
splits into three summands: an equivalence from
$\mu$ and $(-4,1)$ and two translations to walls:
from $\mu$ to $(-2,0)$ and from $(-2,1)$ to $(-1,1)$.
As $[N(\mu):L(\mu)]=2$, the first translation produces
two simples. As $[N(\mu):L((-2,1))]=1$, the second 
translation produces one simple. 
Therefore $L((1,0))\otimes_\mathbb{C}N(\mu)$
is isomorphic to 
\begin{displaymath}
N((-4,1))\oplus N((-2,0))\oplus N((-2,0))
\oplus N((-1,1)),
\end{displaymath}
which gives the following local picture in the graph
(the node $\mu$ is the source): 

\begin{center}
\resizebox{2cm}{!}{
\begin{tikzpicture}
\draw[gray,fill=gray] (0,2) circle (.9ex);
\draw[gray,fill=gray] (2,0) circle (.9ex);
\draw[gray,fill=gray] (4,0) circle (.9ex);
\draw[gray,fill=gray] (6,2) circle (.9ex);
\draw[black, thick,  ->] (1.8,0.2) -- (0.2,1.8);
\draw[black, thick,  ->] (2.2,0.1) to [out=40,in=140]  (3.8,0.1);
\draw[black, thick,  ->] (2.2,-0.1) to [out=-40,in=-140]  (3.8,-0.1);
\draw[black, thick,  ->] (2.2,0.3) to [out=60,in=230]  (5.8,1.8);
\end{tikzpicture}
}
\end{center}

{\bf Case 11.} {\color{violet}Assume that $\mu=(-i,0)$,
for some $i>3$.} In this case $\mu$, $\mu+(-1,1)$
and $\mu+(1,0)$ are regular while $\mu+(0,-1)$ is
not a middle weight. However, just like in Case~10,
the dot-orbit of the weight $\mu+(0,-1)$ contains a middle weight,
namely, $(-1,i-2)$,
which has difference $(1,0)$ to the middle weight $(-2,i-2)$
which is in the dot-orbit of $\mu$. Therefore, 
tensoring with $L((1,0))$
splits again into three summands: two equivalences, from
$\mu$ to  $\mu+(-1,1)$ and $\mu+(1,0)$, 
and a translation to the wall, namely from $(-2,i-2)$ to
$(-1,i-2)$. 
Therefore $L((1,0))\otimes_\mathbb{C}N(\mu)$
is isomorphic to 
\begin{displaymath}
N(\mu+(-1,1))\oplus N(\mu+(1,0))
\oplus N((-1,i-2)),
\end{displaymath}
which gives the following local picture in the graph
(the node $\mu$ is the source): 

\begin{center}
\resizebox{2cm}{!}{
\begin{tikzpicture}
\draw[gray,fill=gray] (0,2) circle (.9ex);
\draw[gray,fill=gray] (2,0) circle (.9ex);
\draw[gray,fill=gray] (4,0) circle (.9ex);
\draw[gray,fill=gray] (6,2) circle (.9ex);
\draw[black, thick,  ->] (1.8,0.2) -- (0.2,1.8);
\draw[black, thick,  ->] (2.2,0) -- (3.8,0);
\draw[black, thick,  ->] (2.2,0.3) to [out=60,in=230]  (5.8,1.8);
\end{tikzpicture}
}
\end{center}

Combining all these cases together, gives 
the left graph in Figure~\ref{fig5}.

Now let us construct a Perron-Frobenius
eigenvector for the  corresponding matrix,
we will see that the corresponding eigenvalue is $3$.
As the defining property of the middle weights
(among all integral weights) one can take the 
property that the corresponding simple highest
weight modules are, on the one hand, 
infinite dimensional but, on the other hand,
not isomorphic to Verma modules. This, in particular,
implies that all simple highest weight modules
for middle weights have uniformly bounded 
weight spaces. 

For a module $M$ with uniformly bounded weight spaces,
denote by $\mathtt{d}(M)$ the maximum value of 
$\dim M_\mu$, taken over all $\mu$. For any simple
highest weight module $L$ and for any negative root
$\gamma$, non-zero elements from $\mathfrak{g}_\gamma$
act either locally nilpotently or injectively on
$L$. Consequently, as soon as some $L_\mu$
has dimension $\mathtt{d}(L)$, for any negative root
$\gamma$ for which the non-zero elements from 
$\mathfrak{g}_\gamma$ act injectively on $L$, we have
$\dim L_{\mu+\gamma}= \mathtt{d}(L)$.
Note that, for all upper middle weights $\nu$, 
the non-zero elements from both
$\mathfrak{g}_{-\alpha}$ and $\mathfrak{g}_{-\alpha-\beta}$
act injectively on $L(\nu)$. 

This implies that, for all upper middle $\nu$,
we have $\mathtt{d}(L(\nu)\otimes L((1,0)))=3 \mathtt{d}(L(\nu))$
and that $\mathtt{d}(N(\nu))$ is the sum of
$\mathtt{d}(L(\mu))$, taken over all upper middle 
subquotients  $L(\mu)$ of $N(\nu)$ (counted with 
the corresponding multiplicities). This implies  the equality
$\mathtt{d}(N(\nu)\otimes L((1,0)))=3 \mathtt{d}(N(\nu))$
and therefore, assigning to an upper middle weight $\nu$
the value $\mathtt{d}(N(\nu))$ produces an eigenvector 
for the matrix corresponding to our graph, with the eigenvalue $3$.
It remains to compute these values $\mathtt{d}(N(\nu))$.

For any Verma module $\Delta(\nu)$
and for all $a,b\in\mathbb{Z}_{\geq 0}$, we have
\begin{displaymath}
\dim \Delta(\nu)_{\nu-a\alpha-b\beta}=\min(a+1,b+1),
\end{displaymath}
which equals the value of Kostant's partition function
at $a\alpha+b\beta$, that is the number of ways to write
$a\alpha+b\beta$ as a linear combination of 
$\alpha$, $\beta$ and $\alpha+\beta$ with non-negative
integer coefficients. Consequently, for the quotient
$M=\Delta(\nu)/\Delta(\nu-a\alpha-b\beta)$,
we have $\mathtt{d}(M)=\max(a,b)$.

From \cite[Theorem~7.6.23]{Di}, for any middle upper
weight $\mu=(\mu_1,\mu_2)$, we have the embedding
\begin{displaymath}
\Delta(\mu-(\mu_2+1)(-1,2))\subset \Delta(\mu)
\end{displaymath}
as well as the embedding 
\begin{displaymath}
\Delta(\mu-(\mu_2-\mu_1+2)(1,1))\subset \Delta(\mu).
\end{displaymath}
This implies that $\mathtt{d}(L(\mu))$ is given by the
following table:
\begin{displaymath}
\begin{array}{c|c|c|c|c|c||c}
\ddots&\vdots&\vdots &\vdots&\vdots& \vdots& \vdots\\\hline
\dots&1&2&3&4&{\color{purple}5}&{\color{cyan}4}\\ \hline
\dots&{\color{purple}4}&1&2&3&{\color{purple}4}&{\color{cyan}3}\\ \hline
\dots&3&{\color{purple}3}&1&2&{\color{purple}3}&{\color{cyan}2}\\ \hline
\dots&2&2&{\color{purple}2}&1&{\color{purple}2}&{\color{cyan}1}\\ \hline
\dots&1&1&1&{\color{purple}1}&{\color{purple}1}&{\color{cyan}0}\\  \hline\hline
\dots& {\color{violet}-5}&{\color{violet}-4}
&{\color{violet}-3}&{\color{violet}-2}&
{\color{violet}-1}&{\color{violet}\mu_1}\setminus {\color{cyan}\mu_2}\\
\end{array}
\end{displaymath}
Now, for {\color{purple}singular weights (shown in purple)}, we have
$L(\mu)=N(\mu)$, thus the table above already outputs
$\mathtt{d}(N(\mu))$. For regular weights (shown in black), we know
that $[N(\mu):L(\mu)]=2$ and $[N(\mu):L(\mu')]=1$, where
$\mu'$ is a reflection of $\mu$ with respect to the diagonal singular line.
Taking the corresponding linear combination of the values gives
exactly the right hand side picture in Figure~\ref{fig5}.
For the record, here is an explicit formula for $\mathtt{d}(N(\mu))$:
\begin{displaymath}
\mathtt{d}(N(\mu))=
\begin{cases}
\mu_2+1,& \mu_1=-1\text{ or }\mu_1+\mu_2=-2;\\
2\mu_2+\mu_1+3,& -2<\mu_1+\mu_2\text{ and }\mu_1<-1;\\
\mu_2-\mu_1,& \mu_1+\mu_2<-2.
\end{cases}
\end{displaymath}
It is easy to check that this is, indeed, an eigenvector with 
eigenvalue $3$, that is, that three times the value at each vertex
$v$ coincides with the sum, taken over all arrows $\eta:v\to w$
that start at $v$, of the values at the the corresponding endpoints $w$.
This completes the proof of our proposition.
\end{proof}

\begin{proposition}\label{prop-s5.3-2}
Let $\lambda$ be a regular upper middle weight. Then we have:

\begin{enumerate}[$($a$)$]
\item\label{prop-s5.3-2.1} The $\mathscr{C}$-module category
$\mathrm{add}(\mathscr{C}\cdot L(\lambda))$ contains
$\mathcal{N}_1$ as a $\mathscr{C}$-module subcategory.
\item\label{prop-s5.3-2.2}
The quotient of $\mathrm{add}(\mathscr{C}\cdot L(\lambda))$
by the ideal $\mathcal{I}$ generated by $\mathcal{N}_1$ is a simple 
transitive $\mathscr{C}$-module category that is 
equivalent to ${}_\mathscr{C}\mathscr{C}$. 
\item\label{prop-s5.3-2.3}
The indecomposable
objects in  $\mathrm{add}(\mathscr{C}\cdot L(\lambda))/\mathcal{I}$  
are given by the images of 
$L(\mu)$, where $\mu$ is a 
regular upper middle weight from the same shaded connected
component as $\lambda$, see Figure~\ref{fig4}.
\end{enumerate}
\end{proposition}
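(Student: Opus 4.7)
The plan is to prove (a) by direct translation-to-a-wall and to reduce (b) and (c) to Theorem~\ref{thm-s4.3-1}.

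For part (a), given a regular upper middle weight $\lambda$, I would apply a projective functor $\theta_{\lambda,\mu}$ with $\mu$ a singular upper middle weight (either on the $s$-wall $\lambda_1=-1$ or on the wall $\lambda_1+\lambda_2=-2$) chosen so that $\theta_{\lambda,\mu}L(\lambda)\neq 0$. For any regular $\lambda$ at least one such wall survives the translation, and the image is then $L(\mu)=N(\mu)\in\mathcal{N}_1$. Consequently $N(\mu)\in\mathrm{add}(\mathscr{C}\cdot L(\lambda))$, and Proposition~\ref{prop-s5.3-1} gives $\mathcal{N}_1=\mathrm{add}(\mathscr{C}\cdot N(\mu))\subseteq\mathrm{add}(\mathscr{C}\cdot L(\lambda))$.

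For (b) and (c), the plan is to index the regular upper middle weights in the shaded component $\mathcal{C}$ containing $\lambda$ by $\mathbb{Z}_{\geq 0}^2$ and verify that the $L((1,0))$-action in the quotient matches that of ${}_\mathscr{C}\mathscr{C}$ up to the Dynkin-diagram automorphism $\alpha\leftrightarrow\beta$ (the two shaded components in Figure~\ref{fig4} require mirror-symmetric bijections). For each $\mu\in\mathcal{C}$, the tensor product $L((1,0))\otimes L(\mu)$ has a Verma flag with subquotients $\Delta(\mu+\sigma)$, $\sigma\in\{(1,0),(-1,1),(0,-1)\}$, which decompose over central characters. A case analysis patterned on the proof of Proposition~\ref{prop-s5.3-1} shows that, modulo $\mathcal{I}$, only the summands whose central character contains a representative in $\mathcal{C}$ contribute a non-zero simple $L(\mu+\sigma)$; summands at singular upper middle central characters lie in $\mathcal{N}_1\subseteq\mathcal{I}$, and summands at central characters without any representative in $\mathcal{C}$ need to be shown to lie in $\mathcal{I}$ as well.

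The main technical obstacle is this last point: treating summands at central characters without a representative in $\mathcal{C}$ (for instance, the bottom simple $L((-3,-1))$ that appears as a Verma subquotient of $L((1,0))\otimes L((-3,0))$). The key observation is that every such central character contains, under the $W$-dot-action, a singular upper middle weight $\nu$, and the offending simple can be realized as a direct summand of $V\otimes L(\nu)$ for a suitable $V\in\mathscr{C}$, hence it lies in the $\mathscr{C}$-ideal $\mathcal{I}$ generated by $\mathcal{N}_1$. Once this is established, the graph and dual graph of the quotient coincide with those of ${}_\mathscr{C}\mathscr{C}$ (up to the above relabeling), the quotient is transitive by strong connectedness, and Theorem~\ref{thm-s4.3-1} yields the equivalence claimed in (b); part (c) then reads off from the chosen indexing of $\mathcal{C}$.
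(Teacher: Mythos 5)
Your overall strategy matches the paper's: part (a) by translation to a wall, parts (b)--(c) by matching the combinatorics of the quotient with that of ${}_{\mathscr{C}}\mathscr{C}$ (transporting the shaded component to the dominant chamber by a Weyl group element) and invoking Theorem~\ref{thm-s4.3-1}. There are, however, two genuine problems in the execution. First, $L((1,0))\otimes L(\mu)$ does \emph{not} have a Verma flag; that statement is true for $L((1,0))\otimes\Delta(\mu)$, but $L(\mu)$ is a proper quotient of $\Delta(\mu)$ for middle weights. What you can use is the block decomposition of $L((1,0))\otimes L(\mu)$ over central characters, together with the fact that translation between two regular blocks is an equivalence, while translation of a simple to a wall is again simple or zero.

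Second, and more importantly, your ``main technical obstacle'' --- direct summands at central characters that have no representative in the shaded component $\mathcal{C}$ --- is not an obstacle, and your proposed resolution does not work. Any indecomposable summand $X$ of $V\otimes L(\lambda)$ for $V\in\mathscr{C}$ satisfies $\mathrm{GKdim}(X)=\mathrm{GKdim}(L(\lambda))=2$; bottom simples like $L((-3,-1))$ are simple Verma modules with $\mathrm{GKdim}=3$, and top simples have $\mathrm{GKdim}=0$. Moreover, tensoring with a finite dimensional module preserves the pattern of which negative root vectors act locally nilpotently, so lower middle simples cannot appear as summands either. Hence the only summands are upper middle, and the ones at singular central characters are already in $\mathcal{N}_1$. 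For the same $\mathrm{GKdim}$ reason, your proposed fix --- realizing the bottom simple as a summand of $V\otimes L(\nu)$ with $\nu$ a singular upper middle weight --- is impossible, so it cannot serve to place anything in the ideal $\mathcal{I}$. Once these invariance considerations are in place, the paper's one-line observation suffices: the projective functors surviving the quotient are equivalences between regular blocks, which preserve the Weyl chamber and hence the shaded component, so the quotient's graph (and dual graph) agree with those of ${}_{\mathscr{C}}\mathscr{C}$ after applying the appropriate element of $W$.
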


\begin{proof}
We can translate $L(\lambda)$ to a wall to obtain some singular
$L(\lambda')\in \mathrm{add}(\mathscr{C}\cdot L(\lambda))$.
Now Claim~\eqref{prop-s5.3-2.1} follows from Proposition~\ref{prop-s5.3-1}.

If we do not translate to any wall, the only projective functors
that remain are equivalences between regular blocks. These 
send simples to simples. Also note that the two connected shaded
upper middle weight components are separated by a wall.
To go from one of these components to another one, we must
cross this wall in which case we end up in the 
$\mathscr{C}$-module subcategory $\mathcal{N}_1$.
This implies Claim~\eqref{prop-s5.3-2.3}.

Finally, for Claim~\eqref{prop-s5.3-2.2}, we 
can now apply Theorem~\ref{thm-s4.3-1}. For this 
we just note that the combinatorics of tensoring
with $L((1,0))$ between the regular simples within
one connected shaded component is the same as
the combinatorics of tensoring
with $L((1,0))$ in ${}_\mathscr{C}\mathscr{C}$.
This is clear because we can transform our connected
component to the component of top weights by applying
some Weyl group element. This completes the proof.
\end{proof}

\subsection{Lower middle weights}\label{s5.4}

In this subsection we fix a lower middle weight $\lambda$.
Denote by $\mathcal{N}_2$ the additive closure of all 
$N(\mu)$, where $\mu$ is a lower middle weight.

\begin{proposition}\label{prop-s5.4-1}
If $\lambda$ is a singular lower middle weight, then 
$\mathrm{add}(\mathscr{C}\cdot L(\lambda))$ coincides
with $\mathcal{N}_2$, moreover, the latter is a simple
transitive $\mathscr{C}$-module category whose graph
and positive eigenvector are depicted in Figure~\ref{fig6}.
\end{proposition}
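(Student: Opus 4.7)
The plan is to mirror the proof of Proposition~\ref{prop-s5.3-1} line by line, with ``upper middle'' replaced by ``lower middle'' throughout. Each singular lower middle weight lies on exactly one wall, so $L(\lambda)$ is the translation of some $L(\mu)$ (with $\mu$ a regular lower middle weight) to that wall; translating back yields $N(\mu)$. The module $N(\mu)$ has simple top and socle $L(\mu)$ and one further composition factor $L(\mu')$, where $\mu'$ is the dot-reflection of $\mu$ across the relevant singular line (possibly plus a finite-dimensional subquotient which dies under any wall-translation). Translating $N(\mu)$ to each singular lower middle wall then recovers every $N(\nu)\in\mathcal{N}_2$ with suitable multiplicities; this gives both $\mathrm{add}(\mathscr{C}\cdot L(\lambda))=\mathcal{N}_2$ and transitivity. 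Simplicity follows by the same translate-to-$\mathrm{Vect}_\mathbb{C}$ argument used in Proposition~\ref{prop-s5.3-1}: any non-zero morphism between indecomposables in $\mathcal{N}_2$ survives translation to a semi-simple wall block, becomes the identity there, and hence generates the identity on every $N(\nu)$ when translated back.

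For the graph $\Gamma_\mathrm{F}$, I would redo the eleven-case local analysis of Proposition~\ref{prop-s5.3-1}, now testing $\mathrm{supp}(L((1,0)))=\{(1,0),(0,-1),(-1,1)\}$ against the geometry of the lower middle region. The Dynkin diagram automorphism of $\mathfrak{sl}_3$ swapping $\alpha$ and $\beta$ provides a useful sanity check: it induces an involution of $\mathscr{C}$ swapping $\mathrm{F}$ and $\mathrm{G}$ and identifies $\mathcal{N}_1$ with $\mathcal{N}_2$ via $(\lambda_1,\lambda_2)\leftrightarrow(\lambda_2,\lambda_1)$, so $\Gamma_\mathrm{G}$ for $\mathcal{N}_2$ is a reflected copy of $\Gamma_\mathrm{F}$ for $\mathcal{N}_1$. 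However, since $\mathcal{N}_2$ is not semi-simple, $\Gamma_\mathrm{F}$ is not simply the opposite of $\Gamma_\mathrm{G}$, so the case analysis must be carried out directly and the symmetry only serves as a check on the local pictures.

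The positive eigenvector with eigenvalue $3$ is constructed by assigning $\mathtt{d}(N(\nu))$, the maximum weight-space dimension, to the vertex $\nu$. For lower middle $\nu$, the non-zero elements of $\mathfrak{g}_{-\beta}$ act injectively on $L(\nu)$ since $\nu_2<0$, and those of $\mathfrak{g}_{-\alpha-\beta}$ act injectively whenever $\nu_1+\nu_2\leq-2$; mirroring the role of $\mathfrak{g}_{-\alpha}$ and $\mathfrak{g}_{-\alpha-\beta}$ in the upper middle case, this yields $\mathtt{d}(L(\nu)\otimes L((1,0)))=3\mathtt{d}(L(\nu))$ on the bulk of the lower middle region, with a direct check handling the boundary weights such as $(0,-1)$. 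The Verma embeddings of \cite[Theorem~7.6.23]{Di} adapted to lower middle $\mu$ give $\mathtt{d}(L(\mu))$ as an explicit piecewise-linear function, and $\mathtt{d}(N(\mu))$ then follows from $[N(\mu):L(\mu)]=2$, $[N(\mu):L(\mu')]=1$ for regular $\mu$, matching the right-hand picture of Figure~\ref{fig6}; the eigenvalue identity $[\mathrm{F}]v=3v$ at each vertex is then direct. The main obstacle is the bookkeeping of the eleven local cases: the analogues of Cases~10 and~11, responsible for the long-range arrows in the graph, require the most care because the relevant dot-orbit shifts differ geometrically from the upper middle setup.
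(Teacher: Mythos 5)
Your proposal matches the paper's approach: the paper's own proof is literally the single sentence ``Mutatis mutandis the proof of Proposition~\ref{prop-s5.3-1},'' and your elaboration is exactly the mutatis-mutandis carry-over that sentence invites, including the translation-to-a-wall argument for simplicity and transitivity, the eleven-case local analysis, and the eigenvector built from $\mathtt{d}(N(\nu))$. Your observation that the Dynkin-diagram automorphism identifies $\mathcal{N}_1$ with $\mathcal{N}_2$ while swapping $\mathrm{F}$ and $\mathrm{G}$ (so that $\Gamma_\mathrm{F}$ of $\mathcal{N}_2$ is $\Gamma_\mathrm{G}$ of $\mathcal{N}_1$, not the opposite of $\Gamma_\mathrm{F}$ of $\mathcal{N}_1$) is a nice consistency check the paper only alludes to in the remark following the proposition, and you are right that it does not eliminate the need for the direct case analysis.

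One small correction: the caveat that $\mathfrak{g}_{-\alpha-\beta}$ acts injectively on $L(\nu)$ only ``whenever $\nu_1+\nu_2\leq -2$'' is spurious. Mirroring the paper's unconditional statement in the upper middle case, for \emph{every} lower middle $\nu$ both $\mathfrak{g}_{-\beta}$ and $\mathfrak{g}_{-\alpha-\beta}$ act injectively on $L(\nu)$; the diagram automorphism you invoke as a check shows this immediately. No boundary correction is needed, so the identity $\mathtt{d}(L(\nu)\otimes L((1,0)))=3\,\mathtt{d}(L(\nu))$ holds uniformly across the lower middle region. This is a minor slip and does not affect the soundness of the overall argument.
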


\begin{proof}
Mutatis mutandis the proof of Proposition~\ref{prop-s5.3-1}. 
\end{proof}

We note that the $\mathscr{C}$-module categories $\mathcal{N}_1$
and $\mathcal{N}_2$ are not equivalent. In fact, they have 
different combinatorics: comparing Figure~\ref{fig5}
with Figure~\ref{fig6} we see that the graphs are not isomorphic.
Indeed, in Figure~\ref{fig5}, the points of indegree $3$ form
an infinite chain of the form
\begin{displaymath}
\xymatrix{\dots\ar[r]&\bullet\ar[r]&\bullet\ar[r]&\bullet} 
\end{displaymath}
while, in Figure~\ref{fig6}, the points of indegree $3$ form
an infinite chain of the form
\begin{displaymath}
\xymatrix{&\bullet\ar[r]&\bullet\ar[r]&\bullet\ar[r]&\dots}. 
\end{displaymath}
This is explained by the lack of symmetry between the two
cases. While all upper middle weights can be transferred to
lower middle weights by a Weyl group element, the additional
influence of the support of $L((1,0))$ is asymmetric in the two cases.
This stays in a sharp contrast to the action of projective functors
on a fixed block of category $\mathcal{O}$. In that case, 
the actions on the upper middle part and on the lower middle part
are equivalent, see \cite{MM1,MS1}.

\begin{figure}
\resizebox{10cm}{!}{
\begin{tikzpicture}
\draw[gray,fill=gray] (0,10) circle (.9ex);
\draw[gray,fill=gray] (0,8) circle (.9ex);
\draw[gray,fill=gray] (0,6) circle (.9ex);
\draw[gray,fill=gray] (0,4) circle (.9ex);
\draw[gray,fill=gray] (0,2) circle (.9ex);
\draw[gray,fill=gray] (2,10) circle (.9ex);
\draw[gray,fill=gray] (2,8) circle (.9ex);
\draw[gray,fill=gray] (2,6) circle (.9ex);
\draw[gray,fill=gray] (2,4) circle (.9ex);
\draw[gray,fill=gray] (2,2) circle (.9ex);
\draw[gray,fill=gray] (4,10) circle (.9ex);
\draw[gray,fill=gray] (4,8) circle (.9ex);
\draw[gray,fill=gray] (4,6) circle (.9ex);
\draw[gray,fill=gray] (4,4) circle (.9ex);
\draw[gray,fill=gray] (4,2) circle (.9ex);
\draw[gray,fill=gray] (6,10) circle (.9ex);
\draw[gray,fill=gray] (6,8) circle (.9ex);
\draw[gray,fill=gray] (6,6) circle (.9ex);
\draw[gray,fill=gray] (6,4) circle (.9ex);
\draw[gray,fill=gray] (6,2) circle (.9ex);
\draw[gray,fill=gray] (8,10) circle (.9ex);
\draw[gray,fill=gray] (8,8) circle (.9ex);
\draw[gray,fill=gray] (8,6) circle (.9ex);
\draw[gray,fill=gray] (8,4) circle (.9ex);
\draw[gray,fill=gray] (8,2) circle (.9ex);
\draw[black, thin,  -] (0,0) -- (0,0) node[anchor= north] {\large$\vdots$};
\draw[black, thin,  -] (2,0) -- (2,0) node[anchor= north] {\large$\vdots$};
\draw[black, thin,  -] (4,0) -- (4,0) node[anchor= north] {\large$\vdots$};
\draw[black, thin,  -] (6,0) -- (6,0) node[anchor= north] {\large$\vdots$};
\draw[black, thin,  -] (8,0) -- (8,0) node[anchor= north] {\large$\vdots$};
\draw[black, thin,  -] (9.8,0.3) -- (9.8,0.3) node[anchor=north west] {\large$\ddots$};
\draw[black, thick,  ->] (8.2,2) -- (9.8,2) node[anchor= west] {\large$\dots$};
\draw[black, thick,  ->] (8.2,4) -- (9.8,4) node[anchor= west] {\large$\dots$};
\draw[black, thick,  ->] (8.2,6) -- (9.8,6) node[anchor= west] {\large$\dots$};
\draw[black, thick,  ->] (8.2,8) -- (9.8,8) node[anchor= west] {\large$\dots$};
\draw[black, thick,  ->] (8.2,10) -- (9.8,10) node[anchor= west] {\large$\dots$};
\draw[black, thick,  ->] (0,1.8) -- (0,0.2);
\draw[black, thick,  ->] (2,1.8) -- (2,0.2);
\draw[black, thick,  ->] (4,1.8) -- (4,0.2);
\draw[black, thick,  ->] (6,1.8) -- (6,0.2);
\draw[black, thick,  ->] (8,1.8) -- (8,0.2);
\draw[black, thick,  ->] (9.8,0.2) -- (8.2,1.8);
\draw[black, thick,  ->] (0.2,10) -- (1.8,10);
\draw[black, thick,  ->] (2.2,10) -- (3.8,10);
\draw[black, thick,  ->] (4.2,10) -- (5.8,10);
\draw[black, thick,  ->] (6.2,10) -- (7.8,10);
\draw[black, thick,  ->] (2.2,8) -- (3.8,8);
\draw[black, thick,  ->] (4.2,8) -- (5.8,8);
\draw[black, thick,  ->] (6.2,8) -- (7.8,8);
\draw[black, thick,  ->] (4.2,6) -- (5.8,6);
\draw[black, thick,  ->] (6.2,6) -- (7.8,6);
\draw[black, thick,  ->] (0.2,4) -- (1.8,4);
\draw[black, thick,  ->] (6.2,4) -- (7.8,4);
\draw[black, thick,  ->] (0.2,2) -- (1.8,2);
\draw[black, thick,  ->] (2.2,2) -- (3.8,2);
\draw[black, thick,  ->] (0.2,6.1) to [out=40,in=140]  (1.8,6.1);
\draw[black, thick,  ->] (0.2,5.9) to [out=-40,in=-140]  (1.8,5.9);
\draw[black, thick,  ->] (2.2,4.1) to [out=40,in=140]  (3.8,4.1);
\draw[black, thick,  ->] (2.2,3.9) to [out=-40,in=-140]  (3.8,3.9);
\draw[black, thick,  ->] (4.2,2.1) to [out=40,in=140]  (5.8,2.1);
\draw[black, thick,  ->] (4.2,1.9) to [out=-40,in=-140]  (5.8,1.9);
\draw[black, thick,  ->] (0,3.8) -- (0,2.2);
\draw[black, thick,  ->] (2,3.8) -- (2,2.2);
\draw[black, thick,  ->] (4,3.8) -- (4,2.2);
\draw[black, thick,  ->] (6,3.8) -- (6,2.2);
\draw[black, thick,  ->] (8,3.8) -- (8,2.2);
\draw[black, thick,  ->] (0,5.8) -- (0,4.2);
\draw[black, thick,  ->] (2,5.8) -- (2,4.2);
\draw[black, thick,  ->] (4,5.8) -- (4,4.2);
\draw[black, thick,  ->] (6,5.8) -- (6,4.2);
\draw[black, thick,  ->] (8,5.8) -- (8,4.2);
\draw[black, thick,  ->] (0,7.8) -- (0,6.2);
\draw[black, thick,  ->] (2,7.8) -- (2,6.2);
\draw[black, thick,  ->] (4,7.8) -- (4,6.2);
\draw[black, thick,  ->] (6,7.8) -- (6,6.2);
\draw[black, thick,  ->] (8,7.8) -- (8,6.2);
\draw[black, thick,  ->] (0,9.8) -- (0,8.2);
\draw[black, thick,  ->] (2,9.8) -- (2,8.2);
\draw[black, thick,  ->] (4,9.8) -- (4,8.2);
\draw[black, thick,  ->] (6,9.8) -- (6,8.2);
\draw[black, thick,  ->] (8,9.8) -- (8,8.2);
\draw[black, thick,  ->] (1.8,0.2) -- (0.2,1.8);
\draw[black, thick,  ->] (3.8,0.2) -- (2.2,1.8);
\draw[black, thick,  ->] (5.8,0.2) -- (4.2,1.8);
\draw[black, thick,  ->] (7.8,0.2) -- (6.2,1.8);
\draw[black, thick,  ->] (1.8,2.2) -- (0.2,3.8);
\draw[black, thick,  ->] (3.8,2.2) -- (2.2,3.8);
\draw[black, thick,  ->] (5.8,2.2) -- (4.2,3.8);
\draw[black, thick,  ->] (7.8,2.2) -- (6.2,3.8);
\draw[black, thick,  ->] (9.8,2.2) -- (8.2,3.8);
\draw[black, thick,  ->] (1.8,4.2) -- (0.2,5.8);
\draw[black, thick,  ->] (3.8,4.2) -- (2.2,5.8);
\draw[black, thick,  ->] (5.8,4.2) -- (4.2,5.8);
\draw[black, thick,  ->] (7.8,4.2) -- (6.2,5.8);
\draw[black, thick,  ->] (9.8,4.2) -- (8.2,5.8);
\draw[black, thick,  ->] (1.8,6.2) -- (0.2,7.8);
\draw[black, thick,  ->] (3.8,6.2) -- (2.2,7.8);
\draw[black, thick,  ->] (5.8,6.2) -- (4.2,7.8);
\draw[black, thick,  ->] (7.8,6.2) -- (6.2,7.8);
\draw[black, thick,  ->] (9.8,6.2) -- (8.2,7.8);
\draw[black, thick,  ->] (1.7,8.3) to [out=150,in=290]  (0.3,9.7);
\draw[black, thick,  ->] (1.8,8.4) to [out=120,in=330]  (0.4,9.8);
\draw[black, thick,  ->] (3.7,8.3) to [out=150,in=290]  (2.3,9.7);
\draw[black, thick,  ->] (3.8,8.4) to [out=120,in=330]  (2.4,9.8);
\draw[black, thick,  ->] (5.7,8.3) to [out=150,in=290]  (4.3,9.7);
\draw[black, thick,  ->] (5.8,8.4) to [out=120,in=330]  (4.4,9.8);
\draw[black, thick,  ->] (7.7,8.3) to [out=150,in=290]  (6.3,9.7);
\draw[black, thick,  ->] (7.8,8.4) to [out=120,in=330]  (6.4,9.8);
\draw[black, thick,  ->] (-0.2,6.2) to [out=110,in=220]  (-0.2,9.8);
\draw[black, thick,  ->] (0.2,4.2) to [out=60,in=230]  (1.8,9.8);
\draw[black, thick,  ->] (0.2,2.2) to [out=1,in=245]  (3.8,9.8);
\draw[black, thick,  ->] (0.2,0.2) to [out=1,in=245]  (5.8,9.8);
\end{tikzpicture}
\qquad\qquad
\begin{tikzpicture}
\draw[gray,fill=gray] (0,10) circle (.01ex) node {\large$\mathbf{1}$};
\draw[gray,fill=gray] (0,8) circle (.01ex) node {\large$\mathbf{1}$};
\draw[gray,fill=gray] (0,6) circle (.01ex) node {\large$\mathbf{3}$};
\draw[gray,fill=gray] (0,4) circle (.01ex) node {\large$\mathbf{4}$};
\draw[gray,fill=gray] (0,2) circle (.01ex) node {\large$\mathbf{5}$};
\draw[gray,fill=gray] (2,10) circle (.01ex) node {\large$\mathbf{2}$};
\draw[gray,fill=gray] (2,8) circle (.01ex) node {\large$\mathbf{3}$};
\draw[gray,fill=gray] (2,6) circle (.01ex) node {\large$\mathbf{2}$};
\draw[gray,fill=gray] (2,4) circle (.01ex) node {\large$\mathbf{5}$};
\draw[gray,fill=gray] (2,2) circle (.01ex) node {\large$\mathbf{6}$};
\draw[gray,fill=gray] (4,10) circle (.01ex) node {\large$\mathbf{3}$};
\draw[gray,fill=gray] (4,8) circle (.01ex) node {\large$\mathbf{5}$};
\draw[gray,fill=gray] (4,6) circle (.01ex) node {\large$\mathbf{4}$};
\draw[gray,fill=gray] (4,4) circle (.01ex) node {\large$\mathbf{3}$};
\draw[gray,fill=gray] (4,2) circle (.01ex) node {\large$\mathbf{7}$};
\draw[gray,fill=gray] (6,10) circle (.01ex) node {\large$\mathbf{4}$};
\draw[gray,fill=gray] (6,8) circle (.01ex) node {\large$\mathbf{7}$};
\draw[gray,fill=gray] (6,6) circle (.01ex) node {\large$\mathbf{6}$};
\draw[gray,fill=gray] (6,4) circle (.01ex) node {\large$\mathbf{5}$};
\draw[gray,fill=gray] (6,2) circle (.01ex) node {\large$\mathbf{4}$};
\draw[gray,fill=gray] (8,10) circle (.01ex) node {\large$\mathbf{5}$};
\draw[gray,fill=gray] (8,8) circle (.01ex) node {\large$\mathbf{9}$};
\draw[gray,fill=gray] (8,6) circle (.01ex) node {\large$\mathbf{8}$};
\draw[gray,fill=gray] (8,4) circle (.01ex) node {\large$\mathbf{7}$};
\draw[gray,fill=gray] (8,2) circle (.01ex) node {\large$\mathbf{6}$};
\draw[lightgray, thin,  -] (0,0) -- (0,0) node[anchor= north] {\large$\vdots$};
\draw[lightgray, thin,  -] (2,0) -- (2,0) node[anchor= north] {\large$\vdots$};
\draw[lightgray, thin,  -] (4,0) -- (4,0) node[anchor= north] {\large$\vdots$};
\draw[lightgray, thin,  -] (6,0) -- (6,0) node[anchor= north] {\large$\vdots$};
\draw[lightgray, thin,  -] (8,0) -- (8,0) node[anchor= north] {\large$\vdots$};
\draw[lightgray, thin,  -] (9.8,0.3) -- (9.8,0.3) node[anchor=north west] {\large$\ddots$};
\draw[lightgray, thin,  ->] (8.2,2) -- (9.8,2) node[anchor= west] {\large$\dots$};
\draw[lightgray, thin,  ->] (8.2,4) -- (9.8,4) node[anchor= west] {\large$\dots$};
\draw[lightgray, thin,  ->] (8.2,6) -- (9.8,6) node[anchor= west] {\large$\dots$};
\draw[lightgray, thin,  ->] (8.2,8) -- (9.8,8) node[anchor= west] {\large$\dots$};
\draw[lightgray, thin,  ->] (8.2,10) -- (9.8,10) node[anchor= west] {\large$\dots$};
\draw[lightgray, thin,  ->] (0,1.8) -- (0,0.2);
\draw[lightgray, thin,  ->] (2,1.8) -- (2,0.2);
\draw[lightgray, thin,  ->] (4,1.8) -- (4,0.2);
\draw[lightgray, thin,  ->] (6,1.8) -- (6,0.2);
\draw[lightgray, thin,  ->] (8,1.8) -- (8,0.2);
\draw[lightgray, thin,  ->] (9.8,0.2) -- (8.2,1.8);
\draw[lightgray, thin,  ->] (0.2,10) -- (1.8,10);
\draw[lightgray, thin,  ->] (2.2,10) -- (3.8,10);
\draw[lightgray, thin,  ->] (4.2,10) -- (5.8,10);
\draw[lightgray, thin,  ->] (6.2,10) -- (7.8,10);
\draw[lightgray, thin,  ->] (2.2,8) -- (3.8,8);
\draw[lightgray, thin,  ->] (4.2,8) -- (5.8,8);
\draw[lightgray, thin,  ->] (6.2,8) -- (7.8,8);
\draw[lightgray, thin,  ->] (4.2,6) -- (5.8,6);
\draw[lightgray, thin,  ->] (6.2,6) -- (7.8,6);
\draw[lightgray, thin,  ->] (0.2,4) -- (1.8,4);
\draw[lightgray, thin,  ->] (6.2,4) -- (7.8,4);
\draw[lightgray, thin,  ->] (0.2,2) -- (1.8,2);
\draw[lightgray, thin,  ->] (2.2,2) -- (3.8,2);
\draw[lightgray, thin,  ->] (0.2,6.1) to [out=40,in=140]  (1.8,6.1);
\draw[lightgray, thin,  ->] (0.2,5.9) to [out=-40,in=-140]  (1.8,5.9);
\draw[lightgray, thin,  ->] (2.2,4.1) to [out=40,in=140]  (3.8,4.1);
\draw[lightgray, thin,  ->] (2.2,3.9) to [out=-40,in=-140]  (3.8,3.9);
\draw[lightgray, thin,  ->] (4.2,2.1) to [out=40,in=140]  (5.8,2.1);
\draw[lightgray, thin,  ->] (4.2,1.9) to [out=-40,in=-140]  (5.8,1.9);
\draw[lightgray, thin,  ->] (0,3.8) -- (0,2.2);
\draw[lightgray, thin,  ->] (2,3.8) -- (2,2.2);
\draw[lightgray, thin,  ->] (4,3.8) -- (4,2.2);
\draw[lightgray, thin,  ->] (6,3.8) -- (6,2.2);
\draw[lightgray, thin,  ->] (8,3.8) -- (8,2.2);
\draw[lightgray, thin,  ->] (0,5.8) -- (0,4.2);
\draw[lightgray, thin,  ->] (2,5.8) -- (2,4.2);
\draw[lightgray, thin,  ->] (4,5.8) -- (4,4.2);
\draw[lightgray, thin,  ->] (6,5.8) -- (6,4.2);
\draw[lightgray, thin,  ->] (8,5.8) -- (8,4.2);
\draw[lightgray, thin,  ->] (0,7.8) -- (0,6.2);
\draw[lightgray, thin,  ->] (2,7.8) -- (2,6.2);
\draw[lightgray, thin,  ->] (4,7.8) -- (4,6.2);
\draw[lightgray, thin,  ->] (6,7.8) -- (6,6.2);
\draw[lightgray, thin,  ->] (8,7.8) -- (8,6.2);
\draw[lightgray, thin,  ->] (0,9.8) -- (0,8.2);
\draw[lightgray, thin,  ->] (2,9.8) -- (2,8.2);
\draw[lightgray, thin,  ->] (4,9.8) -- (4,8.2);
\draw[lightgray, thin,  ->] (6,9.8) -- (6,8.2);
\draw[lightgray, thin,  ->] (8,9.8) -- (8,8.2);
\draw[lightgray, thin,  ->] (1.8,0.2) -- (0.2,1.8);
\draw[lightgray, thin,  ->] (3.8,0.2) -- (2.2,1.8);
\draw[lightgray, thin,  ->] (5.8,0.2) -- (4.2,1.8);
\draw[lightgray, thin,  ->] (7.8,0.2) -- (6.2,1.8);
\draw[lightgray, thin,  ->] (1.8,2.2) -- (0.2,3.8);
\draw[lightgray, thin,  ->] (3.8,2.2) -- (2.2,3.8);
\draw[lightgray, thin,  ->] (5.8,2.2) -- (4.2,3.8);
\draw[lightgray, thin,  ->] (7.8,2.2) -- (6.2,3.8);
\draw[lightgray, thin,  ->] (9.8,2.2) -- (8.2,3.8);
\draw[lightgray, thin,  ->] (1.8,4.2) -- (0.2,5.8);
\draw[lightgray, thin,  ->] (3.8,4.2) -- (2.2,5.8);
\draw[lightgray, thin,  ->] (5.8,4.2) -- (4.2,5.8);
\draw[lightgray, thin,  ->] (7.8,4.2) -- (6.2,5.8);
\draw[lightgray, thin,  ->] (9.8,4.2) -- (8.2,5.8);
\draw[lightgray, thin,  ->] (1.8,6.2) -- (0.2,7.8);
\draw[lightgray, thin,  ->] (3.8,6.2) -- (2.2,7.8);
\draw[lightgray, thin,  ->] (5.8,6.2) -- (4.2,7.8);
\draw[lightgray, thin,  ->] (7.8,6.2) -- (6.2,7.8);
\draw[lightgray, thin,  ->] (9.8,6.2) -- (8.2,7.8);
\draw[lightgray, thin,  ->] (1.7,8.3) to [out=150,in=290]  (0.3,9.7);
\draw[lightgray, thin,  ->] (1.8,8.4) to [out=120,in=330]  (0.4,9.8);
\draw[lightgray, thin,  ->] (3.7,8.3) to [out=150,in=290]  (2.3,9.7);
\draw[lightgray, thin,  ->] (3.8,8.4) to [out=120,in=330]  (2.4,9.8);
\draw[lightgray, thin,  ->] (5.7,8.3) to [out=150,in=290]  (4.3,9.7);
\draw[lightgray, thin,  ->] (5.8,8.4) to [out=120,in=330]  (4.4,9.8);
\draw[lightgray, thin,  ->] (7.7,8.3) to [out=150,in=290]  (6.3,9.7);
\draw[lightgray, thin,  ->] (7.8,8.4) to [out=120,in=330]  (6.4,9.8);
\draw[lightgray, thin,  ->] (-0.2,6.2) to [out=110,in=220]  (-0.2,9.8);
\draw[lightgray, thin,  ->] (0.2,4.2) to [out=60,in=230]  (1.8,9.8);
\draw[lightgray, thin,  ->] (0.2,2.2) to [out=1,in=245]  (3.8,9.8);
\draw[lightgray, thin,  ->] (0.2,0.2) to [out=1,in=245]  (5.8,9.8);
\end{tikzpicture}
\qquad
}
\caption{The graph of the lower middle $\mathscr{C}$-module category
$\mathcal{N}_2$ and the corresponding positive eigenvector}\label{fig6}
\end{figure}
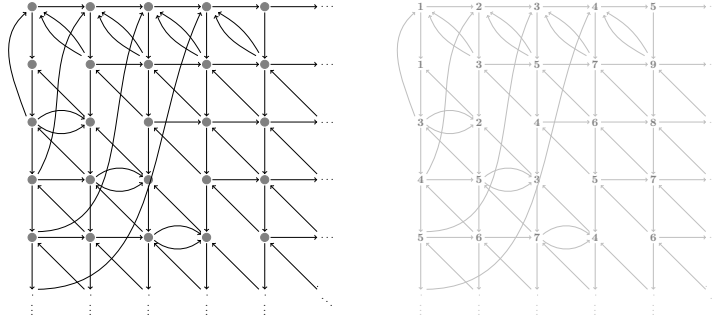

\begin{proposition}\label{prop-s5.4-2}
Let $\lambda$ be a regular lower middle weight. Then we have:

\begin{enumerate}[$($a$)$]
\item\label{prop-s5.4-2.1} The $\mathscr{C}$-module category
$\mathrm{add}(\mathscr{C}\cdot L(\lambda))$ contains
$\mathcal{N}_2$ as a $\mathscr{C}$-module subcategory.
\item\label{prop-s5.4-2.2}
The quotient of $\mathrm{add}(\mathscr{C}\cdot L(\lambda))$
by the ideal $\mathcal{I}$ generated by $\mathcal{N}_2$ is a simple 
transitive $\mathscr{C}$-module category that is 
equivalent to ${}_\mathscr{C}\mathscr{C}$. 
\item\label{prop-s5.4-2.3}
The indecomposable
objects in  $\mathrm{add}(\mathscr{C}\cdot L(\lambda))/\mathcal{I}$  
are given by the images of 
$L(\mu)$, where $\mu$ is a 
regular lower middle weight from the same shaded connected
component as $\lambda$, see Figure~\ref{fig4}.
\end{enumerate}
\end{proposition}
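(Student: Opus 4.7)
The plan is to follow, \emph{mutatis mutandis}, the argument of Proposition~\ref{prop-s5.3-2}, with lower middle weights replacing upper middle weights throughout and Proposition~\ref{prop-s5.4-1} playing the role of Proposition~\ref{prop-s5.3-1}.

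For claim (a), I would translate $L(\lambda)$ to the unique wall to which $\lambda$ is adjacent, producing some singular lower middle weight module $L(\lambda')$ that lies in $\mathrm{add}(\mathscr{C}\cdot L(\lambda))$. By Proposition~\ref{prop-s5.4-1}, the $\mathscr{C}$-module category generated by $L(\lambda')$ is precisely $\mathcal{N}_2$, which gives the required inclusion.

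For claim (c), I would analyse which indecomposables survive in the quotient $\mathrm{add}(\mathscr{C}\cdot L(\lambda))/\mathcal{I}$. Projective functors that do not pass through any wall restrict to equivalences between regular integral blocks of $\mathcal{O}$ and therefore send simple modules to simple modules; on the other hand, the two connected shaded components of the regular lower middle region in Figure~\ref{fig4} are separated by walls, so any projective functor taking an object in the component of $\lambda$ to an object in the other component must factor through a module that lies in $\mathcal{N}_2$, and hence is killed in the quotient. Consequently, the indecomposables in $\mathrm{add}(\mathscr{C}\cdot L(\lambda))/\mathcal{I}$ are precisely the images of $L(\mu)$ for $\mu$ a regular lower middle weight inside the connected component of $\lambda$, and no further identifications occur among them.

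For claim (b), I would apply Theorem~\ref{thm-s4.3-1} to the quotient. What must be checked is that the action of $\mathrm{F}$ (and of $\mathrm{F}^*$) on the remaining indecomposables matches the graph of the left regular module category in Figure~\ref{fig3}. Inside a fixed regular connected component of lower middle weights, tensoring $L(\mu)$ with $L((1,0))$ produces, at the level of composition factors, three contributions indexed by the weights in $\mathrm{supp}(L((1,0)))$ translated from $\mu$; those whose translate stays in the same regular connected component yield the expected summands, while those landing on a wall or in a different region get absorbed into $\mathcal{I}$. A Weyl-group element identifies this connected component, as a lattice of regular dot-orbits, with the dominant chamber, so that the surviving combinatorics reproduces exactly the graph of Figure~\ref{fig3}. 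The same argument applied to $\mathrm{G}$ produces the dual graph, and Theorem~\ref{thm-s4.3-1} then yields the desired equivalence with ${}_\mathscr{C}\mathscr{C}$. The main obstacle is precisely this combinatorial matching: the asymmetry between $\mathcal{N}_1$ and $\mathcal{N}_2$ recorded just before the proposition shows that the global $L((1,0))$-orbit structure is sensitive to which quadrant one works in, and one must verify that every ``hidden dot-orbit coincidence'' of the type appearing in Cases~10 and~11 of Proposition~\ref{prop-s5.3-1} (and their lower middle analogues) indeed lands inside $\mathcal{I}$, so that no extra edges corrupt the Cartan-type combinatorics of the quotient.
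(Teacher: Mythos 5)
Your proposal is correct and follows essentially the same route as the paper, which simply cites "mutatis mutandis the proof of Proposition~\ref{prop-s5.3-2}"; you have fleshed out precisely what that transfer requires, including the correct invocation of Proposition~\ref{prop-s5.4-1} for claim (a), the wall-crossing argument for claim (c), and the Weyl-group transport to the dominant chamber together with Theorem~\ref{thm-s4.3-1} for claim (b). One small imprecision: a regular lower middle weight is not adjacent to a single "unique" wall, so you should say "translate to some adjacent wall" as in the original proof; and your closing caution about dot-orbit coincidences of the Case~10/11 type is a sensible thing to check, but it is already subsumed in the Weyl-group identification of the connected component with the dominant chamber, since those coincidences involve weights lying outside the given regular component and hence always contribute to $\mathcal{I}$.
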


\begin{proof}
Mutatis mutandis the proof of Proposition~\ref{prop-s5.3-2}. 
\end{proof}

\subsection{Bottom weights}\label{s5.5}

In this subsection we fix a bottom weight $\lambda$.
Denote by $\mathcal{N}_3$ the additive closure of all 
$N(\mu)$, where $\mu$ is a bottom weight.

\begin{proposition}\label{prop-s5.5-1}
The $\mathscr{C}$-module category
$\mathrm{add}(\mathscr{C}\cdot L((-1,-1)))$ coincides
with $\mathcal{N}_3$, moreover, the latter is a simple
transitive $\mathscr{C}$-module category whose graph
and positive eigenvector are depicted in Figure~\ref{fig7}.
\end{proposition}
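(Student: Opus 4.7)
The plan is to exploit the exceptional nature of $-\rho = (-1,-1)$: as the unique integral weight with trivial dot-orbit under $W$, its block $\mathcal{O}_{\chi_{-\rho}}$ contains only $L(-\rho) = \Delta(-\rho) = P(-\rho)$, which is simultaneously simple, Verma, and projective in $\mathcal{O}$. Since category $\mathcal{O}$ is closed under tensoring with finite-dimensional modules and such tensoring preserves projectivity, every object in $\mathrm{add}(\mathscr{C}\cdot L((-1,-1)))$ is a projective module in $\mathcal{O}$.

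To identify which projectives arise, I would analyze the Verma flag of $V\otimes P(-\rho) = V\otimes\Delta(-\rho)$: its subquotients are $\Delta(-\rho+\omega)$ for each weight $\omega$ of $V$, appearing with multiplicity $\dim V_\omega$. Within any block $\chi$, the Weyl-invariance $\dim V_\omega=\dim V_{w(\omega)}$, combined with the dot-action identity $w\cdot(-\rho+\omega) = -\rho+w(\omega)$, forces each block component to have a \emph{uniform} Verma flag, in which every Verma of the block appears with the same multiplicity. For $\mathfrak{sl}_3$, the antidominant projective $P(\nu^{\mathrm{anti}})$ in any integral block is the unique indecomposable projective whose Verma flag is uniform with each Verma appearing once (via BGG reciprocity, using that $L(\nu^{\mathrm{anti}})$ sits in the socle of every Verma in its block with multiplicity one). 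It follows that every block component of $V\otimes P(-\rho)$ is a multiple of the corresponding $P(\nu^{\mathrm{anti}})$. Since antidominant integral weights are exactly the bottom weights, this gives $\mathrm{add}(\mathscr{C}\cdot L((-1,-1)))\subseteq\mathcal{N}_3$. The reverse inclusion is established by choosing, for each bottom $\nu$, a finite-dimensional $V$ (for instance $V=L(\lambda+\rho)$ where $\lambda$ is the dominant weight of the block of $\nu$) so that the $\chi_\nu$-component of $V\otimes P(-\rho)$ realizes $P(\nu)$.

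Transitivity and simplicity then follow the template of Theorem~\ref{thm-s4.3-1} and Proposition~\ref{prop-s5.3-1}. Transitivity holds because any two bottom projectives $P(\mu), P(\mu')$ are linked by composing a translation into $\mathcal{O}_{\chi_{-\rho}}$ (sending $P(\mu)$ to a non-zero multiple of $P(-\rho)$) with a translation back out (sending $P(-\rho)$ to a summand containing $P(\mu')$). Simplicity holds because any non-zero radical morphism between bottom projectives, translated into $\mathcal{O}_{\chi_{-\rho}}$, becomes a non-zero scalar in $\mathrm{End}_{\mathcal{O}}(P(-\rho)) = \mathbb{C}$, which can then be transported via translation functors to yield the identity on each bottom projective.

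The combinatorial heart of the proposition is the explicit determination of the graph in Figure~\ref{fig7}, which I would carry out by computing $P(\mu)\otimes L((1,0))$ for each bottom $\mu$ and splitting into cases according to the singularity type of $\mu$: the corner $\mu=-\rho$, the $s$-wall ($\mu_1=-1$, $\mu_2\le -2$), the $r$-wall ($\mu_2=-1$, $\mu_1\le -2$), and the regular interior ($\mu_1,\mu_2\le -2$). In each case, the Verma flag of $P(\mu)$ (which, by antidominance of $\mu$, contains each Verma in the block of $\mu$ exactly once) tensored with $L((1,0))$ produces a known collection of shifted Vermas $\Delta(\nu+\omega)$; the uniformity argument above then pins down every block component as a multiple of the corresponding antidominant projective, whose bottom weight gives the target vertex and whose multiplicity gives the arrow count. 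The main technical obstacle is the careful bookkeeping to determine which block each shifted Verma lands in, especially near the corner $-\rho$ and along the two walls, where adjacent dot-orbits can share boundary behaviour and produce unexpected multiplicities (for instance, the arrow of multiplicity $3$ from $(-2,-1)$ back to $(-1,-1)$ that arises because three of the nine shifted Vermas collapse into the single Verma $\Delta(-1,-1)$). Finally, for the Perron-Frobenius eigenvector, I would assign to each bottom $\mu$ the Bernstein number $\mathbf{b}(P(\mu))=|W|/|W_\mu|$, which evaluates to $1$ at $\mu=-\rho$, $3$ on either singular wall, and $6$ in the regular interior; the identity $\mathbf{B}^t[\mathrm{F}]=3\mathbf{B}^t$ then follows from Subsection~\ref{s3.7} via the multiplicativity of $\mathbf{b}$ under tensoring with finite-dimensional modules, as already established in Propositions~\ref{prop-s5.3-1} and~\ref{prop-s5.4-1}.
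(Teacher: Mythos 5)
Your proof establishes the inclusion $\mathrm{add}(\mathscr{C}\cdot L((-1,-1)))\subseteq\mathcal{N}_3$ by a genuinely different mechanism than the paper. The paper observes that the bottom weights are precisely the integral weights whose indecomposable projective covers are also injective, and that projective functors, having biadjoints, preserve projective-injective objects; since $L((-1,-1))=P((-1,-1))$ is projective-injective, the inclusion is immediate. You instead run a Verma flag analysis on $V\otimes\Delta(-\rho)$ block by block, using the dot-action identity $w\cdot(-\rho+\omega)=-\rho+w(\omega)$ to deduce uniform Verma multiplicities and then pinning down each block component as a multiple of the antidominant projective. This route works, but one step needs reinforcement: the observation that $P(\nu^{\mathrm{anti}})$ is the unique \emph{indecomposable} projective with unit uniform Verma flag does not by itself exclude a direct sum of several non-uniform indecomposable projectives assembling to a uniform flag. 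The clean repair is to compare the Verma multiplicity at the dominant weight (which, by BGG reciprocity and the $\mathfrak{sl}_3$ multiplicity-one structure of Vermas, equals the total number of indecomposable summands) with that at the antidominant weight (which equals the multiplicity of $P(\nu^{\mathrm{anti}})$ alone, since only the socle Verma contains the antidominant simple); uniformity forces these to agree, hence all other summands vanish.

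For simplicity, transitivity, the graph, and the eigenvector, your argument matches the paper's in substance. Two small points of terminology: the paper phrases the positive eigenvector via the Verma flag length $\mathtt{v}(P(\mu))=|W|/|G_\mu|$, while you phrase it via the Bernstein number $\mathbf{b}(P(\mu))$; these are proportional rather than literally equal (the constant being $\mathbf{b}(\Delta(\lambda))$ in the fixed filtration), but the resulting eigenvectors agree after the common normalization at $-\rho$. In the simplicity argument you should invoke that a non-zero morphism between projective-injective objects survives translation to the most singular wall (full faithfulness of the Soergel-type functor on projective-injectives, which is what the paper's observation about simple Verma tops and socles is encoding), rather than relying only on nilpotency of radical endomorphisms.
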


\begin{figure}
\resizebox{10cm}{!}{
\begin{tikzpicture}
\draw[gray,fill=gray] (2,2) circle (.9ex);
\draw[gray,fill=gray] (2,4) circle (.9ex);
\draw[gray,fill=gray] (2,6) circle (.9ex);
\draw[gray,fill=gray] (2,8) circle (.9ex);
\draw[gray,fill=gray] (2,10) circle (.9ex);
\draw[gray,fill=gray] (4,2) circle (.9ex);
\draw[gray,fill=gray] (4,4) circle (.9ex);
\draw[gray,fill=gray] (4,6) circle (.9ex);
\draw[gray,fill=gray] (4,8) circle (.9ex);
\draw[gray,fill=gray] (4,10) circle (.9ex);
\draw[gray,fill=gray] (6,2) circle (.9ex);
\draw[gray,fill=gray] (6,4) circle (.9ex);
\draw[gray,fill=gray] (6,6) circle (.9ex);
\draw[gray,fill=gray] (6,8) circle (.9ex);
\draw[gray,fill=gray] (6,10) circle (.9ex);
\draw[gray,fill=gray] (8,2) circle (.9ex);
\draw[gray,fill=gray] (8,4) circle (.9ex);
\draw[gray,fill=gray] (8,6) circle (.9ex);
\draw[gray,fill=gray] (8,8) circle (.9ex);
\draw[gray,fill=gray] (8,10) circle (.9ex);
\draw[gray,fill=gray] (10,2) circle (.9ex);
\draw[gray,fill=gray] (10,4) circle (.9ex);
\draw[gray,fill=gray] (10,6) circle (.9ex);
\draw[gray,fill=gray] (10,8) circle (.9ex);
\draw[gray,fill=gray] (10,10) circle (.9ex);
\draw[black, thick,  -] (0,2) -- (0,2) node[anchor= east] {\large$\dots$};
\draw[black, thick,  -] (0,4) -- (0,4) node[anchor= east] {\large$\dots$};
\draw[black, thick,  -] (0,6) -- (0,6) node[anchor= east] {\large$\dots$};
\draw[black, thick,  -] (0,8) -- (0,8) node[anchor= east] {\large$\dots$};
\draw[black, thick,  -] (0,10) -- (0,10) node[anchor= east] {\large$\dots$};
\draw[black, thick,  ->] (0.2,2) -- (1.8,2);
\draw[black, thick,  ->] (0.2,4) -- (1.8,4);
\draw[black, thick,  ->] (0.2,6) -- (1.8,6);
\draw[black, thick,  ->] (0.2,8) -- (1.8,8);
\draw[black, thick,  ->] (0.2,10) -- (1.8,10);
\draw[black, thin,  -] (2,0) -- (2,0) node[anchor= north] {\large$\vdots$};
\draw[black, thin,  -] (4,0) -- (4,0) node[anchor= north] {\large$\vdots$};
\draw[black, thin,  -] (6,0) -- (6,0) node[anchor= north] {\large$\vdots$};
\draw[black, thin,  -] (8,0) -- (8,0) node[anchor= north] {\large$\vdots$};
\draw[black, thin,  -] (10,0) -- (10,0) node[anchor= north] {\large$\vdots$};
\draw[black, thick,  ->] (2,1.8) -- (2,0.2);
\draw[black, thick,  ->] (4,1.8) -- (4,0.2);
\draw[black, thick,  ->] (6,1.8) -- (6,0.2);
\draw[black, thick,  ->] (8,1.8) -- (8,0.2);
\draw[black, thick,  ->] (10,1.8) -- (10,0.2);
\draw[black, thick,  ->] (2.2,2) -- (3.8,2);
\draw[black, thick,  ->] (2.2,4) -- (3.8,4);
\draw[black, thick,  ->] (2.2,6) -- (3.8,6);
\draw[black, thick,  ->] (2.2,8) -- (3.8,8);
\draw[black, thick,  ->] (2.2,10) -- (3.8,10);
\draw[black, thick,  ->] (4.2,2) -- (5.8,2);
\draw[black, thick,  ->] (4.2,4) -- (5.8,4);
\draw[black, thick,  ->] (4.2,6) -- (5.8,6);
\draw[black, thick,  ->] (4.2,8) -- (5.8,8);
\draw[black, thick,  ->] (4.2,10) -- (5.8,10);
\draw[black, thick,  ->] (6.2,2) -- (7.8,2);
\draw[black, thick,  ->] (6.2,4) -- (7.8,4);
\draw[black, thick,  ->] (6.2,6) -- (7.8,6);
\draw[black, thick,  ->] (6.2,8) -- (7.8,8);
\draw[black, thick,  ->] (6.2,10) -- (7.8,10);
\draw[black, thick,  ->] (2,3.8) -- (2,2.2);
\draw[black, thick,  ->] (4,3.8) -- (4,2.2);
\draw[black, thick,  ->] (6,3.8) -- (6,2.2);
\draw[black, thick,  ->] (8,3.8) -- (8,2.2);
\draw[black, thick,  ->] (10,3.8) -- (10,2.2);
\draw[black, thick,  ->] (2,5.8) -- (2,4.2);
\draw[black, thick,  ->] (4,5.8) -- (4,4.2);
\draw[black, thick,  ->] (6,5.8) -- (6,4.2);
\draw[black, thick,  ->] (8,5.8) -- (8,4.2);
\draw[black, thick,  ->] (10,5.8) -- (10,4.2);
\draw[black, thick,  ->] (2,7.8) -- (2,6.2);
\draw[black, thick,  ->] (4,7.8) -- (4,6.2);
\draw[black, thick,  ->] (6,7.8) -- (6,6.2);
\draw[black, thick,  ->] (8,7.8) -- (8,6.2);
\draw[black, thick,  ->] (10,7.8) -- (10,6.2);
\draw[black, thick,  ->] (2,9.8) -- (2,8.2);
\draw[black, thick,  ->] (4,9.8) -- (4,8.2);
\draw[black, thick,  ->] (6,9.8) -- (6,8.2);
\draw[black, thick,  ->] (8,9.8) -- (8,8.2);
\draw[black, thick,  ->] (10,9.8) -- (10,8.2);
\draw[black, thick,  ->] (8.2,2.1) to [out=40,in=140]  (9.8,2.1);
\draw[black, thick,  ->] (8.2,1.9) to [out=-40,in=-140]  (9.8,1.9);
\draw[black, thick,  ->] (8.2,4.1) to [out=40,in=140]  (9.8,4.1);
\draw[black, thick,  ->] (8.2,3.9) to [out=-40,in=-140]  (9.8,3.9);
\draw[black, thick,  ->] (8.2,6.1) to [out=40,in=140]  (9.8,6.1);
\draw[black, thick,  ->] (8.2,5.9) to [out=-40,in=-140]  (9.8,5.9);
\draw[black, thick,  ->] (8.2,8.1) to [out=40,in=140]  (9.8,8.1);
\draw[black, thick,  ->] (8.2,7.9) to [out=-40,in=-140]  (9.8,7.9);
\draw[black, thick,  ->] (8.2,10.1) to [out=40,in=140]  (9.8,10.1);
\draw[black, thick,  ->] (8.2,10.3) to [out=80,in=110]  (9.8,10.3);
\draw[black, thick,  ->] (8.2,10) -- (9.6,10);
\draw[black, thick,  ->] (1.8,0.2) -- (0.2,1.8);
\draw[black, thick,  ->] (3.8,0.2) -- (2.2,1.8);
\draw[black, thick,  ->] (5.8,0.2) -- (4.2,1.8);
\draw[black, thick,  ->] (7.8,0.2) -- (6.2,1.8);
\draw[black, thick,  ->] (9.8,0.2) -- (8.2,1.8);
\draw[black, thick,  ->] (1.8,2.2) -- (0.2,3.8);
\draw[black, thick,  ->] (3.8,2.2) -- (2.2,3.8);
\draw[black, thick,  ->] (5.8,2.2) -- (4.2,3.8);
\draw[black, thick,  ->] (7.8,2.2) -- (6.2,3.8);
\draw[black, thick,  ->] (9.8,2.2) -- (8.2,3.8);
\draw[black, thick,  ->] (1.8,4.2) -- (0.2,5.8);
\draw[black, thick,  ->] (3.8,4.2) -- (2.2,5.8);
\draw[black, thick,  ->] (5.8,4.2) -- (4.2,5.8);
\draw[black, thick,  ->] (7.8,4.2) -- (6.2,5.8);
\draw[black, thick,  ->] (9.8,4.2) -- (8.2,5.8);
\draw[black, thick,  ->] (1.8,6.2) -- (0.2,7.8);
\draw[black, thick,  ->] (3.8,6.2) -- (2.2,7.8);
\draw[black, thick,  ->] (5.8,6.2) -- (4.2,7.8);
\draw[black, thick,  ->] (7.8,6.2) -- (6.2,7.8);
\draw[black, thick,  ->] (9.8,6.2) -- (8.2,7.8);
\draw[black, thick,  ->] (1.7,8.2) to [out=170,in=-80]  (0.2,9.7);
\draw[black, thick,  ->] (1.8,8.3) to [out=100,in=-10]  (0.3,9.8);
\draw[black, thick,  ->] (3.7,8.2) to [out=170,in=-80]  (2.2,9.7);
\draw[black, thick,  ->] (3.8,8.3) to [out=100,in=-10]  (2.3,9.8);
\draw[black, thick,  ->] (5.7,8.2) to [out=170,in=-80]  (4.2,9.7);
\draw[black, thick,  ->] (5.8,8.3) to [out=100,in=-10]  (4.3,9.8);
\draw[black, thick,  ->] (7.7,8.2) to [out=170,in=-80]  (6.2,9.7);
\draw[black, thick,  ->] (7.8,8.3) to [out=100,in=-10]  (6.3,9.8);
\draw[black, thick,  ->] (9.6,8.4) to [out=135,in=-45]  (8.4,9.6);
\draw[black, thick,  ->] (9.8,8.3) to [out=90,in=0]  (8.3,9.8);
\end{tikzpicture}
\qquad\qquad
\begin{tikzpicture}
\draw[gray,fill=gray] (2,2) circle (.01ex) node {\large$\mathbf{6}$};
\draw[gray,fill=gray] (2,4) circle (.01ex) node {\large$\mathbf{6}$};
\draw[gray,fill=gray] (2,6) circle (.01ex) node {\large$\mathbf{6}$};
\draw[gray,fill=gray] (2,8) circle (.01ex) node {\large$\mathbf{6}$};
\draw[gray,fill=gray] (2,10) circle (.01ex) node {\large$\mathbf{3}$};
\draw[gray,fill=gray] (4,2) circle (.01ex) node {\large$\mathbf{6}$};
\draw[gray,fill=gray] (4,4) circle (.01ex) node {\large$\mathbf{6}$};
\draw[gray,fill=gray] (4,6) circle (.01ex) node {\large$\mathbf{6}$};
\draw[gray,fill=gray] (4,8) circle (.01ex) node {\large$\mathbf{6}$};
\draw[gray,fill=gray] (4,10) circle (.01ex) node {\large$\mathbf{3}$};
\draw[gray,fill=gray] (6,2) circle (.01ex) node {\large$\mathbf{6}$};
\draw[gray,fill=gray] (6,4) circle (.01ex) node {\large$\mathbf{6}$};
\draw[gray,fill=gray] (6,6) circle (.01ex) node {\large$\mathbf{6}$};
\draw[gray,fill=gray] (6,8) circle (.01ex) node {\large$\mathbf{6}$};
\draw[gray,fill=gray] (6,10) circle (.01ex) node {\large$\mathbf{3}$};
\draw[gray,fill=gray] (8,2) circle (.01ex) node {\large$\mathbf{6}$};
\draw[gray,fill=gray] (8,4) circle (.01ex) node {\large$\mathbf{6}$};
\draw[gray,fill=gray] (8,6) circle (.01ex) node {\large$\mathbf{6}$};
\draw[gray,fill=gray] (8,8) circle (.01ex) node {\large$\mathbf{6}$};
\draw[gray,fill=gray] (8,10) circle (.01ex) node {\large$\mathbf{3}$};
\draw[gray,fill=gray] (10,2) circle (.01ex) node {\large$\mathbf{3}$};
\draw[gray,fill=gray] (10,4) circle (.01ex) node {\large$\mathbf{3}$};
\draw[gray,fill=gray] (10,6) circle (.01ex) node {\large$\mathbf{3}$};
\draw[gray,fill=gray] (10,8) circle (.01ex) node {\large$\mathbf{3}$};
\draw[gray,fill=gray] (10,10) circle (.01ex) node {\large$\mathbf{1}$};
\draw[lightgray, thin,  -] (0,2) -- (0,2) node[anchor= east] {\large$\dots$};
\draw[lightgray, thin,  -] (0,4) -- (0,4) node[anchor= east] {\large$\dots$};
\draw[lightgray, thin,  -] (0,6) -- (0,6) node[anchor= east] {\large$\dots$};
\draw[lightgray, thin,  -] (0,8) -- (0,8) node[anchor= east] {\large$\dots$};
\draw[lightgray, thin,  -] (0,10) -- (0,10) node[anchor= east] {\large$\dots$};
\draw[lightgray, thin,  ->] (0.2,2) -- (1.8,2);
\draw[lightgray, thin,  ->] (0.2,4) -- (1.8,4);
\draw[lightgray, thin,  ->] (0.2,6) -- (1.8,6);
\draw[lightgray, thin,  ->] (0.2,8) -- (1.8,8);
\draw[lightgray, thin,  ->] (0.2,10) -- (1.8,10);
\draw[black, thin,  -] (2,0) -- (2,0) node[anchor= north] {\large$\vdots$};
\draw[black, thin,  -] (4,0) -- (4,0) node[anchor= north] {\large$\vdots$};
\draw[black, thin,  -] (6,0) -- (6,0) node[anchor= north] {\large$\vdots$};
\draw[black, thin,  -] (8,0) -- (8,0) node[anchor= north] {\large$\vdots$};
\draw[black, thin,  -] (10,0) -- (10,0) node[anchor= north] {\large$\vdots$};
\draw[lightgray, thin,  ->] (2,1.8) -- (2,0.2);
\draw[lightgray, thin,  ->] (4,1.8) -- (4,0.2);
\draw[lightgray, thin,  ->] (6,1.8) -- (6,0.2);
\draw[lightgray, thin,  ->] (8,1.8) -- (8,0.2);
\draw[lightgray, thin,  ->] (10,1.8) -- (10,0.2);
\draw[lightgray, thin,  ->] (2.2,2) -- (3.8,2);
\draw[lightgray, thin,  ->] (2.2,4) -- (3.8,4);
\draw[lightgray, thin,  ->] (2.2,6) -- (3.8,6);
\draw[lightgray, thin,  ->] (2.2,8) -- (3.8,8);
\draw[lightgray, thin,  ->] (2.2,10) -- (3.8,10);
\draw[lightgray, thin,  ->] (4.2,2) -- (5.8,2);
\draw[lightgray, thin,  ->] (4.2,4) -- (5.8,4);
\draw[lightgray, thin,  ->] (4.2,6) -- (5.8,6);
\draw[lightgray, thin,  ->] (4.2,8) -- (5.8,8);
\draw[lightgray, thin,  ->] (4.2,10) -- (5.8,10);
\draw[lightgray, thin,  ->] (6.2,2) -- (7.8,2);
\draw[lightgray, thin,  ->] (6.2,4) -- (7.8,4);
\draw[lightgray, thin,  ->] (6.2,6) -- (7.8,6);
\draw[lightgray, thin,  ->] (6.2,8) -- (7.8,8);
\draw[lightgray, thin,  ->] (6.2,10) -- (7.8,10);
\draw[lightgray, thin,  ->] (2,3.8) -- (2,2.2);
\draw[lightgray, thin,  ->] (4,3.8) -- (4,2.2);
\draw[lightgray, thin,  ->] (6,3.8) -- (6,2.2);
\draw[lightgray, thin,  ->] (8,3.8) -- (8,2.2);
\draw[lightgray, thin,  ->] (10,3.8) -- (10,2.2);
\draw[lightgray, thin,  ->] (2,5.8) -- (2,4.2);
\draw[lightgray, thin,  ->] (4,5.8) -- (4,4.2);
\draw[lightgray, thin,  ->] (6,5.8) -- (6,4.2);
\draw[lightgray, thin,  ->] (8,5.8) -- (8,4.2);
\draw[lightgray, thin,  ->] (10,5.8) -- (10,4.2);
\draw[lightgray, thin,  ->] (2,7.8) -- (2,6.2);
\draw[lightgray, thin,  ->] (4,7.8) -- (4,6.2);
\draw[lightgray, thin,  ->] (6,7.8) -- (6,6.2);
\draw[lightgray, thin,  ->] (8,7.8) -- (8,6.2);
\draw[lightgray, thin,  ->] (10,7.8) -- (10,6.2);
\draw[lightgray, thin,  ->] (2,9.8) -- (2,8.2);
\draw[lightgray, thin,  ->] (4,9.8) -- (4,8.2);
\draw[lightgray, thin,  ->] (6,9.8) -- (6,8.2);
\draw[lightgray, thin,  ->] (8,9.8) -- (8,8.2);
\draw[lightgray, thin,  ->] (10,9.8) -- (10,8.2);
\draw[lightgray, thin,  ->] (8.2,2.1) to [out=40,in=140]  (9.8,2.1);
\draw[lightgray, thin,  ->] (8.2,1.9) to [out=-40,in=-140]  (9.8,1.9);
\draw[lightgray, thin,  ->] (8.2,4.1) to [out=40,in=140]  (9.8,4.1);
\draw[lightgray, thin,  ->] (8.2,3.9) to [out=-40,in=-140]  (9.8,3.9);
\draw[lightgray, thin,  ->] (8.2,6.1) to [out=40,in=140]  (9.8,6.1);
\draw[lightgray, thin,  ->] (8.2,5.9) to [out=-40,in=-140]  (9.8,5.9);
\draw[lightgray, thin,  ->] (8.2,8.1) to [out=40,in=140]  (9.8,8.1);
\draw[lightgray, thin,  ->] (8.2,7.9) to [out=-40,in=-140]  (9.8,7.9);
\draw[lightgray, thin,  ->] (8.2,10.1) to [out=40,in=140]  (9.8,10.1);
\draw[lightgray, thin,  ->] (8.2,10.3) to [out=80,in=110]  (9.8,10.3);
\draw[lightgray, thin,  ->] (8.2,10) -- (9.6,10);
\draw[lightgray, thin,  ->] (1.8,0.2) -- (0.2,1.8);
\draw[lightgray, thin,  ->] (3.8,0.2) -- (2.2,1.8);
\draw[lightgray, thin,  ->] (5.8,0.2) -- (4.2,1.8);
\draw[lightgray, thin,  ->] (7.8,0.2) -- (6.2,1.8);
\draw[lightgray, thin,  ->] (9.8,0.2) -- (8.2,1.8);
\draw[lightgray, thin,  ->] (1.8,2.2) -- (0.2,3.8);
\draw[lightgray, thin,  ->] (3.8,2.2) -- (2.2,3.8);
\draw[lightgray, thin,  ->] (5.8,2.2) -- (4.2,3.8);
\draw[lightgray, thin,  ->] (7.8,2.2) -- (6.2,3.8);
\draw[lightgray, thin,  ->] (9.8,2.2) -- (8.2,3.8);
\draw[lightgray, thin,  ->] (1.8,4.2) -- (0.2,5.8);
\draw[lightgray, thin,  ->] (3.8,4.2) -- (2.2,5.8);
\draw[lightgray, thin,  ->] (5.8,4.2) -- (4.2,5.8);
\draw[lightgray, thin,  ->] (7.8,4.2) -- (6.2,5.8);
\draw[lightgray, thin,  ->] (9.8,4.2) -- (8.2,5.8);
\draw[lightgray, thin,  ->] (1.8,6.2) -- (0.2,7.8);
\draw[lightgray, thin,  ->] (3.8,6.2) -- (2.2,7.8);
\draw[lightgray, thin,  ->] (5.8,6.2) -- (4.2,7.8);
\draw[lightgray, thin,  ->] (7.8,6.2) -- (6.2,7.8);
\draw[lightgray, thin,  ->] (9.8,6.2) -- (8.2,7.8);
\draw[lightgray, thin,  ->] (1.7,8.2) to [out=170,in=-80]  (0.2,9.7);
\draw[lightgray, thin,  ->] (1.8,8.3) to [out=100,in=-10]  (0.3,9.8);
\draw[lightgray, thin,  ->] (3.7,8.2) to [out=170,in=-80]  (2.2,9.7);
\draw[lightgray, thin,  ->] (3.8,8.3) to [out=100,in=-10]  (2.3,9.8);
\draw[lightgray, thin,  ->] (5.7,8.2) to [out=170,in=-80]  (4.2,9.7);
\draw[lightgray, thin,  ->] (5.8,8.3) to [out=100,in=-10]  (4.3,9.8);
\draw[lightgray, thin,  ->] (7.7,8.2) to [out=170,in=-80]  (6.2,9.7);
\draw[lightgray, thin,  ->] (7.8,8.3) to [out=100,in=-10]  (6.3,9.8);
\draw[lightgray, thin,  ->] (9.6,8.4) to [out=135,in=-45]  (8.4,9.6);
\draw[lightgray, thin,  ->] (9.8,8.3) to [out=90,in=0]  (8.3,9.8);
\end{tikzpicture}
\qquad
}
\caption{The graph of the bottom $\mathscr{C}$-module category
$\mathcal{N}_3$ and the corresponding positive eigenvector}\label{fig7}
\end{figure}
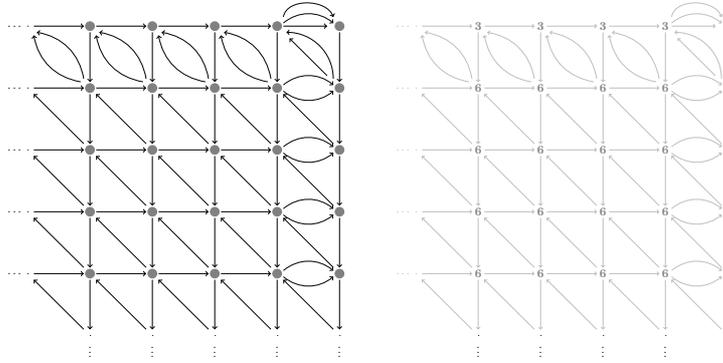

\begin{proof}
To start with, let us note that the bottom weights are exactly 
those integral weights for which the corresponding projective
modules are injective. As projective functors have biadjoints,
they preserve projective-injective modules. This implies that
$\mathcal{N}_3$ is a $\mathscr{C}$-module category.

The weight $(-1,-1)$ is a fixed point for the dot-action of $W$.
This implies that $L((-1,-1))=P((-1,-1))$, moreover, this module
is also injective. As projective functors preserve 
projective-injective modules, it follows that 
$\mathrm{add}(\mathscr{C}\cdot L((-1,-1)))$
is a subcategory of $\mathcal{N}_3$. On the other hand,
any integral projective-injective module can be translated to the 
most singular wall, that is to the additive closure of 
$L((-1,-1))$. This gives us the reversed inclusion and
thus implies that $\mathrm{add}(\mathscr{C}\cdot L((-1,-1)))$
coincides with $\mathcal{N}_3$. 

The simple tops (and socles) of projective-injective modules
are simple Verma modules. Any non-zero homomorphism between 
two projective-injective modules hence induces the identity map
between the corresponding simple Verma subquotients.
Translating this to the additive closure of 
$L((-1,-1))$, we get the identity map on
$L((-1,-1))$. By the previous paragraph, that generates
the whole of $\mathcal{N}_3$ under the action of $\mathscr{C}$.
This implies that $\mathcal{N}_3$ is a simple transitive 
$\mathscr{C}$-module category.

It remains to determine the combinatorics. 
For this, let us recall that every projective object
in $\mathcal{O}$ has a Verma flag, that is a filtration
whose subquotients are Verma modules. For a module
$M$ with a Verma flag, let us denote by
$\mathtt{v}(M)$ the length of a Verma flag
of $M$, which can be alternatively described as 
follows: $\displaystyle\mathtt{v}(M)=\sum_{\text{ bottom }\mu}[M:L(\mu)]$.
Clearly, $\mathtt{v}(M\otimes L((1,0)))=3\mathtt{v}(M)$,
which implies that sending a bottom weight $\mu$
to $\mathtt{v}(N(\mu))$ gives an eigenvector with
eigenvalue $3$ for the graph of $\mathcal{N}_3$.

Note that $\mathtt{v}(N(\mu))$ equals $\frac{|W|}{|G|}$,
where $G$ is the dot-stabilizer of $\mu$ in $W$.
Consequently, $\mathtt{v}(N(\mu))=|W|=6$
if $\mu$ is regular, further $\mathtt{v}(N(-1,-1))=1$ and, finally,
$\mathtt{v}(N(\mu))=3$ for all other singular weights.
This is depicted on the right hand side of Figure~\ref{fig7}.
To determine the graph of $\mathcal{N}_3$, we should again 
go into a case-by-case analysis.

{\bf Case 1.} {\color{violet}Assume that the weights $\mu$, $\mu+(1,0)$,
$\mu+(0,-1)$ and $\mu+(-1,1)$ are all regular.}
This is exactly analogous to Case~1 in the proof of 
Proposition~\ref{prop-s5.3-1} and produces the local
picture \eqref{eq-s5.3-1}.

{\bf Case 2.} {\color{violet}Assume that the weights $\mu$, 
$\mu+(0,-1)$ and $\mu+(-1,1)$ are regular and $\mu+(1,0)$ is singular.}
This is exactly analogous to Case~2 in the proof of 
Proposition~\ref{prop-s5.3-1} and produces the local
picture \eqref{eq-s5.3-2}.

{\bf Case 3.} {\color{violet}Assume that the weights $\mu$, 
$\mu+(0,-1)$ and $\mu+(1,0)$ are regular and $\mu+(-1,1)$ is singular.}
The symmetry argument, when compared with the previous
case, outputs the following local picture:

\begin{center}
\resizebox{2cm}{!}{
\begin{tikzpicture}
\draw[gray,fill=gray] (2,2) circle (.9ex);
\draw[gray,fill=gray] (2,0) circle (.9ex);
\draw[gray,fill=gray] (4,2) circle (.9ex);
\draw[gray,fill=gray] (0,4) circle (.9ex);
\draw[black, thick,  ->] (1.8,2.1) to [out=180,in=270]  (0.1,3.8);
\draw[black, thick,  ->] (1.9,2.2) to [out=90,in=350]  (0.2,3.9);
\draw[black, thick,  ->] (2.2,2) -- (3.8,2);
\draw[black, thick,  ->] (2,1.8) -- (2,0.2);
\end{tikzpicture}
}
\end{center}

{\bf Case 4.} {\color{violet}Assume that the weights $\mu$ and
$\mu+(0,-1)$ are regular while  $\mu+(1,0)$ and $\mu+(-1,1)$ are singular.}
In this case $\mu=(-2,-2)$. Here, by merging the Cases 2 and 3 we obtain
the following local picture:

\begin{center}
\resizebox{2cm}{!}{
\begin{tikzpicture}
\draw[gray,fill=gray] (2,2) circle (.9ex);
\draw[gray,fill=gray] (2,0) circle (.9ex);
\draw[gray,fill=gray] (4,2) circle (.9ex);
\draw[gray,fill=gray] (0,4) circle (.9ex);
\draw[black, thick,  ->] (1.8,2.1) to [out=180,in=270]  (0.1,3.8);
\draw[black, thick,  ->] (1.9,2.2) to [out=90,in=350]  (0.2,3.9);
\draw[black, thick,  ->] (2,1.8) -- (2,0.2);
\draw[black, thick,  ->] (2.2,2.1) to [out=40,in=140]  (3.8,2.1);
\draw[black, thick,  ->] (2.2,1.9) to [out=-40,in=-140]  (3.8,1.9);
\end{tikzpicture}
}
\end{center}

{\bf Case 5.} {\color{violet}Assume that the weights $\mu$ and
$\mu+(0,-1)$ are singular with the same singularity,  $\mu+(-1,1)$ is regular
and $\mu+(1,0)$ is not a bottom weight.} 
This is exactly analogous to Case~4 in the proof of 
Proposition~\ref{prop-s5.3-1} and produces the local
picture \eqref{eq-s5.3-3}.

{\bf Case 6.} {\color{violet}Assume that the weights $\mu$ and 
$\mu+(1,0)$ are singular with the same singularity, 
$\mu+(0,-1)$ is regular and $\mu+(-1,1)$ is not a bottom weight.}
The symmetry argument, when compared with the previous
case, outputs the following local picture:

\begin{center}
\resizebox{15mm}{!}{
\begin{tikzpicture}
\draw[gray,fill=gray] (2,2) circle (.9ex);
\draw[gray,fill=gray] (2,0) circle (.9ex);
\draw[gray,fill=gray] (4,2) circle (.9ex);
\draw[black, thick,  ->] (2.2,2) -- (3.8,2);
\draw[black, thick,  ->] (2,1.8) -- (2,0.2);
\end{tikzpicture}
}
\end{center}

{\bf Case 7.} {\color{violet}Assume that the weights $\mu$, 
$\mu+(-1,1)$ and $\mu+(0,-1)$ are singular while 
$\mu+(1,0)$ is not a bottom weight.} 
In this case $\mu=(-1,-2)$. One summand of tensoring
with $L((1,0))$ will be an equivalence between the 
singular blocks of $(-1,-2)$ and $(-1,-3)$. 
As $\mu+(1,0)$ is in the same dot orbit as $\mu+(-1,1)$,
we will get a double contribution for 
translation from the $(-1,-2)$-block to the 
$(-2,-1)$-block. Hence we get the following local picture:

\begin{center}
\resizebox{15mm}{!}{
\begin{tikzpicture}
\draw[gray,fill=gray] (2,2) circle (.9ex);
\draw[gray,fill=gray] (2,0) circle (.9ex);
\draw[gray,fill=gray] (0,4) circle (.9ex);
\draw[black, thick,  ->] (1.8,2.1) to [out=180,in=270]  (0.1,3.8);
\draw[black, thick,  ->] (1.9,2.2) to [out=90,in=350]  (0.2,3.9);
\draw[black, thick,  ->] (2,1.8) -- (2,0.2);
\end{tikzpicture}
}
\end{center}

{\bf Case 8.} {\color{violet}Assume that the weights $\mu$
and $\mu+(1,0)$ are singular, $\mu+(0,-1)$ is regular while 
$\mu+(-1,1)$ is not a bottom weight.} 
In this case $\mu=(-2,-1)$. Note that $\mu+(-1,1)$ is in 
the same dot orbit as the regular weight $\mu+(0,-1)$.
So, they together give one summand of translation 
out of the wall from $(-2,-1)$ and $(-2,-2)$.
Additionally, we also have a translation 
from $(-2,-1)$ to the most singular weight 
$(-1,-1)$. As here the stabilizer grows by a factor 
of $3$, we will have three arrows in the graph,
giving us the following local picture:

\begin{center}
\resizebox{15mm}{!}{
\begin{tikzpicture}
\draw[gray,fill=gray] (2,2) circle (.9ex);
\draw[gray,fill=gray] (2,0) circle (.9ex);
\draw[gray,fill=gray] (4,2) circle (.9ex);
\draw[black, thick,  ->] (2.2,2.1) to [out=40,in=140]  (3.8,2.1);
\draw[black, thick,  ->] (2.2,1.9) to [out=-40,in=-140]  (3.8,1.9);
\draw[black, thick,  ->] (2,1.8) -- (2,0.2);
\draw[black, thick,  ->] (2.2,2) -- (3.8,2);
\end{tikzpicture}
}
\end{center}

{\bf Case 9.} {\color{violet}Assume that $\mu=(-1,-1)$.}
This is the most singular point. For it, tensoring with
$L((1,0))$ produces just a translation out of the most
singular wall to the singular weight $(-1,-2)$. This
results in one arrow from $\mu$ to $(-1,-2)$.

The final graph of $\mathcal{N}_3$
is depicted on the left hand side of 
Figure~\ref{fig7}.
\end{proof}

\begin{proposition}\label{prop-s5.5-2}
Assume that $\lambda$ is a bottom weight 
of the form $(a,-1)$, for some element 
$a\in\{-2,-3,\dots\}$. Then we have:

\begin{enumerate}[$($a$)$]
\item\label{prop-s5.5-2.1} The $\mathscr{C}$-module category
$\mathrm{add}(\mathscr{C}\cdot L(\lambda))$ contains
$\mathcal{N}_3$ as a $\mathscr{C}$-module subcategory.
\item\label{prop-s5.5-2.2}
The quotient of $\mathrm{add}(\mathscr{C}\cdot L(\lambda))$
by the ideal $\mathcal{I}$ generated by $\mathcal{N}_3$ is a simple 
transitive $\mathscr{C}$-module category that is 
equivalent to $\mathcal{N}_1$. 
\end{enumerate}
\end{proposition}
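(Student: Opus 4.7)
The plan is to prove (a) by directly constructing $L((-1,-1))$ as a summand, and then to handle (b) by exhibiting $\mathcal{N}_1$ as a sub-category and analyzing what survives in the quotient.

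For (a), I will use that since $a\leq-2$, the weight $(a,-1)$ is antidominant (the shift $(a,-1)+\rho=(a+1,0)$ has $\langle\cdot,\alpha^\vee\rangle=a+1\leq-1$, $\langle\cdot,\beta^\vee\rangle=0$, and $\langle\cdot,(\alpha+\beta)^\vee\rangle=a+1\leq-1$), so $L((a,-1))=\Delta((a,-1))$. Taking $V:=L((-a-1,0))\in\mathscr{C}$, the standard Verma flag formula shows that $V\otimes\Delta((a,-1))$ contains $\Delta((a,-1)+(-a-1,0))=\Delta((-1,-1))=L((-1,-1))$ as a Verma flag subquotient with multiplicity $\dim V_{(-a-1,0)}=1$ (the highest weight space of $V$). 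Since $(-1,-1)$ is fixed by the dot-action of $W$, the block $\mathcal{O}_{\chi_{(-1,-1)}}$ is semisimple with unique simple $L((-1,-1))$, so the $\chi_{(-1,-1)}$-projection of $V\otimes L((a,-1))$ is a direct summand isomorphic to $L((-1,-1))$. Hence $L((-1,-1))\in\mathrm{add}(\mathscr{C}\cdot L((a,-1)))$, and Proposition~\ref{prop-s5.5-1} yields $\mathcal{N}_3\subseteq\mathrm{add}(\mathscr{C}\cdot L((a,-1)))$, establishing (a).

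For (b), the key combinatorial observation is that $\nu:=(-1,-a-2)=rs\cdot(a,-1)$ is an upper middle singular weight lying in the same singular block $\mathcal{O}_{\chi_A}$ as $(a,-1)$. I first exhibit $L(\nu)\in\mathrm{add}(\mathscr{C}\cdot L((a,-1)))$ by applying a projective functor on $\mathcal{O}_{\chi_A}$ obtained by translating $L((a,-1))$ out of the $r$-wall into the regular block and then back onto the $s$-wall, and verifying that the resulting module contains $L(\nu)$ as a direct summand (possibly together with additional summands that lie in $\mathcal{N}_3$). Proposition~\ref{prop-s5.3-1} then yields $\mathcal{N}_1\subseteq\mathrm{add}(\mathscr{C}\cdot L((a,-1)))$, so the image of $\mathcal{N}_1$ in $\mathrm{add}(\mathscr{C}\cdot L((a,-1)))/\mathcal{I}$ is an embedded simple transitive $\mathscr{C}$-module subcategory.

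It remains to show this embedding is essentially surjective. Since $L((a,-1))=\Delta((a,-1))$ is the simple antidominant Verma, $V\otimes L((a,-1))$ carries a Verma flag with subquotients $\Delta((a,-1)+\mu)$ indexed by weights $\mu$ of $V$; decomposing by central character, each block-component is analyzed separately. The main obstacle is to verify that the block-components lying outside bottom-weight blocks correspond, modulo $\mathcal{I}$, to the expected $N(\nu)\in\mathcal{N}_1$, and that no indecomposable associated to lower middle weights survives the quotient. The key tool is the cell-theoretic structure of the singular block $\mathcal{O}_{\chi_A}$: the projective-functor orbit of $L((a,-1))$ within $\mathcal{O}_{\chi_A}$ avoids the lower middle singular simple $L((-a-2,a))$ (which lies in a distinct two-sided cell than $L((a,-1))$), so no lower middle contribution arises. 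Finally, matching the action of $\mathrm{F}=L((1,0))$ on the resulting quotient indecomposables with the combinatorial action of $\mathrm{F}$ on $\mathcal{N}_1$ described in Proposition~\ref{prop-s5.3-1} completes the proof.
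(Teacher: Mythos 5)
Your argument for part~(a) is fine and is essentially a more explicit version of what the paper does (translation of $L(\lambda)$ to the most singular weight $(-1,-1)$); the Verma-flag computation with $V=L((-a-1,0))$ and the projection to the most singular central character is a perfectly valid way to get $L((-1,-1))$ as a direct summand.

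The proof of part~(b), however, has a fatal gap. You claim that $L(\nu)\in\mathrm{add}(\mathscr{C}\cdot L((a,-1)))$ for the upper middle singular weight $\nu=(-1,-a-2)$, and hence that $\mathcal{N}_1\subseteq\mathrm{add}(\mathscr{C}\cdot L((a,-1)))$. This is false. Since $\lambda=(a,-1)$ is anti-dominant, $L(\lambda)=\Delta(\lambda)$ is a tilting module, and projective functors preserve tilting modules; therefore every object of $\mathrm{add}(\mathscr{C}\cdot L(\lambda))$ is tilting. But for an upper middle weight $\nu$, the simple module $L(\nu)$ is not the anti-dominant weight of its dot-orbit and hence is not a tilting module (the indecomposable tilting $T(\nu)$ strictly contains $\Delta(\nu)$). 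What actually shows up as a direct summand when you translate $L(\lambda)$ out of the $r$-wall and back to the $s$-wall is $T(\nu)$, not $L(\nu)$. The same problem afflicts the regular upper middle case: the objects $N(\mu)=\theta_t L(\mu)$ of $\mathcal{N}_1$ are not tilting, so $\mathcal{N}_1$ simply cannot sit inside $\mathrm{add}(\mathscr{C}\cdot L(\lambda))$ as a subcategory. The proposition only asserts that the \emph{quotient} by $\mathcal{I}$ is equivalent to $\mathcal{N}_1$, and one must work in that quotient from the start, not first embed $\mathcal{N}_1$.

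The way to repair this is a genuinely different strategy, and it is what the paper uses: apply the (adjoint of the) Arkhipov--Soergel twisting functor $\top_{w_0}$, which commutes with projective functors and induces an equivalence of $\mathscr{C}$-module categories between tilting modules and projective modules, sending the anti-dominant Verma $L(\lambda)$ to the dominant Verma $\Delta(\tilde{\lambda})$. One is then reduced to analyzing $\mathrm{add}(\mathscr{C}\cdot\Delta(\tilde{\lambda}))$ inside the category of projectives in $\mathcal{O}$, where the adjunction $\Hom(\theta P(\tilde{\lambda}),L(\nu))\cong\Hom(P(\tilde{\lambda}),\theta^* L(\nu))$ together with the knowledge that $\theta^*L(\nu)$ has only top or lower middle composition factors (for $\nu$ top or lower middle) pins down exactly which $P(\mu)$ occur (bottom and upper middle), and factoring out the bottom ideal turns $P(\mu)$ into a module surjecting onto $N(\mu)$ with negligible kernel. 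Your cell-theoretic heuristic about avoiding the lower middle block is pointing in the right direction, but it is not a substitute for tracking the tilting/projective dichotomy, and without that switch your sub-category claim cannot be rescued.
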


\begin{proof}
We can translate the module $L(\lambda)$ to the most singular weight
$(-1,-1)$ to obtain $L((-1,-1))$. This implies that 
$\mathrm{add}(\mathscr{C}\cdot L((-1,-1)))=\mathcal{N}_3$
is a $\mathscr{C}$-module subcategory of
$\mathrm{add}(\mathscr{C}\cdot L(\lambda))$, proving
Claim~\eqref{prop-s5.5-3.1}.

To prove Claim~\eqref{prop-s5.5-3.2}, we start by noting that 
$L(\lambda)$ is a Verma module, since $\lambda$ is a bottom weight.
Therefore all modules in $\mathrm{add}(\mathscr{C}\cdot L(\lambda))$
are tilting modules in $\mathcal{O}$, that is, self-dual modules
with Verma flag. Consider the adjoint of the  twisting functor 
$\top_{w_0}$, see \cite{AS,KM}. This functor maps anti-dominant
Verma modules to dominant Verma modules, commutes with the 
action of $\mathscr{C}$ and induces an equivalence between the 
categories of tilting and projective modules in $\mathcal{O}$. 
This means that $\mathrm{add}(\mathscr{C}\cdot L(\lambda))$
is equivalent, as a $\mathscr{C}$-module subcategory, to
$\mathrm{add}(\mathscr{C}\cdot \Delta(\tilde{\lambda}))$,
where $\tilde{\lambda}$ is the dominant weight in the
dot $W$-orbit of $\lambda$. Note that $\tilde{\lambda}$
is an upper middle weight.

Since $\tilde{\lambda}$ is dominant, the module
$\Delta(\tilde{\lambda})$ is projective in $\mathcal{O}$.
As projective functors preserve projective modules, in follows
that $\mathrm{add}(\mathscr{C}\cdot \Delta(\tilde{\lambda}))$
is a subcategory of the category of all projective modules in
$\mathcal{O}$. We claim that 
$\mathrm{add}(\mathscr{C}\cdot \Delta(\tilde{\lambda}))$
is the additive closure of all $P(\mu)$, where $\mu$ is 
either an upper middle weight or a bottom weight. 

Indeed, let $\nu$ be either a top weight or a lower
middle weight and $\theta$ a projective functor. Then,
by adjunction,
\begin{displaymath}
\mathrm{Hom}(\theta P(\tilde{\lambda}),L(\nu))\cong
\mathrm{Hom}(P(\tilde{\lambda}),\theta^* L(\nu)).
\end{displaymath}
In the previous subsections, we have seen that 
$\theta^* L(\nu)$ can only have top or lower middle
subquotients. Therefore
$\mathrm{Hom}(P(\tilde{\lambda}),\theta^* L(\nu))=0$
and we conclude that all indecomposable summands of
$\theta P(\tilde{\lambda})$ are given by either 
bottom or upper middle weights.

The additive closure of all indecomposable 
projective modules indexed by bottom weights is exactly
$\mathcal{N}_3$. Factoring out from
$\mathrm{add}(\mathscr{C}\cdot \Delta(\tilde{\lambda}))$
the ideal generated
by the bottom projectives, each $P(\mu)$, where 
$\mu$ is an upper middle weight, is replaced by 
$P(\mu)/K$, where $K$ is the trace of all bottom
projectives in $P(\mu)$. This quotient surjects onto
$N(\mu)$ and the kernel of this surjection has only simple
subquotients indexed by either lower middle weights or  top weights.
Since we do not have any corresponding projectives in our
picture, the kernel makes no contribution to our story. 
Now Claim~\eqref{prop-s5.5-3.2}
follows from Proposition~\ref{prop-s5.3-1}
and the proof is complete.
\end{proof}

\begin{proposition}\label{prop-s5.5-3}
Assume that $\lambda$ is a bottom weight 
of the form $(-1,b)$, for some element 
$b\in\{-2,-3,\dots\}$. Then we have:

\begin{enumerate}[$($a$)$]
\item\label{prop-s5.5-3.1} The $\mathscr{C}$-module category
$\mathrm{add}(\mathscr{C}\cdot L(\lambda))$ contains
$\mathcal{N}_3$ as a $\mathscr{C}$-module subcategory.
\item\label{prop-s5.5-3.2}
The quotient of $\mathrm{add}(\mathscr{C}\cdot L(\lambda))$
by the ideal $\mathcal{I}$ generated by $\mathcal{N}_3$ is a simple 
transitive $\mathscr{C}$-module category that is 
equivalent to $\mathcal{N}_2$. 
\end{enumerate}
\end{proposition}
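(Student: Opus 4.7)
The proof will proceed \emph{mutatis mutandis} the proof of Proposition~\ref{prop-s5.5-2}, with the roles of upper middle and lower middle weights exchanged. For Claim~\eqref{prop-s5.5-3.1}, I would first translate $L(\lambda)$ to the most singular weight $(-1,-1)$, producing $L((-1,-1))$ inside $\mathrm{add}(\mathscr{C}\cdot L(\lambda))$. By Proposition~\ref{prop-s5.5-1}, this immediately yields $\mathcal{N}_3 = \mathrm{add}(\mathscr{C}\cdot L((-1,-1))) \subseteq \mathrm{add}(\mathscr{C}\cdot L(\lambda))$.

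For Claim~\eqref{prop-s5.5-3.2}, I would follow the same sequence of reductions as in Proposition~\ref{prop-s5.5-2}. Since $\lambda$ is a bottom weight, $L(\lambda) = \Delta(\lambda)$ is a simple Verma module, hence every object in $\mathrm{add}(\mathscr{C}\cdot L(\lambda))$ is a tilting module. The adjoint of the twisting functor $\top_{w_0}$ (see \cite{AS,KM}) induces an equivalence of $\mathscr{C}$-module categories from $\mathrm{add}(\mathscr{C}\cdot L(\lambda))$ to $\mathrm{add}(\mathscr{C}\cdot \Delta(\tilde{\lambda}))$, where $\tilde{\lambda}$ is the dominant element of $W\cdot \lambda$. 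A direct computation using $\rho=(1,1)$ and $w_0\cdot(\mu_1,\mu_2)=(-\mu_2-2,-\mu_1-2)$ shows that, for $\lambda = (-1,b)$ with $b\leq -2$, the weight $\tilde{\lambda} = (-b-2,-1)$ is a lower middle weight (singular when $b=-2$, regular when $b\leq -3$). Since $\tilde{\lambda}$ is dominant, $\Delta(\tilde{\lambda}) = P(\tilde{\lambda})$ is projective.

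Next I would show that $\mathrm{add}(\mathscr{C}\cdot \Delta(\tilde{\lambda}))$ is the additive closure of the projectives $P(\mu)$ for $\mu$ either a lower middle or a bottom weight. This is the dual of the corresponding step in Proposition~\ref{prop-s5.5-2}: for any $\nu$ that is a top or upper middle weight and any projective functor $\theta$, the adjunction
\begin{displaymath}
\mathrm{Hom}(\theta P(\tilde{\lambda}), L(\nu)) \cong \mathrm{Hom}(P(\tilde{\lambda}), \theta^* L(\nu))
\end{displaymath}
vanishes, because Propositions~\ref{prop-s5.3-1}, \ref{prop-s5.3-2} and Section~\ref{s4} together imply that $\theta^* L(\nu)$ has only top and upper middle subquotients, none of which is $L(\tilde{\lambda})$. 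Hence no top or upper middle projective cover occurs as a summand of $\theta P(\tilde{\lambda})$. To conclude, I would factor out the ideal generated by the bottom projectives (that is, by $\mathcal{N}_3$): each $P(\mu)$ with $\mu$ lower middle is replaced by $P(\mu)/K$, which surjects onto $N(\mu)$ with kernel composed only of top and upper middle simples irrelevant to our quotient category. By Proposition~\ref{prop-s5.4-1}, the resulting simple transitive $\mathscr{C}$-module category is equivalent to $\mathcal{N}_2$.

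The main obstacle I anticipate is the verification in the dual adjunction step, which is where the asymmetry noted after Proposition~\ref{prop-s5.4-1} between $\mathcal{N}_1$ and $\mathcal{N}_2$ could cause a subtle issue. Concretely, one must check that the stability statement ``$\theta^* L(\nu)$ has only top and upper middle subquotients for $\nu$ top or upper middle'' does indeed follow from the earlier case analyses, which were formulated in the forward direction ($\theta$ rather than $\theta^*$). Since the indecomposable projective functors are closed under taking (bi)adjoints (corresponding to the involution $(\lambda,\mu)\mapsto (\mu,\lambda)$ on proper pairs), this reduces to the same combinatorial statements already established; nevertheless, making this precise is the most delicate point.
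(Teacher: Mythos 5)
The paper's own proof of this proposition is literally ``Mutatis mutandis the proof of Proposition~\ref{prop-s5.5-2},'' and your proposal is exactly that, fleshed out in full detail; the structure, the use of the twisting functor $\top_{w_0}$, the adjunction step, the stability of top/upper middle subquotients under the adjoint class of projective functors, and the final reduction to Proposition~\ref{prop-s5.4-1} all match the intended argument. There is one small factual error in a parenthetical: you claim $\tilde{\lambda}=(-b-2,-1)$ is ``singular when $b=-2$, regular when $b\leq -3$,'' but since $\tilde{\lambda}_2=-1$ holds for every such $b$, we have $\tilde{\lambda}_2+1=0$ and $\tilde{\lambda}$ is \emph{always} $r$-singular, never regular. (This mirrors Proposition~\ref{prop-s5.5-2}, where $\tilde{\lambda}=(-1,-a-2)$ is always $s$-singular.) Fortunately your argument never invokes regularity of $\tilde{\lambda}$, so this slip is harmless; you may simply delete that parenthetical or correct it to ``always $r$-singular.''
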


\begin{proof}
Mutatis mutandis the proof of Proposition~\ref{prop-s5.5-2}. 
\end{proof}

\begin{proposition}\label{prop-s5.5-4}
Assume that $\lambda$ is a regular bottom weight. Then we have:

\begin{enumerate}[$($a$)$]
\item\label{prop-s5.5-4.1} The $\mathscr{C}$-module category
$\mathrm{add}(\mathscr{C}\cdot L(\lambda))$ contains
$\mathcal{N}_3$ as a $\mathscr{C}$-module subcategory.
\item\label{prop-s5.5-4.2}
The $\mathscr{C}$-module category
$\mathrm{add}(\mathscr{C}\cdot L(\lambda))$ contains
$\mathrm{add}(\mathscr{C}\cdot L((-1,-2)))$ as a $\mathscr{C}$-module subcategory. 
\item\label{prop-s5.5-4.3}
The $\mathscr{C}$-module category
$\mathrm{add}(\mathscr{C}\cdot L(\lambda))$ contains
$\mathrm{add}(\mathscr{C}\cdot L((-2,-1)))$ as a $\mathscr{C}$-module subcategory. 
\item\label{prop-s5.5-4.4}
The category $\mathcal{N}_3$
coincides with the intersection of 
$\mathrm{add}(\mathscr{C}\cdot L((-1,-2)))$ with
$\mathrm{add}(\mathscr{C}\cdot L((-2,-1)))$ inside
$\mathrm{add}(\mathscr{C}\cdot L(\lambda))$. 
\item\label{prop-s5.5-4.5}
The quotient of $\mathrm{add}(\mathscr{C}\cdot L(\lambda))$
by the ideal $\mathcal{I}$, generated by $\mathcal{N}_3$,
$\mathrm{add}(\mathscr{C}\cdot L((-1,-2)))$
and $\mathrm{add}(\mathscr{C}\cdot L((-2,-1)))$,
is a simple 
transitive $\mathscr{C}$-module category that is 
equivalent to ${}_{\mathscr{C}}\mathscr{C}$. 
\end{enumerate}
\end{proposition}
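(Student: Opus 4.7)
The plan is to reduce everything to the adjoint twisting equivalence exploited in Proposition~\ref{prop-s5.5-2}. For (a), (b), (c), since $\lambda$ is regular, appropriate projective translation functors take $L(\lambda)$ to $L((-1,-1))$, $L((-1,-2))$, and $L((-2,-1))$ respectively, which gives the three claimed inclusions of $\mathscr{C}$-module subcategories.

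For (d), we combine (b) and (c) with the descriptions obtained in Propositions~\ref{prop-s5.5-2} and~\ref{prop-s5.5-3}: under the adjoint twisting equivalence, the indecomposable objects of $\mathrm{add}(\mathscr{C}\cdot L((-2,-1)))$ (respectively $\mathrm{add}(\mathscr{C}\cdot L((-1,-2)))$) correspond exactly to those indecomposable integral projectives of $\mathcal{O}$ indexed by bottom or upper middle (respectively bottom or lower middle) weights. An object of $\mathrm{add}(\mathscr{C}\cdot L(\lambda))$ therefore lies in both of these subcategories if and only if all of its indecomposable summands are indexed by bottom weights, which is precisely the condition to lie in $\mathcal{N}_3$.

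For (e), since $\lambda$ is a regular bottom weight, $L(\lambda)=\Delta(\lambda)$ is an antidominant simple Verma, so the adjoint of $\top_{w_0}$ provides an equivalence of $\mathscr{C}$-module categories between $\mathrm{add}(\mathscr{C}\cdot L(\lambda))$ and $\mathrm{add}(\mathscr{C}\cdot P(\tilde\lambda))$, where $\tilde\lambda$ is the regular dominant (top) weight in the dot-orbit of $\lambda$ and $P(\tilde\lambda)=\Delta(\tilde\lambda)$. Using the Verma-flag decomposition of $V\otimes P(\tilde\lambda)$, one checks that for every integral weight $\mu$ there exists $V\in\mathscr{C}$ such that $P(\mu)$ appears as a direct summand of $V\otimes P(\tilde\lambda)$; hence $\mathrm{add}(\mathscr{C}\cdot L(\lambda))$ identifies under twisting with the full subcategory of integral projectives in $\mathcal{O}^{\mathrm{int}}$. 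The ideal $\mathcal{I}$ correspondingly becomes the ideal generated by all non-top integral projectives, so the surviving indecomposables of the quotient are exactly the top projectives $P(\tilde\mu)$.

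Finally, since top weights arise as dominant weights of regular integral blocks (one per block), for top $\tilde\mu,\tilde\nu$ we get $\dim\mathrm{Hom}(P(\tilde\mu),P(\tilde\nu))=[P(\tilde\nu):L(\tilde\mu)]=\delta_{\tilde\mu,\tilde\nu}$, and no nonzero such morphism can factor through a non-top projective. Morphism spaces in the quotient therefore match those in ${}_\mathscr{C}\mathscr{C}$, and a Hom-adjunction calculation shows that the multiplicity of $P(\tilde\nu)$ as a top-projective summand of $V\otimes P(\tilde\mu)$ equals the Clebsch-Gordan multiplicity of $L(\tilde\nu)$ in $V\otimes L(\tilde\mu)$, which gives the desired equivalence of $\mathscr{C}$-module categories. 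The main technical obstacle is verifying rigorously that $\mathrm{add}(\mathscr{C}\cdot P(\tilde\lambda))$ really does capture the full integral projective category (and not merely a proper subcategory), which requires a careful Verma-flag analysis uniform across all integral blocks.
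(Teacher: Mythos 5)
Your proposal follows essentially the same route as the paper's proof: the key idea of passing through the adjoint of the twisting functor $\top_{w_0}$ to replace tilting modules by projective modules, then identifying the relevant subcategories by the type (bottom/middle/top) of indexing weight, is exactly what the paper does.

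There are two points where you diverge or leave a gap. First, for (a)--(c) you argue by translating $L(\lambda)$ to singular walls directly; the paper instead establishes the twisting equivalence first and then characterizes $\mathcal{N}_3$ as the subcategory of projective-injective modules. Both are fine, but the paper's characterization feeds more cleanly into (d). Second, and more substantively, in (e) you conclude ``which gives the desired equivalence of $\mathscr{C}$-module categories'' from the fact that morphism spaces and action multiplicities match those of ${}_{\mathscr{C}}\mathscr{C}$. Matching $\mathrm{Hom}$-spaces and matching Clebsch--Gordan combinatorics do not, by themselves, yield an equivalence of $\mathscr{C}$-module categories --- one must also supply and verify the coherence data (the module-structure natural isomorphisms). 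The paper handles this by first computing that the quotient of each top $P(\tilde\mu)$ by the trace of middle and bottom projectives is exactly $L(\tilde\mu)$, so the quotient category's objects are the finite-dimensional simples with the regular action, and then invoking Theorem~\ref{thm-s4.3-1} (whose proof contains the Yoneda-lemma argument that actually produces the equivalence). You should invoke that theorem, or reproduce its Yoneda argument, to close the last step. Finally, the technical obstacle you flag at the end --- that $\mathrm{add}(\mathscr{C}\cdot P(\tilde\lambda))$ might be a proper subcategory of the integral projectives --- is resolved in the paper without any Verma-flag bookkeeping by simply observing that, for regular antidominant $\lambda$, the module $L(\lambda)=\Delta(\lambda)$ is a regular tilting module, so $\mathrm{add}(\mathscr{C}\cdot L(\lambda))$ is the entire category of integral tilting modules; twisting then gives all of the integral projectives at once.
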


\begin{proof}
As $\lambda$ is regular and anti-dominant, the module $L(\lambda)$
is a regular tilting module in $\mathcal{O}$ and hence 
$\mathrm{add}(\mathscr{C}\cdot L(\lambda))$ coincides with the
category of (integral) tilting modules in $\mathcal{O}$. Applying
$\top_{w_0}$, just like in the proof of Proposition~\ref{prop-s5.5-2},
we get an equivalence of $\mathscr{C}$-module categories
between $\mathrm{add}(\mathscr{C}\cdot L(\lambda))$ and the
category of integral projective modules in $\mathcal{O}$. 

The subcategory of all integral projective-injective modules is
exactly $\mathcal{N}_3$, proving Claim~\eqref{prop-s5.5-4.1}.
Extending this $\mathcal{N}_3$ by all projective modules 
indexed by lower middle weights and taking the additive closure, 
we get exactly  $\mathrm{add}(\mathscr{C}\cdot L((-1,-2)))$, as 
was shown in the proof of Proposition~\ref{prop-s5.5-3}. 
This proves Claim~\eqref{prop-s5.5-4.2}.
Extending $\mathcal{N}_3$ by all projective modules  indexed by 
upper middle weights, we similarly get 
$\mathrm{add}(\mathscr{C}\cdot L((-2,-1)))$.
This proves Claim~\eqref{prop-s5.5-4.3} and Claim~\eqref{prop-s5.5-4.4}.

Factoring from the category of projective modules out
the ideal generated
by all bottom and middle projectives, each $P(\mu)$, where 
$\mu$ is a top weight, is replaced by 
$P(\mu)/K$, where $K$ is the trace of all middle and bottom
projectives in $P(\mu)$. This quotient is 
exactly $L(\mu)$. Now Claim~\eqref{prop-s5.5-4.5}
follows from Theorem~\ref{thm-s4.3-1}
and the proof is complete.
\end{proof}

\subsection{Partially integral weights}\label{s5.6}

For $a\in\mathbb{C}\setminus\mathbb{Z}$, consider
the set $(0,a)+\Lambda$. We split this set into a
disjoint union $(0,a)+\Lambda= X_1\cup X_2$, where
$X_1$ consists of all $\lambda\in (0,a)+\Lambda$ such that
$\lambda_1\geq 0$ and $X_2$ is the complement of 
$X_1$ inside $(0,a)+\Lambda$. Denote by $\mathcal{M}_1^a$
the additive closure of all $L(\lambda)$, where
$\lambda\in X_1$.

\begin{proposition}\label{prop-s5.6-1}
For any $\lambda\in X_1$, the $\mathscr{C}$-module category
$\mathrm{add}(\mathscr{C}\cdot L(\lambda))$ coincides
with $\mathcal{M}_1^a$, moreover, the latter is a simple
transitive $\mathscr{C}$-module category whose graph
and positive eigenvector are depicted in Figure~\ref{fig8}.
\end{proposition}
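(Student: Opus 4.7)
The plan is to prove the proposition by a direct Grothendieck-group calculation, separated by central character, using the fact that $a \notin \mathbb{Z}$ forces the integral Weyl group of every $\lambda \in X_1$ to be $\{e, s\}$.

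\textbf{Step 1 (block structure).} For $\lambda = (k, y+a) \in X_1$ I compute that $\langle \lambda + \rho, \alpha^\vee\rangle = k+1 \in \mathbb{Z}$, whereas $\langle \lambda + \rho, \beta^\vee\rangle$ and $\langle \lambda + \rho, (\alpha+\beta)^\vee\rangle$ are non-integral. Hence the integral Weyl group is $W_\lambda = \{e, s\}$, with $s \cdot \lambda = (-k-2, \, k+y+a+1)$ anti-dominant (since $k \geq 0$). So $\Delta(s\cdot\lambda) = L(s\cdot\lambda)$ embeds in $\Delta(\lambda)$ and $[L(\lambda)] = [\Delta(\lambda)] - [\Delta(s\cdot\lambda)]$.

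\textbf{Step 2 (three blocks).} The three weights $\lambda + \eta$, for $\eta \in \mathrm{supp}(L((1,0))) = \{(1,0),(-1,1),(0,-1)\}$, have pairwise distinct central characters (coincidence of any two would force $a \in \mathbb{Z}$). A direct calculation yields the identifications
\[
s\cdot(\lambda+(1,0)) = s\cdot\lambda + (-1,1), \quad s\cdot(\lambda+(-1,1)) = s\cdot\lambda + (1,0), \quad s\cdot(\lambda+(0,-1)) = s\cdot\lambda + (0,-1),
\]
so each $\Delta(s\cdot\lambda + \eta)$ lies in exactly one of the three blocks $\chi_{\lambda+\eta'}$.

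\textbf{Step 3 (decomposition).} Expanding $[L((1,0))\otimes L(\lambda)] = \sum_\eta \bigl([\Delta(\lambda+\eta)] - [\Delta(s\cdot\lambda+\eta)]\bigr)$ and projecting onto each block: for $k \geq 1$, every block has two simples with Verma filtration $[\Delta(\lambda+\eta)] = [L(\lambda+\eta)] + [L(s\cdot(\lambda+\eta))]$, and Step 2 gives a cancellation leaving exactly $[L(\lambda+\eta)]$ in each block. For $k=0$, the block $\chi_{\lambda+(-1,1)}$ is singular ($(-1, y+a+1)$ is $s\cdot$-fixed), contains the unique simple $L(-1, y+a+1) = \Delta(-1, y+a+1)$, and the two Verma contributions $[\Delta(\lambda+(-1,1))]$ and $[\Delta(s\cdot\lambda+(1,0))]$ cancel exactly. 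Each projection has composition length one and so is forced to be a simple. I conclude
\[
L((1,0))\otimes L(\lambda) \cong \bigoplus_{\eta} L(\lambda+\eta),
\]
where the sum excludes $\eta = (-1,1)$ when $k = 0$. All summands sit in $X_1$.

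\textbf{Step 4 (transitivity, simplicity, eigenvector).} The resulting graph $\Gamma_\mathrm{F}$ on $X_1$ is translation-invariant in $y$, with out-degree $3$ at vertices with $k \geq 1$ and out-degree $2$ at $k = 0$, with no multiple edges. Short paths such as $(k, y+a) \to (k+1, y+a) \to (k, y+a+1)$, and, for $k \geq 1$, $(k, y+a) \to (k-1, y+a+1) \to (k-1, y+a)$, show that $\Gamma_\mathrm{F}$ is strongly connected; this simultaneously yields the reverse inclusion $\mathcal{M}_1^a \subseteq \mathrm{add}(\mathscr{C}\cdot L(\lambda))$ and proves transitivity. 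Since $\mathcal{M}_1^a$ consists of direct sums of pairwise non-isomorphic simple $\mathfrak{sl}_3$-modules, it is semi-simple, and semi-simple plus transitive implies simple. Finally, the ansatz $(k, y+a) \mapsto k+1$ satisfies $(k+2)+k+(k+1) = 3(k+1)$ for $k \geq 1$ and $2 + 1 = 3 \cdot 1$ for $k=0$, so it is the unique (up to scalar) positive left eigenvector of $[\mathrm{F}]$ with eigenvalue $3$, matching Figure~\ref{fig8}.

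\textbf{Main obstacle.} The bookkeeping of Step 3 is the delicate point: one must precisely match $\Delta(s\cdot\lambda+\eta)$ with $s\cdot(\lambda+\eta')$ so the Grothendieck-group cancellation really produces a single simple in each block, and separately handle the singular $k=0$ case where one block vanishes altogether rather than producing a simple outside $X_1$.
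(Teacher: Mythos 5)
Your proposal is correct and follows essentially the same route as the paper: identify the integral Weyl group $\{e,s\}$, use the two-term Verma flag of $L(\lambda)$, observe that $L((1,0))\otimes\Delta(\lambda)$ splits across three distinct central characters, and read off the answer blockwise. The only real difference is presentational: where the paper asserts the blockwise decomposition of $L((1,0))\otimes L(\lambda)$ directly, you fill in the cancellation bookkeeping via the Grothendieck-group identity $[L(\lambda)]=[\Delta(\lambda)]-[\Delta(s\cdot\lambda)]$ and the explicit matching $s\cdot(\lambda+\eta)=s\cdot\lambda+\eta'$, which is a clean way to verify (rather than just state) that each blockwise quotient is a single simple and that the $k=0$ block cancels entirely.
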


\begin{figure}
\resizebox{10cm}{!}{
\begin{tikzpicture}
\draw[black, thick,  ->] (0,1.8) -- (0,0.2) node[anchor= north] {\large$\vdots$};
\draw[black, thick,  ->] (1.8,0.2) -- (0.2,1.8);
\draw[black, thick,  ->] (2,1.8) -- (2,0.2) node[anchor= north] {\large$\vdots$};
\draw[black, thick,  ->] (3.8,0.2) -- (2.2,1.8);
\draw[black, thick,  ->] (4,1.8) -- (4,0.2) node[anchor= north] {\large$\vdots$};
\draw[black, thick,  ->] (5.8,0.2) -- (4.2,1.8);
\draw[black, thick,  ->] (6,1.8) -- (6,0.2) node[anchor= north] {\large$\vdots$};
\draw[black, thick,  ->] (7.8,0.2) -- (6.2,1.8);
\draw[black, thick,  ->] (8,1.8) -- (8,0.2) node[anchor= north] {\large$\vdots$};
\draw[black, thick,  ->] (0.2,2) -- (1.8,2);
\draw[black, thick,  ->] (0,3.8) -- (0,2.2);
\draw[black, thick,  ->] (1.8,2.2) -- (0.2,3.8);
\draw[black, thick,  ->] (2.2,2) -- (3.8,2);
\draw[black, thick,  ->] (2,3.8) -- (2,2.2);
\draw[black, thick,  ->] (3.8,2.2) -- (2.2,3.8);
\draw[black, thick,  ->] (4.2,2) -- (5.8,2);
\draw[black, thick,  ->] (4,3.8) -- (4,2.2);
\draw[black, thick,  ->] (5.8,2.2) -- (4.2,3.8);
\draw[black, thick,  ->] (6.2,2) -- (7.8,2);
\draw[black, thick,  ->] (6,3.8) -- (6,2.2);
\draw[black, thick,  ->] (7.8,2.2) -- (6.2,3.8);
\draw[black, thick,  ->] (8,3.8) -- (8,2.2);
\draw[black, thick,  ->] (0.2,4) -- (1.8,4);
\draw[black, thick,  ->] (0,5.8) -- (0,4.2);
\draw[black, thick,  ->] (1.8,4.2) -- (0.2,5.8);
\draw[black, thick,  ->] (2.2,4) -- (3.8,4);
\draw[black, thick,  ->] (2,5.8) -- (2,4.2);
\draw[black, thick,  ->] (3.8,4.2) -- (2.2,5.8);
\draw[black, thick,  ->] (4.2,4) -- (5.8,4);
\draw[black, thick,  ->] (4,5.8) -- (4,4.2);
\draw[black, thick,  ->] (5.8,4.2) -- (4.2,5.8);
\draw[black, thick,  ->] (6.2,4) -- (7.8,4);
\draw[black, thick,  ->] (6,5.8) -- (6,4.2);
\draw[black, thick,  ->] (7.8,4.2) -- (6.2,5.8);
\draw[black, thick,  ->] (8,5.8) -- (8,4.2);
\draw[black, thick,  ->] (0.2,6) -- (1.8,6);
\draw[black, thick,  ->] (0,7.8) -- (0,6.2);
\draw[black, thick,  ->] (1.8,6.2) -- (0.2,7.8);
\draw[black, thick,  ->] (2.2,6) -- (3.8,6);
\draw[black, thick,  ->] (2,7.8) -- (2,6.2);
\draw[black, thick,  ->] (3.8,6.2) -- (2.2,7.8);
\draw[black, thick,  ->] (4.2,6) -- (5.8,6);
\draw[black, thick,  ->] (4,7.8) -- (4,6.2);
\draw[black, thick,  ->] (5.8,6.2) -- (4.2,7.8);
\draw[black, thick,  ->] (6.2,6) -- (7.8,6);
\draw[black, thick,  ->] (6,7.8) -- (6,6.2);
\draw[black, thick,  ->] (7.8,6.2) -- (6.2,7.8);
\draw[black, thick,  ->] (8,7.8) -- (8,6.2);
\draw[black, thick,  -] (9.8,0.35) -- (9.8,0.35) node[anchor= north west] {\large$\ddots$};
\draw[black, thick,  ->] (8.2,2) -- (9.8,2) node[anchor= west] {\large$\cdots$};
\draw[black, thick,  ->] (8.2,4) -- (9.8,4) node[anchor= west] {\large$\cdots$};
\draw[black, thick,  ->] (8.2,6) -- (9.8,6) node[anchor= west] {\large$\cdots$};
\draw[black, thick,  ->] (8.2,8) -- (9.8,8) node[anchor= west] {\large$\cdots$};
\draw[black, thick,  ->] (9.8,0.2) -- (8.2,1.8);
\draw[black, thick,  ->] (9.8,2.2) -- (8.2,3.8);
\draw[black, thick,  ->] (9.8,4.2) -- (8.2,5.8);
\draw[black, thick,  ->] (9.8,6.2) -- (8.2,7.8);
\draw[black, thick,  ->]  (0,9.8) -- (0,8.2);
\draw[black, thick,  ->]  (2,9.8) -- (2,8.2);
\draw[black, thick,  ->]  (4,9.8) -- (4,8.2);
\draw[black, thick,  ->]  (6,9.8) -- (6,8.2);
\draw[black, thick,  ->]  (8,9.8) -- (8,8.2);
\draw[black, thin,  -] (0,9.8) -- (0,9.8) node[anchor= south] {\large$\vdots$};
\draw[black, thin,  -] (2,9.8) -- (2,9.8) node[anchor= south] {\large$\vdots$};
\draw[black, thin,  -] (4,9.8) -- (4,9.8) node[anchor= south] {\large$\vdots$};
\draw[black, thin,  -] (6,9.8) -- (6,9.8) node[anchor= south] {\large$\vdots$};
\draw[black, thin,  -] (8,9.8) -- (8,9.8) node[anchor= south] {\large$\vdots$};
\draw[gray, thin,  -] (10.4,10.4) -- (10.4,10.4) node[anchor= north east] {\large$\iddots$};
\draw[black, thick,  ->] (1.8,8.2) -- (0.2,9.8);
\draw[black, thick,  ->] (3.8,8.2) -- (2.2,9.8);
\draw[black, thick,  ->] (5.8,8.2) -- (4.2,9.8);
\draw[black, thick,  ->] (7.8,8.2) -- (6.2,9.8);
\draw[black, thick,  ->] (9.8,8.2) -- (8.2,9.8);
\draw[black, thick,  ->] (0.2,8) -- (1.8,8);
\draw[black, thick,  ->] (2.2,8) -- (3.8,8);
\draw[black, thick,  ->] (4.2,8) -- (5.8,8);
\draw[black, thick,  ->] (6.2,8) -- (7.8,8);
\draw[gray,fill=gray] (0,2) circle (.9ex);
\draw[gray,fill=gray] (2,2) circle (.9ex);
\draw[gray,fill=gray] (4,2) circle (.9ex);
\draw[gray,fill=gray] (6,2) circle (.9ex);
\draw[gray,fill=gray] (8,2) circle (.9ex);
\draw[gray,fill=gray] (0,4) circle (.9ex);
\draw[gray,fill=gray] (2,4) circle (.9ex);
\draw[gray,fill=gray] (4,4) circle (.9ex);
\draw[gray,fill=gray] (6,4) circle (.9ex);
\draw[gray,fill=gray] (8,4) circle (.9ex);
\draw[gray,fill=gray] (0,6) circle (.9ex);
\draw[gray,fill=gray] (2,6) circle (.9ex);
\draw[gray,fill=gray] (4,6) circle (.9ex);
\draw[gray,fill=gray] (6,6) circle (.9ex);
\draw[gray,fill=gray] (8,6) circle (.9ex);
\draw[gray,fill=gray] (0,8) circle (.9ex);
\draw[gray,fill=gray] (2,8) circle (.9ex);
\draw[gray,fill=gray] (4,8) circle (.9ex);
\draw[gray,fill=gray] (6,8) circle (.9ex);
\draw[gray,fill=gray] (8,8) circle (.9ex);
\end{tikzpicture}
\qquad\qquad
\begin{tikzpicture}
\draw[lightgray, thin,  ->] (0,1.8) -- (0,0.2) node[anchor= north] {\large$\vdots$};
\draw[lightgray, thin,  ->] (1.8,0.2) -- (0.2,1.8);
\draw[lightgray, thin,  ->] (2,1.8) -- (2,0.2) node[anchor= north] {\large$\vdots$};
\draw[lightgray, thin,  ->] (3.8,0.2) -- (2.2,1.8);
\draw[lightgray, thin,  ->] (4,1.8) -- (4,0.2) node[anchor= north] {\large$\vdots$};
\draw[lightgray, thin,  ->] (5.8,0.2) -- (4.2,1.8);
\draw[lightgray, thin,  ->] (6,1.8) -- (6,0.2) node[anchor= north] {\large$\vdots$};
\draw[lightgray, thin,  ->] (7.8,0.2) -- (6.2,1.8);
\draw[lightgray, thin,  ->] (8,1.8) -- (8,0.2) node[anchor= north] {\large$\vdots$};
\draw[lightgray, thin,  ->] (0.2,2) -- (1.8,2);
\draw[lightgray, thin,  ->] (0,3.8) -- (0,2.2);
\draw[lightgray, thin,  ->] (1.8,2.2) -- (0.2,3.8);
\draw[lightgray, thin,  ->] (2.2,2) -- (3.8,2);
\draw[lightgray, thin,  ->] (2,3.8) -- (2,2.2);
\draw[lightgray, thin,  ->] (3.8,2.2) -- (2.2,3.8);
\draw[lightgray, thin,  ->] (4.2,2) -- (5.8,2);
\draw[lightgray, thin,  ->] (4,3.8) -- (4,2.2);
\draw[lightgray, thin,  ->] (5.8,2.2) -- (4.2,3.8);
\draw[lightgray, thin,  ->] (6.2,2) -- (7.8,2);
\draw[lightgray, thin,  ->] (6,3.8) -- (6,2.2);
\draw[lightgray, thin,  ->] (7.8,2.2) -- (6.2,3.8);
\draw[lightgray, thin,  ->] (8,3.8) -- (8,2.2);
\draw[lightgray, thin,  ->] (0.2,4) -- (1.8,4);
\draw[lightgray, thin,  ->] (0,5.8) -- (0,4.2);
\draw[lightgray, thin,  ->] (1.8,4.2) -- (0.2,5.8);
\draw[lightgray, thin,  ->] (2.2,4) -- (3.8,4);
\draw[lightgray, thin,  ->] (2,5.8) -- (2,4.2);
\draw[lightgray, thin,  ->] (3.8,4.2) -- (2.2,5.8);
\draw[lightgray, thin,  ->] (4.2,4) -- (5.8,4);
\draw[lightgray, thin,  ->] (4,5.8) -- (4,4.2);
\draw[lightgray, thin,  ->] (5.8,4.2) -- (4.2,5.8);
\draw[lightgray, thin,  ->] (6.2,4) -- (7.8,4);
\draw[lightgray, thin,  ->] (6,5.8) -- (6,4.2);
\draw[lightgray, thin,  ->] (7.8,4.2) -- (6.2,5.8);
\draw[lightgray, thin,  ->] (8,5.8) -- (8,4.2);
\draw[lightgray, thin,  ->] (0.2,6) -- (1.8,6);
\draw[lightgray, thin,  ->] (0,7.8) -- (0,6.2);
\draw[lightgray, thin,  ->] (1.8,6.2) -- (0.2,7.8);
\draw[lightgray, thin,  ->] (2.2,6) -- (3.8,6);
\draw[lightgray, thin,  ->] (2,7.8) -- (2,6.2);
\draw[lightgray, thin,  ->] (3.8,6.2) -- (2.2,7.8);
\draw[lightgray, thin,  ->] (4.2,6) -- (5.8,6);
\draw[lightgray, thin,  ->] (4,7.8) -- (4,6.2);
\draw[lightgray, thin,  ->] (5.8,6.2) -- (4.2,7.8);
\draw[lightgray, thin,  ->] (6.2,6) -- (7.8,6);
\draw[lightgray, thin,  ->] (6,7.8) -- (6,6.2);
\draw[lightgray, thin,  ->] (7.8,6.2) -- (6.2,7.8);
\draw[lightgray, thin,  ->] (8,7.8) -- (8,6.2);
\draw[lightgray, thin,  ->] (9.8,0.35) -- (9.8,0.35) node[anchor= north west] {\large$\ddots$};
\draw[lightgray, thin,  ->] (8.2,2) -- (9.8,2) node[anchor= west] {\large$\cdots$};
\draw[lightgray, thin,  ->] (8.2,4) -- (9.8,4) node[anchor= west] {\large$\cdots$};
\draw[lightgray, thin,  ->] (8.2,6) -- (9.8,6) node[anchor= west] {\large$\cdots$};
\draw[lightgray, thin,  ->] (8.2,8) -- (9.8,8) node[anchor= west] {\large$\cdots$};
\draw[lightgray, thin,  ->] (9.8,0.2) -- (8.2,1.8);
\draw[lightgray, thin,  ->] (9.8,2.2) -- (8.2,3.8);
\draw[lightgray, thin,  ->] (9.8,4.2) -- (8.2,5.8);
\draw[lightgray, thin,  ->] (9.8,6.2) -- (8.2,7.8);
\draw[lightgray, thin,  ->]  (0,9.8) -- (0,8.2);
\draw[lightgray, thin,  ->]  (2,9.8) -- (2,8.2);
\draw[lightgray, thin,  ->]  (4,9.8) -- (4,8.2);
\draw[lightgray, thin,  ->]  (6,9.8) -- (6,8.2);
\draw[lightgray, thin,  ->]  (8,9.8) -- (8,8.2);
\draw[black, thin,  -] (0,9.8) -- (0,9.8) node[anchor= south] {\large$\vdots$};
\draw[black, thin,  -] (2,9.8) -- (2,9.8) node[anchor= south] {\large$\vdots$};
\draw[black, thin,  -] (4,9.8) -- (4,9.8) node[anchor= south] {\large$\vdots$};
\draw[black, thin,  -] (6,9.8) -- (6,9.8) node[anchor= south] {\large$\vdots$};
\draw[black, thin,  -] (8,9.8) -- (8,9.8) node[anchor= south] {\large$\vdots$};
\draw[gray, thin,  -] (10.4,10.4) -- (10.4,10.4) node[anchor= north east] {\large$\iddots$};
\draw[lightgray, thin,  ->] (1.8,8.2) -- (0.2,9.8);
\draw[lightgray, thin,  ->] (3.8,8.2) -- (2.2,9.8);
\draw[lightgray, thin,  ->] (5.8,8.2) -- (4.2,9.8);
\draw[lightgray, thin,  ->] (7.8,8.2) -- (6.2,9.8);
\draw[lightgray, thin,  ->] (9.8,8.2) -- (8.2,9.8);
\draw[lightgray, thin,  ->] (0.2,8) -- (1.8,8);
\draw[lightgray, thin,  ->] (2.2,8) -- (3.8,8);
\draw[lightgray, thin,  ->] (4.2,8) -- (5.8,8);
\draw[lightgray, thin,  ->] (6.2,8) -- (7.8,8);
\draw[gray,fill=gray] (0,2) circle (.01ex) node {\large$\mathbf{1}$};
\draw[gray,fill=gray] (2,2) circle (.01ex) node {\large$\mathbf{2}$};
\draw[gray,fill=gray] (4,2) circle (.01ex) node {\large$\mathbf{3}$};
\draw[gray,fill=gray] (6,2) circle (.01ex) node {\large$\mathbf{4}$};
\draw[gray,fill=gray] (8,2) circle (.01ex) node {\large$\mathbf{5}$};
\draw[gray,fill=gray] (0,4) circle (.01ex) node {\large$\mathbf{1}$};
\draw[gray,fill=gray] (2,4) circle (.01ex) node {\large$\mathbf{2}$};
\draw[gray,fill=gray] (4,4) circle (.01ex) node {\large$\mathbf{3}$};
\draw[gray,fill=gray] (6,4) circle (.01ex) node {\large$\mathbf{4}$};
\draw[gray,fill=gray] (8,4) circle (.01ex) node {\large$\mathbf{5}$};
\draw[gray,fill=gray] (0,6) circle (.01ex) node {\large$\mathbf{1}$};
\draw[gray,fill=gray] (2,6) circle (.01ex) node {\large$\mathbf{2}$};
\draw[gray,fill=gray] (4,6) circle (.01ex) node {\large$\mathbf{3}$};
\draw[gray,fill=gray] (6,6) circle (.01ex) node {\large$\mathbf{4}$};
\draw[gray,fill=gray] (8,6) circle (.01ex) node {\large$\mathbf{5}$};
\draw[gray,fill=gray] (0,8) circle (.01ex) node {\large$\mathbf{1}$};
\draw[gray,fill=gray] (2,8) circle (.01ex) node {\large$\mathbf{2}$};
\draw[gray,fill=gray] (4,8) circle (.01ex) node {\large$\mathbf{3}$};
\draw[gray,fill=gray] (6,8) circle (.01ex) node {\large$\mathbf{4}$};
\draw[gray,fill=gray] (8,8) circle (.01ex) node {\large$\mathbf{5}$};
\end{tikzpicture}
}
\caption{The first case of partially integral weights}\label{fig8}
\end{figure}

\begin{proof}
For $\lambda\in X_1$ the intersection of the dot-orbit of $\lambda$
with $(0,a)+\Lambda$ consists of $\lambda$ and $s\cdot \lambda$.
Therefore, for each $\lambda\in X_1$, we have a short exact sequence
\begin{displaymath}
0\to L(s\cdot \lambda)\to\Delta(\lambda)\to L(\lambda)\to 0. 
\end{displaymath}
The module $L((1,0))\otimes_{\mathbb{C}}\Delta(\lambda)$ has a 
standard filtration (Verma flag)
whose subquotients are $\Delta(\lambda+(1,0))$, $\Delta(\lambda+(0,-1))$
and $\Delta(\lambda+(-1,1))$, each occurring with multiplicity one. 
For $\lambda\in X_1$, the weights 
$\lambda+(1,0)$, $\lambda+(0,-1)$ and $\lambda+(-1,1)$ belong
to different dot-orbits and hence
the modules $\Delta(\lambda+(1,0))$, 
$\Delta(\lambda+(0,-1))$ and $\Delta(\lambda+(-1,1))$ have different central characters.
Therefore, $L((1,0))\otimes_{\mathbb{C}}\Delta(\lambda)$ splits as their direct sum.

Next we observe that, if $\lambda_1\neq 0$, then all three weights 
$\lambda+(1,0)$, $\lambda+(0,-1)$ and $\lambda+(-1,1)$ still
belong to $X_1$. Therefore, in this case the module 
$L((1,0))\otimes_{\mathbb{C}} L(\lambda)$ splits as a direct sum of 
$L(\lambda+(1,0))$, $L(\lambda+(0,-1))$ and $L(\lambda+(-1,1))$.

If $\lambda_1=0$, then $\lambda+(1,0)\in X_1$ and $\lambda+(0,-1)\in X_1$ 
but $\lambda+(-1,1)\not\in X_1$.  Therefore, in this case the module 
$L((1,0))\otimes_{\mathbb{C}} L(\lambda)$ splits as a direct sum of 
$L(\lambda+(1,0))$ and $L(\lambda+(0,-1))$. This implies 
both that $\mathrm{add}(\mathscr{C}\cdot L(\lambda))$ coincides
with $\mathcal{M}_1^a$ and that the latter is simple transitive.
We also have that the combinatorics of this simple transitive 
$\mathscr{C}$-module category is described by the left 
part of Figure~\ref{fig8}.

It is straightforward to verify that the right  
part of Figure~\ref{fig8} provides an eigenvector, as claimed.
\end{proof}

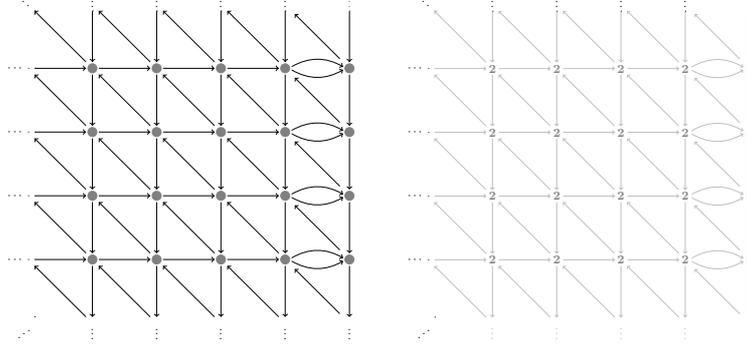
\begin{figure}
\resizebox{10cm}{!}{
\begin{tikzpicture}
\draw[black, thick,  ->] (10,1.8) -- (10,0.2) node[anchor= north] {\large$\vdots$};
\draw[black, thick,  ->] (1.8,0.2) -- (0.2,1.8);
\draw[black, thick,  ->] (2,1.8) -- (2,0.2) node[anchor= north] {\large$\vdots$};
\draw[black, thick,  ->] (3.8,0.2) -- (2.2,1.8);
\draw[black, thick,  ->] (4,1.8) -- (4,0.2) node[anchor= north] {\large$\vdots$};
\draw[black, thick,  ->] (5.8,0.2) -- (4.2,1.8);
\draw[black, thick,  ->] (6,1.8) -- (6,0.2) node[anchor= north] {\large$\vdots$};
\draw[black, thick,  ->] (7.8,0.2) -- (6.2,1.8);
\draw[black, thick,  ->] (8,1.8) -- (8,0.2) node[anchor= north] {\large$\vdots$};
\draw[black, thick,  ->] (0.2,2) -- (1.8,2);
\draw[black, thick,  ->] (10,3.8) -- (10,2.2);
\draw[black, thick,  ->] (1.8,2.2) -- (0.2,3.8);
\draw[black, thick,  ->] (2.2,2) -- (3.8,2);
\draw[black, thick,  ->] (2,3.8) -- (2,2.2);
\draw[black, thick,  ->] (3.8,2.2) -- (2.2,3.8);
\draw[black, thick,  ->] (4.2,2) -- (5.8,2);
\draw[black, thick,  ->] (4,3.8) -- (4,2.2);
\draw[black, thick,  ->] (5.8,2.2) -- (4.2,3.8);
\draw[black, thick,  ->] (6.2,2) -- (7.8,2);
\draw[black, thick,  ->] (6,3.8) -- (6,2.2);
\draw[black, thick,  ->] (7.8,2.2) -- (6.2,3.8);
\draw[black, thick,  ->] (8,3.8) -- (8,2.2);
\draw[black, thick,  ->] (0.2,4) -- (1.8,4);
\draw[black, thick,  ->] (10,5.8) -- (10,4.2);
\draw[black, thick,  ->] (1.8,4.2) -- (0.2,5.8);
\draw[black, thick,  ->] (2.2,4) -- (3.8,4);
\draw[black, thick,  ->] (2,5.8) -- (2,4.2);
\draw[black, thick,  ->] (3.8,4.2) -- (2.2,5.8);
\draw[black, thick,  ->] (4.2,4) -- (5.8,4);
\draw[black, thick,  ->] (4,5.8) -- (4,4.2);
\draw[black, thick,  ->] (5.8,4.2) -- (4.2,5.8);
\draw[black, thick,  ->] (6.2,4) -- (7.8,4);
\draw[black, thick,  ->] (6,5.8) -- (6,4.2);
\draw[black, thick,  ->] (7.8,4.2) -- (6.2,5.8);
\draw[black, thick,  ->] (8,5.8) -- (8,4.2);
\draw[black, thick,  ->] (0.2,6) -- (1.8,6);
\draw[black, thick,  ->] (10,7.8) -- (10,6.2);
\draw[black, thick,  ->] (1.8,6.2) -- (0.2,7.8);
\draw[black, thick,  ->] (2.2,6) -- (3.8,6);
\draw[black, thick,  ->] (2,7.8) -- (2,6.2);
\draw[black, thick,  ->] (3.8,6.2) -- (2.2,7.8);
\draw[black, thick,  ->] (4.2,6) -- (5.8,6);
\draw[black, thick,  ->] (4,7.8) -- (4,6.2);
\draw[black, thick,  ->] (5.8,6.2) -- (4.2,7.8);
\draw[black, thick,  ->] (6.2,6) -- (7.8,6);
\draw[black, thick,  ->] (6,7.8) -- (6,6.2);
\draw[black, thick,  ->] (7.8,6.2) -- (6.2,7.8);
\draw[black, thick,  ->] (8,7.8) -- (8,6.2);
\draw[black, thick,  ->] (8.2,2.05) to [out=30,in=150]   (9.8,2.05);
\draw[black, thick,  ->] (8.2,1.95) to [out=-30,in=-150]   (9.8,1.95);
\draw[black, thick,  ->] (8.2,4.05) to [out=30,in=150]   (9.8,4.05);
\draw[black, thick,  ->] (8.2,3.95) to [out=-30,in=-150]   (9.8,3.95);
\draw[black, thick,  ->] (8.2,6.05) to [out=30,in=150]   (9.8,6.05);
\draw[black, thick,  ->] (8.2,5.95) to [out=-30,in=-150]   (9.8,5.95);
\draw[black, thick,  ->] (8.2,8.05) to [out=30,in=150]   (9.8,8.05);
\draw[black, thick,  ->] (8.2,7.95) to [out=-30,in=-150]   (9.8,7.95);
\draw[black, thick,  ->] (9.7,0.3)  --  (8.3,1.7);
\draw[black, thick,  ->] (9.7,2.3)  --  (8.3,3.7);
\draw[black, thick,  ->] (9.7,4.3)  --  (8.3,5.7);
\draw[black, thick,  ->] (9.7,6.3)  --  (8.3,7.7);
\draw[black, thick,  ->] (9.7,8.3)  --  (8.3,9.7);
\draw[black, thick,  ->]  (10,9.8) -- (10,8.2);
\draw[black, thick,  ->]  (2,9.8) -- (2,8.2);
\draw[black, thick,  ->]  (4,9.8) -- (4,8.2);
\draw[black, thick,  ->]  (6,9.8) -- (6,8.2);
\draw[black, thick,  ->]  (8,9.8) -- (8,8.2);
\draw[black, thin,  -] (0,2) -- (0,2) node[anchor= east] {\large$\cdots$};
\draw[black, thin,  -] (0,4) -- (0,4) node[anchor= east] {\large$\cdots$};
\draw[black, thin,  -] (0,6) -- (0,6) node[anchor= east] {\large$\cdots$};
\draw[black, thin,  -] (0,8) -- (0,8) node[anchor= east] {\large$\cdots$};
\draw[black, thin,  -] (0.2,0.2) -- (0.2,0.2) node[anchor= north east] {\large$\iddots$};
\draw[black, thin,  -] (0.2,9.8) -- (0.2,9.8) node[anchor= south east] {\large$\ddots$};
\draw[black, thin,  -] (2,9.8) -- (2,9.8) node[anchor= south] {\large$\vdots$};
\draw[black, thin,  -] (4,9.8) -- (4,9.8) node[anchor= south] {\large$\vdots$};
\draw[black, thin,  -] (6,9.8) -- (6,9.8) node[anchor= south] {\large$\vdots$};
\draw[black, thin,  -] (8,9.8) -- (8,9.8) node[anchor= south] {\large$\vdots$};
\draw[gray, thin,  -] (10,9.8) -- (10,9.8) node[anchor= south] {\large$\vdots$};
\draw[black, thick,  ->] (1.8,8.2) -- (0.2,9.8);
\draw[black, thick,  ->] (3.8,8.2) -- (2.2,9.8);
\draw[black, thick,  ->] (5.8,8.2) -- (4.2,9.8);
\draw[black, thick,  ->] (7.8,8.2) -- (6.2,9.8);
\draw[black, thick,  ->] (0.2,8) -- (1.8,8);
\draw[black, thick,  ->] (2.2,8) -- (3.8,8);
\draw[black, thick,  ->] (4.2,8) -- (5.8,8);
\draw[black, thick,  ->] (6.2,8) -- (7.8,8);
\draw[gray,fill=gray] (10,2) circle (.9ex);
\draw[gray,fill=gray] (2,2) circle (.9ex);
\draw[gray,fill=gray] (4,2) circle (.9ex);
\draw[gray,fill=gray] (6,2) circle (.9ex);
\draw[gray,fill=gray] (8,2) circle (.9ex);
\draw[gray,fill=gray] (10,4) circle (.9ex);
\draw[gray,fill=gray] (2,4) circle (.9ex);
\draw[gray,fill=gray] (4,4) circle (.9ex);
\draw[gray,fill=gray] (6,4) circle (.9ex);
\draw[gray,fill=gray] (8,4) circle (.9ex);
\draw[gray,fill=gray] (10,6) circle (.9ex);
\draw[gray,fill=gray] (2,6) circle (.9ex);
\draw[gray,fill=gray] (4,6) circle (.9ex);
\draw[gray,fill=gray] (6,6) circle (.9ex);
\draw[gray,fill=gray] (8,6) circle (.9ex);
\draw[gray,fill=gray] (10,8) circle (.9ex);
\draw[gray,fill=gray] (2,8) circle (.9ex);
\draw[gray,fill=gray] (4,8) circle (.9ex);
\draw[gray,fill=gray] (6,8) circle (.9ex);
\draw[gray,fill=gray] (8,8) circle (.9ex);
\end{tikzpicture}
\qquad\qquad
\begin{tikzpicture}
\draw[lightgray, thin,  ->] (10,1.8) -- (10,0.2) node[anchor= north] {\large$\vdots$};
\draw[lightgray, thin,  ->] (1.8,0.2) -- (0.2,1.8);
\draw[lightgray, thin,  ->] (2,1.8) -- (2,0.2) node[anchor= north] {\large$\vdots$};
\draw[lightgray, thin,  ->] (3.8,0.2) -- (2.2,1.8);
\draw[lightgray, thin,  ->] (4,1.8) -- (4,0.2) node[anchor= north] {\large$\vdots$};
\draw[lightgray, thin,  ->] (5.8,0.2) -- (4.2,1.8);
\draw[lightgray, thin,  ->] (6,1.8) -- (6,0.2) node[anchor= north] {\large$\vdots$};
\draw[lightgray, thin,  ->] (7.8,0.2) -- (6.2,1.8);
\draw[lightgray, thin,  ->] (8,1.8) -- (8,0.2) node[anchor= north] {\large$\vdots$};
\draw[lightgray, thin,  ->] (0.2,2) -- (1.8,2);
\draw[lightgray, thin,  ->] (10,3.8) -- (10,2.2);
\draw[lightgray, thin,  ->] (1.8,2.2) -- (0.2,3.8);
\draw[lightgray, thin,  ->] (2.2,2) -- (3.8,2);
\draw[lightgray, thin,  ->] (2,3.8) -- (2,2.2);
\draw[lightgray, thin,  ->] (3.8,2.2) -- (2.2,3.8);
\draw[lightgray, thin,  ->] (4.2,2) -- (5.8,2);
\draw[lightgray, thin,  ->] (4,3.8) -- (4,2.2);
\draw[lightgray, thin,  ->] (5.8,2.2) -- (4.2,3.8);
\draw[lightgray, thin,  ->] (6.2,2) -- (7.8,2);
\draw[lightgray, thin,  ->] (6,3.8) -- (6,2.2);
\draw[lightgray, thin,  ->] (7.8,2.2) -- (6.2,3.8);
\draw[lightgray, thin,  ->] (8,3.8) -- (8,2.2);
\draw[lightgray, thin,  ->] (0.2,4) -- (1.8,4);
\draw[lightgray, thin,  ->] (10,5.8) -- (10,4.2);
\draw[lightgray, thin,  ->] (1.8,4.2) -- (0.2,5.8);
\draw[lightgray, thin,  ->] (2.2,4) -- (3.8,4);
\draw[lightgray, thin,  ->] (2,5.8) -- (2,4.2);
\draw[lightgray, thin,  ->] (3.8,4.2) -- (2.2,5.8);
\draw[lightgray, thin,  ->] (4.2,4) -- (5.8,4);
\draw[lightgray, thin,  ->] (4,5.8) -- (4,4.2);
\draw[lightgray, thin,  ->] (5.8,4.2) -- (4.2,5.8);
\draw[lightgray, thin,  ->] (6.2,4) -- (7.8,4);
\draw[lightgray, thin,  ->] (6,5.8) -- (6,4.2);
\draw[lightgray, thin,  ->] (7.8,4.2) -- (6.2,5.8);
\draw[lightgray, thin,  ->] (8,5.8) -- (8,4.2);
\draw[lightgray, thin,  ->] (0.2,6) -- (1.8,6);
\draw[lightgray, thin,  ->] (10,7.8) -- (10,6.2);
\draw[lightgray, thin,  ->] (1.8,6.2) -- (0.2,7.8);
\draw[lightgray, thin,  ->] (2.2,6) -- (3.8,6);
\draw[lightgray, thin,  ->] (2,7.8) -- (2,6.2);
\draw[lightgray, thin,  ->] (3.8,6.2) -- (2.2,7.8);
\draw[lightgray, thin,  ->] (4.2,6) -- (5.8,6);
\draw[lightgray, thin,  ->] (4,7.8) -- (4,6.2);
\draw[lightgray, thin,  ->] (5.8,6.2) -- (4.2,7.8);
\draw[lightgray, thin,  ->] (6.2,6) -- (7.8,6);
\draw[lightgray, thin,  ->] (6,7.8) -- (6,6.2);
\draw[lightgray, thin,  ->] (7.8,6.2) -- (6.2,7.8);
\draw[lightgray, thin,  ->] (8,7.8) -- (8,6.2);
\draw[lightgray, thin,  ->] (8.2,2.05) to [out=30,in=150]   (9.8,2.05);
\draw[lightgray, thin,  ->] (8.2,1.95) to [out=-30,in=-150]   (9.8,1.95);
\draw[lightgray, thin,  ->] (8.2,4.05) to [out=30,in=150]   (9.8,4.05);
\draw[lightgray, thin,  ->] (8.2,3.95) to [out=-30,in=-150]   (9.8,3.95);
\draw[lightgray, thin,  ->] (8.2,6.05) to [out=30,in=150]   (9.8,6.05);
\draw[lightgray, thin,  ->] (8.2,5.95) to [out=-30,in=-150]   (9.8,5.95);
\draw[lightgray, thin,  ->] (8.2,8.05) to [out=30,in=150]   (9.8,8.05);
\draw[lightgray, thin,  ->] (8.2,7.95) to [out=-30,in=-150]   (9.8,7.95);
\draw[lightgray, thin,  ->] (9.7,0.3)  --  (8.3,1.7);
\draw[lightgray, thin,  ->] (9.7,2.3)  --  (8.3,3.7);
\draw[lightgray, thin,  ->] (9.7,4.3)  --  (8.3,5.7);
\draw[lightgray, thin,  ->] (9.7,6.3)  --  (8.3,7.7);
\draw[lightgray, thin,  ->] (9.7,8.3)  --  (8.3,9.7);
\draw[lightgray, thin,  ->]  (10,9.8) -- (10,8.2);
\draw[lightgray, thin,  ->]  (2,9.8) -- (2,8.2);
\draw[lightgray, thin,  ->]  (4,9.8) -- (4,8.2);
\draw[lightgray, thin,  ->]  (6,9.8) -- (6,8.2);
\draw[lightgray, thin,  ->]  (8,9.8) -- (8,8.2);
\draw[black, thin,  -] (0,2) -- (0,2) node[anchor= east] {\large$\cdots$};
\draw[black, thin,  -] (0,4) -- (0,4) node[anchor= east] {\large$\cdots$};
\draw[black, thin,  -] (0,6) -- (0,6) node[anchor= east] {\large$\cdots$};
\draw[black, thin,  -] (0,8) -- (0,8) node[anchor= east] {\large$\cdots$};
\draw[black, thin,  -] (0.2,0.2) -- (0.2,0.2) node[anchor= north east] {\large$\iddots$};
\draw[black, thin,  -] (0.2,9.8) -- (0.2,9.8) node[anchor= south east] {\large$\ddots$};
\draw[black, thin,  -] (2,9.8) -- (2,9.8) node[anchor= south] {\large$\vdots$};
\draw[black, thin,  -] (4,9.8) -- (4,9.8) node[anchor= south] {\large$\vdots$};
\draw[black, thin,  -] (6,9.8) -- (6,9.8) node[anchor= south] {\large$\vdots$};
\draw[black, thin,  -] (8,9.8) -- (8,9.8) node[anchor= south] {\large$\vdots$};
\draw[gray, thin,  -] (10,9.8) -- (10,9.8) node[anchor= south] {\large$\vdots$};
\draw[lightgray, thin,  ->] (1.8,8.2) -- (0.2,9.8);
\draw[lightgray, thin,  ->] (3.8,8.2) -- (2.2,9.8);
\draw[lightgray, thin,  ->] (5.8,8.2) -- (4.2,9.8);
\draw[lightgray, thin,  ->] (7.8,8.2) -- (6.2,9.8);
\draw[lightgray, thin,  ->] (0.2,8) -- (1.8,8);
\draw[lightgray, thin,  ->] (2.2,8) -- (3.8,8);
\draw[lightgray, thin,  ->] (4.2,8) -- (5.8,8);
\draw[lightgray, thin,  ->] (6.2,8) -- (7.8,8);
\draw[gray,fill=gray] (10,2) circle (.01ex) node {\large$\mathbf{1}$};
\draw[gray,fill=gray] (2,2) circle (.01ex) node {\large$\mathbf{2}$};
\draw[gray,fill=gray] (4,2) circle (.01ex) node {\large$\mathbf{2}$};
\draw[gray,fill=gray] (6,2) circle (.01ex) node {\large$\mathbf{2}$};
\draw[gray,fill=gray] (8,2) circle (.01ex) node {\large$\mathbf{2}$};
\draw[gray,fill=gray] (10,4) circle (.01ex) node {\large$\mathbf{1}$};
\draw[gray,fill=gray] (2,4) circle (.01ex) node {\large$\mathbf{2}$};
\draw[gray,fill=gray] (4,4) circle (.01ex) node {\large$\mathbf{2}$};
\draw[gray,fill=gray] (6,4) circle (.01ex) node {\large$\mathbf{2}$};
\draw[gray,fill=gray] (8,4) circle (.01ex) node {\large$\mathbf{2}$};
\draw[gray,fill=gray] (10,6) circle (.01ex) node {\large$\mathbf{1}$};
\draw[gray,fill=gray] (2,6) circle (.01ex) node {\large$\mathbf{2}$};
\draw[gray,fill=gray] (4,6) circle (.01ex) node {\large$\mathbf{2}$};
\draw[gray,fill=gray] (6,6) circle (.01ex) node {\large$\mathbf{2}$};
\draw[gray,fill=gray] (8,6) circle (.01ex) node {\large$\mathbf{2}$};
\draw[gray,fill=gray] (10,8) circle (.01ex) node {\large$\mathbf{1}$};
\draw[gray,fill=gray] (2,8) circle (.01ex) node {\large$\mathbf{2}$};
\draw[gray,fill=gray] (4,8) circle (.01ex) node {\large$\mathbf{2}$};
\draw[gray,fill=gray] (6,8) circle (.01ex) node {\large$\mathbf{2}$};
\draw[gray,fill=gray] (8,8) circle (.01ex) node {\large$\mathbf{2}$};
\end{tikzpicture}
}
\caption{The second case of partially integral weights}\label{fig9}
\end{figure}

Denote by $\mathcal{M}_2^a$
the additive closure of all $P(\lambda)$, where
$\lambda\in X_2$. 

\begin{proposition}\label{prop-s5.6-2}
\begin{enumerate}[$($a$)$]
\item\label{prop-s5.6-2.1} 
For $\lambda\in X_2$ such that $\lambda_1=-1$, 
$\mathrm{add}(\mathscr{C}\cdot L(\lambda))$ coincides with
$\mathcal{M}_2^a$ and is a simple transitive $\mathscr{C}$-module 
category whose graph
and positive eigenvector are depicted in Figure~\ref{fig9}.
\item\label{prop-s5.6-2.2}
For $\lambda\in X_2$ such that $\lambda_1\neq -1$,
$\mathrm{add}(\mathscr{C}\cdot L(\lambda))$
contains $\mathcal{M}_2^a$ as a $\mathscr{C}$-module 
subcategory, moreover, the quotient of $\mathrm{add}(\mathscr{C}\cdot L(\lambda))$
by the ideal $\mathcal{I}$ generated by $\mathcal{M}_2^a$ is a simple 
transitive $\mathscr{C}$-module category that is 
equivalent to $\mathcal{M}_1^a$. 
\item\label{prop-s5.6-2.3}
If $\lambda_1\neq -1$, then the indecomposable
objects in  $\mathrm{add}(\mathscr{C}\cdot L(\lambda))/\mathcal{I}$  
are given by the images of 
$L(\mu)$, where $\mu\in X_2$ is such that $\mu_1\neq -1$.
\end{enumerate}
\end{proposition}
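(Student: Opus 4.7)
The plan for part (a) is to exploit the fact that, for $\lambda\in X_2$ with $\lambda_1=-1$, the weight $\lambda$ is singular for the integral Weyl group $\{e,s\}$ of $\lambda$, so $L(\lambda)=\Delta(\lambda)=P(\lambda)$ is already projective in $\mathcal{O}$. Since projective functors preserve projectivity, every object of $\mathrm{add}(\mathscr{C}\cdot L(\lambda))$ is projective. A case-by-case analysis of $L((1,0))\otimes P(\mu)$ for $\mu\in X_2$, grouping Verma subquotients by central character (each partially integral regular block contains a dominant simple in $X_1$ and an anti-dominant simple in $X_2$; singular blocks have a unique simple, in $X_2$ when the first coordinate is $-1$), shows that the indecomposable summands are precisely the $P(\nu)$ with $\nu\in X_2$, identifying the category with $\mathcal{M}_2^a$. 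Simple transitivity is established exactly as in Proposition~\ref{prop-s5.5-1}: any non-zero morphism can be translated to the most singular wall, yielding an identity on some $P((-1,\nu))=L((-1,\nu))$, which generates the category under the $\mathscr{C}$-action. The graph of Figure~\ref{fig9} is read off from the case analysis, and the eigenvector with eigenvalue $3$ is given by $\mathtt{v}(P(\mu))=|\{e,s\}|/|\mathrm{stab}_{\{e,s\}}(\mu)|$, which equals $1$ when $\mu_1=-1$ and $2$ when $\mu_1\leq-2$.

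For part (b), we start with $L(\lambda)=\Delta(\lambda)$, a simple anti-dominant Verma. Since the three weights $\lambda+(1,0)$, $\lambda+(0,-1)$, $\lambda+(-1,1)$ lie in pairwise distinct blocks for partially integral $\lambda$ with $\lambda_1\leq-2$ (the only integer values producing coincidences are $\lambda_1=-1$ or $\lambda_1=0$, both excluded), the module $L((1,0))\otimes\Delta(\lambda)$ decomposes as the direct sum of the three corresponding Vermas. Each summand is again a simple Verma, except that when $\lambda_1=-2$ the summand $\Delta(\lambda+(1,0))$ becomes the singular simple $L((-1,\lambda_2))=P((-1,\lambda_2))\in\mathcal{M}_2^a$. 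Iterating, we reach a singular simple and, by part (a), then generate all of $\mathcal{M}_2^a$. Since $\mathcal{M}_2^a$ is closed under the $\mathscr{C}$-action (again by part (a)), it is a $\mathscr{C}$-module subcategory of $\mathrm{add}(\mathscr{C}\cdot L(\lambda))$, proving the first claim of (b). The remaining indecomposable objects of $\mathrm{add}(\mathscr{C}\cdot L(\lambda))$ not appearing in $\mathcal{M}_2^a$ are precisely the simple Vermas $L(\mu)=\Delta(\mu)$ with $\mu\in X_2$, $\mu_1\leq-2$, and these yield the indecomposable objects of the quotient, establishing part (c).

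For the equivalence with $\mathcal{M}_1^a$ in part (b), consider the bijection $\phi\colon X_1\to\{\mu\in X_2\colon\mu_1\leq-2\}$ given by $\phi(\nu)=s\cdot\nu=(-\nu_1-2,\nu_1+\nu_2+1)$. The key computation is that, after killing the $\mathcal{M}_2^a$-summands, the decomposition of $L((1,0))\otimes\Delta(\phi(\nu))$ in the quotient matches, under $\phi^{-1}$, the decomposition of $L((1,0))\otimes L(\nu)$ in $\mathcal{M}_1^a$: interior weights ($\nu_1\geq1$, equivalently $\phi(\nu)_1\leq-3$) produce three summands on each side corresponding under the linear $s$-permutation $(1,0)\leftrightarrow(-1,1)$, $(0,-1)\mapsto(0,-1)$ of weights of $L((1,0))$, while the boundary weights ($\nu_1=0$, equivalently $\phi(\nu)_1=-2$) lose exactly one summand on each side (killed by $\mathcal{I}$ in the quotient, leaving $X_1$ in $\mathcal{M}_1^a$). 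Both categories are semi-simple and simple transitive with matching action matrices, and, by rigidity, matching dual action matrices; a Yoneda-type argument exactly in the spirit of the proof of Theorem~\ref{thm-s4.3-1} then extends $\phi$ to an equivalence of $\mathscr{C}$-module categories. The main technical subtlety is the careful tracking of surviving summands in the boundary case on each side and verifying, via the linear action of $s$ on the three weights of $L((1,0))$, that the action matrices genuinely coincide under $\phi$ rather than merely having the same shape.
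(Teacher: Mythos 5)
Your part (a) is essentially the paper's argument: both analyse the Verma flag of $L((1,0))\otimes P(\mu)$ block by block and both establish simple transitivity by translating a nilpotent endomorphism of a regular $P(\mu)$ to the $s$-wall. One small remark: the paper's simple-transitivity argument first observes that there are no nonzero morphisms between \emph{distinct} indecomposables of $\mathcal{M}_2^a$ and that $\mathrm{End}(P(\mu))$ is either $\mathbb{C}$ or the dual numbers, so any proper ideal must contain a nilpotent endomorphism, and then pushes that to the wall; your invocation of Proposition~\ref{prop-s5.5-1} is close but not quite the same argument, since that proposition relies on the simple Verma socle of projective-injectives in $\mathcal{N}_3$ and does not mention ideals.

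For parts (b) and (c) you take a genuinely different route from the paper, and that route has a gap. The paper produces an \emph{explicit} $\mathscr{C}$-module equivalence: the adjoint of the twisting functor $\top_s$ identifies $\mathrm{add}(\mathscr{C}\cdot L(\lambda))$ (a category of tilting modules) with $\mathcal{M}=\mathrm{add}\{P(\mu):\mu\in(0,a)+\Lambda\}$, carries $\mathcal{M}_2^a$ to $\mathcal{M}_2^a$, and the quotient $\mathcal{M}/\mathcal{M}_2^a$ is then identified with $\mathcal{M}_1^a$ by $P(\nu)\mapsto L(\nu)$, $\nu\in X_1$. This simultaneously proves (b) and (c). Your proposal instead computes the combinatorics of the quotient directly (which you do correctly — the bijection $\phi(\nu)=s\cdot\nu$ and the $s$-permutation of the support of $L((1,0))$ check out) and then says that ``a Yoneda-type argument exactly in the spirit of the proof of Theorem~\ref{thm-s4.3-1}'' upgrades matching action matrices to an equivalence. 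That step is not justified: the proof of Theorem~\ref{thm-s4.3-1} uses, in an essential way, that the regular module category has the distinguished generator $L((0,0))$ corresponding to the monoidal unit, so that the Yoneda Lemma produces the morphism $\mathscr{C}\to\mathcal{N}$ from $L((0,0))\mapsto N_{(0,0)}$. The category $\mathcal{M}_1^a$ has no such canonical ``origin,'' and matching action matrices alone do not in general classify simple transitive module categories over an infinite-rank monoidal category. The twisting-functor construction is the missing ingredient, and it also cleanly delivers the full list of indecomposables of $\mathrm{add}(\mathscr{C}\cdot L(\lambda))$, which your proposal asserts but does not fully establish (your tensor computations show what \emph{does} appear, not that nothing else does).

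Two further small points. First, in your description of $\mathrm{add}(\mathscr{C}\cdot L(\lambda))$, it is worth saying that once $P((-1,\cdot))$ is present, translating it out of the wall already produces the big projective-injectives $P(\mu)$, $\mu\in X_2$ regular; this is implicitly what you mean by ``generating $\mathcal{M}_2^a$ by part (a),'' but it deserves mention since tensoring the simple Vermas alone only ever yields direct sums of Vermas. Second, the paper reads off the eigenvector as the Verma length of $P(\mu)$, which agrees with your index formula; that part is fine.
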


\begin{proof}
A weight $\lambda\in X_2$ is singular if and only if $\lambda_1=-1$.
The integral Weyl group of all weights in $(0,a)+\Lambda$
equals $\{e,s\}$. Consequently, $L(\lambda)=P(\lambda)=I(\lambda)$
for all singular weights in $X_2$. 

Note that $X_2$ coincides with the set of all 
antidominant weights in $(0,a)+\Lambda$ (with respect to the 
integral Weyl group), equivalently, the set of all weights
for which the corresponding projective module is injective.
As applying projective functors
preserves both projectivity and injectivity, it follows that,
for each singular $\lambda\in X_2$, the category
$\mathrm{add}(\mathscr{C}\cdot L(\lambda))$
is a subcategory of $\mathcal{M}_2^a$.

Let us determine the combinatorics of $\mathcal{M}_2^a$.
If $\lambda\in X_2$ is regular, then we have a short exact sequence
\begin{displaymath}
0\to \Delta(s\cdot\lambda)\to P(\lambda)\to\Delta(\lambda)\to 0. 
\end{displaymath}
We have to consider three cases

{\bf Case 1.} Assume $\lambda_1\leq -3$.
Looking at the Verma filtration of $L((1,0))\otimes_\mathbb{C}P(\lambda)$, 
we see that the highest weights in $X_2$ of the Verma subquotients appearing 
in that filtration are $\lambda+(1,0)$, $\lambda+(0,-1)$ and $\lambda+(-1,1)$,
each appearing with multiplicity one. As all these weights belong to $X_2$ 
and are regular, it follows that 
\begin{displaymath}
L((1,0))\otimes_\mathbb{C}P(\lambda)\cong
P(\lambda+(1,0))\oplus P(\lambda+(0,-1))\oplus P(\lambda+(-1,1)).
\end{displaymath}

{\bf Case 2.} Assume $\lambda_1=-2$.
Looking at the Verma filtration of $L((1,0))\otimes_\mathbb{C}P(\lambda)$, 
we see that the highest weights in $X_2$ of the Verma subquotients appearing 
in that filtration are $\lambda+(1,0)$, $\lambda+(0,-1)$ and $\lambda+(-1,1)$.
However, the weights $\lambda+(0,-1)$ and $\lambda+(-1,1)$
appear with multiplicity one while the weight $\lambda+(1,0)$
appears with multiplicity two (as $\lambda+(1,0)=s\cdot \lambda+(-1,1)$).
As all these weights belong to $X_2$, it follows that 
\begin{displaymath}
L((1,0))\otimes_\mathbb{C}P(\lambda)\cong
P(\lambda+(1,0))\oplus 
P(\lambda+(1,0))\oplus P(\lambda+(0,-1))\oplus P(\lambda+(-1,1)).
\end{displaymath}

{\bf Case 3.} Assume $\lambda_1=-1$.
Looking at the Verma filtration of $L((1,0))\otimes_\mathbb{C}P(\lambda)$, 
we see that the highest weights in $X_2$ of the Verma subquotients appearing 
in that filtration are $\lambda+(0,-1)$ and $\lambda+(-1,1)$, both
appearing with multiplicity one. It follows that 
\begin{displaymath}
L((1,0))\otimes_\mathbb{C}P(\lambda)\cong
P(\lambda+(0,-1))\oplus P(\lambda+(-1,1)).
\end{displaymath}

Combining the three cases, we get that the combinatorics of 
$\mathcal{M}_2^a$ is given by the graph on the left hand side of 
Figure~\ref{fig9}. As this graph is strongly connected, it follows
that $\mathcal{M}_2^a$ is transitive. That the right hand side
of Figure~\ref{fig9} provides an eigenvector follows from a direct
computation. Note that this eigenvector can be interpreted as 
the Verma length of the projective object corresponding to a vertex.

The category $\mathcal{M}_2^a$
is not semi-simple, however, there are no 
homomorphisms between different indecomposable objects in 
$\mathcal{M}_2^a$. In fact, for $\lambda\in X_2$, the endomorphism
algebra of $P(\lambda)$ is trivial if and only if $\lambda$
is singular. If $\lambda$ is regular, this endomorphism algebra is
isomorphic to the algebra of dual numbers. Let $\mathscr{J}$ be a 
$\mathscr{C}$-stable ideal of $\mathcal{M}_2^a$ different from 
$\mathcal{M}_2^a$. Then $\mathscr{J}$ must contain a non-zero
radical endomorphism of some regular $P(\lambda)$. Translating
this onto the $s$-wall gives a non-zero endomorphism there.
Since the block on the wall is semi-simple, it follows that 
$\mathscr{J}$ contains the identity morphism of some 
non-zero singular object. Since $\mathcal{M}_2^a$ is transitive,
it follows that $\mathscr{J}=\mathcal{M}_2^a$, a contradiction.
This proves that $\mathcal{M}_2^a$ is simple transitive.
This proves Claim~\eqref{prop-s5.6-2.1}.

Denote that $\mathcal{M}$ the additive closure of all $P(\lambda)$,
where $\lambda\in (0,a)+\Lambda$. For a regular $\lambda\in X_2$,
the functor $\top_s$ induces an equivalence of $\mathscr{C}$-module
categories between $\mathcal{M}$ and $\mathrm{add}(\mathscr{C}\cdot L(\lambda))$.
The category $\mathcal{M}_2^a$ is a $\mathscr{C}$-module subcategory of
$\mathcal{M}$. Factoring out of $\mathcal{M}$ the ideal generated by $\mathcal{M}_2^a$ 
outputs the additive closure of all $P(\lambda)$, where $\lambda\in X_1$.
As all these modules have trivial endomorphism algebra, it follows that
this quotient is a semi-simple category. By mapping each $P(\lambda)$, where $\lambda\in X_1$,
to its simple top, defines an equivalence of  $\mathscr{C}$-module
categories between this quotient and $\mathcal{M}_1^a$.
This implies Claims~\eqref{prop-s5.6-2.2} and \eqref{prop-s5.6-2.3}
and completes the proof.
\end{proof}

For $a\in\mathbb{C}\setminus\mathbb{Z}$, consider
the set $(a,0)+\Lambda$. We split this set into a
disjoint union $(a,0)+\Lambda= X_3\cup X_4$, where
$X_3$ consists of all $\lambda\in (a,0)+\Lambda$ such that
$\lambda_2\geq 0$ and $X_4$ is the complement of 
$X_3$ inside $(a,0)+\Lambda$. Denote by $\mathcal{M}_3^a$
the additive closure of all $L(\lambda)$, where
$\lambda\in X_3$.

\begin{proposition}\label{prop-s5.6-3}
For any $\lambda\in X_3$, the $\mathscr{C}$-module category
$\mathrm{add}(\mathscr{C}\cdot L(\lambda))$ coincides
with $\mathcal{M}_3^a$, moreover, the latter is a simple
transitive $\mathscr{C}$-module category whose graph
and positive eigenvector are depicted in Figure~\ref{fig10}.
\end{proposition}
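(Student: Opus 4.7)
The plan is to adapt the argument of Proposition~\ref{prop-s5.6-1} by invoking the symmetry that interchanges $\alpha$ and $\beta$ (equivalently, $s$ and $r$, and the first and second coordinates).

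First, I would note that, for $\lambda\in X_3$, the integral Weyl group of weights in $(a,0)+\Lambda$ is $\{e,r\}$, so the intersection of the dot-$W$-orbit of $\lambda$ with $(a,0)+\Lambda$ consists of exactly $\lambda$ and $r\cdot\lambda$. This yields the short exact sequence
\begin{displaymath}
0\to L(r\cdot\lambda)\to \Delta(\lambda)\to L(\lambda)\to 0.
\end{displaymath}
As in the proof of Proposition~\ref{prop-s5.6-1}, the module $L((1,0))\otimes_{\mathbb{C}}\Delta(\lambda)$ admits a Verma flag with subquotients $\Delta(\lambda+(1,0))$, $\Delta(\lambda+(0,-1))$, $\Delta(\lambda+(-1,1))$, each appearing with multiplicity one. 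For $\lambda\in X_3$, the three weights $\lambda+(1,0)$, $\lambda+(0,-1)$, $\lambda+(-1,1)$ lie in three different dot-orbits (they behave differently under $r$), so this tensor product splits as the corresponding direct sum.

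Next I would carry out the case analysis based on $\lambda_2$. If $\lambda_2\geq 1$, all three of $\lambda+(1,0)$, $\lambda+(0,-1)$, $\lambda+(-1,1)$ lie in $X_3$, and one obtains
\begin{displaymath}
L((1,0))\otimes_{\mathbb{C}}L(\lambda)\cong L(\lambda+(1,0))\oplus L(\lambda+(0,-1))\oplus L(\lambda+(-1,1)).
\end{displaymath}
If $\lambda_2=0$, then $\lambda+(0,-1)\notin X_3$, while $\lambda+(1,0)$ (still with second coordinate $0$) and $\lambda+(-1,1)$ both belong to $X_3$, yielding
\begin{displaymath}
L((1,0))\otimes_{\mathbb{C}}L(\lambda)\cong L(\lambda+(1,0))\oplus L(\lambda+(-1,1)).
\end{displaymath}
This simultaneously shows that $\mathrm{add}(\mathscr{C}\cdot L(\lambda))$ is contained in $\mathcal{M}_3^a$, that $\mathcal{M}_3^a$ is stable under the $\mathscr{C}$-action, and that the graph of $\mathcal{M}_3^a$ is as depicted on the left of Figure~\ref{fig10}. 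Strong connectedness of this graph (visible by traversing along successive $+(1,0)$ and $+(-1,1)$ edges) then gives transitivity, and hence equality $\mathrm{add}(\mathscr{C}\cdot L(\lambda))=\mathcal{M}_3^a$; simplicity follows from the fact that the underlying category is semi-simple.

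Finally, I would verify that the numerical data on the right of Figure~\ref{fig10} is an eigenvector with eigenvalue $3$ for the action matrix, by direct substitution into the two local forms of the decomposition computed above. Since the proof is essentially a translation of Proposition~\ref{prop-s5.6-1} across the Dynkin involution, there is no real obstacle; the only point requiring care is the boundary case $\lambda_2=0$, which must be isolated correctly so that the graph has the asymmetric shape shown in Figure~\ref{fig10}.
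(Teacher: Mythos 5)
Your proof is correct and is exactly what the paper has in mind: the paper's proof of this proposition is literally the one-line statement ``Mutatis mutandis the proof of Proposition~\ref{prop-s5.6-1},'' and you have simply spelled out the required substitutions (replacing the pair $(s,\lambda_1)$ by $(r,\lambda_2)$, so the integral Weyl group becomes $\{e,r\}$ and the boundary case becomes $\lambda_2=0$, where $\lambda+(0,-1)$ leaves $X_3$ while $\lambda+(1,0)$ and $\lambda+(-1,1)$ remain). The Verma-flag decomposition by central characters, the strong-connectedness argument, and the semi-simplicity of the underlying category all transfer unchanged, exactly as in your write-up.
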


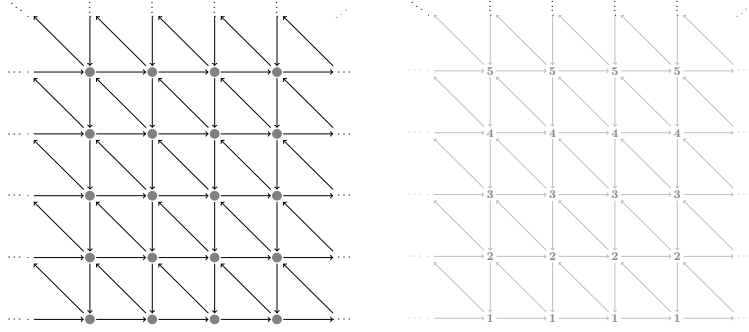
\begin{figure}
\resizebox{10cm}{!}{
\begin{tikzpicture}
\draw[black, thick,  ->] (2,1.8) -- (2,0.2);
\draw[black, thick,  ->] (4,1.8) -- (4,0.2);
\draw[black, thick,  ->] (6,1.8) -- (6,0.2);
\draw[black, thick,  ->] (8,1.8) -- (8,0.2);
\draw[black, thick,  ->] (1.8,0.2) -- (0.2,1.8);
\draw[black, thick,  ->] (3.8,0.2) -- (2.2,1.8);
\draw[black, thick,  ->] (5.8,0.2) -- (4.2,1.8);
\draw[black, thick,  ->] (7.8,0.2) -- (6.2,1.8);
\draw[black, thick,  ->] (0.2,2) -- (1.8,2);
\draw[black, thick,  ->] (2.2,0) -- (3.8,0);
\draw[black, thick,  ->] (1.8,2.2) -- (0.2,3.8);
\draw[black, thick,  ->] (2.2,2) -- (3.8,2);
\draw[black, thick,  ->] (2,3.8) -- (2,2.2);
\draw[black, thick,  ->] (3.8,2.2) -- (2.2,3.8);
\draw[black, thick,  ->] (4.2,2) -- (5.8,2);
\draw[black, thick,  ->] (4,3.8) -- (4,2.2);
\draw[black, thick,  ->] (5.8,2.2) -- (4.2,3.8);
\draw[black, thick,  ->] (6.2,2) -- (7.8,2);
\draw[black, thick,  ->] (6,3.8) -- (6,2.2);
\draw[black, thick,  ->] (7.8,2.2) -- (6.2,3.8);
\draw[black, thick,  ->] (8,3.8) -- (8,2.2);
\draw[black, thick,  ->] (0.2,4) -- (1.8,4);
\draw[black, thick,  ->] (4.2,0) -- (5.8,0);
\draw[black, thick,  ->] (1.8,4.2) -- (0.2,5.8);
\draw[black, thick,  ->] (2.2,4) -- (3.8,4);
\draw[black, thick,  ->] (2,5.8) -- (2,4.2);
\draw[black, thick,  ->] (3.8,4.2) -- (2.2,5.8);
\draw[black, thick,  ->] (4.2,4) -- (5.8,4);
\draw[black, thick,  ->] (4,5.8) -- (4,4.2);
\draw[black, thick,  ->] (5.8,4.2) -- (4.2,5.8);
\draw[black, thick,  ->] (6.2,4) -- (7.8,4);
\draw[black, thick,  ->] (6,5.8) -- (6,4.2);
\draw[black, thick,  ->] (7.8,4.2) -- (6.2,5.8);
\draw[black, thick,  ->] (8,5.8) -- (8,4.2);
\draw[black, thick,  ->] (0.2,6) -- (1.8,6);
\draw[black, thick,  ->] (0.2,0) -- (1.8,0);
\draw[black, thick,  ->] (6.2,0) -- (7.8,0);
\draw[black, thick,  ->] (1.8,6.2) -- (0.2,7.8);
\draw[black, thick,  ->] (2.2,6) -- (3.8,6);
\draw[black, thick,  ->] (2,7.8) -- (2,6.2);
\draw[black, thick,  ->] (3.8,6.2) -- (2.2,7.8);
\draw[black, thick,  ->] (4.2,6) -- (5.8,6);
\draw[black, thick,  ->] (4,7.8) -- (4,6.2);
\draw[black, thick,  ->] (5.8,6.2) -- (4.2,7.8);
\draw[black, thick,  ->] (6.2,6) -- (7.8,6);
\draw[black, thick,  ->] (6,7.8) -- (6,6.2);
\draw[black, thick,  ->] (7.8,6.2) -- (6.2,7.8);
\draw[black, thick,  ->] (8,7.8) -- (8,6.2);
\draw[black, thick,  -] (9.8,0) -- (9.8,0) node[anchor= west] {\large$\cdots$};
\draw[black, thick,  ->] (8.2,2) -- (9.8,2) node[anchor= west] {\large$\cdots$};
\draw[black, thick,  ->] (8.2,4) -- (9.8,4) node[anchor= west] {\large$\cdots$};
\draw[black, thick,  ->] (8.2,6) -- (9.8,6) node[anchor= west] {\large$\cdots$};
\draw[black, thick,  ->] (8.2,8) -- (9.8,8) node[anchor= west] {\large$\cdots$};
\draw[black, thick,  -] (0,0) -- (0,0) node[anchor= east] {\large$\cdots$};
\draw[black, thick,  -] (0,2) -- (0,2) node[anchor= east] {\large$\cdots$};
\draw[black, thick,  -] (0,4) -- (0,4) node[anchor= east] {\large$\cdots$};
\draw[black, thick,  -] (0,6) -- (0,6) node[anchor= east] {\large$\cdots$};
\draw[black, thick,  -] (0,8) -- (0,8) node[anchor= east] {\large$\cdots$};
\draw[black, thick,  ->] (9.8,0.2) -- (8.2,1.8);
\draw[black, thick,  ->] (9.8,2.2) -- (8.2,3.8);
\draw[black, thick,  ->] (9.8,4.2) -- (8.2,5.8);
\draw[black, thick,  ->] (9.8,6.2) -- (8.2,7.8);
\draw[black, thick,  ->]  (8.2,0) -- (9.8,0);
\draw[black, thick,  ->]  (2,9.8) -- (2,8.2);
\draw[black, thick,  ->]  (4,9.8) -- (4,8.2);
\draw[black, thick,  ->]  (6,9.8) -- (6,8.2);
\draw[black, thick,  ->]  (8,9.8) -- (8,8.2);
\draw[black, thin,  -] (0,9.8) -- (0,9.8) node[anchor= south east] {\large$\ddots$};
\draw[black, thin,  -] (2,9.8) -- (2,9.8) node[anchor= south] {\large$\vdots$};
\draw[black, thin,  -] (4,9.8) -- (4,9.8) node[anchor= south] {\large$\vdots$};
\draw[black, thin,  -] (6,9.8) -- (6,9.8) node[anchor= south] {\large$\vdots$};
\draw[black, thin,  -] (8,9.8) -- (8,9.8) node[anchor= south] {\large$\vdots$};
\draw[gray, thin,  -] (10.4,10.4) -- (10.4,10.4) node[anchor= north east] {\large$\iddots$};
\draw[black, thick,  ->] (1.8,8.2) -- (0.2,9.8);
\draw[black, thick,  ->] (3.8,8.2) -- (2.2,9.8);
\draw[black, thick,  ->] (5.8,8.2) -- (4.2,9.8);
\draw[black, thick,  ->] (7.8,8.2) -- (6.2,9.8);
\draw[black, thick,  ->] (9.8,8.2) -- (8.2,9.8);
\draw[black, thick,  ->] (0.2,8) -- (1.8,8);
\draw[black, thick,  ->] (2.2,8) -- (3.8,8);
\draw[black, thick,  ->] (4.2,8) -- (5.8,8);
\draw[black, thick,  ->] (6.2,8) -- (7.8,8);
\draw[gray,fill=gray] (2,0) circle (.9ex);
\draw[gray,fill=gray] (2,2) circle (.9ex);
\draw[gray,fill=gray] (4,2) circle (.9ex);
\draw[gray,fill=gray] (6,2) circle (.9ex);
\draw[gray,fill=gray] (8,2) circle (.9ex);
\draw[gray,fill=gray] (4,0) circle (.9ex);
\draw[gray,fill=gray] (2,4) circle (.9ex);
\draw[gray,fill=gray] (4,4) circle (.9ex);
\draw[gray,fill=gray] (6,4) circle (.9ex);
\draw[gray,fill=gray] (8,4) circle (.9ex);
\draw[gray,fill=gray] (6,0) circle (.9ex);
\draw[gray,fill=gray] (2,6) circle (.9ex);
\draw[gray,fill=gray] (4,6) circle (.9ex);
\draw[gray,fill=gray] (6,6) circle (.9ex);
\draw[gray,fill=gray] (8,6) circle (.9ex);
\draw[gray,fill=gray] (8,0) circle (.9ex);
\draw[gray,fill=gray] (2,8) circle (.9ex);
\draw[gray,fill=gray] (4,8) circle (.9ex);
\draw[gray,fill=gray] (6,8) circle (.9ex);
\draw[gray,fill=gray] (8,8) circle (.9ex);
\end{tikzpicture}
\qquad\qquad
\begin{tikzpicture}
\draw[lightgray, thin,  ->] (2,1.8) -- (2,0.2);
\draw[lightgray, thin,  ->] (4,1.8) -- (4,0.2);
\draw[lightgray, thin,  ->] (6,1.8) -- (6,0.2);
\draw[lightgray, thin,  ->] (8,1.8) -- (8,0.2);
\draw[lightgray, thin,  ->] (1.8,0.2) -- (0.2,1.8);
\draw[lightgray, thin,  ->] (3.8,0.2) -- (2.2,1.8);
\draw[lightgray, thin,  ->] (5.8,0.2) -- (4.2,1.8);
\draw[lightgray, thin,  ->] (7.8,0.2) -- (6.2,1.8);
\draw[lightgray, thin,  ->] (0.2,2) -- (1.8,2);
\draw[lightgray, thin,  ->] (2.2,0) -- (3.8,0);
\draw[lightgray, thin,  ->] (1.8,2.2) -- (0.2,3.8);
\draw[lightgray, thin,  ->] (2.2,2) -- (3.8,2);
\draw[lightgray, thin,  ->] (2,3.8) -- (2,2.2);
\draw[lightgray, thin,  ->] (3.8,2.2) -- (2.2,3.8);
\draw[lightgray, thin,  ->] (4.2,2) -- (5.8,2);
\draw[lightgray, thin,  ->] (4,3.8) -- (4,2.2);
\draw[lightgray, thin,  ->] (5.8,2.2) -- (4.2,3.8);
\draw[lightgray, thin,  ->] (6.2,2) -- (7.8,2);
\draw[lightgray, thin,  ->] (6,3.8) -- (6,2.2);
\draw[lightgray, thin,  ->] (7.8,2.2) -- (6.2,3.8);
\draw[lightgray, thin,  ->] (8,3.8) -- (8,2.2);
\draw[lightgray, thin,  ->] (0.2,4) -- (1.8,4);
\draw[lightgray, thin,  ->] (4.2,0) -- (5.8,0);
\draw[lightgray, thin,  ->] (1.8,4.2) -- (0.2,5.8);
\draw[lightgray, thin,  ->] (2.2,4) -- (3.8,4);
\draw[lightgray, thin,  ->] (2,5.8) -- (2,4.2);
\draw[lightgray, thin,  ->] (3.8,4.2) -- (2.2,5.8);
\draw[lightgray, thin,  ->] (4.2,4) -- (5.8,4);
\draw[lightgray, thin,  ->] (4,5.8) -- (4,4.2);
\draw[lightgray, thin,  ->] (5.8,4.2) -- (4.2,5.8);
\draw[lightgray, thin,  ->] (6.2,4) -- (7.8,4);
\draw[lightgray, thin,  ->] (6,5.8) -- (6,4.2);
\draw[lightgray, thin,  ->] (7.8,4.2) -- (6.2,5.8);
\draw[lightgray, thin,  ->] (8,5.8) -- (8,4.2);
\draw[lightgray, thin,  ->] (0.2,6) -- (1.8,6);
\draw[lightgray, thin,  ->] (0.2,0) -- (1.8,0);
\draw[lightgray, thin,  ->] (6.2,0) -- (7.8,0);
\draw[lightgray, thin,  ->] (1.8,6.2) -- (0.2,7.8);
\draw[lightgray, thin,  ->] (2.2,6) -- (3.8,6);
\draw[lightgray, thin,  ->] (2,7.8) -- (2,6.2);
\draw[lightgray, thin,  ->] (3.8,6.2) -- (2.2,7.8);
\draw[lightgray, thin,  ->] (4.2,6) -- (5.8,6);
\draw[lightgray, thin,  ->] (4,7.8) -- (4,6.2);
\draw[lightgray, thin,  ->] (5.8,6.2) -- (4.2,7.8);
\draw[lightgray, thin,  ->] (6.2,6) -- (7.8,6);
\draw[lightgray, thin,  ->] (6,7.8) -- (6,6.2);
\draw[lightgray, thin,  ->] (7.8,6.2) -- (6.2,7.8);
\draw[lightgray, thin,  ->] (8,7.8) -- (8,6.2);
\draw[lightgray, thin,  -] (9.8,0) -- (9.8,0) node[anchor= west] {\large$\cdots$};
\draw[lightgray, thin,  ->] (8.2,2) -- (9.8,2) node[anchor= west] {\large$\cdots$};
\draw[lightgray, thin,  ->] (8.2,4) -- (9.8,4) node[anchor= west] {\large$\cdots$};
\draw[lightgray, thin,  ->] (8.2,6) -- (9.8,6) node[anchor= west] {\large$\cdots$};
\draw[lightgray, thin,  ->] (8.2,8) -- (9.8,8) node[anchor= west] {\large$\cdots$};
\draw[lightgray, thin,  -] (0,0) -- (0,0) node[anchor= east] {\large$\cdots$};
\draw[lightgray, thin,  -] (0,2) -- (0,2) node[anchor= east] {\large$\cdots$};
\draw[lightgray, thin,  -] (0,4) -- (0,4) node[anchor= east] {\large$\cdots$};
\draw[lightgray, thin,  -] (0,6) -- (0,6) node[anchor= east] {\large$\cdots$};
\draw[lightgray, thin,  -] (0,8) -- (0,8) node[anchor= east] {\large$\cdots$};
\draw[lightgray, thin,  ->] (9.8,0.2) -- (8.2,1.8);
\draw[lightgray, thin,  ->] (9.8,2.2) -- (8.2,3.8);
\draw[lightgray, thin,  ->] (9.8,4.2) -- (8.2,5.8);
\draw[lightgray, thin,  ->] (9.8,6.2) -- (8.2,7.8);
\draw[lightgray, thin,  ->]  (8.2,0) -- (9.8,0);
\draw[lightgray, thin,  ->]  (2,9.8) -- (2,8.2);
\draw[lightgray, thin,  ->]  (4,9.8) -- (4,8.2);
\draw[lightgray, thin,  ->]  (6,9.8) -- (6,8.2);
\draw[lightgray, thin,  ->]  (8,9.8) -- (8,8.2);
\draw[black, thin,  -] (0,9.8) -- (0,9.8) node[anchor= south east] {\large$\ddots$};
\draw[black, thin,  -] (2,9.8) -- (2,9.8) node[anchor= south] {\large$\vdots$};
\draw[black, thin,  -] (4,9.8) -- (4,9.8) node[anchor= south] {\large$\vdots$};
\draw[black, thin,  -] (6,9.8) -- (6,9.8) node[anchor= south] {\large$\vdots$};
\draw[black, thin,  -] (8,9.8) -- (8,9.8) node[anchor= south] {\large$\vdots$};
\draw[gray, thin,  -] (10.4,10.4) -- (10.4,10.4) node[anchor= north east] {\large$\iddots$};
\draw[lightgray, thin,  ->] (1.8,8.2) -- (0.2,9.8);
\draw[lightgray, thin,  ->] (3.8,8.2) -- (2.2,9.8);
\draw[lightgray, thin,  ->] (5.8,8.2) -- (4.2,9.8);
\draw[lightgray, thin,  ->] (7.8,8.2) -- (6.2,9.8);
\draw[lightgray, thin,  ->] (9.8,8.2) -- (8.2,9.8);
\draw[lightgray, thin,  ->] (0.2,8) -- (1.8,8);
\draw[lightgray, thin,  ->] (2.2,8) -- (3.8,8);
\draw[lightgray, thin,  ->] (4.2,8) -- (5.8,8);
\draw[lightgray, thin,  ->] (6.2,8) -- (7.8,8);
\draw[gray,fill=gray] (2,0) circle (.01ex) node {\large$\mathbf{1}$};
\draw[gray,fill=gray] (2,2) circle (.01ex) node {\large$\mathbf{2}$};
\draw[gray,fill=gray] (4,2) circle (.01ex) node {\large$\mathbf{2}$};
\draw[gray,fill=gray] (6,2) circle (.01ex) node {\large$\mathbf{2}$};
\draw[gray,fill=gray] (8,2) circle (.01ex) node {\large$\mathbf{2}$};
\draw[gray,fill=gray] (4,0) circle (.01ex) node {\large$\mathbf{1}$};
\draw[gray,fill=gray] (2,4) circle (.01ex) node {\large$\mathbf{3}$};
\draw[gray,fill=gray] (4,4) circle (.01ex) node {\large$\mathbf{3}$};
\draw[gray,fill=gray] (6,4) circle (.01ex) node {\large$\mathbf{3}$};
\draw[gray,fill=gray] (8,4) circle (.01ex) node {\large$\mathbf{3}$};
\draw[gray,fill=gray] (6,0) circle (.01ex) node {\large$\mathbf{1}$};
\draw[gray,fill=gray] (2,6) circle (.01ex) node {\large$\mathbf{4}$};
\draw[gray,fill=gray] (4,6) circle (.01ex) node {\large$\mathbf{4}$};
\draw[gray,fill=gray] (6,6) circle (.01ex) node {\large$\mathbf{4}$};
\draw[gray,fill=gray] (8,6) circle (.01ex) node {\large$\mathbf{4}$};
\draw[gray,fill=gray] (8,0) circle (.01ex) node {\large$\mathbf{1}$};
\draw[gray,fill=gray] (2,8) circle (.01ex) node {\large$\mathbf{5}$};
\draw[gray,fill=gray] (4,8) circle (.01ex) node {\large$\mathbf{5}$};
\draw[gray,fill=gray] (6,8) circle (.01ex) node {\large$\mathbf{5}$};
\draw[gray,fill=gray] (8,8) circle (.01ex) node {\large$\mathbf{5}$};
\end{tikzpicture}
}
\caption{The third case of partially integral weights}\label{fig10}
\end{figure}

\begin{proof}
Mutatis mutandis the proof of Proposition~\ref{prop-s5.6-1}.
\end{proof}

\begin{figure}
\resizebox{10cm}{!}{
\begin{tikzpicture}
\draw[black, thick,  ->] (8,1.8) -- (8,0.2);
\draw[black, thick,  ->] (6,1.8) -- (6,0.2);
\draw[black, thick,  ->] (4,1.8) -- (4,0.2);
\draw[black, thick,  ->] (2,1.8) -- (2,0.2);
\draw[black, thick,  ->] (8,3.8) -- (8,2.2);
\draw[black, thick,  ->] (6,3.8) -- (6,2.2);
\draw[black, thick,  ->] (4,3.8) -- (4,2.2);
\draw[black, thick,  ->] (2,3.8) -- (2,2.2);
\draw[black, thick,  ->] (8,5.8) -- (8,4.2);
\draw[black, thick,  ->] (6,5.8) -- (6,4.2);
\draw[black, thick,  ->] (4,5.8) -- (4,4.2);
\draw[black, thick,  ->] (2,5.8) -- (2,4.2);
\draw[black, thick,  ->] (8,7.8) -- (8,6.2);
\draw[black, thick,  ->] (6,7.8) -- (6,6.2);
\draw[black, thick,  ->] (4,7.8) -- (4,6.2);
\draw[black, thick,  ->] (2,7.8) -- (2,6.2);
\draw[black, thick,  ->] (8,9.8) -- (8,8.2);
\draw[black, thick,  ->] (6,9.8) -- (6,8.2);
\draw[black, thick,  ->] (4,9.8) -- (4,8.2);
\draw[black, thick,  ->] (2,9.8) -- (2,8.2);
\draw[black, thick,  ->] (0.2,2) -- (1.8,2);
\draw[black, thick,  ->] (2.2,2) -- (3.8,2);
\draw[black, thick,  ->] (4.2,2) -- (5.8,2);
\draw[black, thick,  ->] (6.2,2) -- (7.8,2);
\draw[black, thick,  ->] (8.2,2) -- (9.8,2);
\draw[black, thick,  ->] (0.2,4) -- (1.8,4);
\draw[black, thick,  ->] (2.2,4) -- (3.8,4);
\draw[black, thick,  ->] (4.2,4) -- (5.8,4);
\draw[black, thick,  ->] (6.2,4) -- (7.8,4);
\draw[black, thick,  ->] (8.2,4) -- (9.8,4);
\draw[black, thick,  ->] (0.2,6) -- (1.8,6);
\draw[black, thick,  ->] (2.2,6) -- (3.8,6);
\draw[black, thick,  ->] (4.2,6) -- (5.8,6);
\draw[black, thick,  ->] (6.2,6) -- (7.8,6);
\draw[black, thick,  ->] (8.2,6) -- (9.8,6);
\draw[black, thick,  ->] (0.2,8) -- (1.8,8);
\draw[black, thick,  ->] (2.2,8) -- (3.8,8);
\draw[black, thick,  ->] (4.2,8) -- (5.8,8);
\draw[black, thick,  ->] (6.2,8) -- (7.8,8);
\draw[black, thick,  ->] (8.2,8) -- (9.8,8);
\draw[black, thick,  ->] (0.2,10) -- (1.8,10);
\draw[black, thick,  ->] (2.2,10) -- (3.8,10);
\draw[black, thick,  ->] (4.2,10) -- (5.8,10);
\draw[black, thick,  ->] (6.2,10) -- (7.8,10);
\draw[black, thick,  ->] (8.2,10) -- (9.8,10);
\draw[black, thick,  ->] (1.8,0.2) -- (0.2,1.8);
\draw[black, thick,  ->] (3.8,0.2) -- (2.2,1.8);
\draw[black, thick,  ->] (5.8,0.2) -- (4.2,1.8);
\draw[black, thick,  ->] (7.8,0.2) -- (6.2,1.8);
\draw[black, thick,  ->] (9.8,0.2) -- (8.2,1.8);
\draw[black, thick,  ->] (1.8,2.2) -- (0.2,3.8);
\draw[black, thick,  ->] (3.8,2.2) -- (2.2,3.8);
\draw[black, thick,  ->] (5.8,2.2) -- (4.2,3.8);
\draw[black, thick,  ->] (7.8,2.2) -- (6.2,3.8);
\draw[black, thick,  ->] (9.8,2.2) -- (8.2,3.8);
\draw[black, thick,  ->] (1.8,4.2) -- (0.2,5.8);
\draw[black, thick,  ->] (3.8,4.2) -- (2.2,5.8);
\draw[black, thick,  ->] (5.8,4.2) -- (4.2,5.8);
\draw[black, thick,  ->] (7.8,4.2) -- (6.2,5.8);
\draw[black, thick,  ->] (9.8,4.2) -- (8.2,5.8);
\draw[black, thick,  ->] (1.8,6.2) -- (0.2,7.8);
\draw[black, thick,  ->] (3.8,6.2) -- (2.2,7.8);
\draw[black, thick,  ->] (5.8,6.2) -- (4.2,7.8);
\draw[black, thick,  ->] (7.8,6.2) -- (6.2,7.8);
\draw[black, thick,  ->] (9.8,6.2) -- (8.2,7.8);
\draw[black, thick,  ->] (1.7,8.2) to [out=150,in=300]   (0.2,9.7);
\draw[black, thick,  ->] (1.8,8.3) to [out=120,in=330]   (0.3,9.8);
\draw[black, thick,  ->] (3.7,8.2) to [out=150,in=300]   (2.2,9.7);
\draw[black, thick,  ->] (3.8,8.3) to [out=120,in=330]   (2.3,9.8);
\draw[black, thick,  ->] (5.7,8.2) to [out=150,in=300]   (4.2,9.7);
\draw[black, thick,  ->] (5.8,8.3) to [out=120,in=330]   (4.3,9.8);
\draw[black, thick,  ->] (7.7,8.2) to [out=150,in=300]   (6.2,9.7);
\draw[black, thick,  ->] (7.8,8.3) to [out=120,in=330]   (6.3,9.8);
\draw[black, thick,  ->] (9.7,8.2) to [out=150,in=300]   (8.2,9.7);
\draw[black, thick,  ->] (9.8,8.3) to [out=120,in=330]   (8.3,9.8);
\draw[black, thin,  -] (10,2) -- (10,2) node[anchor= west] {\large$\dots$};
\draw[black, thin,  -] (10,4) -- (10,4) node[anchor= west] {\large$\dots$};
\draw[black, thin,  -] (10,6) -- (10,6) node[anchor= west] {\large$\dots$};
\draw[black, thin,  -] (10,8) -- (10,8) node[anchor= west] {\large$\dots$};
\draw[black, thin,  -] (10,10) -- (10,10) node[anchor= west] {\large$\dots$};
\draw[black, thin,  -] (0,2) -- (0,2) node[anchor= east] {\large$\dots$};
\draw[black, thin,  -] (0,4) -- (0,4) node[anchor= east] {\large$\dots$};
\draw[black, thin,  -] (0,6) -- (0,6) node[anchor= east] {\large$\dots$};
\draw[black, thin,  -] (0,8) -- (0,8) node[anchor= east] {\large$\dots$};
\draw[black, thin,  -] (0,10) -- (0,10) node[anchor= east] {\large$\dots$};
\draw[black, thin,  -] (2,0) -- (2,0) node[anchor= north] {\large$\vdots$};
\draw[black, thin,  -] (4,0) -- (4,0) node[anchor= north] {\large$\vdots$};
\draw[black, thin,  -] (6,0) -- (6,0) node[anchor= north] {\large$\vdots$};
\draw[black, thin,  -] (8,0) -- (8,0) node[anchor= north] {\large$\vdots$};
\draw[black, thin,  -] (10,0) -- (10,0) node[anchor= north west] {\large$\ddots$};
\draw[black, thin,  -] (0,0) -- (0,0) node[anchor= north east] {\large$\iddots$};
\draw[gray,fill=gray] (2,10) circle (.9ex);
\draw[gray,fill=gray] (2,2) circle (.9ex);
\draw[gray,fill=gray] (4,2) circle (.9ex);
\draw[gray,fill=gray] (6,2) circle (.9ex);
\draw[gray,fill=gray] (8,2) circle (.9ex);
\draw[gray,fill=gray] (4,10) circle (.9ex);
\draw[gray,fill=gray] (2,4) circle (.9ex);
\draw[gray,fill=gray] (4,4) circle (.9ex);
\draw[gray,fill=gray] (6,4) circle (.9ex);
\draw[gray,fill=gray] (8,4) circle (.9ex);
\draw[gray,fill=gray] (6,10) circle (.9ex);
\draw[gray,fill=gray] (2,6) circle (.9ex);
\draw[gray,fill=gray] (4,6) circle (.9ex);
\draw[gray,fill=gray] (6,6) circle (.9ex);
\draw[gray,fill=gray] (8,6) circle (.9ex);
\draw[gray,fill=gray] (8,10) circle (.9ex);
\draw[gray,fill=gray] (2,8) circle (.9ex);
\draw[gray,fill=gray] (4,8) circle (.9ex);
\draw[gray,fill=gray] (6,8) circle (.9ex);
\draw[gray,fill=gray] (8,8) circle (.9ex);
\end{tikzpicture}
\qquad\qquad
\begin{tikzpicture}
\draw[lightgray, thin,  ->] (8,1.8) -- (8,0.2);
\draw[lightgray, thin,  ->] (6,1.8) -- (6,0.2);
\draw[lightgray, thin,  ->] (4,1.8) -- (4,0.2);
\draw[lightgray, thin,  ->] (2,1.8) -- (2,0.2);
\draw[lightgray, thin,  ->] (8,3.8) -- (8,2.2);
\draw[lightgray, thin,  ->] (6,3.8) -- (6,2.2);
\draw[lightgray, thin,  ->] (4,3.8) -- (4,2.2);
\draw[lightgray, thin,  ->] (2,3.8) -- (2,2.2);
\draw[lightgray, thin,  ->] (8,5.8) -- (8,4.2);
\draw[lightgray, thin,  ->] (6,5.8) -- (6,4.2);
\draw[lightgray, thin,  ->] (4,5.8) -- (4,4.2);
\draw[lightgray, thin,  ->] (2,5.8) -- (2,4.2);
\draw[lightgray, thin,  ->] (8,7.8) -- (8,6.2);
\draw[lightgray, thin,  ->] (6,7.8) -- (6,6.2);
\draw[lightgray, thin,  ->] (4,7.8) -- (4,6.2);
\draw[lightgray, thin,  ->] (2,7.8) -- (2,6.2);
\draw[lightgray, thin,  ->] (8,9.8) -- (8,8.2);
\draw[lightgray, thin,  ->] (6,9.8) -- (6,8.2);
\draw[lightgray, thin,  ->] (4,9.8) -- (4,8.2);
\draw[lightgray, thin,  ->] (2,9.8) -- (2,8.2);
\draw[lightgray, thin,  ->] (0.2,2) -- (1.8,2);
\draw[lightgray, thin,  ->] (2.2,2) -- (3.8,2);
\draw[lightgray, thin,  ->] (4.2,2) -- (5.8,2);
\draw[lightgray, thin,  ->] (6.2,2) -- (7.8,2);
\draw[lightgray, thin,  ->] (8.2,2) -- (9.8,2);
\draw[lightgray, thin,  ->] (0.2,4) -- (1.8,4);
\draw[lightgray, thin,  ->] (2.2,4) -- (3.8,4);
\draw[lightgray, thin,  ->] (4.2,4) -- (5.8,4);
\draw[lightgray, thin,  ->] (6.2,4) -- (7.8,4);
\draw[lightgray, thin,  ->] (8.2,4) -- (9.8,4);
\draw[lightgray, thin,  ->] (0.2,6) -- (1.8,6);
\draw[lightgray, thin,  ->] (2.2,6) -- (3.8,6);
\draw[lightgray, thin,  ->] (4.2,6) -- (5.8,6);
\draw[lightgray, thin,  ->] (6.2,6) -- (7.8,6);
\draw[lightgray, thin,  ->] (8.2,6) -- (9.8,6);
\draw[lightgray, thin,  ->] (0.2,8) -- (1.8,8);
\draw[lightgray, thin,  ->] (2.2,8) -- (3.8,8);
\draw[lightgray, thin,  ->] (4.2,8) -- (5.8,8);
\draw[lightgray, thin,  ->] (6.2,8) -- (7.8,8);
\draw[lightgray, thin,  ->] (8.2,8) -- (9.8,8);
\draw[lightgray, thin,  ->] (0.2,10) -- (1.8,10);
\draw[lightgray, thin,  ->] (2.2,10) -- (3.8,10);
\draw[lightgray, thin,  ->] (4.2,10) -- (5.8,10);
\draw[lightgray, thin,  ->] (6.2,10) -- (7.8,10);
\draw[lightgray, thin,  ->] (8.2,10) -- (9.8,10);
\draw[lightgray, thin,  ->] (1.8,0.2) -- (0.2,1.8);
\draw[lightgray, thin,  ->] (3.8,0.2) -- (2.2,1.8);
\draw[lightgray, thin,  ->] (5.8,0.2) -- (4.2,1.8);
\draw[lightgray, thin,  ->] (7.8,0.2) -- (6.2,1.8);
\draw[lightgray, thin,  ->] (9.8,0.2) -- (8.2,1.8);
\draw[lightgray, thin,  ->] (1.8,2.2) -- (0.2,3.8);
\draw[lightgray, thin,  ->] (3.8,2.2) -- (2.2,3.8);
\draw[lightgray, thin,  ->] (5.8,2.2) -- (4.2,3.8);
\draw[lightgray, thin,  ->] (7.8,2.2) -- (6.2,3.8);
\draw[lightgray, thin,  ->] (9.8,2.2) -- (8.2,3.8);
\draw[lightgray, thin,  ->] (1.8,4.2) -- (0.2,5.8);
\draw[lightgray, thin,  ->] (3.8,4.2) -- (2.2,5.8);
\draw[lightgray, thin,  ->] (5.8,4.2) -- (4.2,5.8);
\draw[lightgray, thin,  ->] (7.8,4.2) -- (6.2,5.8);
\draw[lightgray, thin,  ->] (9.8,4.2) -- (8.2,5.8);
\draw[lightgray, thin,  ->] (1.8,6.2) -- (0.2,7.8);
\draw[lightgray, thin,  ->] (3.8,6.2) -- (2.2,7.8);
\draw[lightgray, thin,  ->] (5.8,6.2) -- (4.2,7.8);
\draw[lightgray, thin,  ->] (7.8,6.2) -- (6.2,7.8);
\draw[lightgray, thin,  ->] (9.8,6.2) -- (8.2,7.8);
\draw[lightgray, thin,  ->] (1.7,8.2) to [out=150,in=300]   (0.2,9.7);
\draw[lightgray, thin,  ->] (1.8,8.3) to [out=120,in=330]   (0.3,9.8);
\draw[lightgray, thin,  ->] (3.7,8.2) to [out=150,in=300]   (2.2,9.7);
\draw[lightgray, thin,  ->] (3.8,8.3) to [out=120,in=330]   (2.3,9.8);
\draw[lightgray, thin,  ->] (5.7,8.2) to [out=150,in=300]   (4.2,9.7);
\draw[lightgray, thin,  ->] (5.8,8.3) to [out=120,in=330]   (4.3,9.8);
\draw[lightgray, thin,  ->] (7.7,8.2) to [out=150,in=300]   (6.2,9.7);
\draw[lightgray, thin,  ->] (7.8,8.3) to [out=120,in=330]   (6.3,9.8);
\draw[lightgray, thin,  ->] (9.7,8.2) to [out=150,in=300]   (8.2,9.7);
\draw[lightgray, thin,  ->] (9.8,8.3) to [out=120,in=330]   (8.3,9.8);
\draw[black, thin,  -] (10,2) -- (10,2) node[anchor= west] {\large$\dots$};
\draw[black, thin,  -] (10,4) -- (10,4) node[anchor= west] {\large$\dots$};
\draw[black, thin,  -] (10,6) -- (10,6) node[anchor= west] {\large$\dots$};
\draw[black, thin,  -] (10,8) -- (10,8) node[anchor= west] {\large$\dots$};
\draw[black, thin,  -] (10,10) -- (10,10) node[anchor= west] {\large$\dots$};
\draw[black, thin,  -] (0,2) -- (0,2) node[anchor= east] {\large$\dots$};
\draw[black, thin,  -] (0,4) -- (0,4) node[anchor= east] {\large$\dots$};
\draw[black, thin,  -] (0,6) -- (0,6) node[anchor= east] {\large$\dots$};
\draw[black, thin,  -] (0,8) -- (0,8) node[anchor= east] {\large$\dots$};
\draw[black, thin,  -] (0,10) -- (0,10) node[anchor= east] {\large$\dots$};
\draw[black, thin,  -] (2,0) -- (2,0) node[anchor= north] {\large$\vdots$};
\draw[black, thin,  -] (4,0) -- (4,0) node[anchor= north] {\large$\vdots$};
\draw[black, thin,  -] (6,0) -- (6,0) node[anchor= north] {\large$\vdots$};
\draw[black, thin,  -] (8,0) -- (8,0) node[anchor= north] {\large$\vdots$};
\draw[black, thin,  -] (10,0) -- (10,0) node[anchor= north west] {\large$\ddots$};
\draw[black, thin,  -] (0,0) -- (0,0) node[anchor= north east] {\large$\iddots$};
\draw[gray,fill=gray] (2,10) circle (.01ex) node {\large$\mathbf{1}$};
\draw[gray,fill=gray] (2,2) circle (.01ex) node {\large$\mathbf{2}$};
\draw[gray,fill=gray] (4,2) circle (.01ex) node {\large$\mathbf{2}$};
\draw[gray,fill=gray] (6,2) circle (.01ex) node {\large$\mathbf{2}$};
\draw[gray,fill=gray] (8,2) circle (.01ex) node {\large$\mathbf{2}$};
\draw[gray,fill=gray] (4,10) circle (.01ex) node {\large$\mathbf{1}$};
\draw[gray,fill=gray] (2,4) circle (.01ex) node {\large$\mathbf{2}$};
\draw[gray,fill=gray] (4,4) circle (.01ex) node {\large$\mathbf{2}$};
\draw[gray,fill=gray] (6,4) circle (.01ex) node {\large$\mathbf{2}$};
\draw[gray,fill=gray] (8,4) circle (.01ex) node {\large$\mathbf{2}$};
\draw[gray,fill=gray] (6,10) circle (.01ex) node {\large$\mathbf{1}$};
\draw[gray,fill=gray] (2,6) circle (.01ex) node {\large$\mathbf{2}$};
\draw[gray,fill=gray] (4,6) circle (.01ex) node {\large$\mathbf{2}$};
\draw[gray,fill=gray] (6,6) circle (.01ex) node {\large$\mathbf{2}$};
\draw[gray,fill=gray] (8,6) circle (.01ex) node {\large$\mathbf{2}$};
\draw[gray,fill=gray] (8,10) circle (.01ex) node {\large$\mathbf{1}$};
\draw[gray,fill=gray] (2,8) circle (.01ex) node {\large$\mathbf{2}$};
\draw[gray,fill=gray] (4,8) circle (.01ex) node {\large$\mathbf{2}$};
\draw[gray,fill=gray] (6,8) circle (.01ex) node {\large$\mathbf{2}$};
\draw[gray,fill=gray] (8,8) circle (.01ex) node {\large$\mathbf{2}$};

\end{tikzpicture}
}
\caption{The forth case of partially integral weights}\label{fig11}
\end{figure}

Denote by $\mathcal{M}_4^a$
the additive closure of all $P(\lambda)$, where
$\lambda\in X_4$. 

\begin{proposition}\label{prop-s5.6-4}
\begin{enumerate}[$($a$)$]
\item\label{prop-s5.6-4.1} 
For $\lambda\in X_4$ such that $\lambda_2=-1$, 
$\mathrm{add}(\mathscr{C}\cdot L(\lambda))$ coincides with
$\mathcal{M}_4^a$ and is a simple transitive $\mathscr{C}$-module 
category whose graph
and positive eigenvector are depicted in Figure~\ref{fig11}.
\item\label{prop-s5.6-4.2}
For $\lambda\in X_4$ such that $\lambda_2\neq -1$,
$\mathrm{add}(\mathscr{C}\cdot L(\lambda))$
contains $\mathcal{M}_4^a$ as a $\mathscr{C}$-module 
subcategory, moreover, the quotient of $\mathrm{add}(\mathscr{C}\cdot L(\lambda))$
by the ideal $\mathcal{I}$ generated by $\mathcal{M}_4^a$ is a simple 
transitive $\mathscr{C}$-module category that is 
equivalent to $\mathcal{M}_3^a$. 
\item\label{prop-s5.6-4.3}
If $\lambda_2\neq -1$, then the indecomposable
objects in  $\mathrm{add}(\mathscr{C}\cdot L(\lambda))/\mathcal{I}$  
are given by the images of 
$L(\mu)$, where $\mu\in X_4$ is such that $\mu_2\neq -1$.
\end{enumerate}
\end{proposition}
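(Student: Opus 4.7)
The plan is to follow the proof of Proposition~\ref{prop-s5.6-2} verbatim, exchanging the roles of the two coordinates and replacing the simple reflection $s$ (together with its wall, associated integral Weyl group generator, and twisting functor) by $r$. The dihedral symmetry of the Dynkin diagram of $\mathfrak{sl}_3$ exchanging $\alpha$ and $\beta$ ensures that every essential ingredient carries over, and the different eigenvector patterns in Figures~\ref{fig9} and \ref{fig11} arise only from the asymmetry of $\mathrm{supp}(L((1,0)))$ relative to this involution. First, I observe that the integral Weyl group of $(a,0)+\Lambda$ is $\{e,r\}$, hence a weight $\lambda\in X_4$ is singular if and only if $\lambda_2=-1$; for such $\lambda$ one has $L(\lambda)=P(\lambda)=I(\lambda)$, and these are exactly the anti-dominant weights in $(a,0)+\Lambda$. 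Because projective functors preserve both projectivity and injectivity, this gives the inclusion $\mathrm{add}(\mathscr{C}\cdot L(\lambda))\subseteq\mathcal{M}_4^a$ for any singular $\lambda\in X_4$, which is part of Claim~\eqref{prop-s5.6-4.1}.

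Next, for a regular $\lambda\in X_4$ we have a short exact sequence
\[
0\to\Delta(r\cdot\lambda)\to P(\lambda)\to\Delta(\lambda)\to 0,
\]
so $L((1,0))\otimes_\mathbb{C}P(\lambda)$ has a Verma flag with six subquotients whose highest weights are the elements of $\mathrm{supp}(L((1,0)))=\{(1,0),(0,-1),(-1,1)\}$ added to both $\lambda$ and $r\cdot\lambda$. I would case-split on $\lambda_2\le -3$, $\lambda_2=-2$, $\lambda_2=-1$ to determine which of the corresponding indecomposable projective summands in $X_4$ appear, and with what multiplicities. The case $\lambda_2=-2$ again produces a doubled summand because one of the translated weights coincides with an $r$-dot-translate (the $X_4$-analogue of the coincidence $\lambda+(1,0)=s\cdot\lambda+(-1,1)$ used in Case~2 of Proposition~\ref{prop-s5.6-2}); the case $\lambda_2=-1$ produces only two summands because one of the three candidate weights falls out of $X_4$. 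Assembling these local pictures yields exactly the graph on the left of Figure~\ref{fig11}, and the Verma length $\mathtt{v}(P(\mu))$, which scales by $3$ under tensoring with $L((1,0))$, furnishes the positive eigenvector on the right.

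Strong connectivity of this graph gives transitivity, and the simple-transitivity argument of Proposition~\ref{prop-s5.6-2} transfers unchanged: any non-trivial $\mathscr{C}$-stable ideal must contain a radical endomorphism of some regular $P(\lambda)$; translating to the $r$-wall produces a non-zero endomorphism in the semi-simple block sitting on that wall, hence the identity on some singular object, whereupon transitivity forces the ideal to be everything. This completes Claim~\eqref{prop-s5.6-4.1}. For Claims~\eqref{prop-s5.6-4.2} and \eqref{prop-s5.6-4.3}, for a regular $\lambda\in X_4$ the adjoint of the twisting functor $\top_r$ induces an equivalence of $\mathscr{C}$-module categories between $\mathrm{add}(\mathscr{C}\cdot L(\lambda))$ and the additive closure of all indecomposable projectives in $(a,0)+\Lambda$, inside of which $\mathcal{M}_4^a$ is the $\mathscr{C}$-stable subcategory spanned by the projective-injective modules.

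The quotient by the ideal generated by $\mathcal{M}_4^a$ replaces each remaining $P(\mu)$, where $\mu\in X_3$, by $P(\mu)/K$ with $K$ the trace of $\mathcal{M}_4^a$ in $P(\mu)$. Since each such $P(\mu)/K$ has a one-dimensional endomorphism algebra, the quotient category is semi-simple, and the assignment sending the class of $P(\mu)$ to its simple top $L(\mu)$ provides the equivalence with $\mathcal{M}_3^a$ of Proposition~\ref{prop-s5.6-3}, proving both remaining claims. The main obstacle is the case analysis in the second step: it is mechanical but requires careful bookkeeping of the $r$-dot-orbit coincidences between $\lambda+\nu$ and $r\cdot\lambda+\nu'$ (for $\nu,\nu'\in\mathrm{supp}(L((1,0)))$) to make sure that the derived graph matches Figure~\ref{fig11} precisely, including the doubled edges appearing on the $\lambda_2=-2$ row and the singleton vertices on the $\lambda_2=-1$ column.
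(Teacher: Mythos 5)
The paper's own proof of Proposition~\ref{prop-s5.6-4} consists of the single line ``Mutatis mutandis the proof of Proposition~\ref{prop-s5.6-2},'' and your proposal carries out exactly that adaptation: you swap $s$ for $r$, the first coordinate for the second, and $\top_s$ for (the adjoint of) $\top_r$, redo the three-way case split on $\lambda_2$, locate the $r$-dot coincidence $\lambda+(-1,1)=r\cdot\lambda+(0,-1)$ on the $\lambda_2=-2$ row, and run the same simple-transitivity and twisting-functor arguments. This is the correct reading of the paper's intent, and your bookkeeping of which translated weights stay inside $X_4$ and which coincide under the $r$-dot action agrees with Figure~\ref{fig11}. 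One small imprecision: you attribute the difference between Figures~\ref{fig9} and \ref{fig11} to ``different eigenvector patterns,'' but the eigenvector values (a line of $1$'s on the singular locus, $2$'s elsewhere) are structurally identical up to the coordinate swap; what genuinely differs is the shape of the graph, in particular that the doubled edges run horizontally toward the singular column in Figure~\ref{fig9} but diagonally up-left toward the singular row in Figure~\ref{fig11}, precisely because the coincidence in the $X_4$ case attaches to $\lambda+(-1,1)$ rather than $\lambda+(1,0)$.
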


\begin{proof}
Mutatis mutandis the proof of Proposition~\ref{prop-s5.6-2}. 
\end{proof}

For $a\in\mathbb{C}\setminus\mathbb{Z}$, consider
the set $(a,-a)+\Lambda$. We split this set into a
disjoint union $(a,-a)+\Lambda= X_5\cup X_6$, where
$X_5$ consists of all $\lambda\in (a,-a)+\Lambda$ such that
$\lambda_1+\lambda_2\geq 0$ and $X_6$ is the complement of 
$X_5$ inside $(a,-a)+\Lambda$. Denote by $\mathcal{M}_5^a$
the additive closure of all $L(\lambda)$, where
$\lambda\in X_5$.

\begin{proposition}\label{prop-s5.6-5}
For any $\lambda\in X_5$, the $\mathscr{C}$-module category
$\mathrm{add}(\mathscr{C}\cdot L(\lambda))$ coincides
with $\mathcal{M}_5^a$, moreover, the latter is a simple
transitive $\mathscr{C}$-module category whose graph
and positive eigenvector are depicted in Figure~\ref{fig12}.
\end{proposition}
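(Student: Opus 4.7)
The plan is to follow the template of Proposition \ref{prop-s5.6-1}, with the integral Weyl group $\{e, s\}$ there replaced by $\{e, w_0\}$, the integral Weyl group of every weight in $(a,-a)+\Lambda$, where $w_0$ is the reflection through the highest root $\alpha+\beta$. First I would record that $w_0$ acts by $w_0(\mu_1,\mu_2)=(-\mu_2,-\mu_1)$, so that $w_0\cdot\lambda=(-\lambda_2-2,-\lambda_1-2)$ and the $w_0$-wall inside the coset is the line $\lambda_1+\lambda_2=-2$. In particular every $\lambda\in X_5$ is regular with $w_0\cdot\lambda$ antidominant, and hence $L(w_0\cdot\lambda)=\Delta(w_0\cdot\lambda)$.

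For $\lambda\in X_5$, the plan is to tensor the short exact sequence $0\to L(w_0\cdot\lambda)\to\Delta(\lambda)\to L(\lambda)\to 0$ with $L((1,0))$ and split by central character so as to compute $L((1,0))\otimes_{\mathbb{C}} L(\lambda)$ block by block. For this to work, the three shifts $\lambda+(1,0)$, $\lambda+(0,-1)$, $\lambda+(-1,1)$ must lie in pairwise distinct $\{e,w_0\}$-dot-orbits; this reduces to a direct check, since $w_0$ swaps $(1,0)$ and $(0,-1)$ and fixes $(-1,1)$, and matching any two of the three shifts via $w_0\cdot$ forces the identity $\lambda_1+\lambda_2=-2$, contradicting $\lambda\in X_5$. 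In each of the three central character blocks, the quotient will then be $L(\lambda+\gamma)$ whenever $\lambda+\gamma$ is regular, exactly mirroring the block-level analysis in Proposition~\ref{prop-s5.6-1}.

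The hard part will be the careful boundary case analysis: for $\lambda$ near the lower boundary of $X_5$, one of the three shifts may exit $X_5$, and the corresponding block must be shown to contribute in a way consistent with the claim $\mathrm{add}(\mathscr{C}\cdot L(\lambda))=\mathcal{M}_5^a$. This is parallel to (but slightly subtler than) the boundary case in Proposition~\ref{prop-s5.6-1}, where the vanishing of the offending contribution was forced by a singular neighbor on the $s$-wall. Once the local graph data is assembled, it should reproduce Figure~\ref{fig12}, whose strong connectedness yields transitivity; simplicity then follows from the semi-simplicity of $\mathcal{M}_5^a$ together with the triviality of the endomorphism algebras of its simple objects, exactly as in the earlier proposition. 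The positive eigenvector assertion finally reduces to a direct numerical check at each vertex of the graph.
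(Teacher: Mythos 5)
Your plan is the right one: the paper itself disposes of this proposition with a single ``mutatis mutandis'' reference back to Proposition~\ref{prop-s5.6-1}, and the template you outline (integral Weyl group $\{e,w_0\}$, short exact sequence $0\to\Delta(w_0\cdot\lambda)\to\Delta(\lambda)\to L(\lambda)\to 0$, splitting $L((1,0))\otimes-$ by central character, checking that the three shifted weights land in distinct dot-orbits) is exactly the intended adaptation. Your computation of $w_0\cdot\lambda=(-\lambda_2-2,-\lambda_1-2)$ and of the $w_0$-wall as the line $\lambda_1+\lambda_2=-2$ is correct.

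The one place where you wave your hands is precisely where the adaptation is not automatic. In Proposition~\ref{prop-s5.6-1} the boundary of $X_1$ is $\lambda_1=0$, which sits exactly one unit from the $s$-wall $\lambda_1=-1$; the offending shift $\lambda+(-1,1)$ therefore lands directly on the wall, and the corresponding block of $L((1,0))\otimes L(\lambda)$ vanishes because $L(\lambda)$, being dominant, is killed by translation to the wall. For the $w_0$-coset, the wall is $\lambda_1+\lambda_2=-2$, so the analogue only works if the boundary of $X_5$ is at $\lambda_1+\lambda_2=-1$. If one takes the paper's displayed inequality $\lambda_1+\lambda_2\geq 0$ at face value, then for a boundary weight $\lambda$ (with $\lambda_1+\lambda_2=0$) the offending shift $\lambda+(0,-1)$ lands at $\lambda_1+\lambda_2=-1$, which is \emph{regular}, not on the wall. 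In that case the block-$B$ component of $L((1,0))\otimes L(\lambda)$ is the nonzero simple $L(\lambda+(0,-1))$, whose highest weight lies outside $X_5$, and the equality $\mathrm{add}(\mathscr{C}\cdot L(\lambda))=\mathcal{M}_5^a$ would simply be false. So there is a genuine gap hiding under your ``careful boundary case analysis'' caveat: you would discover, upon actually carrying it out, that the vanishing mechanism you invoke does not apply with $X_5$ as literally written. The structurally correct choice, matching the pattern of $X_1$ ($\lambda_1\geq 0$ versus wall $\lambda_1=-1$) and $X_3$ ($\lambda_2\geq 0$ versus wall $\lambda_2=-1$), is $X_5=\{\lambda: \lambda_1+\lambda_2\geq -1\}$, the strictly dominant regular part of the coset. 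With that reading, your argument goes through verbatim: the offending shift $\lambda+(0,-1)$ for boundary $\lambda$ (now $\lambda_1+\lambda_2=-1$) lands on the wall $\lambda_1+\lambda_2=-2$, the corresponding block contribution is killed, and the resulting local graph matches Figure~\ref{fig12}, whose boundary vertices have exactly two outgoing edges. You should either correct the inequality or at least record explicitly that the vanishing at the boundary requires the boundary to be one unit from the $w_0$-wall.
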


\begin{figure}
\resizebox{10cm}{!}{
\begin{tikzpicture}
\draw[black, thick,  -] (8,0) -- (8,0) node[anchor= north west] {\large$\ddots$};
\draw[black, thick,  -] (10,0) -- (10,0) node[anchor= north west] {\large$\ddots$};
\draw[black, thick,  -] (10,2) -- (10,2) node[anchor= west] {\large$\cdots$};
\draw[black, thick,  -] (10,4) -- (10,4) node[anchor= west] {\large$\cdots$};
\draw[black, thick,  -] (10,6) -- (10,6) node[anchor= west] {\large$\cdots$};
\draw[black, thick,  -] (10,8) -- (10,8) node[anchor= west] {\large$\cdots$};
\draw[black, thick,  -] (10,10) -- (10,10) node[anchor= south west] {\large$\iddots$};
\draw[black, thick,  -] (8,10) -- (8,10) node[anchor= south] {\large$\vdots$};
\draw[black, thick,  -] (6,10) -- (6,10) node[anchor= south] {\large$\vdots$};
\draw[black, thick,  -] (4,10) -- (4,10) node[anchor= south] {\large$\vdots$};
\draw[black, thick,  -] (2,10) -- (2,10) node[anchor= south] {\large$\vdots$};
\draw[black, thick,  -] (0,10) -- (0,10) node[anchor= south east] {\large$\ddots$};
\draw[black, thick,  -] (0,8) -- (0,8) node[anchor= south east] {\large$\ddots$};
\draw[black, thick,  ->] (8.2,0) -- (9.8,0);
\draw[black, thick,  ->] (6.2,2) -- (7.8,2);
\draw[black, thick,  ->] (8.2,2) -- (9.8,2);
\draw[black, thick,  ->] (6.2,4) -- (7.8,4);
\draw[black, thick,  ->] (8.2,4) -- (9.8,4);
\draw[black, thick,  ->] (4.2,4) -- (5.8,4);
\draw[black, thick,  ->] (6.2,6) -- (7.8,6);
\draw[black, thick,  ->] (8.2,6) -- (9.8,6);
\draw[black, thick,  ->] (4.2,6) -- (5.8,6);
\draw[black, thick,  ->] (2.2,6) -- (3.8,6);
\draw[black, thick,  ->] (6.2,8) -- (7.8,8);
\draw[black, thick,  ->] (8.2,8) -- (9.8,8);
\draw[black, thick,  ->] (4.2,8) -- (5.8,8);
\draw[black, thick,  ->] (2.2,8) -- (3.8,8);
\draw[black, thick,  ->] (0.2,8) -- (1.8,8);
\draw[black, thick,  ->] (8,1.8) -- (8,0.2);
\draw[black, thick,  ->] (8,3.8) -- (8,2.2);
\draw[black, thick,  ->] (6,3.8) -- (6,2.2);
\draw[black, thick,  ->] (8,5.8) -- (8,4.2);
\draw[black, thick,  ->] (6,5.8) -- (6,4.2);
\draw[black, thick,  ->] (4,5.8) -- (4,4.2);
\draw[black, thick,  ->] (8,7.8) -- (8,6.2);
\draw[black, thick,  ->] (6,7.8) -- (6,6.2);
\draw[black, thick,  ->] (4,7.8) -- (4,6.2);
\draw[black, thick,  ->] (2,7.8) -- (2,6.2);
\draw[black, thick,  ->] (8,9.8) -- (8,8.2);
\draw[black, thick,  ->] (6,9.8) -- (6,8.2);
\draw[black, thick,  ->] (4,9.8) -- (4,8.2);
\draw[black, thick,  ->] (2,9.8) -- (2,8.2);
\draw[black, thick,  ->] (0,9.8) -- (0,8.2);
\draw[black, thick,  ->] (1.8,6.2) -- (0.2,7.8);
\draw[black, thick,  ->] (3.8,6.2) -- (2.2,7.8);
\draw[black, thick,  ->] (5.8,6.2) -- (4.2,7.8);
\draw[black, thick,  ->] (7.8,6.2) -- (6.2,7.8);
\draw[black, thick,  ->] (9.8,6.2) -- (8.2,7.8);
\draw[black, thick,  ->] (1.8,8.2) -- (0.2,9.8);
\draw[black, thick,  ->] (3.8,8.2) -- (2.2,9.8);
\draw[black, thick,  ->] (5.8,8.2) -- (4.2,9.8);
\draw[black, thick,  ->] (7.8,8.2) -- (6.2,9.8);
\draw[black, thick,  ->] (9.8,8.2) -- (8.2,9.8);
\draw[black, thick,  ->] (3.8,4.2) -- (2.2,5.8);
\draw[black, thick,  ->] (5.8,4.2) -- (4.2,5.8);
\draw[black, thick,  ->] (7.8,4.2) -- (6.2,5.8);
\draw[black, thick,  ->] (9.8,4.2) -- (8.2,5.8);
\draw[black, thick,  ->] (5.8,2.2) -- (4.2,3.8);
\draw[black, thick,  ->] (7.8,2.2) -- (6.2,3.8);
\draw[black, thick,  ->] (9.8,2.2) -- (8.2,3.8);
\draw[black, thick,  ->] (7.8,0.2) -- (6.2,1.8);
\draw[black, thick,  ->] (9.8,0.2) -- (8.2,1.8);
\draw[gray,fill=gray] (6,2) circle (.9ex);
\draw[gray,fill=gray] (4,4) circle (.9ex);
\draw[gray,fill=gray] (2,6) circle (.9ex);
\draw[gray,fill=gray] (8,2) circle (.9ex);
\draw[gray,fill=gray] (6,4) circle (.9ex);
\draw[gray,fill=gray] (4,6) circle (.9ex);
\draw[gray,fill=gray] (2,8) circle (.9ex);
\draw[gray,fill=gray] (8,4) circle (.9ex);
\draw[gray,fill=gray] (6,6) circle (.9ex);
\draw[gray,fill=gray] (4,8) circle (.9ex);
\draw[gray,fill=gray] (8,6) circle (.9ex);
\draw[gray,fill=gray] (6,8) circle (.9ex);
\draw[gray,fill=gray] (8,8) circle (.9ex);
\end{tikzpicture}
\qquad\qquad
\begin{tikzpicture}
\draw[lightgray, thin,  -] (8,0) -- (8,0) node[anchor= north west] {\large$\ddots$};
\draw[lightgray, thin,  -] (10,0) -- (10,0) node[anchor= north west] {\large$\ddots$};
\draw[lightgray, thin,  -] (10,2) -- (10,2) node[anchor= west] {\large$\cdots$};
\draw[lightgray, thin,  -] (10,4) -- (10,4) node[anchor= west] {\large$\cdots$};
\draw[lightgray, thin,  -] (10,6) -- (10,6) node[anchor= west] {\large$\cdots$};
\draw[lightgray, thin,  -] (10,8) -- (10,8) node[anchor= west] {\large$\cdots$};
\draw[lightgray, thin,  -] (10,10) -- (10,10) node[anchor= south west] {\large$\iddots$};
\draw[lightgray, thin,  -] (8,10) -- (8,10) node[anchor= south] {\large$\vdots$};
\draw[lightgray, thin,  -] (6,10) -- (6,10) node[anchor= south] {\large$\vdots$};
\draw[lightgray, thin,  -] (4,10) -- (4,10) node[anchor= south] {\large$\vdots$};
\draw[lightgray, thin,  -] (2,10) -- (2,10) node[anchor= south] {\large$\vdots$};
\draw[lightgray, thin,  -] (0,10) -- (0,10) node[anchor= south east] {\large$\ddots$};
\draw[lightgray, thin,  -] (0,8) -- (0,8) node[anchor= south east] {\large$\ddots$};
\draw[lightgray, thin,  ->] (8.2,0) -- (9.8,0);
\draw[lightgray, thin,  ->] (6.2,2) -- (7.8,2);
\draw[lightgray, thin,  ->] (8.2,2) -- (9.8,2);
\draw[lightgray, thin,  ->] (6.2,4) -- (7.8,4);
\draw[lightgray, thin,  ->] (8.2,4) -- (9.8,4);
\draw[lightgray, thin,  ->] (4.2,4) -- (5.8,4);
\draw[lightgray, thin,  ->] (6.2,6) -- (7.8,6);
\draw[lightgray, thin,  ->] (8.2,6) -- (9.8,6);
\draw[lightgray, thin,  ->] (4.2,6) -- (5.8,6);
\draw[lightgray, thin,  ->] (2.2,6) -- (3.8,6);
\draw[lightgray, thin,  ->] (6.2,8) -- (7.8,8);
\draw[lightgray, thin,  ->] (8.2,8) -- (9.8,8);
\draw[lightgray, thin,  ->] (4.2,8) -- (5.8,8);
\draw[lightgray, thin,  ->] (2.2,8) -- (3.8,8);
\draw[lightgray, thin,  ->] (0.2,8) -- (1.8,8);
\draw[lightgray, thin,  ->] (8,1.8) -- (8,0.2);
\draw[lightgray, thin,  ->] (8,3.8) -- (8,2.2);
\draw[lightgray, thin,  ->] (6,3.8) -- (6,2.2);
\draw[lightgray, thin,  ->] (8,5.8) -- (8,4.2);
\draw[lightgray, thin,  ->] (6,5.8) -- (6,4.2);
\draw[lightgray, thin,  ->] (4,5.8) -- (4,4.2);
\draw[lightgray, thin,  ->] (8,7.8) -- (8,6.2);
\draw[lightgray, thin,  ->] (6,7.8) -- (6,6.2);
\draw[lightgray, thin,  ->] (4,7.8) -- (4,6.2);
\draw[lightgray, thin,  ->] (2,7.8) -- (2,6.2);
\draw[lightgray, thin,  ->] (8,9.8) -- (8,8.2);
\draw[lightgray, thin,  ->] (6,9.8) -- (6,8.2);
\draw[lightgray, thin,  ->] (4,9.8) -- (4,8.2);
\draw[lightgray, thin,  ->] (2,9.8) -- (2,8.2);
\draw[lightgray, thin,  ->] (0,9.8) -- (0,8.2);
\draw[lightgray, thin,  ->] (1.8,6.2) -- (0.2,7.8);
\draw[lightgray, thin,  ->] (3.8,6.2) -- (2.2,7.8);
\draw[lightgray, thin,  ->] (5.8,6.2) -- (4.2,7.8);
\draw[lightgray, thin,  ->] (7.8,6.2) -- (6.2,7.8);
\draw[lightgray, thin,  ->] (9.8,6.2) -- (8.2,7.8);
\draw[lightgray, thin,  ->] (1.8,8.2) -- (0.2,9.8);
\draw[lightgray, thin,  ->] (3.8,8.2) -- (2.2,9.8);
\draw[lightgray, thin,  ->] (5.8,8.2) -- (4.2,9.8);
\draw[lightgray, thin,  ->] (7.8,8.2) -- (6.2,9.8);
\draw[lightgray, thin,  ->] (9.8,8.2) -- (8.2,9.8);
\draw[lightgray, thin,  ->] (3.8,4.2) -- (2.2,5.8);
\draw[lightgray, thin,  ->] (5.8,4.2) -- (4.2,5.8);
\draw[lightgray, thin,  ->] (7.8,4.2) -- (6.2,5.8);
\draw[lightgray, thin,  ->] (9.8,4.2) -- (8.2,5.8);
\draw[lightgray, thin,  ->] (5.8,2.2) -- (4.2,3.8);
\draw[lightgray, thin,  ->] (7.8,2.2) -- (6.2,3.8);
\draw[lightgray, thin,  ->] (9.8,2.2) -- (8.2,3.8);
\draw[lightgray, thin,  ->] (7.8,0.2) -- (6.2,1.8);
\draw[lightgray, thin,  ->] (9.8,0.2) -- (8.2,1.8);
\draw[gray,fill=gray] (6,2) circle (.01ex) node {\large$\mathbf{1}$};
\draw[gray,fill=gray] (4,4) circle (.01ex) node {\large$\mathbf{1}$};
\draw[gray,fill=gray] (2,6) circle (.01ex) node {\large$\mathbf{1}$};
\draw[gray,fill=gray] (8,2) circle (.01ex) node {\large$\mathbf{2}$};
\draw[gray,fill=gray] (6,4) circle (.01ex) node {\large$\mathbf{2}$};
\draw[gray,fill=gray] (4,6) circle (.01ex) node {\large$\mathbf{2}$};
\draw[gray,fill=gray] (2,8) circle (.01ex) node {\large$\mathbf{2}$};
\draw[gray,fill=gray] (8,4) circle (.01ex) node {\large$\mathbf{3}$};
\draw[gray,fill=gray] (6,6) circle (.01ex) node {\large$\mathbf{3}$};
\draw[gray,fill=gray] (4,8) circle (.01ex) node {\large$\mathbf{3}$};
\draw[gray,fill=gray] (8,6) circle (.01ex) node {\large$\mathbf{4}$};
\draw[gray,fill=gray] (6,8) circle (.01ex) node {\large$\mathbf{4}$};
\draw[gray,fill=gray] (8,8) circle (.01ex) node {\large$\mathbf{5}$};
\end{tikzpicture}
}
\caption{The fifth case of partially integral weights}\label{fig12}
\end{figure}
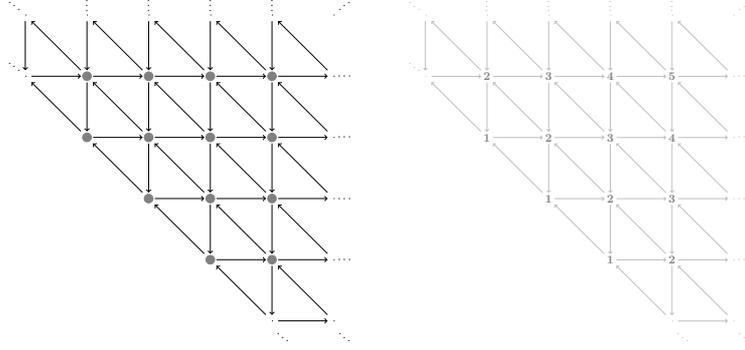

\begin{proof}
Mutatis mutandis the proof of Proposition~\ref{prop-s5.6-1}.
\end{proof}

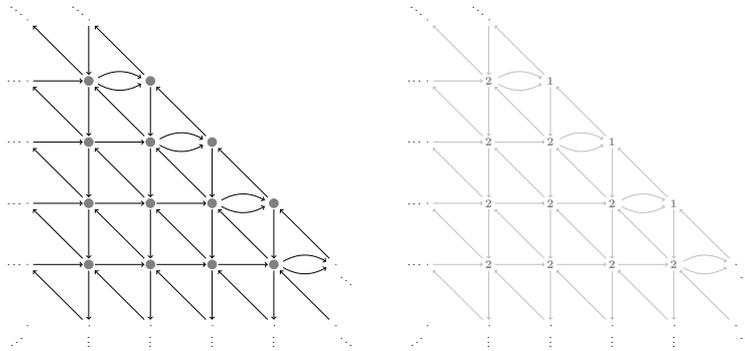
\begin{figure}
\resizebox{10cm}{!}{
\begin{tikzpicture}
\draw[black, thick,  ->] (8,1.8) -- (8,0.2);
\draw[black, thick,  ->] (6,1.8) -- (6,0.2);
\draw[black, thick,  ->] (4,1.8) -- (4,0.2);
\draw[black, thick,  ->] (2,1.8) -- (2,0.2);
\draw[black, thick,  ->] (8,3.8) -- (8,2.2);
\draw[black, thick,  ->] (6,3.8) -- (6,2.2);
\draw[black, thick,  ->] (4,3.8) -- (4,2.2);
\draw[black, thick,  ->] (2,3.8) -- (2,2.2);
\draw[black, thick,  ->] (6,5.8) -- (6,4.2);
\draw[black, thick,  ->] (4,5.8) -- (4,4.2);
\draw[black, thick,  ->] (2,5.8) -- (2,4.2);
\draw[black, thick,  ->] (4,7.8) -- (4,6.2);
\draw[black, thick,  ->] (2,7.8) -- (2,6.2);
\draw[black, thick,  ->] (2,9.8) -- (2,8.2);
\draw[black, thick,  ->] (2.3,8.1) to [out=30,in=150]   (3.7,8.1);
\draw[black, thick,  ->] (2.3,7.9) to [out=-30,in=-150]   (3.7,7.9);
\draw[black, thick,  ->] (4.3,6.1) to [out=30,in=150]   (5.7,6.1);
\draw[black, thick,  ->] (4.3,5.9) to [out=-30,in=-150]   (5.7,5.9);
\draw[black, thick,  ->] (6.3,4.1) to [out=30,in=150]   (7.7,4.1);
\draw[black, thick,  ->] (6.3,3.9) to [out=-30,in=-150]   (7.7,3.9);
\draw[black, thick,  ->] (8.3,2.1) to [out=30,in=150]   (9.7,2.1);
\draw[black, thick,  ->] (8.3,1.9) to [out=-30,in=-150]   (9.7,1.9);
\draw[black, thick,  ->] (0.2,2) -- (1.8,2);
\draw[black, thick,  ->] (2.2,2) -- (3.8,2);
\draw[black, thick,  ->] (4.2,2) -- (5.8,2);
\draw[black, thick,  ->] (6.2,2) -- (7.8,2);
\draw[black, thick,  ->] (0.2,4) -- (1.8,4);
\draw[black, thick,  ->] (2.2,4) -- (3.8,4);
\draw[black, thick,  ->] (4.2,4) -- (5.8,4);
\draw[black, thick,  ->] (0.2,6) -- (1.8,6);
\draw[black, thick,  ->] (2.2,6) -- (3.8,6);
\draw[black, thick,  ->] (0.2,8) -- (1.8,8);
\draw[black, thick,  ->] (1.8,0.2) -- (0.2,1.8);
\draw[black, thick,  ->] (3.8,0.2) -- (2.2,1.8);
\draw[black, thick,  ->] (5.8,0.2) -- (4.2,1.8);
\draw[black, thick,  ->] (7.8,0.2) -- (6.2,1.8);
\draw[black, thick,  ->] (9.8,0.2) -- (8.2,1.8);
\draw[black, thick,  ->] (1.8,2.2) -- (0.2,3.8);
\draw[black, thick,  ->] (3.8,2.2) -- (2.2,3.8);
\draw[black, thick,  ->] (5.8,2.2) -- (4.2,3.8);
\draw[black, thick,  ->] (7.8,2.2) -- (6.2,3.8);
\draw[black, thick,  ->] (9.8,2.2) -- (8.2,3.8);
\draw[black, thick,  ->] (1.8,4.2) -- (0.2,5.8);
\draw[black, thick,  ->] (3.8,4.2) -- (2.2,5.8);
\draw[black, thick,  ->] (5.8,4.2) -- (4.2,5.8);
\draw[black, thick,  ->] (7.8,4.2) -- (6.2,5.8);
\draw[black, thick,  ->] (1.8,8.2) -- (0.2,9.8);
\draw[black, thick,  ->] (1.8,6.2) -- (0.2,7.8);
\draw[black, thick,  ->] (3.8,6.2) -- (2.2,7.8);
\draw[black, thick,  ->] (5.8,6.2) -- (4.2,7.8);
\draw[black, thick,  ->] (3.8,8.2) -- (2.2,9.8);
\draw[black, thin,  -] (10,2) -- (10,2) node[anchor= north west] {\large$\ddots$};
\draw[black, thin,  -] (0,2) -- (0,2) node[anchor= east] {\large$\dots$};
\draw[black, thin,  -] (0,4) -- (0,4) node[anchor= east] {\large$\dots$};
\draw[black, thin,  -] (0,6) -- (0,6) node[anchor= east] {\large$\dots$};
\draw[black, thin,  -] (0,8) -- (0,8) node[anchor= east] {\large$\dots$};
\draw[black, thin,  -] (0,10) -- (0,10) node[anchor= south east] {\large$\ddots$};
\draw[black, thin,  -] (2,10) -- (2,10) node[anchor= south east] {\large$\ddots$};
\draw[black, thin,  -] (2,0) -- (2,0) node[anchor= north] {\large$\vdots$};
\draw[black, thin,  -] (4,0) -- (4,0) node[anchor= north] {\large$\vdots$};
\draw[black, thin,  -] (6,0) -- (6,0) node[anchor= north] {\large$\vdots$};
\draw[black, thin,  -] (8,0) -- (8,0) node[anchor= north] {\large$\vdots$};
\draw[black, thin,  -] (10,0) -- (10,0) node[anchor= north west] {\large$\ddots$};
\draw[black, thin,  -] (0,0) -- (0,0) node[anchor= north east] {\large$\iddots$};
\draw[gray,fill=gray] (2,2) circle (.9ex);
\draw[gray,fill=gray] (4,2) circle (.9ex);
\draw[gray,fill=gray] (6,2) circle (.9ex);
\draw[gray,fill=gray] (8,2) circle (.9ex);
\draw[gray,fill=gray] (2,4) circle (.9ex);
\draw[gray,fill=gray] (4,4) circle (.9ex);
\draw[gray,fill=gray] (6,4) circle (.9ex);
\draw[gray,fill=gray] (8,4) circle (.9ex);
\draw[gray,fill=gray] (2,6) circle (.9ex);
\draw[gray,fill=gray] (4,6) circle (.9ex);
\draw[gray,fill=gray] (6,6) circle (.9ex);
\draw[gray,fill=gray] (2,8) circle (.9ex);
\draw[gray,fill=gray] (4,8) circle (.9ex);
\end{tikzpicture}
\qquad\qquad
\begin{tikzpicture}
\draw[lightgray, thin,  ->] (8,1.8) -- (8,0.2);
\draw[lightgray, thin,  ->] (6,1.8) -- (6,0.2);
\draw[lightgray, thin,  ->] (4,1.8) -- (4,0.2);
\draw[lightgray, thin,  ->] (2,1.8) -- (2,0.2);
\draw[lightgray, thin,  ->] (8,3.8) -- (8,2.2);
\draw[lightgray, thin,  ->] (6,3.8) -- (6,2.2);
\draw[lightgray, thin,  ->] (4,3.8) -- (4,2.2);
\draw[lightgray, thin,  ->] (2,3.8) -- (2,2.2);
\draw[lightgray, thin,  ->] (6,5.8) -- (6,4.2);
\draw[lightgray, thin,  ->] (4,5.8) -- (4,4.2);
\draw[lightgray, thin,  ->] (2,5.8) -- (2,4.2);
\draw[lightgray, thin,  ->] (4,7.8) -- (4,6.2);
\draw[lightgray, thin,  ->] (2,7.8) -- (2,6.2);
\draw[lightgray, thin,  ->] (2,9.8) -- (2,8.2);
\draw[lightgray, thin,  ->] (2.3,8.1) to [out=30,in=150]   (3.7,8.1);
\draw[lightgray, thin,  ->] (2.3,7.9) to [out=-30,in=-150]   (3.7,7.9);
\draw[lightgray, thin,  ->] (4.3,6.1) to [out=30,in=150]   (5.7,6.1);
\draw[lightgray, thin,  ->] (4.3,5.9) to [out=-30,in=-150]   (5.7,5.9);
\draw[lightgray, thin,  ->] (6.3,4.1) to [out=30,in=150]   (7.7,4.1);
\draw[lightgray, thin,  ->] (6.3,3.9) to [out=-30,in=-150]   (7.7,3.9);
\draw[lightgray, thin,  ->] (8.3,2.1) to [out=30,in=150]   (9.7,2.1);
\draw[lightgray, thin,  ->] (8.3,1.9) to [out=-30,in=-150]   (9.7,1.9);
\draw[lightgray, thin,  ->] (0.2,2) -- (1.8,2);
\draw[lightgray, thin,  ->] (2.2,2) -- (3.8,2);
\draw[lightgray, thin,  ->] (4.2,2) -- (5.8,2);
\draw[lightgray, thin,  ->] (6.2,2) -- (7.8,2);
\draw[lightgray, thin,  ->] (0.2,4) -- (1.8,4);
\draw[lightgray, thin,  ->] (2.2,4) -- (3.8,4);
\draw[lightgray, thin,  ->] (4.2,4) -- (5.8,4);
\draw[lightgray, thin,  ->] (0.2,6) -- (1.8,6);
\draw[lightgray, thin,  ->] (2.2,6) -- (3.8,6);
\draw[lightgray, thin,  ->] (0.2,8) -- (1.8,8);
\draw[lightgray, thin,  ->] (1.8,0.2) -- (0.2,1.8);
\draw[lightgray, thin,  ->] (3.8,0.2) -- (2.2,1.8);
\draw[lightgray, thin,  ->] (5.8,0.2) -- (4.2,1.8);
\draw[lightgray, thin,  ->] (7.8,0.2) -- (6.2,1.8);
\draw[lightgray, thin,  ->] (9.8,0.2) -- (8.2,1.8);
\draw[lightgray, thin,  ->] (1.8,2.2) -- (0.2,3.8);
\draw[lightgray, thin,  ->] (3.8,2.2) -- (2.2,3.8);
\draw[lightgray, thin,  ->] (5.8,2.2) -- (4.2,3.8);
\draw[lightgray, thin,  ->] (7.8,2.2) -- (6.2,3.8);
\draw[lightgray, thin,  ->] (9.8,2.2) -- (8.2,3.8);
\draw[lightgray, thin,  ->] (1.8,4.2) -- (0.2,5.8);
\draw[lightgray, thin,  ->] (3.8,4.2) -- (2.2,5.8);
\draw[lightgray, thin,  ->] (5.8,4.2) -- (4.2,5.8);
\draw[lightgray, thin,  ->] (7.8,4.2) -- (6.2,5.8);
\draw[lightgray, thin,  ->] (1.8,8.2) -- (0.2,9.8);
\draw[lightgray, thin,  ->] (1.8,6.2) -- (0.2,7.8);
\draw[lightgray, thin,  ->] (3.8,6.2) -- (2.2,7.8);
\draw[lightgray, thin,  ->] (5.8,6.2) -- (4.2,7.8);
\draw[lightgray, thin,  ->] (3.8,8.2) -- (2.2,9.8);
\draw[black, thin,  -] (10,2) -- (10,2) node[anchor= north west] {\large$\ddots$};
\draw[black, thin,  -] (0,2) -- (0,2) node[anchor= east] {\large$\dots$};
\draw[black, thin,  -] (0,4) -- (0,4) node[anchor= east] {\large$\dots$};
\draw[black, thin,  -] (0,6) -- (0,6) node[anchor= east] {\large$\dots$};
\draw[black, thin,  -] (0,8) -- (0,8) node[anchor= east] {\large$\dots$};
\draw[black, thin,  -] (0,10) -- (0,10) node[anchor= south east] {\large$\ddots$};
\draw[black, thin,  -] (2,10) -- (2,10) node[anchor= south east] {\large$\ddots$};
\draw[black, thin,  -] (2,0) -- (2,0) node[anchor= north] {\large$\vdots$};
\draw[black, thin,  -] (4,0) -- (4,0) node[anchor= north] {\large$\vdots$};
\draw[black, thin,  -] (6,0) -- (6,0) node[anchor= north] {\large$\vdots$};
\draw[black, thin,  -] (8,0) -- (8,0) node[anchor= north] {\large$\vdots$};
\draw[black, thin,  -] (10,0) -- (10,0) node[anchor= north west] {\large$\ddots$};
\draw[black, thin,  -] (0,0) -- (0,0) node[anchor= north east] {\large$\iddots$};
\draw[gray,fill=gray] (2,2) circle (.01ex) node {\large$\mathbf{2}$};
\draw[gray,fill=gray] (4,2) circle (.01ex) node {\large$\mathbf{2}$};
\draw[gray,fill=gray] (6,2) circle (.01ex) node {\large$\mathbf{2}$};
\draw[gray,fill=gray] (8,2) circle (.01ex) node {\large$\mathbf{2}$};
\draw[gray,fill=gray] (2,4) circle (.01ex) node {\large$\mathbf{2}$};
\draw[gray,fill=gray] (4,4) circle (.01ex) node {\large$\mathbf{2}$};
\draw[gray,fill=gray] (6,4) circle (.01ex) node {\large$\mathbf{2}$};
\draw[gray,fill=gray] (8,4) circle (.01ex) node {\large$\mathbf{1}$};
\draw[gray,fill=gray] (2,6) circle (.01ex) node {\large$\mathbf{2}$};
\draw[gray,fill=gray] (4,6) circle (.01ex) node {\large$\mathbf{2}$};
\draw[gray,fill=gray] (6,6) circle (.01ex) node {\large$\mathbf{1}$};
\draw[gray,fill=gray] (2,8) circle (.01ex) node {\large$\mathbf{2}$};
\draw[gray,fill=gray] (4,8) circle (.01ex) node {\large$\mathbf{1}$};
\end{tikzpicture}
}
\caption{The sixth case of partially integral weights}\label{fig13}
\end{figure}

Denote by $\mathcal{M}_6^a$
the additive closure of all $P(\lambda)$, where
$\lambda\in X_6$. 

\begin{proposition}\label{prop-s5.6-6}
\begin{enumerate}[$($a$)$]
\item\label{prop-s5.6-6.1} 
For $\lambda\in X_4$ such that $\lambda_1+\lambda_2=-1$, 
$\mathrm{add}(\mathscr{C}\cdot L(\lambda))$ coincides with
$\mathcal{M}_6^a$ and is a simple transitive $\mathscr{C}$-module 
category whose graph
and positive eigenvector are depicted in Figure~\ref{fig13}.
\item\label{prop-s5.6-6.2}
For $\lambda\in X_6$ such that $\lambda_1+\lambda_2\neq -1$,
$\mathrm{add}(\mathscr{C}\cdot L(\lambda))$
contains $\mathcal{M}_6^a$ as a $\mathscr{C}$-module 
subcategory, moreover, the quotient of $\mathrm{add}(\mathscr{C}\cdot L(\lambda))$
by the ideal $\mathcal{I}$ generated by $\mathcal{M}_6^a$ is a simple 
transitive $\mathscr{C}$-module category that is 
equivalent to $\mathcal{M}_5^a$. 
\item\label{prop-s5.6-6.3}
If $\lambda_1+\lambda_2\neq -1$, then the indecomposable
objects in  $\mathrm{add}(\mathscr{C}\cdot L(\lambda))/\mathcal{I}$  
are given by the images of 
$L(\mu)$, where $\mu\in X_6$ is such that $\mu_1+\mu_2\neq -1$.
\end{enumerate}
\end{proposition}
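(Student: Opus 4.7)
The plan is to mirror, step by step, the argument used for Proposition~\ref{prop-s5.6-2}, adapting every ingredient to the line $(a,-a)+\Lambda$ in place of $(0,a)+\Lambda$. The key change is that the nontrivial element of the integral Weyl group is now $s_{\alpha+\beta}=w_0$, so the dot-singular locus on this line is $\lambda_1+\lambda_2=-2$, and the ``antidominant'' half is precisely $X_6$. In particular, for a dot-singular $\lambda\in X_6$ the module $L(\lambda)=P(\lambda)=I(\lambda)$ and, since projective functors preserve projective-injective modules, $\mathrm{add}(\mathscr{C}\cdot L(\lambda))\subset\mathcal{M}_6^a$; the reverse inclusion follows by translating any singular projective to any other along $\{\lambda_1+\lambda_2=-2\}$.

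To pin down the combinatorics of $\mathcal{M}_6^a$, I would take a regular $\lambda\in X_6$, use the short exact sequence $0\to\Delta(s_{\alpha+\beta}\cdot\lambda)\to P(\lambda)\to\Delta(\lambda)\to 0$ and analyze the Verma filtration of $L((1,0))\otimes_{\mathbb{C}}P(\lambda)$, whose subquotients have highest weights $\lambda+(1,0)$, $\lambda+(0,-1)$, $\lambda+(-1,1)$. Splitting into cases according to the position of $\lambda$ relative to the singular line $\lambda_1+\lambda_2=-2$ and the boundary between $X_5$ and $X_6$, one groups Verma subquotients by their dot-orbits under $\{e,s_{\alpha+\beta}\}$ to recover the indecomposable summands. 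The three sub-cases to check are (i) $\lambda_1+\lambda_2\leq -4$ (all three summands land in $X_6$ as regular weights, giving three distinct projectives), (ii) $\lambda_1+\lambda_2=-3$ (two of the three weights are $s_{\alpha+\beta}$-conjugate in $X_6$, producing a summand of multiplicity two), and (iii) $\lambda_1+\lambda_2=-2$ (only two of the translated weights belong to $X_6$). This reproduces exactly the graph in Figure~\ref{fig13}; the proposed eigenvector is then the Verma-length function $\mathtt{v}(P(\lambda))=|\{e,s_{\alpha+\beta}\}|/|\mathrm{Stab}(\lambda)|$, which is $1$ on singular nodes and $2$ on regular nodes, and which is automatically an eigenvector with eigenvalue $3$ by multiplicativity of Verma flags under tensoring with $L((1,0))$.

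Transitivity is immediate from strong connectedness of the graph; for simplicity I would copy the ideal argument from Proposition~\ref{prop-s5.6-2}: endomorphism algebras are either $\mathbb{C}$ (singular nodes) or dual numbers (regular nodes), so any non-trivial $\mathscr{C}$-stable ideal must contain a radical endomorphism of some regular $P(\lambda)$; translating it onto the singular line $\lambda_1+\lambda_2=-2$ produces a non-zero morphism in a semisimple block, hence an identity, hence the whole category. This settles~\eqref{prop-s5.6-6.1}. For \eqref{prop-s5.6-6.2} and \eqref{prop-s5.6-6.3}, I would use the twisting functor $\top_{s_{\alpha+\beta}}$ (its adjoint, as in the proof of Proposition~\ref{prop-s5.5-2}) to identify $\mathrm{add}(\mathscr{C}\cdot L(\lambda))$, for regular $\lambda\in X_6$, with the additive closure of all integral projectives in the block $(a,-a)+\Lambda$; factoring out the ideal generated by $\mathcal{M}_6^a$ leaves the additive closure of $\{P(\mu):\mu\in X_5\}$, where each such $P(\mu)$ has trivial endomorphism ring and hence maps to its simple top, yielding an equivalence with $\mathcal{M}_5^a$ by Proposition~\ref{prop-s5.6-5}.

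The main obstacle is the case analysis for the combinatorics: the support $\{(1,0),(0,-1),(-1,1)\}$ of $L((1,0))$ is not symmetric with respect to the reflection $s_{\alpha+\beta}$ (which swaps $(1,0)$ and $(-1,0)$, not two of the three support points), so one has to track carefully which of the three translated weights land on the same dot-orbit in $X_6$ and which leave $X_6$ entirely, in order to correctly identify the double edges and the boundary behaviour visible in Figure~\ref{fig13}.
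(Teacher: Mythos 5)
Your proposal carries out exactly what the paper's one-line ``Mutatis mutandis the proof of Proposition~\ref{prop-s5.6-2}'' intends, and the structure (singular wall argument for transitivity, short exact sequence $0\to\Delta(w_0\cdot\lambda)\to P(\lambda)\to\Delta(\lambda)\to 0$ plus Verma-flag bookkeeping for the combinatorics, Verma-length function for the eigenvector, endomorphism-ring argument for simplicity, and the twisting functor for parts~(b) and~(c)) is the right adaptation. Two remarks are in order.

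First, you silently shift the singular locus to $\lambda_1+\lambda_2=-2$, whereas the statement of Proposition~\ref{prop-s5.6-6}(a) reads $\lambda_1+\lambda_2=-1$, and the paper defines $X_6=\{\lambda_1+\lambda_2<0\}$. Your computation of the singular hyperplane is correct ($\langle\lambda+\rho,(\alpha+\beta)^\vee\rangle=\lambda_1+\lambda_2+2$, consistent with Figure~\ref{fig4}), and under the paper's literal definition $X_6$ is not the anti-dominant half (it also contains the dominant regular line $\lambda_1+\lambda_2=-1$, whose projectives are Verma modules and hence not projective-injective), so the identity $\mathrm{add}(\mathscr{C}\cdot L(\lambda))=\mathcal{M}_6^a$ for singular $\lambda$ would fail as literally stated. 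You should not silently fix this: flag the tension between the proposition statement, the definition of $X_5,X_6$, and the dot-action geometry, and state the convention you are actually using (e.g.\ that $X_6$ should be read as $\{\lambda_1+\lambda_2\leq -2\}$). As written, your proof is internally consistent but inconsistent with the text it claims to prove.

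Second, the closing remark about the ``main obstacle'' is mistaken: the reflection $s_{\alpha+\beta}$ sends $\mu$ to $\mu-(\mu_1+\mu_2)(\alpha+\beta)$, hence swaps $(1,0)\leftrightarrow(0,-1)$ and fixes $(-1,1)$, so the support of $L((1,0))$ is in fact symmetric under $s_{\alpha+\beta}$ (just as it is under $s_\alpha$, which swaps $(1,0)\leftrightarrow(-1,1)$ and fixes $(0,-1)$). This does not affect the correctness of your case analysis, but it does mean your stated diagnosis of where the difficulty lies is wrong, and it might have led you astray in checking the sub-cases; it is worth redoing the computation in case~(ii) with the correct action of $s_{\alpha+\beta}$ in mind (the identity you actually need there is $\lambda+(1,0)=w_0\cdot\lambda+(0,-1)$ when $\lambda_1+\lambda_2=-3$, which does hold and gives the multiplicity-two summand).
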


\begin{proof}
Mutatis mutandis the proof of Proposition~\ref{prop-s5.6-2}. 
\end{proof}

The weights $\lambda$ that appeared in this subsection will be
called {\em partially integral weights}.

It is easy to see that the graphs shown in Figures~\ref{fig8}, \ref{fig10}, \ref{fig12}
are isomorphic and, similarly,  the graphs shown in Figures~\ref{fig9}, \ref{fig11}, \ref{fig13}
are isomorphic.

\subsection{Generic weights}\label{s5.9}

Let now $\lambda$ be a weight that is neither integral nor
partially integral. We will call such $\lambda$ {\em generic}. 
Denote by $\mathcal{K}$ the additive
closure of all $L(\mu)$, where $\mu\in\lambda+\Lambda$.

\begin{proposition}\label{prop-s5.9-1}
For any generic $\lambda$, the $\mathscr{C}$-module category
$\mathrm{add}(\mathscr{C}\cdot L(\lambda))$ coincides
with $\mathcal{K}$, moreover, the latter is a simple
transitive $\mathscr{C}$-module category whose graph
and positive eigenvector are depicted in Figure~\ref{fig14}. 
\end{proposition}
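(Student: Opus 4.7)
The plan is to follow the same strategy as in Proposition~\ref{prop-s5.6-1}, but in a much simpler form, since for generic $\lambda$ the integral Weyl group of every $\mu\in\lambda+\Lambda$ is trivial. First I would observe that for such $\mu$ the dot-orbit $W\cdot\mu$ intersects $\lambda+\Lambda$ only in $\mu$ itself, because any non-identity element of $W$ applied dot-wise to $\mu$ would shift $\mu$ by a non-integral combination of roots, which cannot lie in $\lambda+\Lambda$ (this is precisely what fails for the partially integral case, and what makes the generic case easier). Consequently, $L(\mu)=\Delta(\mu)=P(\mu)$ as an object of the block $\mathcal{O}_{\chi_\mu}$, and the whole block intersects $\mathcal{K}$ only in the additive hull of $L(\mu)$.

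Next I would compute the action of $\mathrm{F}=L((1,0))$. Tensoring $\Delta(\mu)$ with $L((1,0))$ gives a module with a Verma flag whose subquotients are $\Delta(\mu+(1,0))$, $\Delta(\mu+(0,-1))$, $\Delta(\mu+(-1,1))$, each with multiplicity one. The key point is that the three weights $\mu+(1,0)$, $\mu+(0,-1)$, $\mu+(-1,1)$ pairwise differ by roots and, by the same genericity argument, lie in three distinct dot $W$-orbits; hence the corresponding three Verma modules have pairwise distinct central characters. Block decomposition then forces
\begin{displaymath}
L((1,0))\otimes_{\mathbb{C}}L(\mu)\cong L(\mu+(1,0))\oplus L(\mu+(0,-1))\oplus L(\mu+(-1,1)).
\end{displaymath}
From this I immediately conclude that $\mathrm{add}(\mathscr{C}\cdot L(\lambda))$ is contained in, and generates, $\mathcal{K}$, so the two agree; the graph $\Gamma_{\mathrm{F}}$ is the full (unbounded in all four directions) triangular grid on the index set $\lambda+\Lambda$, which is exactly the picture asserted in Figure~\ref{fig14}. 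Strong connectedness is obvious from the grid structure, hence $\mathcal{K}$ is transitive.

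For simplicity, I would argue as in Proposition~\ref{prop-s5.3-1}: every indecomposable object of $\mathcal{K}$ has trivial endomorphism algebra, and there are no nonzero morphisms between non-isomorphic indecomposables (different blocks). Therefore any nonzero $\mathscr{C}$-stable ideal must contain an identity morphism of some $L(\mu)$, and by transitivity it then contains all identity morphisms, so it is the whole category. Finally, the constant assignment $\mu\mapsto 1$ is trivially a positive eigenvector with eigenvalue $3$ for the action matrix of $\mathrm{F}$, since each vertex has exactly three outgoing arrows; this is the eigenvector depicted on the right hand side of Figure~\ref{fig14}. I expect no substantive obstacle here beyond recording the genericity argument that excludes all the coincidences of central characters which produced the extra arrows and multiplicities in the integral and partially integral cases; the rest of the proof is a direct specialization of what has already been done.
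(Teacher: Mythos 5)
Your proposal is correct and takes essentially the same approach as the paper: you observe that for generic $\mu\in\lambda+\Lambda$ the dot-orbit of $\mu$ meets $\lambda+\Lambda$ only in $\mu$ itself (forcing $L(\mu)=\Delta(\mu)=P(\mu)$ and all generic blocks to be semi-simple), so the Verma flag of $L((1,0))\otimes_{\mathbb{C}}\Delta(\mu)$ splits by central characters into a direct sum of three simple Vermas, giving the grid graph, the constant eigenvector, and simple transitivity via semi-simplicity of the underlying category. The paper's proof is the same argument in more compressed form, invoking semi-simplicity of generic blocks directly rather than spelling out the distinct-central-characters step; your write-up just makes that intermediate step explicit.
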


\begin{proof}
Note that, for a generic $\lambda$, we have $L(\lambda)=\Delta(\lambda)$.
Moreover, all generic blocks of category $\mathcal{O}$
are semi-simple. Consequently, for a generic $\lambda$, the 
usual standard filtration of the module
$L((1,0))\otimes_{\mathbb{C}}\Delta(\lambda)$ with subquotients
$\Delta(\lambda+(1,0))$, $\Delta(\lambda+(0,-1))$ and 
$\Delta(\lambda+(-1,1))$ gives rise to a decomposition of
$L((1,0))\otimes_{\mathbb{C}}L(\lambda)$ into a direct sum of 
$L(\lambda+(1,0))$, $L(\lambda+(0,-1))$ and 
$L(\lambda+(-1,1))$. This implies that $\mathcal{K}$
is a simple $\mathscr{C}$-module category whose graph
is  depicted on the left picture in Figure~\ref{fig14}.
That the positive eigenvector is as 
depicted on the right picture in Figure~\ref{fig14}
is easy to check.

Since the graph is simply connected, 
the $\mathscr{C}$-module $\mathcal{K}$
is transitive, hence also simple transitive
due to the semi-simplicity of the underlying category $\mathcal{K}$.
We also, clearly, have that $\mathrm{add}(\mathscr{C}\cdot L(\lambda))=\mathcal{K}$
and the proof is complete.
\end{proof}

This completes our description of the combinatorics of the 
$\mathscr{C}$-module categories
of the form $\mathrm{add}(\mathscr{C}\cdot L(\lambda))$, for
$\lambda\in\mathfrak{h}^*$.

\begin{figure}
\resizebox{10cm}{!}{
\begin{tikzpicture}
\draw[black, thick,  ->] (1.8,0.2) -- (0.2,1.8);
\draw[black, thick,  ->] (2,1.8) -- (2,0.2) node[anchor= north] {\large$\vdots$};
\draw[black, thick,  ->] (3.8,0.2) -- (2.2,1.8);
\draw[black, thick,  ->] (4,1.8) -- (4,0.2) node[anchor= north] {\large$\vdots$};
\draw[black, thick,  ->] (5.8,0.2) -- (4.2,1.8);
\draw[black, thick,  ->] (6,1.8) -- (6,0.2) node[anchor= north] {\large$\vdots$};
\draw[black, thick,  ->] (7.8,0.2) -- (6.2,1.8);
\draw[black, thick,  ->] (8,1.8) -- (8,0.2) node[anchor= north] {\large$\vdots$};
\draw[black, thick,  ->] (0.2,2) -- (1.8,2);
\draw[black, thick,  ->] (1.8,2.2) -- (0.2,3.8);
\draw[black, thick,  ->] (2.2,2) -- (3.8,2);
\draw[black, thick,  ->] (2,3.8) -- (2,2.2);
\draw[black, thick,  ->] (3.8,2.2) -- (2.2,3.8);
\draw[black, thick,  ->] (4.2,2) -- (5.8,2);
\draw[black, thick,  ->] (4,3.8) -- (4,2.2);
\draw[black, thick,  ->] (5.8,2.2) -- (4.2,3.8);
\draw[black, thick,  ->] (6.2,2) -- (7.8,2);
\draw[black, thick,  ->] (6,3.8) -- (6,2.2);
\draw[black, thick,  ->] (7.8,2.2) -- (6.2,3.8);
\draw[black, thick,  ->] (8,3.8) -- (8,2.2);
\draw[black, thick,  ->] (0.2,4) -- (1.8,4);
\draw[black, thick,  ->] (1.8,4.2) -- (0.2,5.8);
\draw[black, thick,  ->] (2.2,4) -- (3.8,4);
\draw[black, thick,  ->] (2,5.8) -- (2,4.2);
\draw[black, thick,  ->] (3.8,4.2) -- (2.2,5.8);
\draw[black, thick,  ->] (4.2,4) -- (5.8,4);
\draw[black, thick,  ->] (4,5.8) -- (4,4.2);
\draw[black, thick,  ->] (5.8,4.2) -- (4.2,5.8);
\draw[black, thick,  ->] (6.2,4) -- (7.8,4);
\draw[black, thick,  ->] (6,5.8) -- (6,4.2);
\draw[black, thick,  ->] (7.8,4.2) -- (6.2,5.8);
\draw[black, thick,  ->] (8,5.8) -- (8,4.2);
\draw[black, thick,  ->] (0.2,6) -- (1.8,6);
\draw[black, thick,  ->] (1.8,6.2) -- (0.2,7.8);
\draw[black, thick,  ->] (2.2,6) -- (3.8,6);
\draw[black, thick,  ->] (2,7.8) -- (2,6.2);
\draw[black, thick,  ->] (3.8,6.2) -- (2.2,7.8);
\draw[black, thick,  ->] (4.2,6) -- (5.8,6);
\draw[black, thick,  ->] (4,7.8) -- (4,6.2);
\draw[black, thick,  ->] (5.8,6.2) -- (4.2,7.8);
\draw[black, thick,  ->] (6.2,6) -- (7.8,6);
\draw[black, thick,  ->] (6,7.8) -- (6,6.2);
\draw[black, thick,  ->] (7.8,6.2) -- (6.2,7.8);
\draw[black, thick,  ->] (8,7.8) -- (8,6.2);
\draw[black, thick,  -] (9.8,0.35) -- (9.8,0.35) node[anchor= north west] {\large$\ddots$};
\draw[black, thick,  ->] (8.2,2) -- (9.8,2) node[anchor= west] {\large$\cdots$};
\draw[black, thick,  ->] (8.2,4) -- (9.8,4) node[anchor= west] {\large$\cdots$};
\draw[black, thick,  ->] (8.2,6) -- (9.8,6) node[anchor= west] {\large$\cdots$};
\draw[black, thick,  ->] (8.2,8) -- (9.8,8) node[anchor= west] {\large$\cdots$};
\draw[black, thick,  ->] (9.8,0.2) -- (8.2,1.8);
\draw[black, thick,  ->] (9.8,2.2) -- (8.2,3.8);
\draw[black, thick,  ->] (9.8,4.2) -- (8.2,5.8);
\draw[black, thick,  ->] (9.8,6.2) -- (8.2,7.8);
\draw[black, thick,  ->]  (2,9.8) -- (2,8.2);
\draw[black, thick,  ->]  (4,9.8) -- (4,8.2);
\draw[black, thick,  ->]  (6,9.8) -- (6,8.2);
\draw[black, thick,  ->]  (8,9.8) -- (8,8.2);
\draw[black, thin,  -] (0,9.8) -- (0,9.8) node[anchor= south east] {\large$\ddots$};
\draw[black, thin,  -] (0.2,0.2) -- (0.2,0.2) node[anchor= north east] {\large$\iddots$};
\draw[black, thin,  -] (0,8) -- (0,8) node[anchor= east] {\large$\dots$};
\draw[black, thin,  -] (0,6) -- (0,6) node[anchor= east] {\large$\dots$};
\draw[black, thin,  -] (0,4) -- (0,4) node[anchor= east] {\large$\dots$};
\draw[black, thin,  -] (0,2) -- (0,2) node[anchor= east] {\large$\dots$};
\draw[black, thin,  -] (2,9.8) -- (2,9.8) node[anchor= south] {\large$\vdots$};
\draw[black, thin,  -] (4,9.8) -- (4,9.8) node[anchor= south] {\large$\vdots$};
\draw[black, thin,  -] (6,9.8) -- (6,9.8) node[anchor= south] {\large$\vdots$};
\draw[black, thin,  -] (8,9.8) -- (8,9.8) node[anchor= south] {\large$\vdots$};
\draw[gray, thin,  -] (10.4,10.4) -- (10.4,10.4) node[anchor= north east] {\large$\iddots$};
\draw[black, thick,  ->] (1.8,8.2) -- (0.2,9.8);
\draw[black, thick,  ->] (3.8,8.2) -- (2.2,9.8);
\draw[black, thick,  ->] (5.8,8.2) -- (4.2,9.8);
\draw[black, thick,  ->] (7.8,8.2) -- (6.2,9.8);
\draw[black, thick,  ->] (9.8,8.2) -- (8.2,9.8);
\draw[black, thick,  ->] (0.2,8) -- (1.8,8);
\draw[black, thick,  ->] (2.2,8) -- (3.8,8);
\draw[black, thick,  ->] (4.2,8) -- (5.8,8);
\draw[black, thick,  ->] (6.2,8) -- (7.8,8);
\draw[gray,fill=gray] (2,2) circle (.9ex);
\draw[gray,fill=gray] (4,2) circle (.9ex);
\draw[gray,fill=gray] (6,2) circle (.9ex);
\draw[gray,fill=gray] (8,2) circle (.9ex);
\draw[gray,fill=gray] (2,4) circle (.9ex);
\draw[gray,fill=gray] (4,4) circle (.9ex);
\draw[gray,fill=gray] (6,4) circle (.9ex);
\draw[gray,fill=gray] (8,4) circle (.9ex);
\draw[gray,fill=gray] (2,6) circle (.9ex);
\draw[gray,fill=gray] (4,6) circle (.9ex);
\draw[gray,fill=gray] (6,6) circle (.9ex);
\draw[gray,fill=gray] (8,6) circle (.9ex);
\draw[gray,fill=gray] (2,8) circle (.9ex);
\draw[gray,fill=gray] (4,8) circle (.9ex);
\draw[gray,fill=gray] (6,8) circle (.9ex);
\draw[gray,fill=gray] (8,8) circle (.9ex);
\end{tikzpicture}
\qquad\qquad
\begin{tikzpicture}
\draw[lightgray, thin,  ->] (1.8,0.2) -- (0.2,1.8);
\draw[lightgray, thin,  ->] (2,1.8) -- (2,0.2) node[anchor= north] {\large$\vdots$};
\draw[lightgray, thin,  ->] (3.8,0.2) -- (2.2,1.8);
\draw[lightgray, thin,  ->] (4,1.8) -- (4,0.2) node[anchor= north] {\large$\vdots$};
\draw[lightgray, thin,  ->] (5.8,0.2) -- (4.2,1.8);
\draw[lightgray, thin,  ->] (6,1.8) -- (6,0.2) node[anchor= north] {\large$\vdots$};
\draw[lightgray, thin,  ->] (7.8,0.2) -- (6.2,1.8);
\draw[lightgray, thin,  ->] (8,1.8) -- (8,0.2) node[anchor= north] {\large$\vdots$};
\draw[lightgray, thin,  -] (0,9.8) -- (0,9.8) node[anchor= south east] {\large$\ddots$};
\draw[lightgray, thin,  -] (0.2,0.2) -- (0.2,0.2) node[anchor= north east] {\large$\iddots$};
\draw[lightgray, thin,  -] (0,8) -- (0,8) node[anchor= east] {\large$\dots$};
\draw[lightgray, thin,  -] (0,6) -- (0,6) node[anchor= east] {\large$\dots$};
\draw[lightgray, thin,  -] (0,4) -- (0,4) node[anchor= east] {\large$\dots$};
\draw[lightgray, thin,  -] (0,2) -- (0,2) node[anchor= east] {\large$\dots$};
\draw[lightgray, thin,  ->] (0.2,2) -- (1.8,2);
\draw[lightgray, thin,  ->] (1.8,2.2) -- (0.2,3.8);
\draw[lightgray, thin,  ->] (2.2,2) -- (3.8,2);
\draw[lightgray, thin,  ->] (2,3.8) -- (2,2.2);
\draw[lightgray, thin,  ->] (3.8,2.2) -- (2.2,3.8);
\draw[lightgray, thin,  ->] (4.2,2) -- (5.8,2);
\draw[lightgray, thin,  ->] (4,3.8) -- (4,2.2);
\draw[lightgray, thin,  ->] (5.8,2.2) -- (4.2,3.8);
\draw[lightgray, thin,  ->] (6.2,2) -- (7.8,2);
\draw[lightgray, thin,  ->] (6,3.8) -- (6,2.2);
\draw[lightgray, thin,  ->] (7.8,2.2) -- (6.2,3.8);
\draw[lightgray, thin,  ->] (8,3.8) -- (8,2.2);
\draw[lightgray, thin,  ->] (0.2,4) -- (1.8,4);
\draw[lightgray, thin,  ->] (1.8,4.2) -- (0.2,5.8);
\draw[lightgray, thin,  ->] (2.2,4) -- (3.8,4);
\draw[lightgray, thin,  ->] (2,5.8) -- (2,4.2);
\draw[lightgray, thin,  ->] (3.8,4.2) -- (2.2,5.8);
\draw[lightgray, thin,  ->] (4.2,4) -- (5.8,4);
\draw[lightgray, thin,  ->] (4,5.8) -- (4,4.2);
\draw[lightgray, thin,  ->] (5.8,4.2) -- (4.2,5.8);
\draw[lightgray, thin,  ->] (6.2,4) -- (7.8,4);
\draw[lightgray, thin,  ->] (6,5.8) -- (6,4.2);
\draw[lightgray, thin,  ->] (7.8,4.2) -- (6.2,5.8);
\draw[lightgray, thin,  ->] (8,5.8) -- (8,4.2);
\draw[lightgray, thin,  ->] (0.2,6) -- (1.8,6);
\draw[lightgray, thin,  ->] (1.8,6.2) -- (0.2,7.8);
\draw[lightgray, thin,  ->] (2.2,6) -- (3.8,6);
\draw[lightgray, thin,  ->] (2,7.8) -- (2,6.2);
\draw[lightgray, thin,  ->] (3.8,6.2) -- (2.2,7.8);
\draw[lightgray, thin,  ->] (4.2,6) -- (5.8,6);
\draw[lightgray, thin,  ->] (4,7.8) -- (4,6.2);
\draw[lightgray, thin,  ->] (5.8,6.2) -- (4.2,7.8);
\draw[lightgray, thin,  ->] (6.2,6) -- (7.8,6);
\draw[lightgray, thin,  ->] (6,7.8) -- (6,6.2);
\draw[lightgray, thin,  ->] (7.8,6.2) -- (6.2,7.8);
\draw[lightgray, thin,  ->] (8,7.8) -- (8,6.2);
\draw[lightgray, thin,  ->] (9.8,0.35) -- (9.8,0.35) node[anchor= north west] {\large$\ddots$};
\draw[lightgray, thin,  ->] (8.2,2) -- (9.8,2) node[anchor= west] {\large$\cdots$};
\draw[lightgray, thin,  ->] (8.2,4) -- (9.8,4) node[anchor= west] {\large$\cdots$};
\draw[lightgray, thin,  ->] (8.2,6) -- (9.8,6) node[anchor= west] {\large$\cdots$};
\draw[lightgray, thin,  ->] (8.2,8) -- (9.8,8) node[anchor= west] {\large$\cdots$};
\draw[lightgray, thin,  ->] (9.8,0.2) -- (8.2,1.8);
\draw[lightgray, thin,  ->] (9.8,2.2) -- (8.2,3.8);
\draw[lightgray, thin,  ->] (9.8,4.2) -- (8.2,5.8);
\draw[lightgray, thin,  ->] (9.8,6.2) -- (8.2,7.8);
\draw[lightgray, thin,  ->]  (2,9.8) -- (2,8.2);
\draw[lightgray, thin,  ->]  (4,9.8) -- (4,8.2);
\draw[lightgray, thin,  ->]  (6,9.8) -- (6,8.2);
\draw[lightgray, thin,  ->]  (8,9.8) -- (8,8.2);
\draw[lightgray, thin,  -] (0,9.8) -- (0,9.8) node[anchor= south] {\large$\vdots$};
\draw[lightgray, thin,  -] (2,9.8) -- (2,9.8) node[anchor= south] {\large$\vdots$};
\draw[lightgray, thin,  -] (4,9.8) -- (4,9.8) node[anchor= south] {\large$\vdots$};
\draw[lightgray, thin,  -] (6,9.8) -- (6,9.8) node[anchor= south] {\large$\vdots$};
\draw[lightgray, thin,  -] (8,9.8) -- (8,9.8) node[anchor= south] {\large$\vdots$};
\draw[lightgray, thin,  -] (10.4,10.4) -- (10.4,10.4) node[anchor= north east] {\large$\iddots$};
\draw[lightgray, thin,  ->] (1.8,8.2) -- (0.2,9.8);
\draw[lightgray, thin,  ->] (3.8,8.2) -- (2.2,9.8);
\draw[lightgray, thin,  ->] (5.8,8.2) -- (4.2,9.8);
\draw[lightgray, thin,  ->] (7.8,8.2) -- (6.2,9.8);
\draw[lightgray, thin,  ->] (9.8,8.2) -- (8.2,9.8);
\draw[lightgray, thin,  ->] (0.2,8) -- (1.8,8);
\draw[lightgray, thin,  ->] (2.2,8) -- (3.8,8);
\draw[lightgray, thin,  ->] (4.2,8) -- (5.8,8);
\draw[lightgray, thin,  ->] (6.2,8) -- (7.8,8);
\draw[gray,fill=gray] (2,2) circle (.01ex) node {\large$\mathbf{1}$};
\draw[gray,fill=gray] (4,2) circle (.01ex) node {\large$\mathbf{1}$};
\draw[gray,fill=gray] (6,2) circle (.01ex) node {\large$\mathbf{1}$};
\draw[gray,fill=gray] (8,2) circle (.01ex) node {\large$\mathbf{1}$};
\draw[gray,fill=gray] (2,4) circle (.01ex) node {\large$\mathbf{1}$};
\draw[gray,fill=gray] (4,4) circle (.01ex) node {\large$\mathbf{1}$};
\draw[gray,fill=gray] (6,4) circle (.01ex) node {\large$\mathbf{1}$};
\draw[gray,fill=gray] (8,4) circle (.01ex) node {\large$\mathbf{1}$};
\draw[gray,fill=gray] (2,6) circle (.01ex) node {\large$\mathbf{1}$};
\draw[gray,fill=gray] (4,6) circle (.01ex) node {\large$\mathbf{1}$};
\draw[gray,fill=gray] (6,6) circle (.01ex) node {\large$\mathbf{1}$};
\draw[gray,fill=gray] (8,6) circle (.01ex) node {\large$\mathbf{1}$};
\draw[gray,fill=gray] (2,8) circle (.01ex) node {\large$\mathbf{1}$};
\draw[gray,fill=gray] (4,8) circle (.01ex) node {\large$\mathbf{1}$};
\draw[gray,fill=gray] (6,8) circle (.01ex) node {\large$\mathbf{1}$};
\draw[gray,fill=gray] (8,8) circle (.01ex) node {\large$\mathbf{1}$};
\end{tikzpicture}
}
\caption{Generic weights}\label{fig14}
\end{figure}
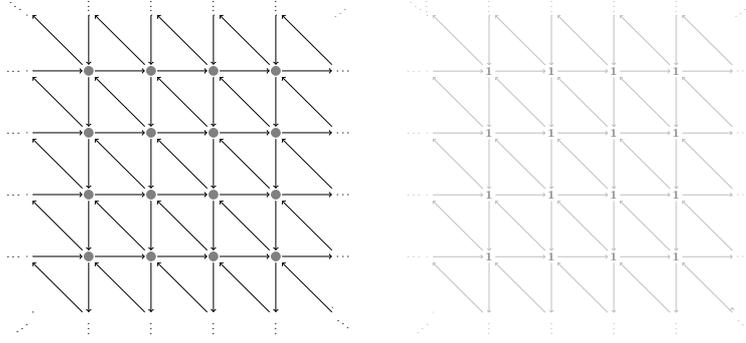

\section{$\mathscr{C}$-module categories from Whittaker modules}\label{s6}

\subsection{Whittaker modules}\label{s6.1}

Fix a Lie algebra homomorphism $\eta:\mathfrak{n}_+\to \mathbb{C}$
such that both $\eta(e_{1,2})\neq 0$  and $\eta(e_{2,3})\neq 0$.
Associated to $\eta$, we have the corresponding universal Whittaker
module $\mathbf{K}_\eta$ defined as the quotient of $U(\mathfrak{g})$
modulo the left ideal generated by $e_{1,2}-\eta(e_{1,2})$,
$e_{2,3}-\eta(e_{2,3})$ and $e_{1,3}$ (note that $\eta(e_{1,3})=0$), 
see \cite{Ko}.

Now, given a central character $\chi:Z(\mathfrak{g})\to\mathbb{C}$,
the quotient of $\mathbf{K}_\eta$ modulo $\mathrm{Ker}(\chi)\mathbf{K}_\eta$
is a simple $\mathfrak{g}$-module denoted by $\mathbf{L}(\eta,\chi)$.
The module $\mathbf{L}(\eta,\chi)$ is the unique, up to isomorphism,
simple $\mathfrak{g}$-module with central character $\chi$, on which
the action of $v-\eta(v)$ is locally nilpotent, for any $v\in \mathfrak{n}_+$.

\subsection{Some weight combinatorics}\label{s6.2}

Here, for a weight $\lambda\in\mathfrak{h}^*$, we determine all possible cases 
for how $\lambda+\Lambda$ may intersect $W\cdot \lambda$.

\begin{proposition}\label{prop-s6.2-1}
For $\lambda\in\mathfrak{h}^*$, we have:
\begin{enumerate}[$($a$)$]
\item\label{prop-s6.2-1.1} If $\lambda\in\Lambda$,
then $(W\cdot \lambda)\cap (\lambda+\Lambda)=W\cdot \lambda$.
\item\label{prop-s6.2-1.2} If $\lambda\in (0,a)+\Lambda$,
for some $a\in\mathbb{C}\setminus\mathbb{Z}$, 
then $(W\cdot \lambda)\cap (\lambda+\Lambda)=\{\lambda,s\cdot \lambda\}$.
\item\label{prop-s6.2-1.3} If $\lambda\in (a,0)+\Lambda$,
for some $a\in\mathbb{C}\setminus\mathbb{Z}$, 
then $(W\cdot \lambda)\cap (\lambda+\Lambda)=\{\lambda,r\cdot \lambda\}$.
\item\label{prop-s6.2-1.4} If $\lambda\in (a,-a)+\Lambda$,
for some $a\in\mathbb{C}\setminus\mathbb{Z}$,
then $(W\cdot \lambda)\cap (\lambda+\Lambda)=\{\lambda,w_0\cdot \lambda\}$.
\item\label{prop-s6.2-1.5} If $\lambda\in (\frac{1}{3},\frac{1}{3})+\Lambda$
or $\lambda\in (\frac{2}{3},\frac{2}{3})+\Lambda$,
then $(W\cdot \lambda)\cap (\lambda+\Lambda)=
\{\lambda,sr\cdot \lambda,rs\cdot \lambda\}$.
\item\label{prop-s6.2-1.6} In all other cases, 
$(W\cdot \lambda)\cap (\lambda+\Lambda)=\{\lambda\}$. 
\end{enumerate}
\end{proposition}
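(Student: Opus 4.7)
The plan is to translate the condition $w\cdot\lambda\in\lambda+\Lambda$ into concrete arithmetic conditions on the coordinates $(\lambda_1,\lambda_2)$ for each of the six elements of $W=\{e,s,r,sr,rs,w_0\}$, and then run through the cases.

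Setting $\mu:=\lambda+\rho$, one has $w\cdot\lambda-\lambda=w\mu-\mu$, so everything reduces to asking when $w\mu-\mu\in\Lambda$. The three reflections in $W$ are $s=s_\alpha$, $r=s_\beta$ and $w_0=s_{\alpha+\beta}$, and for a reflection $s_\gamma$ we have the standard formula $s_\gamma\mu-\mu=-\mu(\gamma^\vee)\gamma$. Using $\alpha=(2,-1)$, $\beta=(-1,2)$ and $\alpha+\beta=(1,1)$ in the coordinates of Subsection~\ref{s2.2}, this immediately yields: $s\cdot\lambda\in\lambda+\Lambda$ iff $\mu_1\in\mathbb{Z}$, $r\cdot\lambda\in\lambda+\Lambda$ iff $\mu_2\in\mathbb{Z}$, and $w_0\cdot\lambda\in\lambda+\Lambda$ iff $\mu_1+\mu_2\in\mathbb{Z}$. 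Since $\rho=(1,1)\in\Lambda$, these are equivalent to $\lambda_1\in\mathbb{Z}$, $\lambda_2\in\mathbb{Z}$ and $\lambda_1+\lambda_2\in\mathbb{Z}$, respectively. For $sr$ and $rs$ I would compute directly: $sr\mu-\mu=-(\mu_1+\mu_2)\alpha-\mu_2\beta$, whose coordinates are $(-2\mu_1-\mu_2,\mu_1-\mu_2)$. Demanding both integers and adding/subtracting yields $3\mu_1,3\mu_2\in\mathbb{Z}$ and $\mu_1-\mu_2\in\mathbb{Z}$, and conversely these clearly suffice. The element $rs$ gives the same system (as $rs=(sr)^{-1}$), so the condition for either $sr$ or $rs$ is $\lambda_1,\lambda_2\in\tfrac{1}{3}\mathbb{Z}$ with $\lambda_1\equiv\lambda_2\pmod{\mathbb{Z}}$.

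With these five conditions in hand, cases \eqref{prop-s6.2-1.1}--\eqref{prop-s6.2-1.6} follow by inspection. In case \eqref{prop-s6.2-1.1} all five conditions hold, giving $W\cdot\lambda$. Case \eqref{prop-s6.2-1.2} satisfies only the $s$-condition (the $sr/rs$ condition fails because $\lambda_2\notin\tfrac{1}{3}\mathbb{Z}$, as $a\notin\mathbb{Z}$ already rules out $\lambda_1\equiv\lambda_2\pmod{\mathbb{Z}}$). Case \eqref{prop-s6.2-1.3} is symmetric. Case \eqref{prop-s6.2-1.4} satisfies only the $w_0$-condition; here the key disjointness check is that $\lambda_1+\lambda_2\in\mathbb{Z}$ combined with $\lambda_1\equiv\lambda_2\pmod{\mathbb{Z}}$ forces $2\lambda_2\in\mathbb{Z}$, which together with $\lambda_2\in\tfrac{1}{3}\mathbb{Z}\setminus\mathbb{Z}$ is impossible since $\tfrac{1}{2}\mathbb{Z}\cap\tfrac{1}{3}\mathbb{Z}=\mathbb{Z}$. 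In case \eqref{prop-s6.2-1.5} the common fractional part $\tfrac{1}{3}$ or $\tfrac{2}{3}$ gives $\lambda_1+\lambda_2\notin\mathbb{Z}$, ruling out $w_0$, so only $e$, $sr$, $rs$ contribute. Case \eqref{prop-s6.2-1.6} is then the residual one, where none of the five congruences is met and the intersection collapses to $\{\lambda\}$.

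The main obstacle is the $sr$/$rs$ computation and keeping the six resulting cases genuinely disjoint; once the coordinate form of $sr\mu-\mu$ is written down, the rest is bookkeeping, and the only nontrivial disjointness verification is the one between cases \eqref{prop-s6.2-1.4} and \eqref{prop-s6.2-1.5} outlined above.
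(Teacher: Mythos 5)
Your proof is correct and takes essentially the same route as the paper's: both translate each $w\cdot\lambda\in\lambda+\Lambda$ into a congruence on $(\lambda_1,\lambda_2)$ and run the case check. The only differences are cosmetic. You compute $w\mu-\mu$ via the reflection formula $s_\gamma\mu-\mu=-\mu(\gamma^\vee)\gamma$ after shifting to $\mu=\lambda+\rho$ (so the dot-action disappears), whereas the paper writes out the $2\times2$ matrices of $s$, $r$, $sr$ in the $(\lambda_1,\lambda_2)$-coordinates directly; both land on the same equivalent conditions ($\lambda_1\in\mathbb{Z}$ for $s$, $\lambda_2\in\mathbb{Z}$ for $r$, $\lambda_1+\lambda_2\in\mathbb{Z}$ for $w_0$, and $\lambda_1,\lambda_2\in\tfrac13\mathbb{Z}$ with $\lambda_1-\lambda_2\in\mathbb{Z}$ for $sr$, $rs$). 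You also spell out the disjointness check in case~(d) (that the $w_0$- and $sr/rs$-conditions together would force $\lambda_2\in\tfrac12\mathbb{Z}\cap\tfrac13\mathbb{Z}=\mathbb{Z}$), which the paper compresses into ``proved similarly''; this is worth having on record. One small wording wobble: in case~(b) you write that the $sr/rs$-condition fails ``because $\lambda_2\notin\tfrac13\mathbb{Z}$'', which need not hold; the actual reason, which you then correctly give, is that $\lambda_1-\lambda_2\notin\mathbb{Z}$ when $a\notin\mathbb{Z}$. The substance is right and the argument goes through.
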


We note that the most interesting case for us in this proposition
is Case~\eqref{prop-s6.2-1.5}. In the setup of  category $\mathcal{O}$,
the weights described by Case~\eqref{prop-s6.2-1.5} are generic.
As we will see, in the setup of Whittaker modules, they are not
(only the weights described by Case~\eqref{prop-s6.2-1.6} are generic).

\begin{proof}
The group $S_3$ has exactly six subgroups: $\{e\}$, $\{e,s\}$,
$\{e,r\}$, $\{e,w_0\}$, $\{e,rs,sr\}$ and $S_3$. This gives us 
six different possibilities for the stabilizer of the set
$\lambda+\Lambda$ with respect to the dot-action.

In the standard basis of $\mathfrak{h}^*$, the actions of the linear
operators $s$, $r$ and $sr$ are given by the following matrices:
\begin{displaymath}
[s]=\left(\begin{array}{cc}-1&0\\1&1\end{array}\right),\qquad 
[r]=\left(\begin{array}{cc}1&1\\0&-1\end{array}\right),\qquad 
[sr]=\left(\begin{array}{cc}-1&-1\\1&0\end{array}\right).  
\end{displaymath}
Using these, the condition $s\cdot \lambda\in \lambda+\Lambda$
can be rewritten as the conjunction of $-\lambda_1\in \lambda_1+\mathbb{Z}$
and $\lambda_1+\lambda_2\in \lambda_2+\mathbb{Z}$. This 
is equivalent to $\lambda_1\in \mathbb{Z}$. Similarly, 
$r\cdot \lambda\in \lambda+\Lambda$ is equivalent to 
$\lambda_2\in \mathbb{Z}$.

Therefore the conjunction of $s\cdot \lambda\in \lambda+\Lambda$ and
$r\cdot \lambda\in \lambda+\Lambda$ implies Claim~\eqref{prop-s6.2-1.1}.
Taking this into account, each condition separately implies
Claim~\eqref{prop-s6.2-1.2} and Claim~\eqref{prop-s6.2-1.3}.
Claim~\eqref{prop-s6.2-1.4} is proved similarly.

To prove Claim~\eqref{prop-s6.2-1.5}, we note that 
the condition $sr\cdot \lambda\in \lambda+\Lambda$
can be rewritten as the conjunction of $-\lambda_1-\lambda_2\in \lambda_1+\mathbb{Z}$
and $\lambda_1\in \lambda_2+\mathbb{Z}$. This is equivalent to
$\lambda_2\in \frac{1}{3}\mathbb{Z}$ and $\lambda_1\in \lambda_2+\mathbb{Z}$.
Together with already established Claim~\eqref{prop-s6.2-1.1}, this implies
Claim~\eqref{prop-s6.2-1.5}.

Claim~\eqref{prop-s6.2-1.6} follows as the logical complement 
to the union of all the preceding claims.
\end{proof}

\subsection{$\mathscr{C}$-module categories generated by simple
Whittaker modules}\label{s6.3}

For a central character, $\chi:Z(\mathfrak{g})\to \mathbb{C}$, 
fix some {\em anti-dominant} $\lambda_{\chi}\in\mathfrak{h}^*$ such that
$\chi=\chi_{{}_{\lambda_{\chi}}}$. Note that 
$\lambda_{\chi}$ is not unique, in general.

For $\eta:\mathfrak{n}_+\to \mathbb{C}$
such that both $\eta(e_{1,2})\neq 0$  and $\eta(e_{2,3})\neq 0$,
and for $\chi:Z(\mathfrak{g})\to \mathbb{C}$, consider the module
$\mathbf{L}(\eta,\chi)$ and the $\mathscr{C}$-module
category $\mathrm{add}(\mathscr{C}\cdot \mathbf{L}(\eta,\chi))$.

\begin{proposition}\label{prop-s6.3-1}
For $\eta$ and $\chi$ as above, we have:
\begin{enumerate}[$($a$)$]
\item\label{prop-s6.3-1.1} If 
$\lambda_{\chi}\in (\frac{1}{3},\frac{1}{3})+\Lambda$
or $\lambda_{\chi}\in (\frac{2}{3},\frac{2}{3})+\Lambda$,
then $\mathrm{add}(\mathscr{C}\cdot \mathbf{L}(\eta,\chi))$
is semi-simple and is a simple transitive 
$\mathscr{C}$-module category whose graph
and positive eigenvector are depicted in Figure~\ref{fig15}.
\item\label{prop-s6.3-1.2} 
In all other cases, $\mathrm{add}(\mathscr{C}\cdot \mathbf{L}(\eta,\chi))$
is equivalent to $\mathrm{add}(\mathscr{C}\cdot L(\lambda_{\chi}))$,
as a $\mathscr{C}$-module category.
\end{enumerate}
\end{proposition}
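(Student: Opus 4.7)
The plan is to prove the two parts separately. Part~(b) will reduce to the results already established for category~$\mathcal{O}$ in Section~\ref{s5} via a generalized Mili\v{c}i\'c--Soergel equivalence, whereas part~(a) requires a direct analysis that exploits the three-fold symmetry appearing in case~(e) of Proposition~\ref{prop-s6.2-1}.

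For part~(b), the key input is that, for non-degenerate $\eta$, the annihilator of $\mathbf{L}(\eta,\chi)$ equals the minimal primitive ideal $J_\chi$ of central character $\chi$ (due to \cite{Ko}), which is also the annihilator of $L(\lambda_\chi)=\Delta(\lambda_\chi)$ when $\lambda_\chi$ is antidominant. I would then invoke the Harish-Chandra bimodule machinery alluded to in Subsection~\ref{s1.3}: the functor $B\mapsto B\otimes_{U(\mathfrak{g})}L$, from the category of Harish-Chandra bimodules with right annihilator containing $J_\chi$ to $\mathfrak{g}\text{-}\mathrm{mod}$, has essential image $\mathrm{add}(\mathscr{C}\cdot L)$, commutes with the action of $\mathscr{C}$, and depends on $L$ only through $\mathrm{Ann}(L)$. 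Applying this to $L=\mathbf{L}(\eta,\chi)$ and to $L=L(\lambda_\chi)$ yields the equivalence $\mathrm{add}(\mathscr{C}\cdot\mathbf{L}(\eta,\chi))\cong\mathrm{add}(\mathscr{C}\cdot L(\lambda_\chi))$ in cases \eqref{prop-s6.2-1.1}--\eqref{prop-s6.2-1.4} and \eqref{prop-s6.2-1.6} of Proposition~\ref{prop-s6.2-1}.

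For part~(a), the weights $\lambda_\chi$ in $(\tfrac13,\tfrac13)+\Lambda$ or $(\tfrac23,\tfrac23)+\Lambda$ are non-integral with trivial integral root system, so the block $\mathcal{O}_\chi$ is semi-simple, with unique simple object $L(\lambda_\chi)=\Delta(\lambda_\chi)$. Consequently, the block of Whittaker modules with central character $\chi$ is also semi-simple with unique simple $\mathbf{L}(\eta,\chi)$, and every Whittaker module in it is a direct sum of copies of $\mathbf{L}(\eta,\chi)$. I would then compute $L((1,0))\otimes_{\mathbb{C}}\mathbf{L}(\eta,\chi)$ by decomposing along generalized central characters, producing one summand for each of $\chi_{\lambda_\chi+(1,0)}$, $\chi_{\lambda_\chi+(0,-1)}$, $\chi_{\lambda_\chi+(-1,1)}$. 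Using the formula $sr\cdot\mu=(-\mu_1-\mu_2-3,\mu_1)$ together with $rs\cdot\mu=(\mu_2,-\mu_1-\mu_2-3)$, a direct check shows that the three translates of $\lambda_\chi$ cannot lie in a common $W$-orbit when $\lambda_\chi\in(\tfrac13,\tfrac13)+\Lambda$ or $(\tfrac23,\tfrac23)+\Lambda$, so the three resulting central characters are pairwise distinct. Each summand is therefore a semi-simple Whittaker module of a single central character, and the multiplicity, determined by matching weight-space dimensions in $L((1,0))$, is exactly one. Iterating gives the semi-simplicity of $\mathrm{add}(\mathscr{C}\cdot\mathbf{L}(\eta,\chi))$ together with the graph of Figure~\ref{fig15}: vertices indexed by $(\lambda_\chi+\Lambda)/\langle sr\rangle$, each with three outgoing edges. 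The constant vector with all entries equal to~$1$ is then a positive eigenvector for the eigenvalue~$3$.

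The main obstacle will be making the Harish-Chandra bimodule argument in part~(b) fully precise in the partially integral cases~\eqref{prop-s6.2-1.2}--\eqref{prop-s6.2-1.4} of Proposition~\ref{prop-s6.2-1}, where the Mili\v{c}i\'c--Soergel equivalence from \cite{MiSo} is not available in its classical integral form. This requires verifying that the equivalence extends uniformly, and that both $\mathbf{L}(\eta,\chi)$ and $L(\lambda_\chi)$ genuinely share the same two-sided ideal as annihilator in these cases (using Duflo's theorem on primitive ideals). A secondary but more routine point in part~(a) is ruling out non-trivial extensions within each central-character summand, which is handled by the semi-simplicity of the relevant Whittaker block.
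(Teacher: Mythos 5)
Your argument for part~(b) runs along the same lines as the paper's: both invoke the Mili\v{c}i\'c--Soergel type Harish-Chandra-bimodule equivalence to reduce to the highest-weight case. The paper simply cites \cite[Theorem~5.3]{MiSo} directly (which already covers the partially integral cases you flag as the ``main obstacle''), so the concern you raise there is not a real gap, but it does mean your proof needs to be checked against the precise scope of that theorem rather than argued from scratch via Duflo.

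For part~(a), you take a genuinely different route. The paper's proof is slicker: since $\mathbf{L}(\eta,\chi)$ is the \emph{unique} simple Whittaker module of central character~$\chi$, the two non-trivial indecomposable projective functors $\theta_{\lambda_\chi,sr\cdot\lambda_\chi}$ and $\theta_{\lambda_\chi,rs\cdot\lambda_\chi}$ (which are equivalences between singleton blocks of $\mathcal{O}$) must both send $\mathbf{L}(\eta,\chi)$ to itself; semi-simplicity and the identification of the graph as the quotient of Figure~\ref{fig14} by the order-$3$ group $\{e,sr,rs\}$ then fall out directly. You instead decompose $L((1,0))\otimes_{\mathbb{C}}\mathbf{L}(\eta,\chi)$ by generalized central characters. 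This can be made to work, but as written it has two weak points. First, the ``direct check'' that the three translates $\lambda_\chi+(1,0)$, $\lambda_\chi+(0,-1)$, $\lambda_\chi+(-1,1)$ have pairwise distinct central characters is not carried out; a computation with $sr\cdot\mu=(-\mu_1-\mu_2-3,\mu_1)$ shows they coincide only at integral weights (e.g.\ $\mu=(-2,0)$ and $\mu=(-1,-1)$), so the claim does hold on $(\tfrac13,\tfrac13)+\Lambda$ and $(\tfrac23,\tfrac23)+\Lambda$, but this needs to be actually verified. Second, and more importantly, pairwise distinctness of the translates' characters does not rule out one of them coinciding with $\chi_{\lambda_\chi}$ itself --- and in fact this does happen (e.g.\ $sr\cdot\mu=\mu+(1,0)$ when $\mu=(-\tfrac43,-\tfrac43)\in(\tfrac23,\tfrac23)+\Lambda$), which is exactly the self-loop visible in Figure~\ref{fig15}. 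Your description of the vertex set as $(\lambda_\chi+\Lambda)/\langle sr\rangle$ is correct and implicitly accommodates this, but your intermediate reasoning about three distinct central characters does not address it. The quotient-group formulation the paper uses handles these coincidences uniformly without case analysis.
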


\begin{proof}
Claim~\eqref{prop-s6.3-1.2} follows from \cite[Theorem~5.3]{MiSo}.
Claim~\eqref{prop-s6.3-1.1} differs from Claim~\eqref{prop-s6.3-1.2}
due to the fact that, in this case, the second condition in
\cite[Proposition~5.5]{MiSo} is violated. 

Indeed, let $\lambda_{\chi}\in (\frac{1}{3},\frac{1}{3})+\Lambda$
or $\lambda_{\chi}\in (\frac{2}{3},\frac{2}{3})+\Lambda$.
Then $(W\cdot\lambda)\cap\lambda_{\chi}+\Lambda$ consists of 
$\lambda_{\chi}$, $sr\cdot \lambda_{\chi}$ and $rs\cdot \lambda_{\chi}$. At the same time,
the integral Weyl group of $\lambda_{\chi}$ is trivial. Our classification
of projective functors tells us that we have two non-trival projective functors,
namely $\theta_{\lambda_{\chi},sr\cdot \lambda_{\chi}}$ and 
$\theta_{\lambda_{\chi},rs\cdot \lambda_{\chi}}$. On the one hand,
they are defined via
\begin{displaymath}
\theta_{\lambda_{\chi},sr\cdot \lambda_{\chi}}L(\lambda_{\chi}) 
\cong L(sr\cdot \lambda_{\chi})\quad\text{ and }\quad
\theta_{\lambda_{\chi},rs\cdot \lambda_{\chi}}L(\lambda_{\chi}) 
\cong L(rs\cdot \lambda_{\chi}).
\end{displaymath}
On the other hand, we have 
\begin{displaymath}
\theta_{\lambda_{\chi},sr\cdot \lambda_{\chi}}\mathbf{L}(\eta,\chi)\cong
\theta_{\lambda_{\chi},rs\cdot \lambda_{\chi}}\mathbf{L}(\eta,\chi)\cong
\mathbf{L}(\eta,\chi),
\end{displaymath}
due to the uniqueness of $\mathbf{L}(\eta,\chi)$.
In particular, $\mathrm{add}(\mathscr{C}\cdot \mathbf{L}(\eta,\chi))$
is semi-simple, which implies that it is also a simple transitive
$\mathscr{C}$-module category. 

To obtain its combinatorics, we can think of 
$\mathrm{add}(\mathscr{C}\cdot \mathbf{L}(\eta,\chi))$ as
a ``quotient'' of $\mathrm{add}(\mathscr{C}\cdot L(\lambda_{\chi}))$
under the action of the stabilizer group $\{e,sr,rs\}$.
Then it is easy to check that Figure~\ref{fig15}
is simply such a quotient of Figure~\ref{fig14}.
\end{proof}

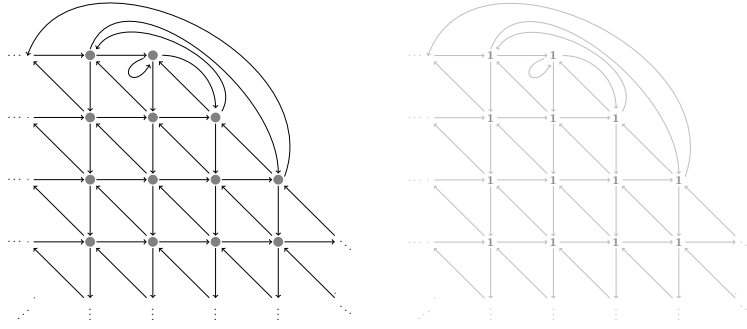
\begin{figure}
\resizebox{10cm}{!}{
\begin{tikzpicture}
\draw[black, thick,  ->] (1.8,0.2) -- (0.2,1.8);
\draw[black, thick,  ->] (2,1.8) -- (2,0.2) node[anchor= north] {\large$\vdots$};
\draw[black, thick,  ->] (3.8,0.2) -- (2.2,1.8);
\draw[black, thick,  ->] (4,1.8) -- (4,0.2) node[anchor= north] {\large$\vdots$};
\draw[black, thick,  ->] (5.8,0.2) -- (4.2,1.8);
\draw[black, thick,  ->] (6,1.8) -- (6,0.2) node[anchor= north] {\large$\vdots$};
\draw[black, thick,  ->] (7.8,0.2) -- (6.2,1.8);
\draw[black, thick,  ->] (8,1.8) -- (8,0.2) node[anchor= north] {\large$\vdots$};
\draw[black, thick,  ->] (0.2,2) -- (1.8,2);
\draw[black, thick,  ->] (1.8,2.2) -- (0.2,3.8);
\draw[black, thick,  ->] (2.2,2) -- (3.8,2);
\draw[black, thick,  ->] (2,3.8) -- (2,2.2);
\draw[black, thick,  ->] (3.8,2.2) -- (2.2,3.8);
\draw[black, thick,  ->] (4.2,2) -- (5.8,2);
\draw[black, thick,  ->] (4,3.8) -- (4,2.2);
\draw[black, thick,  ->] (5.8,2.2) -- (4.2,3.8);
\draw[black, thick,  ->] (6.2,2) -- (7.8,2);
\draw[black, thick,  ->] (6,3.8) -- (6,2.2);
\draw[black, thick,  ->] (7.8,2.2) -- (6.2,3.8);
\draw[black, thick,  ->] (8,3.8) -- (8,2.2);
\draw[black, thick,  ->] (0.2,4) -- (1.8,4);
\draw[black, thick,  ->] (1.8,4.2) -- (0.2,5.8);
\draw[black, thick,  ->] (2.2,4) -- (3.8,4);
\draw[black, thick,  ->] (2,5.8) -- (2,4.2);
\draw[black, thick,  ->] (3.8,4.2) -- (2.2,5.8);
\draw[black, thick,  ->] (4.2,4) -- (5.8,4);
\draw[black, thick,  ->] (4,5.8) -- (4,4.2);
\draw[black, thick,  ->] (5.8,4.2) -- (4.2,5.8);
\draw[black, thick,  ->] (6.2,4) -- (7.8,4);
\draw[black, thick,  ->] (6,5.8) -- (6,4.2);
\draw[black, thick,  ->] (7.8,4.2) -- (6.2,5.8);
\draw[black, thick,  ->] (0.2,6) -- (1.8,6);
\draw[black, thick,  ->] (1.8,6.2) -- (0.2,7.8);
\draw[black, thick,  ->] (2.2,6) -- (3.8,6);
\draw[black, thick,  ->] (2,7.8) -- (2,6.2);
\draw[black, thick,  ->] (3.8,6.2) -- (2.2,7.8);
\draw[black, thick,  ->] (4.2,6) -- (5.8,6);
\draw[black, thick,  ->] (4,7.8) -- (4,6.2);
\draw[black, thick,  ->] (5.8,6.2) -- (4.2,7.8);
\draw[black, thick,  -] (9.8,0.35) -- (9.8,0.35) node[anchor= north west] {\large$\ddots$};
\draw[black, thick,  ->] (8.2,2) -- (9.8,2) node[anchor= west] {\large$\ddots$};
\draw[black, thick,  ->] (9.8,0.2) -- (8.2,1.8);
\draw[black, thick,  ->] (9.8,2.2) -- (8.2,3.8);
\draw[black, thin,  -] (0.2,0.2) -- (0.2,0.2) node[anchor= north east] {\large$\iddots$};
\draw[black, thin,  -] (0,8) -- (0,8) node[anchor= east] {\large$\dots$};
\draw[black, thin,  -] (0,6) -- (0,6) node[anchor= east] {\large$\dots$};
\draw[black, thin,  -] (0,4) -- (0,4) node[anchor= east] {\large$\dots$};
\draw[black, thin,  -] (0,2) -- (0,2) node[anchor= east] {\large$\dots$};
\draw[black, thick,  ->] (0.2,8) -- (1.8,8);
\draw[black, thick,  ->] (2.2,8) -- (3.8,8);
\draw[black, thick,  ->] (4.3,8) to [out=0,in=90]   (6,6.3);
\draw[black, thick,  ->] (3.7,7.9) to [out=200,in=240,looseness=9]   (3.9,7.7);
\draw[black, thick,  ->] (6.2,6.2) to [out=60,in=60]   (2.2,8.2);
\draw[black, thick,  ->] (2,8.2) to [out=70,in=90]   (8,4.2);
\draw[black, thick,  ->] (8.2,4.1) to [out=70,in=70,looseness=1.2] (0,8);
\draw[gray,fill=gray] (2,2) circle (.9ex);
\draw[gray,fill=gray] (4,2) circle (.9ex);
\draw[gray,fill=gray] (6,2) circle (.9ex);
\draw[gray,fill=gray] (8,2) circle (.9ex);
\draw[gray,fill=gray] (2,4) circle (.9ex);
\draw[gray,fill=gray] (4,4) circle (.9ex);
\draw[gray,fill=gray] (6,4) circle (.9ex);
\draw[gray,fill=gray] (8,4) circle (.9ex);
\draw[gray,fill=gray] (2,6) circle (.9ex);
\draw[gray,fill=gray] (4,6) circle (.9ex);
\draw[gray,fill=gray] (6,6) circle (.9ex);
\draw[gray,fill=gray] (2,8) circle (.9ex);
\draw[gray,fill=gray] (4,8) circle (.9ex);
\end{tikzpicture}
\qquad\qquad
\begin{tikzpicture}
\draw[lightgray, thin,  ->] (1.8,0.2) -- (0.2,1.8);
\draw[lightgray, thin,  ->] (2,1.8) -- (2,0.2) node[anchor= north] {\large$\vdots$};
\draw[lightgray, thin,  ->] (3.8,0.2) -- (2.2,1.8);
\draw[lightgray, thin,  ->] (4,1.8) -- (4,0.2) node[anchor= north] {\large$\vdots$};
\draw[lightgray, thin,  ->] (5.8,0.2) -- (4.2,1.8);
\draw[lightgray, thin,  ->] (6,1.8) -- (6,0.2) node[anchor= north] {\large$\vdots$};
\draw[lightgray, thin,  ->] (7.8,0.2) -- (6.2,1.8);
\draw[lightgray, thin,  ->] (8,1.8) -- (8,0.2) node[anchor= north] {\large$\vdots$};
\draw[lightgray, thin,  ->] (0.2,2) -- (1.8,2);
\draw[lightgray, thin,  ->] (1.8,2.2) -- (0.2,3.8);
\draw[lightgray, thin,  ->] (2.2,2) -- (3.8,2);
\draw[lightgray, thin,  ->] (2,3.8) -- (2,2.2);
\draw[lightgray, thin,  ->] (3.8,2.2) -- (2.2,3.8);
\draw[lightgray, thin,  ->] (4.2,2) -- (5.8,2);
\draw[lightgray, thin,  ->] (4,3.8) -- (4,2.2);
\draw[lightgray, thin,  ->] (5.8,2.2) -- (4.2,3.8);
\draw[lightgray, thin,  ->] (6.2,2) -- (7.8,2);
\draw[lightgray, thin,  ->] (6,3.8) -- (6,2.2);
\draw[lightgray, thin,  ->] (7.8,2.2) -- (6.2,3.8);
\draw[lightgray, thin,  ->] (8,3.8) -- (8,2.2);
\draw[lightgray, thin,  ->] (0.2,4) -- (1.8,4);
\draw[lightgray, thin,  ->] (1.8,4.2) -- (0.2,5.8);
\draw[lightgray, thin,  ->] (2.2,4) -- (3.8,4);
\draw[lightgray, thin,  ->] (2,5.8) -- (2,4.2);
\draw[lightgray, thin,  ->] (3.8,4.2) -- (2.2,5.8);
\draw[lightgray, thin,  ->] (4.2,4) -- (5.8,4);
\draw[lightgray, thin,  ->] (4,5.8) -- (4,4.2);
\draw[lightgray, thin,  ->] (5.8,4.2) -- (4.2,5.8);
\draw[lightgray, thin,  ->] (6.2,4) -- (7.8,4);
\draw[lightgray, thin,  ->] (6,5.8) -- (6,4.2);
\draw[lightgray, thin,  ->] (7.8,4.2) -- (6.2,5.8);
\draw[lightgray, thin,  ->] (0.2,6) -- (1.8,6);
\draw[lightgray, thin,  ->] (1.8,6.2) -- (0.2,7.8);
\draw[lightgray, thin,  ->] (2.2,6) -- (3.8,6);
\draw[lightgray, thin,  ->] (2,7.8) -- (2,6.2);
\draw[lightgray, thin,  ->] (3.8,6.2) -- (2.2,7.8);
\draw[lightgray, thin,  ->] (4.2,6) -- (5.8,6);
\draw[lightgray, thin,  ->] (4,7.8) -- (4,6.2);
\draw[lightgray, thin,  ->] (5.8,6.2) -- (4.2,7.8);
\draw[lightgray, thin,  -] (9.8,0.35) -- (9.8,0.35) node[anchor= north west] {\large$\ddots$};
\draw[lightgray, thin,  ->] (8.2,2) -- (9.8,2) node[anchor= west] {\large$\ddots$};
\draw[lightgray, thin,  ->] (9.8,0.2) -- (8.2,1.8);
\draw[lightgray, thin,  ->] (9.8,2.2) -- (8.2,3.8);
\draw[lightgray, thin,  -] (0.2,0.2) -- (0.2,0.2) node[anchor= north east] {\large$\iddots$};
\draw[lightgray, thin,  -] (0,8) -- (0,8) node[anchor= east] {\large$\dots$};
\draw[lightgray, thin,  -] (0,6) -- (0,6) node[anchor= east] {\large$\dots$};
\draw[lightgray, thin,  -] (0,4) -- (0,4) node[anchor= east] {\large$\dots$};
\draw[lightgray, thin,  -] (0,2) -- (0,2) node[anchor= east] {\large$\dots$};
\draw[lightgray, thin,  ->] (0.2,8) -- (1.8,8);
\draw[lightgray, thin,  ->] (2.2,8) -- (3.8,8);
\draw[lightgray, thin,  ->] (4.3,8) to [out=0,in=90]   (6,6.3);
\draw[lightgray, thin,  ->] (3.7,7.9) to [out=200,in=240,looseness=9]   (3.9,7.7);
\draw[lightgray, thin,  ->] (6.2,6.2) to [out=60,in=60]   (2.2,8.2);
\draw[lightgray, thin,  ->] (2,8.2) to [out=70,in=90]   (8,4.2);
\draw[lightgray, thin,  ->] (8.2,4.1) to [out=70,in=70,looseness=1.2] (0,8);
\draw[gray,fill=gray] (2,2) circle (.01ex) node {\large$\mathbf{1}$};
\draw[gray,fill=gray] (4,2) circle (.01ex) node {\large$\mathbf{1}$};
\draw[gray,fill=gray] (6,2) circle (.01ex) node {\large$\mathbf{1}$};
\draw[gray,fill=gray] (8,2) circle (.01ex) node {\large$\mathbf{1}$};
\draw[gray,fill=gray] (2,4) circle (.01ex) node {\large$\mathbf{1}$};
\draw[gray,fill=gray] (4,4) circle (.01ex) node {\large$\mathbf{1}$};
\draw[gray,fill=gray] (6,4) circle (.01ex) node {\large$\mathbf{1}$};
\draw[gray,fill=gray] (8,4) circle (.01ex) node {\large$\mathbf{1}$};
\draw[gray,fill=gray] (2,6) circle (.01ex) node {\large$\mathbf{1}$};
\draw[gray,fill=gray] (4,6) circle (.01ex) node {\large$\mathbf{1}$};
\draw[gray,fill=gray] (6,6) circle (.01ex) node {\large$\mathbf{1}$};
\draw[gray,fill=gray] (2,8) circle (.01ex) node {\large$\mathbf{1}$};
\draw[gray,fill=gray] (4,8) circle (.01ex) node {\large$\mathbf{1}$};

\end{tikzpicture}
}
\caption{Additional combinatorics from the Whittaker setup}\label{fig15}
\end{figure}

\section{Combinatorics of transitive $\mathscr{C}$-module subquotient
of arbitrary $\mathrm{add}(\mathscr{C}\cdot L)$}\label{s7}

\subsection{Setup}\label{s7.1}

Let now $L$ be a simple $\mathfrak{g}$-module. Associated to $L$,
we have the 
$\mathscr{C}$-module category $\mathrm{add}(\mathscr{C}\cdot L)$.
The category $\mathrm{add}(\mathscr{C}\cdot L)$ is locally
finitary, moreover, for any indecomposable object $X$, there are
only finitely many, up to isomorphism, indecomposable objects
$Y$ such that $\mathrm{Hom}(X,Y)\neq 0$ (and, similarly,
in the other direction). We refer to \cite{MMM} for the details
and to \cite{Mac1,Mac2} for more information on finitary module
categories.

\subsection{Main result}\label{s7.2}

We are now ready to formulate the main result of our paper
which describes combinatorics of transitive subquotients
of $\mathrm{add}(\mathscr{C}\cdot L)$.

\begin{theorem}\label{thm-main}
Let $L$ be a simple $\mathfrak{g}$-module and 
   $\mathbf{N}$ a transitive subquotient of 
$\mathrm{add}(\mathscr{C}\cdot L)$. Then the graph
$\Gamma_\mathrm{F}$ for $\mathbf{N}$
is isomorphic to the graph given by one of the following
figures: Figure~\ref{fig3}, Figure~\ref{fig5}, Figure~\ref{fig6},  
Figure~\ref{fig7}, Figure~\ref{fig8}, Figure~\ref{fig9},
Figure~\ref{fig10}, Figure~\ref{fig11}, Figure~\ref{fig12},
Figure~\ref{fig13}, Figure~\ref{fig14}, Figure~\ref{fig15}
(note that some of them are isomorphic).
\end{theorem}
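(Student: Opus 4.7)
The plan is to reduce the case of an arbitrary simple $\mathfrak{g}$-module $L$ to the two families already analyzed in the preceding sections: simple highest weight modules (Sections~\ref{s4}--\ref{s5}) and non-degenerate Whittaker modules (Section~\ref{s6}). More precisely, the goal is to exhibit, for any simple $L$, an equivalence of $\mathscr{C}$-module categories between $\mathrm{add}(\mathscr{C}\cdot L)$ and $\mathrm{add}(\mathscr{C}\cdot L')$, where $L'$ is either a simple highest weight module or a non-degenerate Whittaker module. Once this is achieved, the classification of transitive subquotients follows directly from Propositions~\ref{prop-s5.3-1}, \ref{prop-s5.3-2}, \ref{prop-s5.4-1}, \ref{prop-s5.4-2}, \ref{prop-s5.5-1}--\ref{prop-s5.5-4}, \ref{prop-s5.6-1}--\ref{prop-s5.6-6}, \ref{prop-s5.9-1}, and \ref{prop-s6.3-1}, which together exhaust exactly the graphs listed in the statement.

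The reduction proceeds via Harish--Chandra bimodules along the lines of \cite{MiSo}. First, projective functors split along the central character decomposition of $\mathcal{Z}$, so we may assume $L$ has a single central character $\chi$; fix an anti-dominant $\lambda$ with $\chi=\chi_{{}_\lambda}$. The category of Harish--Chandra bimodules whose right generalized central character equals $\chi$ acts on $\mathcal{Z}_{{}_\chi}$ from the right by tensor product, and this action commutes with the $\mathscr{C}$-action on the left. The key technical step is to upgrade \cite[Theorem~5.3]{MiSo} from $\mathfrak{sl}_2$ to $\mathfrak{sl}_3$: construct a simple Harish--Chandra bimodule $\mathbb{X}$ which, via $L'\mapsto L'\otimes_{U(\mathfrak{g})}\mathbb{X}$, induces an equivalence of $\mathscr{C}$-module categories $\mathrm{add}(\mathscr{C}\cdot L') \simeq \mathrm{add}(\mathscr{C}\cdot L)$, for an appropriate $L'$ in one of the two known families sharing the central character $\chi$.

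Which family $L'$ belongs to is dictated by the relative position of $W\cdot \lambda$ inside $\lambda+\Lambda$ via Proposition~\ref{prop-s6.2-1}. In all cases except Proposition~\ref{prop-s6.2-1}\eqref{prop-s6.2-1.5}, the second hypothesis of \cite[Proposition~5.5]{MiSo} is satisfied, giving an equivalence with some $\mathrm{add}(\mathscr{C}\cdot L(\mu))$ and hence producing a graph from Figures~\ref{fig3}--\ref{fig14}; these cases cover $\mu$ integral (subdivided into top, middle, bottom), partially integral, and generic. In the remaining exceptional case $\lambda\in (\tfrac{1}{3},\tfrac{1}{3})+\Lambda$ or $\lambda\in (\tfrac{2}{3},\tfrac{2}{3})+\Lambda$, the equivalence lands instead in $\mathrm{add}(\mathscr{C}\cdot \mathbf{L}(\eta,\chi))$ for a non-degenerate Whittaker module and produces the graph of Figure~\ref{fig15}. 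The main obstacle is precisely the construction of the Harish--Chandra bimodule equivalence for $\mathfrak{sl}_3$: one must verify that the bimodule $\mathbb{X}$ exists and is suitably compatible with the $\mathscr{C}$-action in each scenario of Proposition~\ref{prop-s6.2-1}, and---crucially---track exactly when the orbit $W\cdot\lambda$ gets collapsed by the equivalence, since this collapse is what distinguishes the Whittaker combinatorics of Figure~\ref{fig15} from the generic highest-weight combinatorics of Figure~\ref{fig14} and is ultimately what produces the eighth graph.
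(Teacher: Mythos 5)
The proposal heads in the right general direction---a Harish--Chandra-bimodule reduction to highest weight and Whittaker modules---but it has a concrete gap in the key case distinction, and it also differs in mechanism from the paper's argument.

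The gap: you decide which target category the equivalence lands in solely from $\lambda_{\chi}$, via Proposition~\ref{prop-s6.2-1}, asserting that in the exceptional case $\lambda_{\chi}\in(\tfrac{1}{3},\tfrac{1}{3})+\Lambda$ or $\lambda_{\chi}\in(\tfrac{2}{3},\tfrac{2}{3})+\Lambda$ one always lands in $\mathrm{add}(\mathscr{C}\cdot\mathbf{L}(\eta,\chi))$ and gets Figure~\ref{fig15}. This is false: for such $\lambda_{\chi}$, the answer depends on $L$ itself, not just on the central character. Indeed, $L(\lambda_{\chi})$ for such $\lambda_{\chi}$ is a simple Verma module whose module category has the generic combinatorics of Figure~\ref{fig14} (Proposition~\ref{prop-s5.9-1}), whereas $\mathbf{L}(\eta,\chi)$ for the same $\chi$ gives Figure~\ref{fig15}. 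The correct dichotomy (which the paper encodes via Subcases 1 and 2 of Case 2) is whether $L$ is fixed by the cyclic group of order three generated by the projective functors $\theta_{\lambda_{\chi},sr\cdot\lambda_{\chi}}$ and $\theta_{\lambda_{\chi},rs\cdot\lambda_{\chi}}$. You do later gesture at this (``track exactly when the orbit $W\cdot\lambda$ gets collapsed''), but that caveat contradicts your explicit statement, and you give no criterion for detecting the collapse that does not already refer to $L$; the weight data from Proposition~\ref{prop-s6.2-1} alone cannot do it.

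On the mechanism: the paper does not construct a bimodule $\mathbb{X}$ and a tensor equivalence at all. Instead it forms the algebra object $\mathcal{L}(L,L)$ inside the monoidal category $\mathscr{H}$ of Harish--Chandra bimodules, observes (via EGNO) that $\mathrm{add}(\mathscr{C}\cdot L)$ is the category of $\mathcal{L}(L,L)$-modules in $\mathscr{H}$, and then shows that $\mathcal{L}(L,L)$ is isomorphic as an algebra to $\mathcal{L}(L',L')$ for $L'$ in one of the two known families. The identification is done by Duflo's annihilator theorem plus Kostant's problem (for $\lambda_{\chi}\notin(\tfrac{1}{3},\tfrac{1}{3})+\Lambda$), and by an adjoint-multiplicity count combined with the semi-simplicity of the relevant Harish--Chandra bimodules in the exceptional case. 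This route avoids having to ``upgrade \cite[Theorem~5.3]{MiSo}'' to arbitrary simple $L$, which your plan would need but does not justify: that theorem is about Whittaker modules, and producing the requisite bimodule $\mathbb{X}$ with the requisite compatibility for a general simple $L$ is precisely the content one still has to supply. In effect you would end up re-proving the same multiplicity and Kostant-type statements that the paper uses directly, so even after the gap above is fixed the proposed route would not be shorter.
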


For convenience, we collected all these graphs together
in Figure~\ref{fig16}.

\begin{figure}
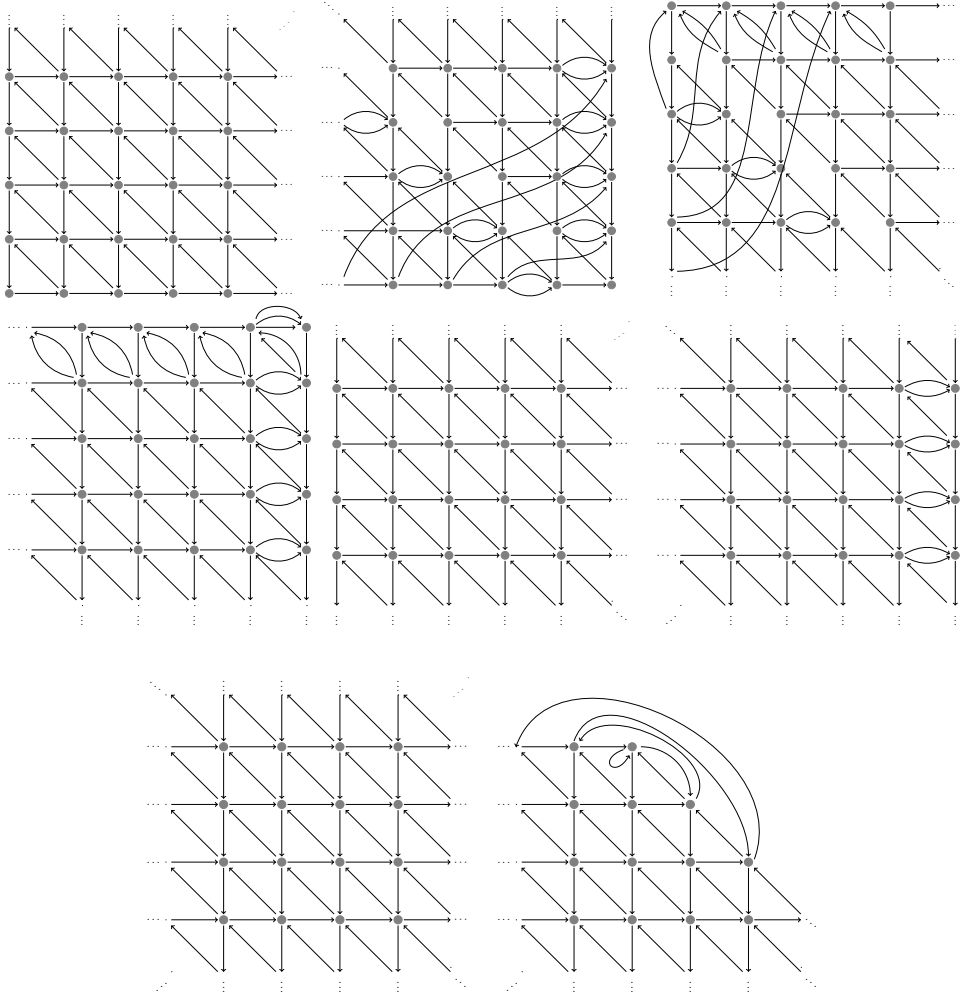

\resizebox{\textwidth}{!}{

}
\caption{All eight graphs mentioned in Theorem~\ref{thm-main}}\label{fig16}
\end{figure}

\subsection{Proof}\label{s7.3}

In order to prove Theorem~\ref{thm-main}, we reinterpret the
results of \cite{MiSo} (which we already mentioned in the
proof of Proposition~\ref{prop-s6.3-1}) in terms of the
approach to representations of monoidal categories via
algebra objects as described in, e.g. \cite[Chapter~7]{EGNO}. 
This will allow us to generalize the necessary results of
\cite{MiSo} to the case of our arbitrary simple module $L$.

Consider the $\mathfrak{g}$-$\mathfrak{g}$-bimodule
$\mathrm{Hom}_\mathbb{C}(L,L)$ of all linear endomorphisms
of $L$. It contains a subbimodule $\mathcal{L}(L,L)$ which
consists of all elements, the adjoint action of $\mathfrak{g}$
on which is locally finite. In fact, the multiplicity of each
simple finite dimensional $\mathfrak{g}$-module in the 
adjoint $\mathfrak{g}$-module $\mathcal{L}(L,L)$ is finite,
that is, $\mathcal{L}(L,L)$ is a Harish-Chandra 
$\mathfrak{g}$-$\mathfrak{g}$-bimodule, see \cite[Kapitel~6]{Ja}.
The monoidal category $\mathscr{H}$ of Harish-Chandra bimodules,
see \cite{BG}, is a proxy for the action of 
$\mathscr{C}$ on $\mathfrak{g}$-modules with locally finite
action of $Z(\mathfrak{g})$.

The space $\mathcal{L}(L,L)$ has the natural structure of an algebra.
By the combination of \cite[Remark~7.9.1]{EGNO} with
\cite[Theorem~7.10.1]{EGNO}, the category of all
$\mathcal{L}(L,L)$-modules in $\mathscr{H}$ is equivalent
to $\mathrm{add}(\mathscr{C}\cdot L)$. Therefore,
in order to prove Theorem~\ref{thm-main}, we just need to show
that there either exists a simple highest weight module
$L(\lambda)$ or a simple Whittaker module
$\mathbf{L}(\eta,\chi)$ such that $\mathcal{L}(L,L)$
is isomorphic (as an algebra) to the corresponding
$\mathcal{L}(L(\lambda),L(\lambda))$ or
$\mathcal{L}(\mathbf{L}(\eta,\chi),\mathbf{L}(\eta,\chi))$,
respectively.

By \cite[Proposition~2.6.8]{Di}, the module $L$ admits a central character,
say $\chi$. Then there exists $\lambda\in\mathfrak{h}^*$ such that
the annihilators of $L$ and $L(\lambda)$ in $U(\mathfrak{g})$
coincide, see \cite{Du}. In particular, $\chi$ is also
the central character of $L(\lambda)$.

Consider first the case $\lambda\not\in (\frac{1}{3},\frac{1}{3})+\Lambda$.
We claim that, in this case, the following holds:
\begin{displaymath}
\mathcal{L}(L(\lambda),L(\lambda))
\cong \mathcal{L}(L,L)\cong U(\mathfrak{g})/\mathrm{Ann}_{U(\mathfrak{g})}(L). 
\end{displaymath}
All this can be found in the literature, see \cite{MS1,MMM,Maz}.
The point of this case is that the part of $\mathcal{O}_{{}_\chi}$
which corresponds to the weights in $\lambda+\Lambda$ is indecomposable
(as a category) and hence the action of  projective
functors on this part is similar to the action on an integral block.
Therefore the isomorphisms above just reflect the fact that 
Kostant's problem has positive solution for all simple 
$\mathfrak{sl}_2$ and $\mathfrak{sl}_3$-modules with integral
central characters, see \cite{Maz} for details.

Now consider the case $\lambda\in (\frac{1}{3},\frac{1}{3})+\Lambda$.
In this case, the block $\mathcal{O}_{{}_\chi}$ is semi-simple, so
$L(\lambda)$ is, in fact, a Verma module. This implies that
\begin{displaymath}
\mathcal{L}(L(\lambda),L(\lambda))
\cong U(\mathfrak{g})/\mathrm{Ann}_{U(\mathfrak{g})}(L),
\end{displaymath}
see  \cite[Section~6.9]{Ja}.

However, the intersection of $W\cdot \lambda$ with 
$\lambda+\Lambda$ contains two other weights, different from $\lambda$.
Let us denote them by $\lambda'$ and $\lambda''$.
This means that there are three pairwise non-isomorphic 
indecomposable projective  endofunctors of $\mathcal{O}_{{}_\chi}$, 
namely $\theta_{\lambda,\lambda}$,
$\theta_{\lambda,\lambda'}$ and
$\theta_{\lambda,\lambda''}$. They all send simple modules
to simple modules and hence are equivalences of categories.
In other words, we have the cyclic group of order $3$ acting
on $\mathcal{O}_{{}_\chi}$ by equivalences.
This leads to two different subcases:

{\bf Subcase 1.} Assume that the modules $L$ and
$\theta_{\lambda,\lambda'} L$ are not isomorphic.
In this subcase we automatically have that the modules
$L$, $\theta_{\lambda,\lambda'} L$ and $\theta_{\lambda,\lambda''} L$
are pairwise non-isomorphic (as the only proper subgroup of
the cyclic group on three elements
is the trivial subgroup). This means that, for an indecomposable
projective functor $\theta$, the inequality
$\mathrm{Hom}(L,\theta L)\neq 0$ implies that
$\theta$ is the identity functor. Exactly the same property holds
if we substitute $L$ by $L(\lambda)$. Therefore, for any 
finite dimensional $\mathfrak{g}$-module $V$, 
using \cite[Section~6.8]{Ja}, we have
\begin{displaymath}
\dim \mathrm{Hom}_{\mathfrak{g}}(V,\mathcal{L}(L,L)^\mathrm{ad})=
\dim \mathrm{Hom}_{\mathfrak{g}}(V,\mathcal{L}(L(\lambda),L(\lambda))^\mathrm{ad}),
\end{displaymath}
where in the second argument we consider the adjoint $\mathfrak{g}$-action,
which implies that the natural inclusion
$U(\mathfrak{g})/\mathrm{Ann}_{U(\mathfrak{g})}(L)\subset 
\mathcal{L}(L,L)$
is an isomorphism. This shows that 
$\mathcal{L}(L,L)\cong
\mathcal{L}(L(\lambda),L(\lambda))$ and we are done.

{\bf Subcase 2.} Assume that the modules $L$ and
$\theta_{\lambda,\lambda'} L$ are isomorphic.
In this subcase we automatically have that the modules
$L$, $\theta_{\lambda,\lambda'} L$ and $\theta_{\lambda,\lambda''} L$
are all isomorphic (as the only non-trivial subgroup of the
cyclic group on three elements is the group itself).
This means that, for any indecomposable
projective endofunctor $\theta$ of $\mathcal{O}_{{}_\chi}$, 
we have $\mathrm{Hom}(L,\theta L)\cong \mathbb{C}$.
Note that the same holds for $\mathbf{L}(\eta,\chi)$.
Therefore, for any finite dimensional $\mathfrak{g}$-module $V$, 
using \cite[Section~6.8]{Ja}, we have
\begin{equation}\label{eq-nnn23}
\dim \mathrm{Hom}_{\mathfrak{g}}(V,\mathcal{L}(L,L)^\mathrm{ad})=
\dim \mathrm{Hom}_{\mathfrak{g}}
(V,\mathcal{L}(\mathbf{L}(\eta,\chi),\mathbf{L}(\eta,\chi))^\mathrm{ad}).
\end{equation}
Note that both $\mathcal{L}(L,L)$
and $\mathcal{L}(\mathbf{L}(\eta,\chi),\mathbf{L}(\eta,\chi))$
are semi-simple by \cite[Theorem~5.9]{BG}. From \eqref{eq-nnn23}
it follows that each simple bimodule appears in
$\mathcal{L}(L,L)$
and $\mathcal{L}(\mathbf{L}(\eta,\chi),\mathbf{L}(\eta,\chi))$
with the same multiplicity. This implies that the bimodules
$\mathcal{L}(L,L)$
and $\mathcal{L}(\mathbf{L}(\eta,\chi),\mathbf{L}(\eta,\chi))$
are isomorphic. The proof
of the main theorem is now complete.

\subsection{The action of $\mathrm{G}$}\label{s7.5}

As the complete combinatorics is uniquely determined
by $[\mathrm{F}]$ and $[\mathrm{G}]$, see Subsection~\ref{s3.4},
for convenience, in Figure~\ref{fig17} we collected the 
graphs $\Gamma_{\mathrm{G}}$ corresponding to the graphs
in Figure~\ref{fig16}. Determination of the graphs 
in Figure~\ref{fig17} is similar
to our determination of the graphs in Figure~\ref{fig16}.

\begin{figure}
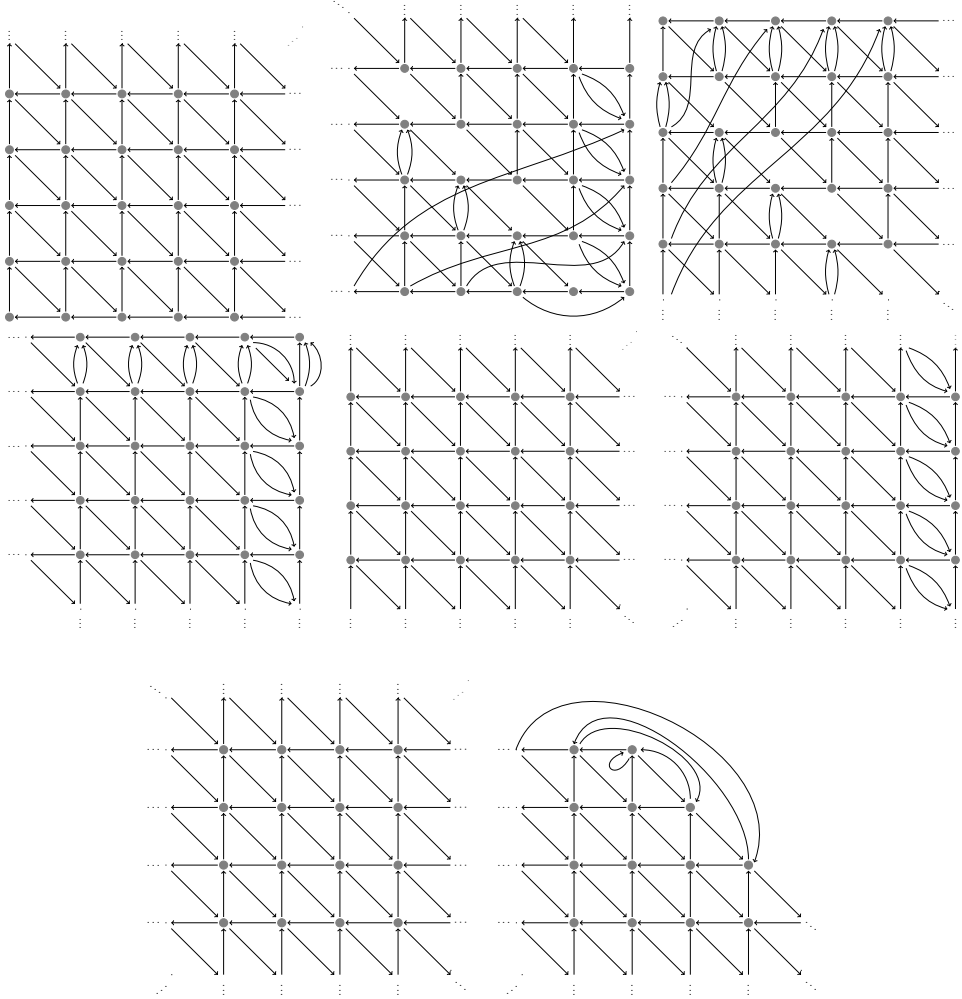

\resizebox{\textwidth}{!}{

}
\caption{The graphs $\Gamma_{\mathrm{G}}$
corresponding to Figure~\ref{fig16}}\label{fig17}
\end{figure}

\subsection{Semi-simplicity of some underlying categories}\label{s7.7}

Similarly to \cite[Proposition~20]{MZ}, we also have the following:

\begin{proposition}\label{prop-s7.7-1}
Let $\mathcal{M}$ be a simple transitive 
admissible $\mathscr{C}$-module category
whose combinatorics is given by one of the four graphs in Figure~\ref{fig16}
that does not have double oriented arrows describing the action of   $\mathrm{F}$,
with the corresponding graph in Figure~\ref{fig17} describing the
action of $\mathrm{G}$. Then the category $\mathcal{M}$
is semi-simple.
\end{proposition}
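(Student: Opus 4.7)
I would adapt the strategy of the proof of Theorem~\ref{thm-s4.3-1}, observing that the four graphs in question correspond respectively to the reference semi-simple $\mathscr{C}$-module categories: the regular category ${}_\mathscr{C}\mathscr{C}$ of Figure~\ref{fig3}, the partially integral category $\mathcal{M}_1^a$ of Figure~\ref{fig8} (and its three isomorphic companions in Figures~\ref{fig10} and \ref{fig12}), the generic category $\mathcal{K}$ of Figure~\ref{fig14}, and the semi-simple Whittaker category of Proposition~\ref{prop-s6.3-1} shown in Figure~\ref{fig15}. In each reference case, semi-simplicity of the underlying category gives the matrix identity $[\mathrm{F}^*]=[\mathrm{F}]^t$, which is precisely the content of the hypothesis transferred to the abstract category $\mathcal{M}$.

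First, I would let $\{X_i\,:\,i\in\mathtt{I}\}$ be a complete and irredundant list of representatives of indecomposable objects in $\mathcal{M}$, with $N_i$ the simple top of $X_i$ in the abelianization $\overline{\mathcal{M}}$. The identification of $\Gamma_{\mathrm{G}}$ with the graph shown in Figure~\ref{fig17} says exactly that $[\mathrm{G}]=[\mathrm{F}]^t$, equivalently $[\mathrm{F}^*]=[\mathrm{F}]^t$. Invoking \cite[Lemma~8]{AM}, this yields the identity
\begin{displaymath}
\llbracket\theta\rrbracket=[\theta]
\end{displaymath}
for every $\theta\in\mathscr{C}$, analogously to \eqref{eq-s4.3-2}.

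Next, I would run the radical-stability argument from the proof of Theorem~\ref{thm-s4.3-1} verbatim. For a radical endomorphism $\psi\colon X\to X$ of an indecomposable $X$, the map $\mathrm{F}(\psi)$ is nilpotent; the no-double-arrows hypothesis on $\Gamma_{\mathrm{F}}$ makes $\mathrm{F}(X)$ multiplicity free, so the endomorphism algebra $\mathrm{End}(\mathrm{F}(X))$ is basic and nilpotent elements there automatically lie in the Jacobson radical, whence $\mathrm{F}(\psi)$ is a radical morphism. For a radical morphism $\varphi\colon X\to Y$ between two non-isomorphic indecomposables, I would apply $\mathrm{F}$ to the short exact sequence $0\to\mathrm{Rad}(Y)\to Y\to N_Y\to 0$; combining the matrix identity with the multiplicity-freeness of the top of $\mathrm{F}(Y)$, I conclude that $\mathrm{F}(N_Y)$ is isomorphic to the top of $\mathrm{F}(Y)$, so $\mathrm{F}(\mathrm{Rad}(Y))=\mathrm{Rad}(\mathrm{F}(Y))$, and the image of $\mathrm{F}(\varphi)$ lies inside $\mathrm{Rad}(\mathrm{F}(Y))$. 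Since $\mathscr{C}$ is monoidally generated by $\mathrm{F}$, the Jacobson radical ideal of $\mathcal{M}$ is $\mathscr{C}$-stable; being a proper ideal of the simple $\mathscr{C}$-module category $\mathcal{M}$, it must vanish, and hence every $X_i$ coincides with its simple top $N_i$, which is precisely semi-simplicity of $\mathcal{M}$.

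The main obstacle will be the identification of $\mathrm{F}(N_Y)$ with the top of $\mathrm{F}(Y)$. The matrix identity gives that both have the same length $\sum_k[\mathrm{F}]_{k,Y}$, and both are quotients of $\mathrm{F}(Y)$, but the equality $\mathrm{F}(\mathrm{Rad}(Y))=\mathrm{Rad}(\mathrm{F}(Y))$ is not purely formal; rigorously excluding the possibility that $\mathrm{F}(N_Y)$ has a non-trivial radical requires combining this length coincidence with the parallel identity $\llbracket\mathrm{G}\rrbracket=[\mathrm{G}]$ via the biadjunction between $\mathrm{F}$ and $\mathrm{G}$, so that both the top and the socle of $\mathrm{F}(N_Y)$ are forced to attain the full composition length. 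The no-double-arrows hypothesis plays a crucial role throughout, as it is what makes the endomorphism algebras of $\mathrm{F}(X)$ basic and ensures that the tops we compare are multiplicity free.
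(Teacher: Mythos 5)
Your proposal matches the paper's approach: prove $\mathrm{F}$-invariance of the radical of the abelianization $\overline{\mathcal{M}}$ via the identity $\llbracket\theta\rrbracket=[\theta]$ coming from the hypothesis $\Gamma_{\mathrm{G}}=\Gamma_{\mathrm{F}}^{\mathrm{op}}$, together with the multiplicity-freeness of $\mathrm{F}(X)$ and exactness, then invoke simple transitivity to kill the radical. The paper's own proof is equally terse at the step you flag and explicitly defers the details to \cite[Proposition~20]{MZ}, so your observation that the identity $\mathrm{F}(\mathrm{Rad}(Q))=\mathrm{Rad}(\mathrm{F}(Q))$, equivalently semi-simplicity of $\mathrm{F}(N_Q)$, is not a purely formal consequence of the matching multiplicities is legitimate. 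However, the biadjunction move you sketch as a fix is circular as phrased: showing $N_Q$ lies in the socle of $\mathrm{G}(N_k)$ (rather than merely being a composition factor) is dual to, not weaker than, the assertion you are trying to establish for the top of $\mathrm{F}(N_Q)$, so an additional input — for instance comparing $\mathrm{F}(N_Q)$ simultaneously as a quotient of the projective $\mathrm{F}(P_Q)$ and as a submodule of the injective $\mathrm{F}(I_Q)$ — is still needed to break the symmetry.
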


\begin{proof}
The proof is an adaptation of the proof of \cite[Proposition~20]{MZ}.
We just sketch the main steps below, leaving the details to the reader.
Consider the abelianization $\overline{\mathcal{M}}$.
We need to prove that the radical of $\overline{\mathcal{M}}$
is $\mathscr{C}$-invariant. As $\mathscr{C}$ is generated by
$\mathrm{F}$, we just need to prove that the radical 
of $\overline{\mathcal{M}}$ is $\mathrm{F}$-invariant.

The radical of $\overline{\mathcal{M}}$
is generated by morphisms between non-isomorphic indecomposable 
projectives together with the nilpotent endomorphisms of
indecomposable projectives. The action of $\mathrm{F}$ preserves
nilpotency and hence the outcome, when applied to  
a nilpotent endomorphism of an indecomposable projective, is 
a radical morphism, since, by our assumptions, this action is 
multiplicity free in the basis of projectives.

Under our assumptions, the graphs $\Gamma_\mathrm{F}$ and 
$\Gamma_\mathrm{G}$ are opposite to each other. This implies
that the combinatorics of the action of $\mathrm{F}$ in the basis of
simples in $\overline{\mathcal{M}}$ coincides with the combinatorics of 
of the action of $\mathrm{F}$ in the basis of projectives in 
$\overline{\mathcal{M}}$.
Given a morphism $f:P\to Q$ between two indecomposable projectives,
the image of $f$ belongs to the radical of $Q$. So, we need to show
that the image of $\mathrm{F}(f)$ belongs to the radical of
$\mathrm{F}(Q)$. In fact, $\mathrm{F}(\mathrm{Rad}(Q))=
\mathrm{Rad}(\mathrm{F}(Q))$.
Indeed, applying the exact functor $\mathrm{F}$
to the short exact sequence
\begin{displaymath}
0\to \mathrm{Rad}(Q)\to Q\to Q/ \mathrm{Rad}(Q)\to 0,
\end{displaymath}
we observe that the top of $\mathrm{F}(Q)$ is isomorphic to
$\mathrm{F}(Q/\mathrm{Rad}(Q))$ since the combinatorics of the action of
$\mathrm{F}$ in the bases of projectives and simples agree.
This implies the claim and completes the proof.
\end{proof}

\vspace{2mm}

\noindent
V.~M.: Department of Mathematics, Uppsala University, Box. 480,
SE-75106, Uppsala,\\ SWEDEN, email: {\tt mazor\symbol{64}math.uu.se}

\noindent
X.~Z.: School of Mathematics and Statistics, 
Ningbo University, Ningbo, 
Zhejiang, 315211, China, email: 
{\tt zhuxiaoyu1\symbol{64}nbu.edu.cn}

\end{document}